\documentclass[leqno]{tac}
\usepackage{amsmath,amsfonts}

\author{George M. Bergman}
\thanks{%
Much of this work was done in 1986, when the author was
partially supported by NSF grant DMS~85-02330.
\protect\\\indent\indent 
arXiv:0711.0674.
}
\address{%
Department of Mathematics,\\
University of California\\
Berkeley, CA 94720-3840, USA}
\title{Colimits of representable algebra-valued functors}
\copyrightyear{2008}
\keywords{representable functor among varieties of algebras,
initial representable functor, colimit of representable functors,
final coalgebra, limit of coalgebras;
binar (set with one binary operation), semigroup, monoid, group,
ring, Boolean ring, Stone topological algebra}
\amsclass{Primary: 18A30, 18D35.
Secondary: 06E15, 08C05, 18C05, 20M50, 20N02
}
\eaddress{gbergman@math.berkeley.edu}


\hyphenation{
co-algeb-ra
co-algeb-ras
co-lim-it
co-lim-its
pre-coalgeb-ra
pre-coalgeb-ras
pseudo-co-algeb-ra
pseudo-co-algeb-ras
pseudo-co-pro-jection
pseudo-co-pro-jections
quasi-variety
sub-coalgeb-ra
zero-ary
}

\raggedbottom
\setlength{\mathsurround}{.167em}
\newcommand{\<}{\kern.0833em}

\newtheorem{question}[theorem]{Question}

\newcommand{\fb}{\mathbf}
\newcommand{\Rep}[2]{\fb{Rep}(#1,\protect\nolinebreak[2]#2)}
\newcommand{\coalg}[2]{\fb{Coalg}(#1,\protect\nolinebreak[2]#2)}
\newcommand{\pscoalg}[2]{\fb{Pseudocoalg}(#1,\protect\nolinebreak[2]#2)}
\newcommand{\C}{\fb{C}\kern.05em}
\mathbfdef{D}
\mathbfdef[Bi]{Binar}
\mathbfdef[Se]{Semigp}
\mathbfdef[Gp]{Group}
\mathrmdef{ari}
\mathrmdef[cd]{card}
\newcommand{\cP}{\raisebox{.1em}{$\<\scriptscriptstyle\coprod$}\nolinebreak[2]}
\newcommand{\limit}{\varprojlim}
\newcommand{\colim}{\varinjlim}
\newcommand{\ba}{_\mathrm{base}}
\newcommand{\circsm}{\kern.25em{\scriptstyle\circ}\kern.25em\nolinebreak[3]}


\begin{document}
\maketitle

\begin{abstract}
If $\C$ and $\D$ are varieties of algebras in the sense
of general algebra, then by a representable functor
$\C\to\D$ we understand a functor which, when
composed with the forgetful functor $\D\to\fb{Set},$ gives a
representable functor in the classical sense; Freyd showed that these
functors are determined by $\!\D\!$-coalgebra objects of $\C.$
Let $\Rep{\C}{\D}$ denote the category of all
such functors, a full subcategory of $\fb{Cat}(\C,\D),$
opposite to the category of $\!\D\!$-coalgebras in $\C.$

It is proved that $\Rep{\C}{\D}$ has small
colimits, and in certain situations, explicit constructions for
the representing coalgebras are obtained.

In particular, $\Rep{\C}{\D}$ always has an initial object.
This is shown to be ``trivial'' unless
$\C$ and $\D$ either both have {\em no} zeroary operations, or both
have {\em more than one} derived zeroary operation.
In those two cases, the functors in question may have
surprisingly opulent structures.

It is also shown that every set-valued representable functor
on $\C$ admits a universal morphism to a $\!\D\!$-valued
representable functor.

Several examples are worked out in detail, and areas for further
investigation are noted.
\end{abstract}
\section*{}

In \S\S\ref{S.intro}-\ref{S.preco>co} below we develop our general
results, and in \S\S\ref{S.BiBi}-\ref{S.Ring>Ab}, some examples.
(One example is also worked in \S\ref{S.Set}, to motivate
the ideas of \S\ref{S.precoalg}.)

R.~Par\'e has pointed out to me that my main result,
Theorem~\ref{T.final}, can be deduced from
\cite[Theorem~6.1.4, p.143, and following remark, and
{\em ibid.}\ Corollary~6.2.5, p.149]{M+P}.
However, as he observes, it is useful to have a direct proof.

\vspace{1em}
{\samepage\begin{center}{\bf I. GENERAL RESULTS.}\end{center}

\section{Conventions; algebras, coalgebras, and representable
functors.}\label{S.intro}

In} this note, morphisms in a category will be composed like
set-maps written to the left of their arguments.
The composition-symbol $\circsm$ will sometimes be introduced as an
aid to the eye, with no change in meaning.
So given morphisms $f:X\to Y$ and $g:Y\to Z,$ their composite
is $g\<f$ or $g\circsm f:X\to Z.$

Throughout, $\C$ and $\D$ will denote fixed varieties
(equational classes) of algebras, in which the operations may have
infinite arities and/or be infinite in number unless the contrary
is stated, but are always required to form a small set.

Let us set up our notation, illustrating it with the case
of the variety $\C.$
We assume given a set $\Omega_\C,$ which will index
the operations, and a cardinal-valued function $\ari_\C$
on $\Omega_\C,$ the arity function.
We fix a regular infinite cardinal $\lambda_\C$ greater than every
$\ari_\C(\alpha)$ $(\alpha\in\Omega_\C).$
An {\em $\!\Omega_\C\!$-algebra} will mean a pair
$A=(|A\<|,(\alpha_A)_{\alpha\in\Omega_\C}),$ where $|A\<|$ is
a set, and for each $\alpha\in\Omega_\C,$ $\alpha_A$ is a set-map
$|A\<|^{\ari_\C(\alpha)}\to|A\<|.$
We denote the category of such algebras, with the obvious
morphisms, by \mbox{$\Omega_\C\!$-$\!\fb{Alg}$}.

For every cardinal $\kappa,$ let $T_{\Omega_\C}(\kappa),$
the ``term algebra'' on $\kappa,$ denote a
free $\!\Omega_\C\!$-algebra on a $\!\kappa\!$-tuple of indeterminates,
and let $\Phi_\C$ be a subset of $\bigcup_{\kappa<\lambda_\C}
|T_{\Omega_\C}(\kappa)|\times |T_{\Omega_\C}(\kappa)|,$
our set of intended identities.
Then we define an object
of $\C$ to mean an $\!\Omega_\C\!$-algebra $A$ with the property
that for every $\kappa<\lambda_\C,$
every map $x:\kappa\to|A\<|$ and every element $(s,t)\in\Phi_\C\cap
(|T_{\Omega_\C}(\kappa)|\times |T_{\Omega_\C}(\kappa)|),$
the $\!\kappa\!$-tuple $x$ ``satisfies the identity $s=t\!$'',
in the sense that the images of $s$ and $t$ under the homomorphism
$T_{\Omega_\C}(\kappa)\to A$ extending $x,$ which
we may denote $s(x)$ and $t(x),$ are equal.
We take $\C$ to be the full subcategory
of \mbox{$\Omega_\C\!$-$\!\fb{Alg}$} with this object-set, and write
$U_\C:\C\to\fb{Set}$ for its underlying set functor.

Note that if $\C$ has no zeroary operations
(i.e., if $\Omega_\C$ has no elements of arity $0),$
then the empty set has a (unique) structure of $\!\C\!$-algebra,
and gives the initial object of~$\C.$

The corresponding notation, $\Omega_\D,$ $\ari_\D,$
$\lambda_\D,$ $\Phi_\D,$ etc., applies to $\D.$
We will generally abbreviate $\ari_\C(\alpha)$ or $\ari_\D(\alpha)$
to $\ari(\alpha)$ when there is no danger of ambiguity.

(Observe that our definition requires that every variety $\C$ be given
with a distinguished set of defining identities $\Phi_\C.$
The choices of $\Phi_\C$ and $\Phi_\D$ will not come into the
development of our main results in \S\S\ref{S.motivate}-\ref{S.Set}.
But $\Phi_\D$ will be called on in
\S\ref{S.precoalg}-\ref{S.preco>co}, where we develop methods
for the explicit construction of our colimit functors.)

If $\fb{A}$ is a category in which all families of
$<\lambda_\D$ objects have coproducts, an
{\em $\!\Omega_\D\!$-coalgebra} $R$ in $\fb{A}$ will mean
a pair $R=(|R\<|,(\alpha^R)_{\alpha\in\Omega_\C}),$ where $|R\<|$ is
an object of $\fb{A},$ and each $\alpha^R$ is a morphism
$|R\<|\to\coprod_{\ari(\alpha)}|R\<|;$
the $\alpha^R$ are the {\em co-operations} of the coalgebra.
For each $\alpha\in\Omega_\D$ and each object $A$ of $\fb{A},$
application of the hom-functor $\fb{A}(-,A)$ to the co-operation
$\alpha^R$ induces an operation
$\alpha_{\fb{A}(R,A)}: \fb{A}(|R\<|,A)^{\ari(\alpha)}\to
\fb{A}(|R\<|,A);$ these together
make $\fb{A}(|R\<|,A)$ an $\!\Omega_\D\!$-algebra,
which we will denote $\fb{A}(R,A).$
We thus get a functor
$\fb{A}(R,-):\fb{A}\to\mbox{$\Omega_\D\!$-$\!\fb{Alg}.$}$

Note that this is a nontrivial extension of standard notation:
If $A$ and $B$
are objects of $\fb{A},$ then $\fb{A}(A,B)$ denotes the hom-{\em set}\/;
if $R$ is an $\!\Omega_\D\!$-coalgebra in $\fb{A},$
then $\fb{A}(R,B)$ denotes an {\em $\!\Omega_\D\!$-algebra} with the
hom-set $\fb{A}(|R\<|,B)$ as underlying set.
We are also making the symbol $|\ |$ do double duty:
If $A$ is an object of the variety
$\C,$ then $|A\<|$ denotes its underlying set; if
$R$ is a coalgebra object in $\C,$ we denote by $|R\<|$
its underlying $\!\C\!$-algebra; thus
$||R\<||$ will be the underlying set of that underlying algebra.
In this situation, ``an element of $R\!$'',
``an element of $|R\<|\!$'' and ``an element of $||R\<||\!$'' will all
mean the same thing, the first two being shorthand for the third.
In symbols we will always write this $r\in||R\<||.$
We also extend the standard language under which the functor
$\fb{A}(|R\<|,-): \fb{A}\to\fb{Set}$ is said to
be {\em representable} with representing object $|R\<|,$ and call
$\fb{A}(R,-): \fb{A}\to\Omega_\D$\!-\!$\fb{Alg} $ a representable
(algebra-valued) functor, with representing coalgebra $R.$

A more minor notational point:  Whenever we write something
like $\coprod_\kappa A,$ this will denote a $\!\kappa\!$-fold copower
of the object $A,$ with the index which ranges over $\kappa$ unnamed;
thus, a symbol such as $\coprod_\kappa A_\iota$ denotes the
$\!\kappa\!$-fold copower of a single object $A_\iota.$
On the few occasions (all in \S\ref{S.preco>co}) where we consider
coproducts other than copowers, we will show the variable index
explicitly in the subscript on the coproduct-symbol,
writing, for instance, $\coprod_{\iota\in\kappa}A_\iota$ to denote
(in contrast to the above)
a coproduct of a family of objects $A_\iota$ $(\iota\in\kappa).$

If $R=(|R\<|,(\alpha^R)_{\Omega_\C})$ is
an $\!\Omega_\D\!$-coalgebra in $\fb{A},$
and $s,t\in|T_{\Omega_\D}(\kappa)|,$ then the necessary
and sufficient condition for the algebras $\fb{A}(R,A)$ to
satisfy the identity $s=t$ for all objects $A$ of $\fb{A}$ is that the
$\!\kappa\!$-tuple of coprojection maps in the $\!\Omega_\D\!$-algebra
$\fb{A}(R,\,\coprod_{\kappa}|R\<|)$ satisfy that relation.
In this case, we shall say that the
$\!\Omega_\D\!$-coalgebra $R$ {\em cosatisfies} the identity $s=t.$
If $R$ cosatisfies all the identities in $\Phi_\D,$
we shall call $R$ a {\em $\!\D\!$-coalgebra} object of $\fb{A},$
and call the functor it represents
a representable $\!\D\!$-valued functor on $\fb{A}.$

Whenever a functor $F:\fb{A}\to\D$ has the property
that its composite with the underlying set functor $U_\D$
is representable in the classical sense, then the $\!\D\!$-algebra
structures on the values of $F$ in fact arise
in this way from a $\!\D\!$-coalgebra
structure on the representing $\!\fb{A}\!$-object.
If $\fb{A}$ not only has small coproducts but general small colimits,
the $\!\D\!$-valued functors that are representable
in this sense are precisely those that have left adjoints
\cite{Freyd}, \cite[Theorem~9.3.6]{245}, \cite[Theorem~8.14]{coalg}.
We shall write $\coalg{\fb{A}}{\D}$ for the category of
$\!\D\!$-coalgebras in $\fb{A},$ taking for the morphisms
$R\to S$ those morphisms $|R\<|\to|S|$ that make
commuting squares with the co-operations, and we shall
write $\Rep{\fb{A}}{\D}$ for the category of representable functors
$\fb{A}\to\D,$ a full subcategory of $\fb{Cat}(\fb{A},\D).$
Up to equivalence, $\Rep{\fb{A}}{\D}$ and
$\coalg{\fb{A}}{\D}$ are opposite categories.

The seminal paper on these concepts is \cite{Freyd} (though the
case where $\C$ is a variety of commutative rings
and $\D$ the variety of groups, or, occasionally, rings,
was already familiar to algebraic geometers, the representable
functors corresponding to the ``affine algebraic groups and rings'').
For a more recent exposition, see \cite[\S\S9.1-9.4]{245}.
In \cite{coalg} the structure of $\coalg{\C}{\D}$ is determined
for many particular varieties $\C$ and $\D.$

(The term ``coalgebra'' is sometimes used for a more elementary
concept:  Given any functor $F: \fb{Set}\to\fb{Set},$
a set $A$ given with a morphism $A\to F(A)$ is called,
in that usage, an $\!F\!$-coalgebra.
The existence of final coalgebras in that sense has also been
studied \cite{PA+NM}, \cite{Barr}, and, as we shall see,
has a slight overlap with the concept studied in this note.
Still another use of ``coalgebra'', probably the earliest,
which also has relations to these two, and is basic to the theory of
Hopf algebras, is that of a module $M$ over a commutative ring,
given with a map $M\to M\otimes M;$ cf.\ \cite[pp.4 to end]{MS} and
\cite[\S\S29-32, \S43]{coalg}.
And in fact, as the referee has pointed out, results analogous to some
of those in this note were proved for coalgebras in that sense over
thirty years ago, by a similar approach~\cite{Barr_74}.)

The first step toward our results will be an easy observation.

\begin{lemma}\label{L.colim}
If $\fb{A}$ has small colimits, then so does $\coalg{\fb{A}}{\D};$
equivalently, $\Rep{\fb{A}}{\D}$ has small limits.
Moreover, the underlying $\!\fb{A}\!$-object of a colimit of
$\!\D\!$-coalgebras in $\fb{A}$ is the colimit of the underlying
$\!\fb{A}\!$-objects of these algebras.
Equivalently, the composite
of the limit of the corresponding representable functors with the
underlying set functor $U_\D$ is the limit of the
composites of the given functors with $U_\D,$ and may thus be evaluated
at the set level by taking limits of underlying sets.
\end{lemma}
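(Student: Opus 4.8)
The plan is to construct the colimit directly on the coalgebra side and then read off the limit statement for $\Rep{\fb{A}}{\D}$ from the opposite-category equivalence recorded above. Given a small diagram $(R_i)_{i\in I}$ of $\!\D\!$-coalgebras in $\fb{A}$ with transition morphisms among the $R_i$, I would first form $L=\colim_i |R_i|$, the colimit in $\fb{A}$ of the underlying $\!\fb{A}\!$-objects, with coprojections $q_i\colon|R_i|\to L$. This $L$ is forced to be the underlying object of any colimit coalgebra, which is exactly what the ``moreover'' clause asserts.

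Next I would equip $L$ with co-operations. The crucial ingredient is that copowers, being themselves colimits, commute with the colimit defining $L$: the canonical map $\colim_i\coprod_{\ari(\alpha)}|R_i|\to\coprod_{\ari(\alpha)}L$ is an isomorphism. For each $\alpha\in\Omega_\D$, composing each co-operation $\alpha^{R_i}\colon|R_i|\to\coprod_{\ari(\alpha)}|R_i|$ with $\coprod_{\ari(\alpha)}q_i$ yields a family of morphisms into $\coprod_{\ari(\alpha)}L$; this family is a cocone over the underlying diagram precisely because the transition maps are coalgebra morphisms and so commute with the $\alpha^{R_i}$. The universal property of $L$ then produces a unique $\alpha^L\colon L\to\coprod_{\ari(\alpha)}L$, and these are manufactured exactly so that every $q_i$ is a morphism of $\!\Omega_\D\!$-coalgebras.

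To see that $L$ is a genuine $\!\D\!$-coalgebra, not merely an $\!\Omega_\D\!$-coalgebra, I would pass to hom-algebras rather than verify cosatisfaction term by term. For each object $A$ of $\fb{A}$, the standard isomorphism $\fb{A}(|L|,A)\cong\limit_i\fb{A}(|R_i|,A)$ carrying a map to the compatible family of its composites with the $q_i$ is, because each $q_i$ is a coalgebra morphism, an isomorphism of $\!\Omega_\D\!$-algebras $\fb{A}(L,A)\cong\limit_i\fb{A}(R_i,A)$, the limit being computed in \mbox{$\Omega_\D\!$-$\!\fb{Alg}$} at the set level with operations componentwise. Each $\fb{A}(R_i,A)$ lies in $\D$; since $\D$ is a variety it is closed under products and subalgebras, hence under all small limits, so $\fb{A}(L,A)\in\D$ for every $A$, which says exactly that $L$ cosatisfies $\Phi_\D$. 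I expect this identification of $\fb{A}(L,A)$ with the limit algebra $\limit_i\fb{A}(R_i,A)$ — together with the underlying copower-colimit interchange — to be the main obstacle; once it is in place, cosatisfaction is automatic from $\D$'s being an equational class.

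It then remains to check the universal property in $\coalg{\fb{A}}{\D}$. Given a cocone of coalgebra morphisms $f_i\colon R_i\to S$, the underlying maps induce a unique $f\colon L\to|S|$ in $\fb{A}$; that $f$ is a coalgebra morphism follows by precomposing the relevant compatibility square with each $q_i$ and using that the $q_i$ and the $f_i$ are coalgebra morphisms, the $q_i$ being jointly epimorphic as a colimiting cocone, while uniqueness is inherited from the uniqueness of $f$ in $\fb{A}$. Finally the ``equivalently'' assertions are formal: limits in $\Rep{\fb{A}}{\D}$ correspond to colimits in $\coalg{\fb{A}}{\D}$ under the equivalence noted above, and applying $U_\D$ to the representable functor of $L$ gives $\fb{A}(|L|,-)$, whose value at $A$ is $\fb{A}(|L|,A)\cong\limit_i\fb{A}(|R_i|,A)$, so the composite limit functor is evaluated at the set level by taking limits of underlying sets.
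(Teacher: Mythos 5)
Your argument is correct, but it is not the route of the paper's displayed proof: it is precisely the alternative that the paper relegates to its closing parenthetical remark (``one can get the same result starting at the coalgebra end\dots''), here carried out in full. The paper's main proof stays on the functor side: since $\D$ has small limits, $\D^\fb{A}$ has objectwise limits; limits of algebras commute with underlying sets, so $U_\D\circsm L$ is the limit of the set-valued composites $U_\D\circsm F(E)$; that limit of representable set-valued functors is represented by the colimit $|R\<|$ of the representing objects; and then the fact recalled in \S\ref{S.intro} (a $\!\D\!$-valued functor whose composite with $U_\D$ is representable in the classical sense arises from a $\!\D\!$-coalgebra structure on the representing object) supplies the coalgebra structure on $|R\<|$ with no further verification. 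You work instead at the coalgebra end: the co-operations on $L=\colim_i|R_i|$ come from the colimit's universal property, the universal property of the colimit coalgebra from joint epimorphy of the $q_i,$ and cosatisfaction --- the only non-formal point --- from the $\!\Omega_\D\!$-algebra isomorphism $\fb{A}(L,A)\cong\limit_i\fb{A}(R_i,A)$ together with closure of the variety $\D$ under products and subalgebras, hence under all small limits. That last step is in substance a hands-on proof of exactly the instance of the representability-lifting fact that the paper invokes wholesale; so your route is longer but self-contained, while the paper's is shorter by leaning on cited machinery. One small observation: the copower-colimit interchange that you call ``the crucial ingredient'' is never actually used --- your cocone construction of $\alpha^L,$ the hom-algebra identification, and the universal-property check need only the universal property of $L$ and the fact that the $q_i$ and the transition maps are coalgebra morphisms.
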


\begin{proof}
Since $\D$ has small limits, the category of $\!\D\!$-valued functors
on any category also has small limits, which may be computed
object-wise; hence by properties of limits of
algebras, these commute with passing to underlying sets.
In particular, a functor $F$ from a small category $\fb{E}$ to
$\Rep{\fb{A}}{\D}$ will have a limit $L$ in $\D^\fb{A},$
and $U_\D\circsm L$ will
be the limit of the set-valued functors $U_\D\circsm F(E).$
The latter limit is represented by the colimit of the underlying
$\!\fb{A}\!$-objects of the coalgebras representing the $F(E);$
call this colimit object $|R\<|.$
As noted above, representability of $U_\D\circsm L: \fb{A}\to\fb{Set}$
by $|R\<|$ implies representability of $L:\fb{A}\to\D$
by a coalgebra $R$ with underlying $\!\fb{A}\!$-object $|R\<|.$

(One can get the same result starting at the coalgebra end,
using the general fact that ``colimits commute with colimits'' to
deduce that a colimit of underlying objects of a diagram of coalgebras
inherits co-operations from these, and verify that the resulting
coalgebra has the universal property of the desired colimit.)
\end{proof}

Knowing that $\Rep{\fb{A}}{\D}$ has small limits, to prove that it has
small {\em colimits} we need ``only'' prove that it satisfies the
appropriate solution-set condition (\cite[Theorem~V.6.1]{CW},
\cite[Theorem~7.10.1]{245}).
Easier said than done!

We shall get such a result in the case where $\fb{A}$ is a variety $\C.$
In the next section, we motivate the technique to be used (after
giving a couple of results showing cases to avoid when
thinking about examples), then preview the remainder of the paper.

\section{Background; trivial cases; motivation of the
proof; overview of the paper.}\label{S.motivate}

Let me first describe what led me to the questions answered below.

Let $\fb{Ring}^1$ denote the category of associative unital rings,
and for any field $K,$ let $\fb{Ring}^1_K$ denote the category of
associative unital $\!K\!$-algebras.
(When I write ``$\!K\!$-algebra'', ``algebra'' will be meant in
the ring-theoretic sense; otherwise it is
always meant in the general sense.)
In \cite[\S25]{coalg}, descriptions are obtained of all
representable functors from $\fb{Ring}^1_K$ into a number of
varieties of algebras (general sense!), including $\fb{Ring}^1.$
The result for the lastmentioned variety says
that for every representable functor
$F: \fb{Ring}^1_K\to\fb{Ring}^1,$ there exist two
{\em linearly compact} associative unital $\!K\!$-algebras $A$
and $B,$ such that $F$ is isomorphic to the functor
taking every object $S$ of $\fb{Ring}^1_K$ to the direct product
of completed tensor-product rings
$S\hat{\otimes}A\,\times\,S^{\mathrm{op}}\hat{\otimes}B.$
(One doesn't need to know precisely what these terms mean
to appreciate the point that is coming up.
For general background:
the category of linearly compact $\!K\!$-vector spaces is dual to the
category of all $\!K\!$-vector spaces, and a
linearly compact topology on an associative unital $\!K\!$-algebra
$A$ makes it an inverse limit of finite-dimensional $\!K\!$-algebras
\cite[\S24]{coalg}.
In this situation, the {\em completed} tensor product of a
$\!K\!$-algebra $S$ with $A$ is the inverse limit of the
tensor products of $S$ with those finite-dimensional $\!K\!$-algebras.)

Now the category of linearly compact associative unital $\!K\!$-algebras
has an initial object, the field $K$ given with the discrete topology,
and under the above characterization of representable
functors, the operation of completed tensor product with this
object describes the forgetful functor $\fb{Ring}^1_K\to\fb{Ring}^1.$
Hence the result cited shows
that $\Rep{\fb{Ring}^1_K}{\,\fb{Ring}^1}$ has an initial object, the
functor taking $S$ to $S\times S^{\mathrm{op}},$ regarded as a ring.
The coalgebra representing this functor has for underlying
object the free associative unital $\!K\!$-algebra
in two indeterminates, $K\langle x,y\rangle.$

(The same conclusion is true for $K$ any commutative ring, and,
in fact, in a still more general context \cite[\S28]{coalg}.
However, there is no analog in these contexts to the duality between
vector spaces and linearly compact vector spaces, and hence no
interpretation of representable functors in terms of completed
tensor products.
Indeed, the existence and description of the initial
object might not have been discovered
without the motivation of the case where $K$ is a field.)

The above result suggests that for general varieties $\C$ and
$\D,$ the category $\Rep{\C}{\D}$ might have an initial object,
which might have a non-obvious form.
It was this tantalizing hint that led to the present investigations.

Curiously, if, in the preceding example, the unitality condition is
dropped from the class of rings taken as the domain variety, {\em or}
the codomain variety, {\em or} both, then $\Rep{\C}{\D}$
still has an initial object, but not an ``interesting'' one
-- it is trivial, represented by the $\!0\!$- or
$\!1\!$-dimensional $\!K\!$-algebra depending on the case.
This sort of triviality occurs in some very general classes of
situations.
Let us now prove this, so that the reader who wishes to think about
the arguments of later sections in the light of examples of her or
his choosing will be able to consider
cases that have a chance of being nontrivial.

\begin{theorem}\label{T.7/9}
\textup{(i)}\ \ Suppose $\fb{A}$ is a variety of algebras
with no zeroary operations \textup{(}or more generally,
is a category with small coproducts such
that the initial object of $\fb{A}$ admits no
morphisms from non-initial objects into it\textup{)}.
Let $\D$ be
a variety of algebras having {\em at least one} zeroary operation.
Then for any representable functor $F:\fb{A}\to\D,$
the underlying $\!\fb{A}\!$-object of the coalgebra representing
$F$ is the initial object of $\fb{A};$ hence $F$ is the functor taking
every object to the $\!1\!$-element $\!\D\!$-algebra.
\textup{(}So, in particular, $\Rep{\fb{A}}{\D}$ has this trivial functor
as its initial object.\textup{)}\\[.5em]
\textup{(ii)}\ \ Suppose $\fb{A}$ is a variety of algebras
with a unique derived zeroary operation \textup{(}or more generally,
is a category with small coproducts whose initial object is also
a final object\textup{)}, and let $\D$ be any variety of algebras.
Then $\Rep{\fb{A}}{\D}$ has an initial object, namely the functor
taking all objects of $\fb{A}$ to the $\!1\!$-element algebra in $\D,$
represented by the initial-final object of $\fb{A}$ with the
unique $\!\D\!$-coalgebra structure that it admits.
\textup{(}However, in this case, $\Rep{\fb{A}}{\D}$ may have
nontrivial {\em non-initial} objects.\textup{)}\\[.5em]
\textup{(iii)}\ \ Suppose $\fb{A}$ is a variety of algebras with
more than one derived zeroary operation \textup{(}or more generally,
is a category with small coproducts and a final
object, such that the unique morphism
from the initial to the final object is an epimorphism but
not invertible\textup{),} and let $\D$ be any variety of algebras
having at most one derived zeroary operation.
Then $\Rep{\fb{A}}{\D}$ has an initial object $F.$
Namely --

\vspace{.5em}
\textup{(iii.a)}\ \ If $\D$ has no zeroary operations, $F$ is
represented by the final object of $\fb{A},$ which under the
above hypotheses has a unique $\!\D\!$-coalgebra
structure, and takes objects of $\fb{A}$ that admit morphisms
from the final object into them to the $\!1\!$-element
\textup{(}final\textup{)} $\!\D\!$-algebra, and objects which do not
admit such a morphism to the empty \textup{(}initial\textup{)}
$\!\D\!$-algebra.

\textup{(iii.b)}\ \ If $\D$ has exactly one derived zeroary operation,
then $F$ is represented by the initial object of $\fb{A},$ with
its unique $\!\D\!$-coalgebra
structure, and so takes all objects to the $\!1\!$-element algebra.

\textup{(}But in these situations, too, $\Rep{\fb{A}}{\D}$ may have
nontrivial non-initial objects.\textup{)}

\vspace{.5em}
Thus, assuming $\fb{A}$ a variety of algebras, the only situations
where $\Rep{\fb{A}}{\D}$ can have an initial object whose values are
not exclusively $\!0\!$- or $\!1\!$-element algebras are if
{\em neither} $\fb{A}$ nor $\D$ has zeroary operations, or if
{\em both} $\fb{A}$ and $\D$ have {\em more than} one derived zeroary
operation.
\end{theorem}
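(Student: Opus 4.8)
The plan is to work throughout on the coalgebra side, using the stated duality under which an initial object of $\Rep{\fb{A}}{\D}$ is the same as a terminal object of $\coalg{\fb{A}}{\D}$, and to obtain the concluding summary by assembling parts \textup{(i)}--\textup{(iii)} through a case analysis on two invariants: the number of derived zeroary operations of $\fb{A}$ (equivalently, the cardinality of the initial $\fb{A}$-algebra) and the corresponding number for $\D$. First I would record the dictionary between the variety-level hypotheses and the categorical ones: a variety has no zeroary operations iff its initial object is empty, in which case no non-initial object maps into it; it has exactly one derived zeroary operation iff its initial algebra is a one-element algebra, hence simultaneously final; and it has more than one derived zeroary operation iff its initial algebra has at least two elements, so that the (surjective, hence epi) unique morphism from the initial object to the one-element final object fails to be invertible. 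For $\D$, ``$\Omega_\D$ has a zeroary operation'' is the same as ``$\D$ has at least one derived zeroary operation.''

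For \textup{(i)} I would argue that a zeroary operation $\alpha$ of $\D$ supplies, on any representing coalgebra $R,$ a co-operation $\alpha^R\colon|R\<|\to\coprod_0|R\<|$ whose target is the empty copower, i.e.\ the initial object $0_{\fb{A}};$ the hypothesis then forces $|R\<|$ to be initial, so the functor is the constant one-element $\!\D\!$-algebra, and (the coalgebra structure on $0_{\fb{A}}$ being forced) this is the unique, hence initial, object of $\Rep{\fb{A}}{\D}.$ For \textup{(ii)}, writing $Z$ for the initial-final object, I would note that $Z$ carries a unique coalgebra structure, since every co-operation $Z\to\coprod_\kappa Z$ is a map out of an initial object; this coalgebra is terminal because the underlying comparison map $|R\<|\to Z$ is unique ($Z$ being final) and the coalgebra squares automatically commute: their targets $\coprod_\kappa Z$ are copowers of the initial object, hence again initial, and therefore (as initial $=$ final in $\fb{A}$) final, so any two maps into them agree.

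Part \textup{(iii)} splits. In \textup{(iii.b)}, the hypothesis that $\D$ has exactly one derived zeroary operation makes the one-element $\!\D\!$-algebra the \emph{initial} object of $\D;$ the constant functor $\mathbf 1$ at this algebra is then initial already in the full functor category, since for every $\!\D\!$-valued functor $G$ each component $\mathbf 1(A)\to G(A)$ is the unique homomorphism out of the initial $\!\D\!$-algebra and naturality is automatic, and $\mathbf 1$ is represented by $0_{\fb{A}}$ with its forced coalgebra structure. Part \textup{(iii.a)} is the crux. Here I would represent the candidate initial functor by the final object $1_{\fb{A}},$ and the key lemma to establish is that each copower $\coprod_\kappa 1_{\fb{A}}$ with $\kappa=\ari(\beta)\ge 1$ is again final. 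To get this I would first use that $0_{\fb{A}}\to 1_{\fb{A}}$ is epi to show that $\fb{A}(1_{\fb{A}},B)$ has at most one element for every $B$: any two morphisms $1_{\fb{A}}\to B$ become equal after precomposition with $0_{\fb{A}}\to 1_{\fb{A}}$ (morphisms out of the initial object are unique), and hence coincide because that map is epi. Consequently precomposition with a coprojection is a bijection $\fb{A}(\coprod_\kappa 1_{\fb{A}},B)\to\fb{A}(1_{\fb{A}},B)$ natural in $B$ (both sides are empty or singletons, simultaneously nonempty using $\kappa\ge 1$), so by Yoneda the coprojection is an isomorphism and $\coprod_\kappa 1_{\fb{A}}$ is final. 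Granting this, $1_{\fb{A}}$ carries a unique coalgebra structure, every coalgebra admits a unique comparison map $|R\<|\to 1_{\fb{A}},$ and the coalgebra squares commute because their targets are final; thus $1_{\fb{A}}$ is the terminal coalgebra, and its functor sends $A$ to $\fb{A}(1_{\fb{A}},A),$ a singleton or (since $\D$ has no zeroary operations, the empty set being a $\!\D\!$-algebra) empty, exactly as $A$ does or does not admit a morphism from $1_{\fb{A}}.$

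I expect the copower-collapse step in \textup{(iii.a)} to be the main obstacle, as it is the only place where the precise strength of the hypothesis on $\fb{A}$ is used and where a Yoneda argument, rather than a direct diagram chase, seems to be needed; the analogous but easier collapse of copowers of a zero object drives \textup{(ii)}. With \textup{(i)}--\textup{(iii)} in hand, the concluding statement follows by tabulating the pair (number of derived zeroary operations of $\fb{A},$ number for $\D$): case \textup{(i)} covers $(0,\ge 1),$ case \textup{(ii)} covers $(1,\,\text{anything}),$ case \textup{(iii.a)} covers $(\ge 2,0),$ and case \textup{(iii.b)} covers $(\ge 2,1).$ These exhaust every combination except $(0,0)$ and $(\ge 2,\ge 2),$ and in each covered case the initial object just produced has only empty or one-element values; this is precisely the assertion that nontriviality can occur only when neither $\fb{A}$ nor $\D$ has zeroary operations, or when both have more than one derived zeroary operation.
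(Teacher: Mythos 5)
Your proposal is correct, and it follows the paper's architecture almost exactly: the same case division, the same representing objects (the initial object of $\fb{A}$ in (i), (ii), (iii.b); the final object in (iii.a)), the same dictionary between the variety-level and the categorical hypotheses, and the same tabulation of cases to get the concluding summary. The one point of genuine divergence is the copower-collapse lemma in (iii.a), which you correctly identify as the crux. You prove that $\coprod_\kappa 1_{\fb{A}}$ is again final by observing that the epimorphism hypothesis forces every hom-set $\fb{A}(1_{\fb{A}},B)$ to have at most one element, checking that a coprojection induces a natural bijection of hom-functors, and invoking Yoneda. The paper instead identifies $\coprod_\kappa J$, for $J$ any epimorph of the initial object $I$ and $\kappa\neq 0$, with the wide pushout over $I$ of $\kappa$ copies of $I\to J$, which collapses to $J$ precisely because $I\to J$ is epic. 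Both arguments pivot on the same hypothesis and are of comparable difficulty; the paper's version, stated once for an arbitrary epimorph of $I$, is marginally more economical (it is quoted verbatim in (iii.a)), while yours is more self-contained at the level of hom-sets. Your (iii.b) also matches the paper's proof modulo phrasing: you use initiality of the one-element algebra in $\D$, the paper uses the unique one-element subalgebra that the derived zeroary operation generates in every $\!\D\!$-algebra -- the same fact -- and both conclude initiality in the full functor category $\D^{\fb{A}}$ before restricting to $\Rep{\fb{A}}{\D}$.

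The only real omission is that the parenthetical assertions in (ii) and (iii) -- that $\Rep{\fb{A}}{\D}$ may have nontrivial \emph{non-initial} objects -- are existence claims forming part of the theorem, and the paper verifies them by exhibiting examples: for $\fb{A}=\Gp$, the forgetful functor to $\Se$ and the identity functor, plus, when $\D$ has more than one derived zeroary operation, a nontrivial functor into the subvariety of $\D$ in which all derived zeroary operations coincide. Your proposal never addresses these parentheticals, so as written it establishes the identification of the initial objects in all cases but leaves those side claims unproved; they are easily repaired by citing such examples, but they do need to be supplied.
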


\begin{proof}
First, some general observations.
Recall that if a category $\fb{A}$ has an initial object $I,$ then
this is the colimit of the empty diagram, and in particular, is
the coproduct of the empty family of algebras.
It follows that any copower of $I$ is again $I.$
From this we can see that $I$ has a unique
$\!\D\!$-coalgebra structure for every variety $\D.$

More generally, suppose $\fb{A}$ has an initial object $I$
and that $J$ is an epimorph of $I.$
(For example, $\fb{A}$ might be the category of unital commutative
rings, so that $I$ is the ring
$\mathbb{Z}$ of integers, and $J$ might be a prime
field, $\mathbb{Z}/p\mathbb{Z}$ or $\mathbb{Q}.)$
Then a copower $\coprod_\kappa J$ can be identified with the
pushout of the system of maps from $I$
to a $\!\kappa\!$-tuple of copies of $J$ (because
$I$ is initial), so by the assumption that $I\to J$ is an epimorphism,
if $\kappa\neq 0$ that pushout will again be $J.$
(On the other hand, if $\kappa=0,$ then that pushout is $I.)$
The argument of the preceding paragraph now generalizes to show that
for every $\D$ without zeroary operations, such an
object $J$ also has a unique structure of $\!\D\!$-coalgebra.

With these observations in mind, we shall prove the various statements
of the theorem, in each case under the ``more general'' hypothesis.
(In each case, it is easy to see that {}that hypothesis holds
for the particular class of varieties with which the statement begins.)

In the situation of~(i), since $\D$ has a zeroary
operation, the representing object $R$ of $F$ must
have a zeroary co-operation, i.e., a morphism in $\fb{A}$ from
$|R\<|$ to the initial object.
But by assumption on $\fb{A},$
that can only happen if $|R\<|$ {\em is} the initial
object, giving the indicated conclusion.

In the situation of~(ii), we can form a $\!\D\!$-coalgebra in $\fb{A}$
by taking the initial-final object (often called a {\em zero} object)
$Z$ as underlying object, and noting as above that as the
initial object, $Z$ has a unique $\!\D\!$-coalgebra structure.
Because $Z$ is also a final object, the underlying $\!\fb{A}\!$-object
of every $\!\D\!$-coalgebra in $\fb{A}$ admits a unique map to $Z,$
and this clearly forms commuting squares with the co-operations.
Thus, the corresponding coalgebra is final in $\coalg{\fb{A}}{\D},$
and so determines an initial representable functor.

To verify the parenthetical assertion that representable functors other
than this one may also exist, let $\fb{A}=\Gp,$ and note that
the forgetful functor to $\Se$ is representable, as is
the identity functor of $\fb{A}.$
These two examples cover the cases where $\D$ has no zeroary
operations or a unique derived zeroary operation.
If $\D$ has more than one derived zeroary operation,
a representable functor from a category $\fb{A}$ of the indicated sort
must take values in the proper subvariety of $\D$ determined by the
identities saying that the values of all these operations are equal,
since an object of $\fb{A}$ clearly has a unique zeroary co-operation.
Although for some $\D$ (e.g., $\fb{Ring}^1)$ that subvariety is
trivial, for others it is not.
For instance, if $\D$ is the variety of groups with an
additional distinguished element, then the subvariety in
question consists of groups with the identity as that
element, and the functor from $\Gp$ to that subvariety
which leaves the group structure unchanged is not trivial.

In case~(iii.a), the assumption that the unique map from the initial to
the final object of $\fb{A}$ is an epimorphism implies (by the
observations of the second paragraph of this proof) that the
final object of $\fb{A}$ admits a unique $\!\D\!$-coalgebra structure
for every variety $\D$ with no zeroary operations.
It is easy to see that the resulting coalgebra is final in
$\coalg{\fb{A}}{\D},$ as asserted; the description
of the functor represented is also clear.

In the case~(iii.b), where $\D$ has a unique derived zeroary
operation, the conclusion actually requires no assumption
on $\fb{A}$ but that it have an initial object.
(Indeed, statements~(i) and~(ii) imply the same conclusion for
such $\D.)$
To get that conclusion, observe that the value of the unique zeroary
derived operation of $\D$ yields a unique one-element subalgebra in
every object of $\D$ (e.g., when $\D=\Gp$ or $\fb{Monoid},$
the subalgebra $\{e\}).$
Hence the functor represented by the initial object of $\fb{A}$
with its unique $\!\D\!$-coalgebra structure, taking every
object to the one-element $\!\D\!$-algebra, has a unique morphism to
every functor $\fb{A}\to\D,$ hence is initial in $\D^\fb{A},$ and
so, a fortiori, in~$\Rep{\fb{A}}{\D}.$

In these two situations, identity functors and forgetful functors again
show that not every representable functor need be trivial.
\end{proof}

The above theorem, in the case where $\fb{A}$ is
a variety $\C$ of algebras, is summarized in the chart below.
In that chart, $I$ means that the initial representable functor is
represented by the initial object, $F$ means that it is represented
by the final object,
$IF$ means it is represented by the initial-final object, an
exclamation point means that the functor so represented is
the only representable functor, and an exclamation point in
parenthesis means that, though the functor in question may not
be the only one, there is a strong restriction on representable
functors, namely that they take as values algebras in which all
derived zeroary operations are equal.
Stars mark the two cases in which nontrivial initial representable
functors can occur.

\begin{picture}(350,150)
\put(140,145){zeroary derived operations of $\D$}
\put(160,126){$0$}
\put(210,126){$1$}
\put(255,126){$>1$}
\multiput(140,120)(0,-35){4}{\line(1,0){150}}
\multiput(140,120)(50,0){4}{\line(0,-1){105}}
\put(121,098){$0$}
\put(160,098){\Large{$*$}}
\put(205,098){$I$!}
\put(256,098){$I$!}
\put(121,063){$1$}
\put(155,063){$IF$}
\put(205,063){$IF$}
\put(246,063){$IF$(!)}
\put(117,028){${>}\<1$}
\put(160,028){$F$}
\put(210,028){$I$}
\put(259,028){\Large{$*$}}
\put(065,105){zeroary}
\put(065,078){derived}
\put(050,051){operations}
\put(075,024){of $\C$}
\end{picture}

In contrast to the triviality of the seven cases covered by the above
theorem, the structure of the initial representable functor
in the two starred cases can be surprisingly rich; we shall
see this for a case belonging to the upper left-hand corner in
\S\ref{S.Set}, and for further examples of both cases
in \S\S\ref{S.BiBi}-\ref{S.Group}.
However, let us note a subcase of the upper left-hand
case where the functors one gets are again fairly degenerate.

\begin{proposition}\label{P.unary}
Suppose $\D$ is a polyunary variety; i.e.,
that $\ari(\alpha)=1$ for all $\alpha\in\Omega_\D.$
Then for any variety $\fb{A}$ of algebras \textup{(}or more
generally, for any category $\fb{A}$ having small colimits and a final
object\textup{)}, the category $\Rep{\fb{A}}{\D}$ has an initial object
$F,$ represented by the final object $T$ of $\fb{A},$ with its
unique $\!\D\!$-coalgebra structure, namely the structure in which
every primitive co-operation is the identity.

Thus, if $\fb{A}$ is a variety, then for every object $A$ of $\fb{A},$
the algebra $F(A)$ has for underlying set
the set of those $x\in|A\<|$ such that $\{x\}$ forms a
subalgebra of $A,$ and has the $\!\D\!$-algebra structure in
which every primitive operation of $\D$ acts as the identity.
\end{proposition}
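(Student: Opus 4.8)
The plan is to pass to the coalgebra side, where by the duality $\Rep{\fb{A}}{\D}\simeq\coalg{\fb{A}}{\D}^{\mathrm{op}}$ recalled in \S\ref{S.intro} the assertion becomes that $T,$ equipped with the stated co-operations, is the \emph{final} object of $\coalg{\fb{A}}{\D}.$ The first, purely formal, point is that the coalgebra structure is forced: since $\D$ is polyunary, every co-operation is a morphism $\alpha^T:T\to\coprod_1 T=T,$ and as $T$ is final in $\fb{A}$ the only such morphism is $\mathrm{id}_T.$ Hence there is at most one candidate structure on $T,$ the one named in the statement, and I only have to show that it is a $\D$-coalgebra and that it is final.

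The substantive step — and the one I expect to be the main obstacle — is to check that this candidate really cosatisfies every identity in $\Phi_\D.$ By the criterion recalled in \S\ref{S.intro}, cosatisfaction of $s=t$ (with $s,t\in|T_{\Omega_\D}(\kappa)|$) says that the $\kappa$-tuple of coprojections $u_j:T\to\coprod_\kappa T$ satisfies $s=t$ in the algebra $\fb{A}(T,\coprod_\kappa T).$ Because the co-operations are all identities, each primitive operation of this algebra acts by $f\mapsto f\circ\mathrm{id}_T=f;$ consequently a polyunary term, being a string of operations applied to a single variable $x_i,$ evaluates on the coprojection tuple to the single coprojection $u_i.$ I would therefore reduce the verification to comparing the coprojections indexed by the variables occurring on the two sides of each defining identity, and establishing the required equalities is the delicate point that has to be handled with care.

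Granting $\D$-validity, finality of $T$ in $\coalg{\fb{A}}{\D}$ is immediate: for any $\D$-coalgebra $S,$ finality of $T$ in $\fb{A}$ furnishes a unique morphism $|S|\to T,$ and the squares it must form with the co-operations commute automatically, since both composites are morphisms into the final object $T$ and hence equal; uniqueness holds for the same reason. (Equivalently, once $F=\fb{A}(T,-)$ is known to be $\D$-valued one can argue its initiality in $\Rep{\fb{A}}{\D}$ directly: for a representable $G$ with coalgebra $S$ the set $|G(T)|=\fb{A}(|S|,T)$ is a singleton, so Yoneda yields a unique natural transformation $U_\D F\to U_\D G,$ and its single value, lying in the one-element algebra $G(T),$ is fixed by every operation and so carried by each $G(f)$ to a fixed point in $G(A)$ — exactly what is needed for the components to be $\D$-homomorphisms, since every operation acts as the identity on $F.$)

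Finally, for the concrete description when $\fb{A}$ is a variety, I would identify the underlying set $\fb{A}(T,A)$ of $F(A)$: a homomorphism from the one-element algebra $T$ into $A$ is determined by the image $x$ of its element, and is well defined precisely when $\{x\}$ is closed under all operations of $\fb{A},$ i.e.\ when $\{x\}$ is a subalgebra. The $\D$-structure is the one already exhibited, in which every operation of $\D$ acts as the identity.
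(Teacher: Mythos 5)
Your route is the same as the paper's: the co\-/operations on $T$ are forced to be identity maps (the paper phrases this as ``$\fb{A}(T,T)$ is a trivial monoid''), finality of the resulting coalgebra in $\coalg{\fb{A}}{\D}$ follows from finality of $T$ in $\fb{A}$ because any square whose target is a final object commutes automatically, and the concrete description of $F$ when $\fb{A}$ is a variety is immediate. The only thing separating your text from a complete proof is the step you decline to carry out -- ``establishing the required equalities is the delicate point that has to be handled with care'' -- and that is exactly the point the paper passes over in silence (its proof simply asserts that the forced structure ``will make $T$ a $\!\D\!$-coalgebra'').

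Here is what actually happens at that point, and it is two-sided. Your own computation shows that a polyunary term built on the variable $x_i$ evaluates, on the tuple of coprojections, to the coprojection $u_i.$ So for any identity $s=t$ whose two sides involve the \emph{same} variable, both sides evaluate to the same coprojection and cosatisfaction is automatic: there is nothing delicate left to do, and you should simply say so. If instead some identity in $\Phi_\D$ has \emph{distinct} variables on its two sides -- which the paper's conventions permit, e.g.\ $\Omega_\D=\{\alpha\}$ with $\ari(\alpha)=1$ and the single identity $\alpha(x_0)=\alpha(x_1),$ the variety of sets with a constant endomap -- then the required equality $u_i=u_j$ of two distinct coprojections $T\to\coprod_\kappa T$ fails in general: for $\fb{A}=\fb{Set}$ the two coprojections $\{*\}\to\{*\}\cP\{*\}$ land at different points. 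In that case $T$ with identity co\-/operations is not a $\!\D\!$-coalgebra at all; indeed for that $\D$ the only $\!\D\!$-coalgebra in $\fb{Set}$ is the empty one, so the initial object of $\Rep{\fb{Set}}{\D}$ is represented by the initial, not the final, object. Consequently the gap you flagged cannot be closed in the stated generality; it closes (trivially) only under the tacit hypothesis -- needed equally by the paper's own one\-/line proof -- that each defining identity of the polyunary variety $\D$ involves a single variable. With that reading, replacing your ``delicate point'' by the one\-/line observation above makes your proof complete and essentially identical to the paper's.
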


\begin{proof}
Since $\fb{A}(T,T)$ is a trivial monoid, there is a
unique way to send the primitive unary operations of $\D$
to maps $T\to T,$ and this will make $T$ a $\!\D\!$-coalgebra.
Recalling that $T$ is final in $\fb{A}$ we see that for every
$\!\D\!$-coalgebra $R$ in $\fb{A},$ the unique morphism
$|R\<|\to T$ becomes a morphism of coalgebras, so the coalgebra
described is final in $\coalg{\fb{A}}{\D}.$

The description of the functor represented by this
coalgebra when $\fb{A}$ is a variety is immediate.
\end{proof}

From this point on, when the contrary is not stated,
the domain category of our functors will
be the variety $\C$ (assumed fixed in \S\ref{S.intro});
so when we use the terms ``representable functor'' and
``coalgebra'' without qualification, these will mean ``representable
functor $\C\to\D\!$'', and ``$\!\D\!$-coalgebra in $\C\!$''.

Let me now motivate the technique of proof of the existence of final
objects in $\coalg{\C}{\D},$ then
indicate how to extend that technique to other limits.

Our Lemma~\ref{L.colim} and Freyd's Initial Object Theorem will
establish the existence of an initial representable functor if we can
find a small set $W_\mathrm{funct}$ of representable functors such that
every object of $\Rep{\C}{\D}$ admits a morphism from some member of
$W_\mathrm{funct};$ equivalently, a small set $W_\mathrm{coalg}$ of
coalgebras such that every coalgebra admits a morphism
into a member of $W_\mathrm{coalg}.$
Let us call an object $R$ of $\coalg{\C}{\D}$ ``strongly quasifinal''
if every morphism of coalgebras with domain $R$
which is surjective on underlying algebras is an isomorphism.
(I will motivate this terminology in a moment.)
It will not be hard to show that every coalgebra admits a
morphism onto a strongly quasifinal coalgebra, so it will suffice to
show that up to isomorphism, there is only a small set of these.
To see how to get this smallness condition, note that no strongly
quasifinal coalgebra $R$ can be the codomain of
two distinct morphisms of coalgebras with a common domain $R';$
for if it were, then their coequalizer in $\coalg{\C}{\D}$ (which
by Lemma~\ref{L.colim} has for underlying $\!\C\!$-algebra the
coequalizer of the corresponding maps in $\C)$
would contradict the strong quasifinality condition.
(It is natural to call the property of admitting at most one
morphism from every object ``quasifinality'', hence our use of
``strong quasifinality'' for the above condition that implies it.)
Thus, if we can find a small set $V$ of coalgebras such
that every coalgebra is a union of subcoalgebras isomorphic to
members of $V$ (where we shall define a ``subcoalgebra'' of $R$
to mean a coalgebra which can be mapped into $R$ by a coalgebra
morphism that is one-to-one on underlying $\!\C\!$-algebras, and
also induces one-to-one maps of their copowers),
then the cardinalities of strongly quasifinal coalgebras $R$ will
be bounded by the sum of the cardinalities
of the members of $V$ (since each member of $V$ can be mapped
into $R$ in at most one way), giving the required smallness condition.

Naively, we would like to say that each coalgebra $R$ is the
union of the subcoalgebras ``generated'' by the elements $r\in|R\<|,$
bound the cardinalities of coalgebras that can be
``generated'' by single elements, and take the set of such
coalgebras as our $V.$
Unfortunately, there is not a well-defined concept of the
subcoalgebra generated by an element.
Nevertheless, we shall be able to build up, starting with any
element, or more generally, any subset $X$ of $||R\<||,$ a
subcoalgebra which contains $X,$ and whose cardinality can be
bounded in terms of that of $X.$

How?
We will begin by closing $X$ under the $\!\C\!$-algebra
operations of $|R\<|.$
We will then consider the image of the resulting subalgebra of $|R\<|$
under each {\em co-operation}
$\alpha^R: |R\<|\to\coprod_{\ari(\alpha)}|R\<|$ $(\alpha\in\Omega_\D).$
This image will be contained in the subalgebra of
$\coprod_{\ari(\alpha)}|R\<|$ generated by the images,
under the $\ari(\alpha)$ {\em coprojections}
$|R\<|\to\coprod_{\ari(\alpha)}|R\<|,$ of certain elements of $|R\<|.$
The set of these elements is not, in general, unique, but the number
of them that are needed can be bounded with the help of $\lambda_\C.$
Taking the subalgebra of $|R\<|$ that they generate, we then repeat this
process; after $\lambda_\C$ iterations, it will stabilize, giving
a subalgebra $|R\<|'\subseteq|R\<|$ such that each co-operation
$\alpha^R$ carries $|R\<|'$ into the subalgebra
of $\coprod_{\ari(\alpha)}|R\<|$ generated by the images of $|R\<|'$
under the coprojections.

This $|R\<|'$ still may not define a subcoalgebra of $R,$ because when
we use the inclusion $|R\<|'\subseteq|R\<|$ to induce homomorphisms
\begin{equation}\begin{minipage}[c]{35pc}\label{x.R'R}
$\coprod_{\ari(\alpha)}|R\<|'\ \to\ \coprod_{\ari(\alpha)}|R\<|
\qquad(\alpha\in\Omega_\D),$
\end{minipage}\end{equation}
these may not be one-to-one; hence the co-operations
$\alpha^R: |R\<|\to\coprod_{\ari(\alpha)}|R\<|,$
though they carry $|R\<|'$ into the image of~(\ref{x.R'R}),
may not lift to co-operations $|R\<|'\to\coprod_{\ari(\alpha)}|R\<|'.$

However, given $|R\<|',$ we can now find a larger subalgebra
$|R\<|'',$ whose cardinality we can again bound, such that the inclusion
$|R\<|''\subseteq|R\<|$ does induce one-to-one maps
$\coprod_\kappa|R\<|''\to\coprod_\kappa|R\<|$ for
all cardinals $\kappa.$
We then have to repeat the process of the preceding paragraph
so that the images of our new algebra under the co-operations
of $R$ are again contained in the subalgebras generated by
the images of the coprojections.
Applying these processes alternately (or better, applying one step
of each alternately, since nothing is gained by iterating one process
to completion before beginning the other), we get a {\em subcoalgebra}
of $R$ containing $X,$ whose cardinality we can bound.

Taking such a bound $\mu$ for the case $\cd(X)=1,$ any set of
representatives of the isomorphism classes of coalgebras of cardinality
$\leq\mu$ gives the $V$ needed to complete our proof.

In view of the messiness of the above construction, the bound on the
cardinality of the
final coalgebra that it leads to is rather large.
But this reflects the reality of the situation.
For instance, if $\C$ is $\fb{Set},$ and $\D$ the variety of sets given
with a single binary operation, we shall see in \S\ref{S.Set} that
the final object of $\coalg{\C}{\D}$ is the Cantor set, with
co-operation given by the natural bijection from that set
to the disjoint union of two copies of itself.
Since this coalgebra has cardinality $2^{\aleph_0},$ the functor
it represents takes a $\!2\!$-element set to a $\!\D\!$-algebra
of cardinality $2^{2^{\aleph_0}}.$
(Incidentally, because in this example, $\C=\fb{Set}$ and the variety
$\D$ is defined without the use of identities, this
final coalgebra is also the
final coalgebra in the sense of \cite{PA+NM} and \cite{Barr},
with respect to the functor taking
every set to the disjoint union of two copies of itself.)

How will we modify the above construction of final
coalgebras to get general small limits in $\coalg{\C}{\D}$?
Consider the task of finding a product of objects
$R_1$ and $R_2$ in this category, i.e., an object universal
among coalgebras $R$ with morphisms $f_1:R\to R_1$ and $f_2:R\to R_2.$
Such a pair of maps corresponds, at the algebra level, to a map
$f:|R\<|\to|R_1|\times|R_2|;$ let us call the latter algebra $S\ba.$
To express in terms of $f$ the fact that $f_1$ and $f_2$
are compatible with the co-operations
of $R_1$ and $R_2,$ let us, for each $\alpha\in\Omega_\D,$ define
$S_\alpha$ to be the $\!\C\!$-algebra
$\coprod_{\ari(\alpha)}|R_1|\times\coprod_{\ari(\alpha)}|R_2|.$
The $\ari(\alpha)$ coprojection maps
$|R_i|\to\coprod_{\ari(\alpha)}|R_i|$ $(i=1,2)$ induce
$\ari(\alpha)$ maps $S\ba\to S_\alpha;$ let us call these
``pseudocoprojections''; thus, each $S_\alpha$ is an object of
$\C$ with $\ari(\alpha)$ pseudocoprojection maps
of $S\ba$ into it, and an additional map of $S\ba$ into it,
induced by $\alpha^{R_1}$ and $\alpha^{R_2},$ which
we shall call the ``pseudo-co-operation'' $\alpha^S.$
We shall call systems $S$ of objects and morphisms of the
sort exemplified by this construction ``pseudocoalgebras''
(Definition~\ref{D.pseudo} below).

Then a coalgebra $R$ with morphisms into $R_1$ and $R_2$ can be regarded
as a coalgebra with a morphism into the above pseudocoalgebra $S.$
More generally, if we are given any small diagram of coalgebras,
then a cone from a coalgebra $R$ to that diagram is equivalent to a
morphism from $R$ to an appropriate pseudocoalgebra.
Thus, it will suffice to show that for every pseudocoalgebra $S,$
the category of coalgebras with morphisms to $S$ has a final object.
The construction sketched above for final coalgebras in fact
goes over with little change to this context.

After obtaining this existence result in the next two
sections, we will show that in many cases,
these colimits can be constructed more explicitly,
as inverse limits of what we shall call ``precoalgebras''.
(The example mentioned above where the final coalgebra
is the Cantor set will lead us to that approach.)

\section{Subcoalgebras of bounded cardinality.}\label{S.sub}

Recall that $\lambda_\C$ is a regular infinite
cardinal such that every primitive operation of $\alpha\in\Omega_\C$
has $\ari(\alpha)<\lambda_\C.$
A standard result is

\begin{lemma}[{\cite[Lemma~8.2.3]{245}}]\label{L.gen_by}
If $A$ is an algebra in $\C$ and $X$ a generating set for $A,$
then every element of $A$ is contained in a subalgebra generated
by $<\lambda_\C$ elements of $X.$\endproof
\end{lemma}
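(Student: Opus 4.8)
The plan is to reduce the statement to a purely syntactic fact about terms — that each term involves fewer than $\lambda_\C$ of its variables — and then transport that fact along the evaluation map onto $A.$

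Since $A$ lies in $\C$ and $X$ generates it, I would first note that the subalgebra of $A$ generated by a subset $S\subseteq X$ is the same whether computed in $\C$ or in $\Omega_\C\!$-$\!\fb{Alg}$ (a subset of $|A\<|$ closed under the operations is automatically an object of $\C,$ since $A$ satisfies $\Phi_\C$ and identities pass to subalgebras). Consequently the unique $\!\Omega_\C\!$-homomorphism $\phi$ from the term algebra on $X$ (the free $\!\Omega_\C\!$-algebra on the set $X,$ a renamed copy of $T_{\Omega_\C}(\cd(X))$) to $A$ extending the inclusion $X\hookrightarrow|A\<|$ is surjective, its image being the subalgebra generated by $X,$ i.e.\ all of $A.$ So I would fix $a\in|A\<|,$ choose a term $t$ with $\phi(t)=a,$ and let $S\subseteq X$ be the set of variables occurring in $t.$ As $\phi$ fixes each variable, $a=\phi(t)$ lies in the image under $\phi$ of the subalgebra of the term algebra generated by $S,$ hence in the subalgebra of $A$ generated by $S.$ Everything then turns on bounding $\cd(S).$

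For that bound — the one place where the hypotheses on $\lambda_\C$ enter — I would stratify the term algebra as $\bigcup_{\beta<\lambda_\C}T_\beta,$ with $T_0$ the set of variables, $T_{\beta+1}$ adjoining every $\alpha(f)$ for $\alpha\in\Omega_\C$ and $f$ an $\!\ari(\alpha)\!$-tuple from $T_\beta,$ and unions taken at limit stages. Regularity of $\lambda_\C,$ together with $\ari(\alpha)<\lambda_\C,$ guarantees that this filtration already exhausts the term algebra by stage $\lambda_\C$: the $<\lambda_\C$ entries of any tuple $f$ drawn from $\bigcup_{\beta<\lambda_\C}T_\beta$ appear by stages $\beta_i<\lambda_\C$ whose supremum is again $<\lambda_\C,$ so $\alpha(f)$ is present below stage $\lambda_\C,$ whence the union is closed under the operations and equals the term algebra. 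I would then show by transfinite induction on $\beta<\lambda_\C$ that each element of $T_\beta$ involves $<\lambda_\C$ variables: the base and limit stages are immediate, and at a successor stage the variable-set of $\alpha(f)$ is a union of the $\ari(\alpha)<\lambda_\C$ variable-sets of the entries of $f,$ each of size $<\lambda_\C$ by induction — and a union of $<\lambda_\C$ sets each of size $<\lambda_\C$ has size $<\lambda_\C,$ which is exactly the regularity of $\lambda_\C.$ Applying this to the chosen $t$ gives $\cd(S)<\lambda_\C,$ completing the proof.

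The content is thus entirely syntactic; the main (if modest) obstacle is to set up the stratification of the term algebra correctly and to recognize that regularity of $\lambda_\C$ must be invoked twice — once to see that the construction closes off at stage $\lambda_\C$ (so the induction has an ordinal range over which to run), and once in the successor step to bound a union of fewer than $\lambda_\C$ variable-sets each of size $<\lambda_\C.$ Granting the syntactic claim, the passage back to $A$ is immediate from the reduction above.
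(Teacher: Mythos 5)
Your proof is correct. Note, however, that the paper gives no proof of this lemma at all: it is quoted as a standard result from \cite[Lemma~8.2.3]{245}, so there is no argument in the paper itself to compare yours against. Your route --- factor the evaluation map through the term algebra, then prove syntactically, via the stratification $\bigcup_{\beta<\lambda_\C}T_\beta,$ that every term involves fewer than $\lambda_\C$ variables --- is sound, and both invocations of regularity you flag are legitimate (though the first one, exhaustion of the term algebra at stage $\lambda_\C,$ is actually dispensable: you could stratify over all ordinals and run the same induction, since at limit stages every element already lies in an earlier stratum). The more economical argument, which is the one usually given for this statement, never mentions terms: let $B$ be the union of the subalgebras $\langle S\rangle\subseteq A$ over all $S\subseteq X$ with $\cd(S)<\lambda_\C;$ given $\alpha\in\Omega_\C$ and a tuple $(b_\iota)_{\iota\in\ari(\alpha)}$ with each $b_\iota\in\langle S_\iota\rangle,$ a single use of regularity gives $\cd(\bigcup_{\iota}S_\iota)<\lambda_\C,$ so $\alpha_A((b_\iota))\in\langle\bigcup_{\iota}S_\iota\rangle\subseteq B;$ thus $B$ is a subalgebra of $A$ containing $X,$ hence equals $A,$ and every element of $A$ lies in some $\langle S\rangle$ with $\cd(S)<\lambda_\C.$ This is the same regularity bookkeeping you perform, but carried out one level down, directly among subalgebras of $A,$ which removes the need for the term-algebra apparatus and the exhaustion step.
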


Let us fix a notation for algebras presented by
generators and relations.
For any set $X,$ let $F_\C(X)$ denote the free algebra on $X$ in $\C.$
If $Y$ is a subset of $|F_\C(X)|\times|F_\C(X)|,$
let $\langle X\mid Y\rangle_\C$ be the quotient of
$F_\C(X)$ by the congruence generated by $Y.$
If $A$ is a $\!\C\!$-algebra, a presentation of $A$
will mean an isomorphism with an algebra $\langle X\mid Y\rangle_\C.$
Every algebra $A$ has a canonical presentation, with $X=|A\<|,$ and
$Y$ consisting of all pairs $(\alpha_{F_\C(X)}(x),\alpha_A(x))$
with $\alpha\in\Omega_\C$ and $x\in|A\<|^{\ari(\alpha)}.$

If $X_0$ is a subset of $X,$ we shall often regard $F_\C(X_0)$
as a subalgebra of $F_\C(X).$

Here is the analog of the preceding lemma for relations.

\begin{corollary}\label{C.rels}
Let $A=\langle X\mid Y\rangle_\C,$ let $X_0$ be a subset
of $X,$ and let $p$ and $q$ be elements of $F_\C(X_0)$ which
fall together under the composite of natural
maps $F_\C(X_0)\hookrightarrow F_\C(X)\to A\<.$

Then there exist a set $X_1$ with $X_0\subseteq X_1\subseteq X,$
and a set $Y_1\subseteq Y\cap(|F_\C(X_1)|\times|F_\C(X_1)|),$ such that
the difference-set $X_1-X_0$ and the set $Y_1$ both
have cardinality $<\lambda_\C,$ and such that
$p$ and $q$ already fall together under the composite map
$F_\C(X_0)\hookrightarrow F_\C(X_1)\to
\langle X_1\mid Y_1\rangle_\C.$
\end{corollary}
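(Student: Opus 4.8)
The plan is to recast the hypothesis as a statement about the congruence $\theta$ on $F_\C(X)$ generated by $Y$ --- namely that $(p,q)\in\theta$, since $A=F_\C(X)/\theta$ --- and then to show that $\theta$ is already ``visible'' on \emph{small} subpresentations. Concretely, I would let $\mathcal{P}$ be the set of all pairs $(X',Y')$ with $X_0\subseteq X'\subseteq X$ and $Y'\subseteq Y\cap(|F_\C(X')|\times|F_\C(X')|)$ satisfying $\cd(X'-X_0)<\lambda_\C$ and $\cd(Y')<\lambda_\C$, ordered by componentwise inclusion. Because $\lambda_\C$ is regular and infinite, the componentwise union of fewer than $\lambda_\C$ members of $\mathcal{P}$ is again a member; that is, $\mathcal{P}$ is $\lambda_\C$-directed. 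For $(X',Y')\in\mathcal{P}$ I write $\theta_{X',Y'}$ for the congruence on $F_\C(X')$ generated by $Y'$, so that $\langle X'\mid Y'\rangle_\C=F_\C(X')/\theta_{X',Y'}$.

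The heart of the argument is the relation
$$\theta^*=\{(a,b) : (a,b)\in\theta_{X',Y'}\text{ for some }(X',Y')\in\mathcal{P}\}\subseteq|F_\C(X)|\times|F_\C(X)|,$$
and the claim that $\theta^*=\theta$. One inclusion is free: since $Y'\subseteq Y$, the subalgebra inclusion $F_\C(X')\hookrightarrow F_\C(X)$ carries $\theta_{X',Y'}$ into $\theta$, so $\theta^*\subseteq\theta$. For the reverse inclusion I would verify that $\theta^*$ is a congruence containing $Y$, whence minimality of $\theta$ gives $\theta\subseteq\theta^*$. That $\theta^*$ contains $Y$ and is reflexive and symmetric is immediate once one notes (by Lemma~\ref{L.gen_by}, applied to $F_\C(X)$ with generating set $X$) that every element of $F_\C(X)$ already lies in $F_\C(X'')$ for some $X''\subseteq X$ with $\cd(X'')<\lambda_\C$, so the relevant supports fit into a member of $\mathcal{P}$; transitivity follows by taking the componentwise union of the two witnessing stages, again a member of $\mathcal{P}$ by $\lambda_\C$-directedness.

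The one step that genuinely uses the hypotheses on arities, and the place I expect the only real work, is compatibility with the operations. Given $\alpha\in\Omega_\C$ of arity $\kappa=\ari(\alpha)<\lambda_\C$ and pairs $(a_\iota,b_\iota)\in\theta^*$ for $\iota<\kappa$, each witnessed by a stage $(X_\iota,Y_\iota)\in\mathcal{P}$, I would pass --- legitimately, precisely because $\kappa<\lambda_\C$ and $\lambda_\C$ is regular --- to a single stage $(X',Y')\in\mathcal{P}$ lying above all the $(X_\iota,Y_\iota)$. There all the $a_\iota,b_\iota$ lie in $F_\C(X')$ and all the pairs lie in the \emph{congruence} $\theta_{X',Y'}$, so $(\alpha(\bar a),\alpha(\bar b))\in\theta_{X',Y'}$, and hence in $\theta^*$ (using that the operations of $F_\C(X')$ agree with those of $F_\C(X)$ on the subalgebra). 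With $\theta^*=\theta$ in hand I would finish by applying it to the given pair: from $(p,q)\in\theta$ we obtain a stage $(X_1,Y_1)\in\mathcal{P}$ with $(p,q)\in\theta_{X_1,Y_1}$, which --- since $p,q\in F_\C(X_0)\subseteq F_\C(X_1)$ --- says exactly that $p$ and $q$ fall together under $F_\C(X_0)\hookrightarrow F_\C(X_1)\to\langle X_1\mid Y_1\rangle_\C$, while membership in $\mathcal{P}$ delivers the bounds $\cd(X_1-X_0)<\lambda_\C$ and $\cd(Y_1)<\lambda_\C$. Thus the essential obstacle is not the bookkeeping but the closure of $\theta^*$ under the (possibly infinitary) operations, and everything there hinges on the regularity of $\lambda_\C$ together with each arity being strictly below it.
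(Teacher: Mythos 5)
Your proposal is correct, but it takes a genuinely different route from the paper's. The paper proves the corollary by a single application of Lemma~\ref{L.gen_by} to an \emph{enriched} structure on $F_\C(X)\times F_\C(X)$: it observes that a congruence is a subalgebra of the product that contains the diagonal and is closed under a unary ``symmetry'' operation and a (partial) binary ``transitivity'' operation, extends the partial operation to a total one by a small trick, and then invokes the lemma to find a generating set $Y_0$ of cardinality $<\lambda_\C$ (consisting of members of $Y$ and diagonal pairs) whose generated subalgebra in this enriched signature contains $(p,q)$; collecting supports of the components of $Y_0$ then yields $X_1$ and $Y_1.$ You instead prove a ``locality of congruence generation'' statement: the poset $\mathcal{P}$ of small subpresentations is $\lambda_\C$-directed, the union $\theta^*$ of the congruences $\theta_{X',Y'}$ they generate is itself a congruence containing $Y$ (regularity of $\lambda_\C$ entering exactly where you say it does, at closure under operations of arity $<\lambda_\C,$ and directedness at transitivity), so $\theta^*=\theta$ and any witnessing stage for $(p,q)$ serves as $(X_1,Y_1).$ Each approach has its virtues: the paper's is economical in that it reduces everything to one citation of the generation lemma, but it must handle the delicacies of partial operations (and the paper explicitly notes that rendering the diagonal by zeroary operations would break the argument, since $|F_\C(X_1)|\times|F_\C(X_1)|$ would not be closed under them); yours is more self-contained and structural, uses Lemma~\ref{L.gen_by} only to obtain supports of elements, and produces the witnessing subpresentation directly, at the cost of verifying the congruence axioms for $\theta^*$ one by one. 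Both arguments ultimately rest on the same cardinal arithmetic: regularity of $\lambda_\C$ and the bound $\ari(\alpha)<\lambda_\C.$
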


\begin{proof}
By hypothesis, $(p,q)$ lies in the congruence on $F_\C(X)$
generated by $Y.$
A congruence on $F_\C(X)$ can be described as a subalgebra
of $F_\C(X)\times F_\C(X)$ which is also an equivalence relation
on $|F_\C(X)|,$ and the latter condition can
be expressed as saying that it contains all pairs $(r,r)$
$(r\in|F_\C(X)|),$ and is closed under both the unary
operation $(r,s)\mapsto(s,r)$ and the {\em partial}
binary operation carrying a pair of elements of
the form $((r,s),\ (s,t))$ to the element $(r,t).$

We can apply the preceding lemma to this situation, either using
the observation that the proof of that lemma works equally well
for structures with partial operations, or by noting that if we
extend the above partial operation to a total operation by
making it send $((r,s),\ (s',t))$ to $(r,s)$ if $s\neq s',$
then closure under that total operation is equivalent to closure
under the given partial operation.
Either way, we get the conclusion that if we extend the
$\!\C\!$-algebra structure of $F_\C(X)\times F_\C(X)$ to embrace the
two additional operations expressing symmetry and transitivity, then
our given element $(p,q)$ lies in the subalgebra of the
resulting structure generated by some subset
\begin{quote}
$Y_0\ \subseteq\ Y\cup\{(r,r)\mid r\in|F_\C(X)|\}$
\end{quote}
of cardinality $<\lambda_\C.$
The elements of $Y_0$ coming from $Y$ will form
a set $Y_1$ of cardinality $<\lambda_\C,$ and
by the preceding lemma, each component of each of these elements
will lie in the subalgebra of $F_\C(X)$
generated by some subset of $X$ of cardinality $<\lambda_\C,$
and the same will be true of each of the $<\lambda_\C$ elements $r$
occurring in pairs $(r,r)\in Y_0.$
As $\lambda_\C$ is a regular cardinal, the union of these
subsets will be a set
$X_1'\subseteq X$ of cardinality $<\lambda_\C$ such that $Y_0$
lies in $|F_\C(X_1')|\times |F_\C(X_1')|.$
By construction, $(p,q)$ lies in the subalgebra of this product
generated by $Y_0$ under our extended algebra structure.
Letting $X_1=X_0\cup X_1',$ we conclude that $p$ and $q$ fall together
in $\langle X_1\mid Y_1\rangle_\C,$ as claimed, and that $X_1$
and $Y_1$ satisfy the desired cardinality restrictions.

(Incidentally, this proof would not have worked if we had rendered the
elements $(r,r)$ by zeroary operations, rather than generating elements,
since $|F_\C(X_1)|\times |F_\C(X_1)|$ would not have been closed
in $|F_\C(X)|\times |F_\C(X)|$ under all these operations.)
\end{proof}

To go from the bounds on the cardinalities of the sets constructed
in the above lemma and corollary to bounds
on the cardinalities of the algebras they generate, we will want

\begin{definition}\label{D.exp<}
If $\kappa$ is a cardinal and $\lambda$ an infinite cardinal, then
$\kappa^{\lambda-}$ will denote the least cardinal $\mu\geq\kappa$
such that $\mu^\iota=\mu$ for all $\iota<\lambda.$
\end{definition}

To see that this makes sense, note that $\kappa^\lambda$ is
not changed on exponentiating by $\lambda,$ hence, a fortiori,
it is not changed on exponentiating by any positive $\iota<\lambda,$
so the class of cardinals $\mu$ with that property is nonempty; hence
it has a least member.

Immediate consequences of the above definition are
\begin{equation}\begin{minipage}[c]{35pc}\label{x.^*l-twice}
$(\kappa^{\lambda-})^{\lambda-}=\ \kappa^{\lambda-}.$
\end{minipage}\end{equation}
\begin{equation}\begin{minipage}[c]{35pc}\label{x.sup}
$\max(\kappa,\mu)^{\lambda-}\ =\ %
\max(\kappa^{\lambda-},\mu^{\lambda-}).$
\end{minipage}\end{equation}
Note also that for $\iota<\lambda$ and $\kappa>1$ we have
$\kappa^{\lambda-}\geq\kappa^\iota>\iota.$
Hence
\begin{equation}\begin{minipage}[c]{35pc}\label{x.geq*l}
If $\kappa>1,$ then $\kappa^{\lambda-}\geq\lambda.$
\end{minipage}\end{equation}
We also see
\begin{equation}\begin{minipage}[c]{35pc}\label{x.^aleph0}
For all $\kappa>1$ one has
$\kappa^{\aleph_0-}=\ \max(\kappa,\,\aleph_0).$
\end{minipage}\end{equation}

(Leo Harrington has pointed out to me that for $\lambda$ a
regular cardinal, which will always be the case below,
what I am calling $\kappa^{\lambda-}$ can be shown
equal to what set theorists call $\kappa^{<\lambda},$
namely $\sup_{\iota<\lambda}\kappa^\iota.$
However, we shall not need this fact.)

The following result is very likely known.

\begin{lemma}\label{L.gen_card}
If a $\!\C\!$-algebra $A$ is generated by a set $X,$ then
\begin{quote}
$\cd(|A\<|)\ \leq\ \max(\cd(X),\nolinebreak[2]\,
\cd(\Omega_\C),\,2)^{\lambda_\C-}.$
\end{quote}
\end{lemma}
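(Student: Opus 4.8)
The plan is to build up the algebra $A$ from its generating set $X$ by transfinitely iterating the process of applying the operations of $\C$, indexing the iteration by ordinals $<\lambda_\C$, and to bound the cardinality at each stage using the arithmetic of the operator $\kappa^{\lambda_\C-}$ introduced in Definition~\ref{D.exp<}. Since $\C$ is a variety with all operations of arity $<\lambda_\C$ and $\lambda_\C$ is a regular infinite cardinal, Lemma~\ref{L.gen_by} tells me that every element of $A$ lies in a subalgebra generated by $<\lambda_\C$ elements of $X$; this is the structural fact that makes the transfinite closure terminate at stage $\lambda_\C$.

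First I would set $\mu=\max(\cd(X),\cd(\Omega_\C),2)^{\lambda_\C-}$ and define an increasing chain of subsets $S_0\subseteq S_1\subseteq\cdots\subseteq S_\beta\subseteq\cdots$ of $|A\<|$ for $\beta\le\lambda_\C$ by transfinite recursion: $S_0=X$; at a successor stage, $S_{\beta+1}$ is the set of all elements $\alpha_A(x)$ for $\alpha\in\Omega_\C$ and $x\in S_\beta^{\,\ari(\alpha)}$ (together with $S_\beta$ itself); and at limit stages I take unions. The key cardinality estimate is the successor step: the number of newly produced elements is at most $\sum_{\alpha\in\Omega_\C}\cd(S_\beta)^{\ari(\alpha)}$, and since each $\ari(\alpha)<\lambda_\C$, the defining property $\mu^\iota=\mu$ for all $\iota<\lambda_\C$ lets me bound each summand by $\mu$ once I know $\cd(S_\beta)\le\mu$. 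Then the number of summands is $\cd(\Omega_\C)\le\mu$, so the total is at most $\mu\cdot\mu=\mu$, which by the idempotence of the $\lambda_\C-$ operator~\eqref{x.^*l-twice} stays $\le\mu$. Thus by induction every $\cd(S_\beta)\le\mu$, using~\eqref{x.sup} and~\eqref{x.geq*l} at the limit stages to absorb the $<\lambda_\C\le\mu$ intermediate cardinals (here~\eqref{x.geq*l} guarantees $\mu\ge\lambda_\C$ since the base is $>1$).

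The second ingredient is that the chain stabilizes, i.e.\ that $S_{\lambda_\C}$ is already a subalgebra and equals $|A\<|$. For closure under operations I use regularity of $\lambda_\C$: given $\alpha$ and a tuple $x\in S_{\lambda_\C}^{\,\ari(\alpha)}$, each of the $<\lambda_\C$ coordinates of $x$ entered at some stage $<\lambda_\C$, and by regularity their supremum is still $<\lambda_\C$, so the whole tuple lies in some $S_\beta$ with $\beta<\lambda_\C$ and hence $\alpha_A(x)\in S_{\beta+1}\subseteq S_{\lambda_\C}$. That $S_{\lambda_\C}$ exhausts $|A\<|$ follows because $X$ generates $A$ and $S_{\lambda_\C}$ is a subalgebra containing $X$. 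Combining the two ingredients yields $\cd(|A\<|)=\cd(S_{\lambda_\C})\le\mu$, which is the asserted bound.

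**The main obstacle** I anticipate is the bookkeeping at the successor and limit stages: making sure the exponent-absorbing identity $\mu^\iota=\mu$ is invoked only for $\iota<\lambda_\C$ (which is exactly what Definition~\ref{D.exp<} supplies and no more), and confirming that the cumulative count over all $\le\lambda_\C$ stages does not exceed $\mu$. The latter needs $\lambda_\C\le\mu$, which is precisely~\eqref{x.geq*l}; then the total over all stages is at most $\mu\cdot\lambda_\C\le\mu\cdot\mu=\mu$. A secondary point of care is whether the iteration should track merely the carrier subsets $S_\beta$ or the subalgebras they generate --- I would work with the carrier sets as above, since closure is recovered at stage $\lambda_\C$ by the regularity argument, thereby avoiding any circular appeal to cardinalities of already-generated subalgebras.
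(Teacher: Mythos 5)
Your proposal is correct and follows essentially the same route as the paper's proof: the same transfinite chain of subsets indexed by ordinals up to $\lambda_\C,$ the same successor-stage count using $\mu^{\ari(\alpha)}\le\mu$ and absorption of $\cd(\Omega_\C)$ into $\mu,$ the same union at limits, and the same appeal to regularity of $\lambda_\C$ to see that the stage-$\!\lambda_\C\!$ set is closed under all operations and hence equals $|A\<|.$ (The preliminary appeal to Lemma~\ref{L.gen_by} is unnecessary --- your regularity argument already does that work, exactly as in the paper --- but this is harmless.)
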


\begin{proof}
We construct subsets $X_0\subseteq X_1\subseteq\dots
\subseteq X_{\lambda_\C}$ of $|A\<|$ as follows:
Take $X_0=X.$
For every successor ordinal $\iota\<{+}1$ let
$X_{\iota+1}$ consist of all elements of $X_\iota$ and all
elements of the form $\alpha_A(x)$ where $\alpha\in\Omega_\C$
and $x=(x_\gamma)_{\gamma\in\ari(\alpha)}\in X_\iota^{\ari(\alpha)}.$
For every limit ordinal $\iota$ let
$X_\iota=\bigcup_{\eta\in\iota} X_\eta.$

Since $\lambda_\C$ is a regular cardinal exceeding all the
cardinals $\ari(\alpha)$ $(\alpha\in\Omega_\C),$ we see that
$X_{\lambda_\C}$ is closed under the operations of $\C,$
so as it contains $X_0=X,$ it is all of $|A\<|.$
Let us now show by induction that for
all $\iota\leq\lambda_\C,$
\begin{equation}\begin{minipage}[c]{35pc}\label{x.leqmu}
$\cd(X_\iota)\ \leq\ \max(\cd(X),\,\cd(\Omega_\C),\,2)^{\lambda_\C-}.$
\end{minipage}\end{equation}
Clearly,~(\ref{x.leqmu}) holds for $\iota=0.$
Let us write $\mu$ for the right-hand side of~(\ref{x.leqmu}), which
is independent of $\iota,$ and by~(\ref{x.geq*l}) is $\geq\lambda_{\C}.$
At each successor ordinal $\iota\<{+}1,$ the number of elements
we adjoin as values of each operation $\alpha_A$ is at most
$\cd(X_\iota)^{\ari(\alpha)}\leq\mu^{\ari(\alpha)}\leq\mu,$
since $\ari(\alpha)<\lambda_\C.$
Hence, doing this for all $\cd(\Omega_\C)$ operations brings
in $\leq\mu\cdot\cd(\Omega_\C)\linebreak[0]\leq\mu$ elements.
Likewise, at a limit ordinal $\iota,$ we take the union of a family
of $\cd(\iota)\leq\linebreak[0]\lambda_\C\leq\mu$
sets of cardinality $\leq\mu,$
hence again get a set of cardinality $\leq\mu.$

Taking $\iota=\lambda_\C$ in~(\ref{x.leqmu}), we get
$\cd(|A\<|)\leq\mu,$ as required.
\end{proof}

For the step in our proof where we will enlarge an arbitrary
subalgebra of $|R\<|$ to a subcoalgebra of $R,$ we will first need
to define ``subcoalgebra''.
For this in turn we will need

\begin{definition}\label{D.pure}
A subalgebra $A$ of a $\!\C\!$-algebra $B$ will be called
{\em copower-pure} if for every cardinal $\kappa,$ the induced map
\begin{equation}\begin{minipage}[c]{35pc}\label{x.pure}
$\coprod_\kappa A\ \to\ \coprod_\kappa B$\ \ is one-to-one.
\end{minipage}\end{equation}
When this holds, we shall often identify
$\coprod_\kappa A$ with its image in~$\coprod_\kappa B\<.$
\end{definition}

An example of a subalgebra which is not copower-pure was noted
in \cite[discussion preceding Question~4.5]{embed}:
Let $\C$ be the variety of groups determined by the identities satisfied
in the infinite dihedral group, which include $x^2 y^2=y^2 x^2,$
but not $xy=yx,$ nor $x^n=1$ for any $n>0.$
Let $B$ be the infinite cyclic group $\langle x\rangle,$
which is free on one generator in $\C,$
and $A$ the subgroup $\langle x^2\rangle\subseteq B\<.$
Then $B\cP B$ is the free $\!\C\!$-algebra on two generators,
$x_0$ and $x_1,$ hence is noncommutative,
hence the same is true of $A\cP A.$
But the image of $A\cP A$ in $B\cP B$ is generated by $x_0^2$
and $x_1^2,$ which commute by the identity noted; so the
map $A\cP A\to B\cP B$ is not an embedding.

\begin{lemma}\label{L.pure<*l}
A subalgebra $A$ of a $\!\C\!$-algebra $B$ is copower-pure if and
only if~\textup{(\ref{x.pure})} holds for all $\kappa<\lambda_\C.$
\end{lemma}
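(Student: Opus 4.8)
The forward implication is immediate, since copower-purity is by definition the assertion that~(\ref{x.pure}) holds for \emph{every} cardinal $\kappa.$ So the content is the converse: assuming that $\coprod_\kappa A\to\coprod_\kappa B$ is one-to-one for all $\kappa<\lambda_\C,$ I must deduce the same for an arbitrary $\kappa.$ The plan is to reduce an arbitrary copower to a sub-copower of size $<\lambda_\C,$ exploiting that such a sub-copower sits inside $\coprod_\kappa A$ as a \emph{retract}, chosen compatibly for $A$ and $B.$

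Fix $\kappa$ and write $f_\kappa:\coprod_\kappa A\to\coprod_\kappa B$ for the induced map; suppose $u,v\in|\coprod_\kappa A|$ satisfy $f_\kappa(u)=f_\kappa(v),$ and let me aim to show $u=v.$ The copower $\coprod_\kappa A$ is generated by the union of the images of its $\kappa$ coprojections, so by Lemma~\ref{L.gen_by}, applied to $u$ and to $v$ and using the regularity of $\lambda_\C$ to amalgamate the two resulting $<\lambda_\C$ sets of generators, there is a subset $S\subseteq\kappa$ with $\cd(S)<\lambda_\C$ such that both $u$ and $v$ lie in the subalgebra generated by the copies indexed by $S;$ enlarging $S$ if need be I may take $S\neq\emptyset$ (harmless, as $1<\lambda_\C$). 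That subalgebra is precisely the image of the coprojection $j_A:\coprod_S A\to\coprod_\kappa A$ of the sub-copower on $S,$ and $\coprod_S A$ is the $\cd(S)$-fold copower to which the hypothesis applies.

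The next step is to see that $j_A$ is a \emph{split} monomorphism, with a retraction defined compatibly on the $B$-side. Fixing $\sigma\in S,$ I would define $\rho_A:\coprod_\kappa A\to\coprod_S A$ to act on the factor $A_\iota$ as the $\iota$-th coprojection when $\iota\in S$ and as the $\sigma$-th coprojection when $\iota\notin S,$ and define $\rho_B$ by the identical recipe on the copies of $B.$ Then $\rho_A\<j_A=\mathrm{id},$ so $j_A$ is one-to-one and $u=j_A\<\rho_A(u),$ $v=j_A\<\rho_A(v);$ moreover, comparing the two composites factor by factor, the square $f_S\,\rho_A=\rho_B\,f_\kappa$ commutes, where $f_S:\coprod_S A\to\coprod_S B$ is the induced map. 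Hence $f_S(\rho_A u)=\rho_B(f_\kappa u)=\rho_B(f_\kappa v)=f_S(\rho_A v),$ and since $\cd(S)<\lambda_\C$ the hypothesis makes $f_S$ one-to-one, giving $\rho_A u=\rho_A v$ and therefore $u=j_A\<\rho_A u=j_A\<\rho_A v=v,$ as wanted.

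I expect the only delicate points to be bookkeeping: checking that the subalgebra supplied by Lemma~\ref{L.gen_by} is contained in the image of $j_A$ rather than something larger, and verifying commutativity of the retraction square on the factors $\iota\notin S,$ where both $\rho_A$ and $\rho_B$ route through the chosen index $\sigma.$ Both are routine once the compatible retractions are in place; the conceptual heart is merely that $\coprod_\kappa A$ is the directed union of its sub-copowers $\coprod_S A$ with $\cd(S)<\lambda_\C,$ so that injectivity of $f_\kappa$, being the injectivity of a map of such directed systems along split-injective transition maps, is detected at that level.
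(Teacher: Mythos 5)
Your proof is correct and follows essentially the same route as the paper's: both reduce to a sub-copower on an index set of size $<\lambda_\C$ via Lemma~\ref{L.gen_by}, then use a set-theoretic retraction of $\kappa$ onto that subset (your explicit recipe sending $\iota\notin S$ to a fixed $\sigma\in S$ is just one choice of such a retraction) to induce compatible algebra retractions and conclude from the resulting commuting square. The only cosmetic difference is that the paper argues with distinct elements $p\neq q$ having distinct images, while you argue directly from $f_\kappa(u)=f_\kappa(v)$ to $u=v;$ these are the same argument.
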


\begin{proof}
``Only if'' is clear; for the converse,
assume~(\ref{x.pure}) holds whenever $\kappa<\lambda_\C.$
Suppose we are given $\kappa$ not necessarily $<\lambda_\C,$ and
distinct elements $p,q\in|\coprod_\kappa A\<|.$
Since $\coprod_\kappa A$ is generated by the images of $A$ under
the $\kappa$ coprojection maps, we see from Lemma~\ref{L.gen_by} that
$p$ and $q$ will lie in the subalgebra generated by the
copies of $A$ indexed by some subset $I\subseteq\kappa$
with $0<|I|<\lambda_\C.$
Now a set-theoretic retraction of $\kappa$ onto $I$
(a left inverse to the inclusion of $I$ in $\kappa)$
induces algebra retractions $\coprod_\kappa A\to\coprod_I A$ and
$\coprod_\kappa B\to\coprod_I B,$ making a commuting square
with the maps $\coprod_\kappa A\to\coprod_\kappa B$
and $\coprod_I A\to\coprod_I B.$
By choice of $I,$ the elements $p$ and $q$ lie in
the subalgebra $\coprod_I A\subseteq\coprod_\kappa A,$
and since $|I|<\lambda_\C,$ the map
$\coprod_I A\to\coprod_I B$ is one-to-one; so $p$ and $q$ have
distinct images in $\coprod_I B,$ and hence in $\coprod_\kappa B.$
\end{proof}

We are now ready for

\begin{definition}\label{D.subcoalg}
If $R$ and $R'$ are $\!\D\!$-coalgebras in $\C,$ then
we will call $R'$ a {\em subcoalgebra} of $R$ if $|R'|$ is
a copower-pure $\!\C\!$-subalgebra of $|R\<|,$ and for
each $\alpha\in\Omega_\D,$ the co-operation $\alpha^{R'}$
is the restriction to $|R'|\subseteq|R\<|$ of $\alpha^R.$
\end{definition}

Thus, for a subalgebra $A\subseteq |R\<|$ to yield a subcoalgebra
of $R,$ it must be copower-pure, and have the property that
each $\alpha^R$ carries $A$ into $\coprod_{\ari(\alpha)}A.$

(Remark: It would probably be more natural in the above definition
to require~(\ref{x.pure}) to hold only for $\kappa<\lambda_\D;$
and perhaps to remove that condition entirely when the arities of the
operations of $\D$ are all $\leq 1.$
But for simplicity, we will stick with the above definition.)

We shall now prove the existence of subcoalgebras satisfying
cardinality bounds, as sketched earlier.
(Note that the ``$\!\mu\!$'' of the next result is not necessarily the
same as the value so named in the proof of Lemma~\ref{L.gen_card},
since it also involves $\cd(\Omega_\D).)$

\begin{theorem}\label{T.subcoalg}
Let $R$ be a $\!\D\!$-coalgebra object of $\C,$ and $X$
a subset of $||R\<||.$
Then $R$ has a subcoalgebra $R'$ whose underlying set contains $X,$
and has cardinality at most
\begin{equation}\begin{minipage}[c]{35pc}\label{x.mu}
$\mu\ =\ %
\max(\cd(X),\,\cd(\Omega_\C),\,\cd(\Omega_\D),\,2)^{\lambda_\C-}.$
\end{minipage}\end{equation}
\end{theorem}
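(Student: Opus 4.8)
The plan is to construct the underlying algebra of the desired subcoalgebra $R'$ as the union of an increasing $\lambda_\C$-indexed chain of subalgebras $A_0\subseteq A_1\subseteq\dots$ of $|R\<|$, each of cardinality $\leq\mu$, where at each successor stage I perform two ``repair'' operations pushing the union toward (a)~closure under the co-operations of $R$ and (b)~copower-purity. I would start with $A_0=$ the subalgebra of $|R\<|$ generated by $X$, which by Lemma~\ref{L.gen_card} already has cardinality $\leq\mu$; note $\mu\geq\lambda_\C$ by~(\ref{x.geq*l}), and that the key arithmetic identities $\mu^{\lambda_\C-}=\mu$ and $\lambda_\C\cdot\mu=\mu$ follow from~(\ref{x.^*l-twice})--(\ref{x.geq*l}).

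For the \emph{co-operation step} on $A_\iota$: for each $a\in|A_\iota|$ and each $\alpha\in\Omega_\D$, the element $\alpha^R(a)\in\coprod_{\ari(\alpha)}|R\<|$ lies, by Lemma~\ref{L.gen_by}, in the subalgebra generated by the coprojection-images of some set of $<\lambda_\C$ elements of $||R\<||$; adjoining all such elements (over all $a,\alpha$) to the generators of $A_\iota$ brings in $\leq\cd(A_\iota)\cdot\cd(\Omega_\D)\cdot\lambda_\C\leq\mu$ new elements. For the \emph{purity step}: for each $\kappa<\lambda_\C$ and each pair $p,q\in\coprod_\kappa A_\iota$ falling together in $\coprod_\kappa|R\<|$, I lift $p,q$ to $F_\C(\kappa\times|A_\iota|)$ and apply Corollary~\ref{C.rels} to the presentation of $\coprod_\kappa|R\<|$ got by copowering the canonical presentation of $|R\<|$, taking $X_0=\kappa\times|A_\iota|$; this returns $<\lambda_\C$ new generators, hence $<\lambda_\C$ elements of $||R\<||$, whose adjunction forces $p$ and $q$ to fall together in the next copower. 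As $\coprod_\kappa A_\iota$ has cardinality $\leq\mu$ (Lemma~\ref{L.gen_card}) and there are $<\lambda_\C$ relevant $\kappa$, this step too brings in $\leq\mu$ elements. Taking $A_{\iota+1}$ to be the subalgebra generated by $A_\iota$ and all elements collected in both steps keeps $\cd(A_{\iota+1})\leq\mu^{\lambda_\C-}=\mu$; at limit stages I take unions, preserving the bound.

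Setting $A=\bigcup_{\iota<\lambda_\C}A_\iota$, I would verify the two conditions of Definition~\ref{D.subcoalg}, using regularity of $\lambda_\C$ in the form ``every $<\lambda_\C$-subset of $|A|$ lies in some $A_\iota$''. For copower-purity it suffices, by Lemma~\ref{L.pure<*l}, to treat $\kappa<\lambda_\C$: if $p,q\in\coprod_\kappa A$ fall together in $\coprod_\kappa|R\<|$, then by Lemma~\ref{L.gen_by} they come from $\coprod_\kappa A_\iota$ for some $\iota<\lambda_\C$, so the purity step at stage $\iota+1$ has already forced their images to agree in $\coprod_\kappa A_{\iota+1}$, whence in $\coprod_\kappa A$. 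For co-operation closure, any $a\in|A|$ lies in some $A_\iota$, and the co-operation step at stage $\iota+1$ places $\alpha^R(a)$ in the image of $\coprod_{\ari(\alpha)}A_{\iota+1}$; since $A$ is now known to be copower-pure, $\coprod_{\ari(\alpha)}A$ embeds in $\coprod_{\ari(\alpha)}|R\<|$, so $\alpha^R(a)\in\coprod_{\ari(\alpha)}A$. Thus the restrictions $\alpha^R|_{|A|}$ corestrict to co-operations $A\to\coprod_{\ari(\alpha)}A$, and $R'=(A,(\alpha^R|_A)_{\alpha\in\Omega_\D})$ is a subcoalgebra with $X\subseteq|A|$ and $\cd(|A|)\leq\mu$.

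The main obstacle I anticipate is the purity step: making precise that Corollary~\ref{C.rels}, applied to the large copower $\coprod_\kappa|R\<|$, really lets one annihilate a given coincidence $p=q$ by adjoining only $<\lambda_\C$ elements of $||R\<||$, and then checking that the relation set $Y_1$ the corollary returns actually holds in $\coprod_\kappa A_{\iota+1}$. This amounts to exhibiting the natural map $\langle X_1\mid Y_1\rangle_\C\to\coprod_\kappa A_{\iota+1}$ (well-defined because every generator in $X_1$ is a coprojection-image of an element of $|A_{\iota+1}|$, and each relation in $Y_1$ is a coprojected instance of the canonical relations of the subalgebra $A_{\iota+1}\subseteq|R\<|$) and verifying that it carries the common image of $p,q$ in $\langle X_1\mid Y_1\rangle_\C$ to the images of $p,q$ in $\coprod_\kappa A_{\iota+1}$. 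Everything else is cardinal bookkeeping against $\mu$.
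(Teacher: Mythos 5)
Your proposal is essentially the paper's own proof: the same $\!\lambda_\C\!$-indexed chain starting from the subalgebra generated by $X,$ the same two successor-stage repairs (closure under co-operations via Lemma~\ref{L.gen_by}, and forcing copower coincidences via Corollary~\ref{C.rels} applied to the copowered canonical presentation of $|R\<|),$ and the same final verification using the regularity of $\lambda_\C$ together with Lemma~\ref{L.pure<*l}. The one (trivially repairable) discrepancy is at limit stages, where the paper takes the subalgebra {\em generated} by the union of the earlier stages rather than the plain union: that union need not be closed under the operations of $\C$ when the stage's cofinality is below some arity $<\lambda_\C,$ and your next purity step needs $A_\iota$ to be an algebra in order to form $\coprod_\kappa A_\iota.$
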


\begin{proof}
We shall construct a chain of subalgebras
$A_0\subseteq A_1\subseteq\dots\subseteq A_{\lambda_\C}$ of $|R\<|,$
and show that $A_{\lambda_\C}$ is the underlying $\!\C\!$-algebra of
a subcoalgebra $R'$ with the asserted properties.

We take for $A_0$ the subalgebra of $|R\<|$ generated by $X;$
by Lemma~\ref{L.gen_card} this has cardinality $\leq\mu.$

Assuming for some $\iota<\lambda_\C$ that $A_\iota$ has been
constructed, and has cardinality at most $\mu,$ we obtain
$A_{\iota+1}$ by adjoining $\leq\mu$ further elements chosen as follows.

First, for every $\kappa<\lambda_\C,$ and every pair of elements
$p,q\in|\coprod_\kappa A_\iota|$ which have equal image under the
natural map $\coprod_\kappa A_\iota\to\coprod_\kappa |R\<|,$
I claim we can adjoin to $A_\iota$ a set of
$<\lambda_\C$ elements whose presence causes the images of these
elements in the copower of the resulting algebra to fall together.
Indeed, this follows from Corollary~\ref{C.rels}, and the observation
that given a presentation $|R\<|=\langle X\mid Y\rangle_\C,$ the
copower $\coprod_\kappa |R\<|$ can be presented by taking the union of
$\kappa$ copies of $X,$ and for each of these, a copy of $Y.$
Note also that $\coprod_\kappa A_\iota$ is generated by
$\kappa$ copies of $A_\iota,$ which has cardinality $\leq\mu;$
hence by Lemma~\ref{L.gen_card} it itself has cardinality $\leq\mu,$
hence there are at most $\mu$ such pairs $p,q$ to deal with; so
for each $\kappa<\lambda_\C,$ we are adjoining at most $\mu$ elements.
The number of such cardinals $\kappa$ is $\leq\lambda_\C;$
so in handling all such pairs $p,q,$ for all $\kappa<\lambda_\C,$
we adjoin $\leq\mu$ new elements of $|R\<|$ to $A_\iota.$

In addition, for each operation symbol $\alpha\in\Omega_\D,$ and
each $p\in|A_\iota|,$ consider the image of $p$ under
the co-operation $\alpha^R: |R\<|\to\coprod_{\ari(\alpha)}|R\<|.$
By Lemma~\ref{L.gen_by}, this will lie in the subalgebra
of $\coprod_{\ari(\alpha)}|R\<|$ generated by a subset $X_{p,\alpha},$
having cardinality $<\lambda_\C,$ of the generating set for
that copower given by the union of the images of the coprojections.
So $X_{p,\alpha}$ is contained in the
union of the images, under those coprojections, of a
set $X'_{p,\alpha}$ of $<\lambda_\C$ elements of $|R\<|.$
For each $p\in|A_\iota|$ and $\alpha\in\Omega_\D,$
let us include such a set $X'_{p,\alpha}$
in the set of elements we are adjoining to $A_\iota.$
Letting $p$ run over the $\leq\mu$ elements of $A_\iota,$
and $\alpha$ over the elements of $\Omega_\D,$
we see that this process adjoins
$\leq\mu\cdot\cd(\Omega_\D)\cdot\lambda_\C\leq\mu$ new elements.
Let $A_{\iota+1}$ be the subalgebra of $|R\<|$ obtained by
adjoining to $A_\iota$ the two families of elements described in
this and the preceding paragraph.

On the other hand, if $\iota$ is a limit ordinal, we let $A_\iota$
be the $\!\C\!$-subalgebra of $|R\<|$ generated by
$\bigcup_{\eta\in\iota} |A_\eta|.$
That union, being a union of $\leq\lambda_\C$ sets of
cardinality $\leq\mu,$ will itself have
cardinality $\leq\mu,$ and it follows from Lemma~\ref{L.gen_card}
that $A_\iota$ will as well.

Now consider the subalgebra $A = A_{\lambda_\C}$ of $|R\<|.$
Since $\lambda_\C$ is by assumption a regular
cardinal, the union $\bigcup_{\eta\in\lambda_\C} |A_\eta|$
involved in the construction of $A$ is over a chain of cofinality
$\lambda_\C,$ which strictly majorizes the arities of
all operations of $\C;$ hence that union is closed
under those operations, so $|A\<|$ is that union.

I claim that $A$ is copower-pure in $|R\<|.$
Indeed, given any $\kappa<\lambda_\C,$ and elements $p,\ q$ of
$\coprod_\kappa A$ that fall together $\coprod_\kappa |R\<|,$
we can find $<\lambda_\C$ elements of $A$ such that $p$ and $q$ lie
in the subalgebra of $\coprod_\kappa A$ generated by images of
those elements under coprojection maps; and we can then find
some $\iota$ such that all those elements lie in $A_\iota.$
Thus we get $p',q'\in|\coprod_\kappa A_\iota|$ which map to
$p,q\in|\coprod_\kappa A\<|;$ so their images under the composite map
$\coprod_\kappa A_\iota\to \coprod_\kappa A \to\coprod_\kappa |R\<|$
fall together.
Hence, by the construction of $A_{\iota+1},$ the images of $p'$
and $q'$ fall together in $\coprod_\kappa A_{\iota+1},$ hence
they do so in $\coprod_\kappa A,$ i.e., $p=q,$ as required.

It remains to show that
each co-operation $\alpha^R$ $(\alpha\in\Omega_\D)$ carries
$A\subseteq|R\<|$ into the subalgebra
$\coprod_{\ari(\alpha)}A$ of $\coprod_{\ari(\alpha)}|R\<|.$
Every element $p\in|A\<|$ lies in some $A_\iota,$
and by construction, $A_{\iota+1}$ contains elements which
guarantee that $\alpha^R(p)$ lies in the subalgebra of
$\coprod_{\ari(\alpha)}|R\<|$ generated by the image of
$\coprod_{\ari(\alpha)}A_{\iota+1},$ hence, a fortiori,
in $\coprod_{\ari(\alpha)}A\<.$
\end{proof}

\begin{corollary}\label{C.subcoalg}
If $R$ is a $\!\D\!$-coalgebra object of $\C,$ then for every element
$p$ of $|R\<|$ there is a subcoalgebra $R'$ of $R$ whose underlying
$\!\C\!$-algebra contains $p,$ and has cardinality at most
$\max(\cd(\Omega_\C),\,\cd(\Omega_\D),\,2)^{\lambda_\C-}.$

In particular, if $\C$ and $\D$ each have at most countably
many operations, and all operations of $\C$ are finitary,
then every element of a $\!\D\!$-coalgebra object of $\C$ is
contained in a countable or finite subcoalgebra.\endproof
\end{corollary}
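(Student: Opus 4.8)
The first assertion is nothing but the special case $X=\{p\}$ of Theorem~\ref{T.subcoalg}. The plan is simply to invoke that theorem and simplify the resulting bound: for a one-element set we have $\cd(X)=1$, and since $2>1$ the term $\cd(X)$ in~(\ref{x.mu}) is absorbed into the constant $2$. Hence the bound $\mu$ of~(\ref{x.mu}) reduces to exactly $\max(\cd(\Omega_\C),\,\cd(\Omega_\D),\,2)^{\lambda_\C-}$, which is the cardinality claimed.

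For the ``in particular'' clause I would first observe that when all operations of $\C$ are finitary we are free to take $\lambda_\C=\aleph_0$, this being a regular infinite cardinal exceeding every finite $\ari_\C(\alpha)$; thus the exponent ``$\lambda_\C-$'' may be read as ``$\aleph_0-$''. The hypothesis that $\C$ and $\D$ each have at most countably many operations gives $\cd(\Omega_\C),\,\cd(\Omega_\D)\leq\aleph_0$, so that the base $\kappa=\max(\cd(\Omega_\C),\,\cd(\Omega_\D),\,2)$ of the exponentiation satisfies $2\leq\kappa\leq\aleph_0$. It then remains only to evaluate $\kappa^{\aleph_0-}$, and this is exactly what~(\ref{x.^aleph0}) provides: for every such $\kappa$ one has $\kappa^{\aleph_0-}=\max(\kappa,\,\aleph_0)=\aleph_0$. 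Therefore the cardinality bound of the first assertion collapses to $\aleph_0$, so the subcoalgebra $R'$ containing $p$ is countable or finite.

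Since the whole argument is an immediate specialization of Theorem~\ref{T.subcoalg} combined with the bookkeeping identity~(\ref{x.^aleph0}) for the operation $(-)^{\lambda-}$, I do not expect any genuine obstacle. The only point calling for a word of care is the legitimacy of taking $\lambda_\C=\aleph_0$ in the finitary case, which is immediate from the standing requirement that $\lambda_\C$ be a regular infinite cardinal greater than every $\ari_\C(\alpha)$.
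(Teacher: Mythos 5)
Your proposal is correct and is exactly the argument the paper intends: the corollary is stated with no separate proof because it is the specialization $X=\{p\}$ of Theorem~\ref{T.subcoalg} (with $\cd(X)=1$ absorbed into the constant $2$ in~(\ref{x.mu})), followed by the evaluation $\max(\cd(\Omega_\C),\,\cd(\Omega_\D),\,2)^{\aleph_0-}=\aleph_0$ via~(\ref{x.^aleph0}) when $\lambda_\C$ may be taken to be $\aleph_0.$ Your remark on the legitimacy of choosing $\lambda_\C=\aleph_0$ in the finitary case is also the right point to flag, since the notion of subcoalgebra (Definitions~\ref{D.pure} and~\ref{D.subcoalg}) does not depend on that choice, so the theorem may be applied with the minimal admissible $\lambda_\C.$
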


\section{Pseudocoalgebras, and the solution set
condition.}\label{S.pseudo}

We now come to the pseudocoalgebras of our sketched development.

\begin{definition}\label{D.pseudo}
By a {\em $\!\D\!$-pseudocoalgebra} in a category $\bf{A},$ we
shall mean a $\!4\!$-tuple
\begin{equation}\begin{minipage}[c]{35pc}\label{x.pseudo}
$S\ =\ (S\ba\<,\ (S_\alpha)_{\alpha\in\Omega_\D},\ %
(c^S_{\alpha,\<\iota})_{\alpha\in\Omega_\D,\,\<\iota\in\ari(\alpha)},\ %
(\alpha^S)_{\alpha\in\Omega_\D}),$
\end{minipage}\end{equation}
where $S\ba$ and the $S_\alpha$ are objects of $\fb{A}$
\textup{(}the ``base object'' and
the ``pseudocopower objects''\textup{)},
and for each $\alpha\in\Omega_\D,$ $\alpha^S$ \textup{(}the
$\!\alpha\!$-th ``pseudo-co-operation''\textup{)} and
the $c^S_{\alpha,\<\iota}$ \textup{(}the ``pseudocoprojections'',
one for each $\iota\in\ari(\alpha))$ are morphisms $S\ba\to S_\alpha.$

A morphism of $\!\D\!$-pseudocoalgebras $f:S\to S'$
will mean a family of morphisms
%
\begin{equation}\begin{minipage}[c]{34pc}\label{x.psmorph}
$f\ba: S\ba\to S'\ba\<,$\quad and\quad
$f_\alpha: S_\alpha\to S'_\alpha$\quad
$(\alpha\in\Omega_\D)$
\end{minipage}\end{equation}
which make commuting squares with the
$c^S_{\alpha,\<\iota}$ and $c^{S'}_{\alpha,\<\iota},$
and with the $\alpha^S$ and $\alpha^{S'}.$
The category of $\!\D\!$-pseudocoalgebras in $\fb{A}$ will be
denoted $\pscoalg{\fb{A}}{\D}.$
\end{definition}

Note that the operation-set $\Omega_\D$ and arity-function $\ari_\D$
of $\D$ come into the definition of $\!\D\!$-pseudo\-coalgebra,
but the identities of $\D$ do not.
In the use we will make of pseudocoalgebras, the fact that those
identities are, by definition, cosatisfied by the
{\em $\!\D\!$-coalgebras} we map to them will be all that matters.
Let us make clear in what sense one can map coalgebras to
pseudocoalgebras.

\begin{definition}\label{D.psi}
If $R$ is a $\!\D\!$-coalgebra, or more generally,
an $\!\Omega_\D\!$-coalgebra, in $\fb{A},$ then we define
the associated $\!\D\!$-pseudocoalgebra $\psi(R)$ to have
\begin{quote}
$\psi(R)\ba\ =\ |R\<|,$\\[.17em]
$\psi(R)_\alpha\ =\ \coprod_{\ari(\alpha)}|R\<|,$\\[.17em]
$c^{\psi(R)}_{\alpha,\<\iota}=$ the $\!\iota\!$-th coprojection:
$|R\<|\to\coprod_{\ari(\alpha)}|R\<|,$\quad and\\[.17em]
$\alpha^{\psi(R)}\ =\ \alpha^R:\ |R\<|\to\coprod_{\ari(\alpha)}|R\<|\<.$
\end{quote}

Clearly, $\psi$ yields a full and faithful functor
$\coalg{\fb{A}}{\<\Omega_\D\mbox{-}\fb{Alg}}\to\pscoalg{\fb{A}}{\D},$
so when there is no danger of ambiguity, we shall
treat $\coalg{\fb{A}}{\<\Omega_\D\mbox{-}\fb{Alg}}$ and
its subcategory $\coalg{\fb{A}}{\D}$ as full subcategories of
$\pscoalg{\fb{A}}{\D};$ in particular
we shall speak of morphisms from coalgebras to pseudocoalgebras.
If $S$ is a $\!\D\!$-pseudocoalgebra,
then a $\!\D\!$-coalgebra given with a morphism
to $S,$ i.e., an object of the comma category
$(\coalg{\fb{A}}{\D}\downarrow S),$
will be called a $\!\D\!$-coalgebra {\em over}~$S.$

We shall say that a pseudocoalgebra $S$ ``is an
$\!\Omega_\D\!$-coalgebra'' if it is isomorphic to
$\psi(R)$ for some $\!\Omega_\D\!$-coalgebra $R,$ in
other words, if for every $\alpha,$ the object $S_\alpha$
is the copower $\coprod_{\ari(\alpha)}S\ba,$ with the
pseudocoprojections $c^S_{\alpha,\<\iota}$ as the coprojections.
We will say that $S$ is a $\!\D\!$-coalgebra if it is
an $\!\Omega_\D\!$-coalgebra $R$ which cosatisfies
the identities of~$\D.$
\end{definition}

Note that if $R$ is a coalgebra and $S$ a pseudocoalgebra, then
every morphism $f:R\to S$ is determined by
$f\ba:|R\<|\to S\ba,$ since once this is given, the components
$f_\alpha$ are uniquely determined by the property of
making commuting squares with the coprojections and pseudocoprojections,
via the universal property of $\coprod_{\ari(\alpha)}|R\<|.$
Of course, in general not every map $f\ba:|R\<|\to S\ba$
induces a morphism $f:R\to S,$ since the maps $f_\alpha$ so
determined by $f\ba$ may fail to satisfy
the remaining condition, that the squares they make with the
co-operations and pseudo-co-operations commute.

We now again restrict attention to the case where the
variety $\C$ plays the role of $\fb{A}.$
To the convention that ``coalgebra'', unmodified, means
``$\!\D\!$-coalgebra in $\!\C\!$''
we add the convention that ``pseudocoalgebra'', unmodified, means
``$\!\D\!$-pseudocoalgebra in $\!\C\!$''.
Note that these pseudocoalgebras are simply a kind of many-sorted
algebra, so there is no difficulty constructing limits of such objects.

We recall from Lemma~\ref{L.colim}
that $\coalg{\C}{\D}$ has small colimits,
given on underlying $\!\C\!$-algebras by the colimits of
the corresponding algebras in $\C.$
It follows that if we have a diagram of coalgebras over a fixed
pseudocoalgebra $S,$ its colimit coalgebra has an induced
morphism into $S,$ and so will also be the colimit of the given diagram
in the comma category $(\coalg{\C}{\D}\downarrow S).$

\begin{definition}\label{D.quasifinal}
A morphism of coalgebras will be called {\em surjective} if
it is surjective on underlying $\!\C\!$-objects.

If $f:R\to R'$ is a surjective morphism in
$(\coalg{\C}{\D}\downarrow S),$ for some pseudocoalgebra $S,$
then $R'$ \textup{(}given with the map $f$ from $R)$
will be called an {\em image} coalgebra of $R$ over $S.$
To avoid dealing with the non-small set of isomorphic copies of each
such image, we shall call an image coalgebra $R'$ of $R$ {\em standard}
if the map $||R\<||\to||R'||$ is the canonical map from a set to its
set of equivalence classes under an equivalence relation.

A coalgebra $R$ over $S$ will be called {\em strongly
quasifinal} over $S$ if the only surjective morphisms out of $R$ in
$(\coalg{\C}{\D}\downarrow S)$ are the isomorphisms.
\end{definition}

Given a coalgebra $R$ over a pseudocoalgebra $S,$
the category of all standard images of $R$ over $S$ will form a
partially ordered set, isomorphic to a sub-poset
of the lattice of congruences on the $\!\C\!$-algebra $|R\<|.$
We cannot expect that the set of congruences on $|R\<|$ such that
the $\!\D\!$-coalgebra structure of $R$ extends to the resulting
factor-algebra will be closed under intersections; but it will be
closed under arbitrary joins, since, as just noted, colimits of
coalgebras over $S$ correspond to
colimits of underlying $\!\C\!$-objects.
We deduce

\begin{lemma}\label{L.quasifinal}
Let $S$ be a pseudocoalgebra and $R$ a coalgebra over $S.$
Then the standard images of $R$ over $S$ form a
\textup{(}small\textup{)} complete lattice.
The greatest element of this lattice is, up to isomorphism, the
unique strongly quasifinal homomorphic image of $R$ over~$S.$\endproof
\end{lemma}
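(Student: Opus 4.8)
The plan is to work entirely at the level of congruences on the underlying $\!\C\!$-algebra $|R\<|.$ First I would make precise the identification already indicated before the statement: each standard image of $R$ over $S$ is determined by a congruence $\theta$ on $|R\<|$ with the property that (a) the $\!\D\!$-coalgebra structure of $R$ descends to the quotient $|R\<|/\theta,$ and (b) the induced map $|R\<|/\theta\to S\ba$ exists, i.e.\ $\theta$ refines the kernel congruence of the structure map $|R\<|\to S\ba.$ Call such congruences \emph{admissible}. The assignment sending a standard image to its defining congruence is an order-isomorphism onto the set of admissible congruences, ordered by inclusion, so it suffices to prove that the admissible congruences form a complete lattice and to identify the top element.

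\medskip\noindent\textbf{Closure under joins.} The key observation, supplied by Lemma~\ref{L.colim}, is that a join (in the lattice of all congruences on $|R\<|$) of admissible congruences corresponds to a \emph{colimit} of the associated coalgebras over $S.$ Concretely, given any family $(\theta_i)$ of admissible congruences, form the corresponding family of standard image coalgebras $R_i$ over $S;$ their colimit in $(\coalg{\C}{\D}\downarrow S)$ exists and, by Lemma~\ref{L.colim}, has underlying $\!\C\!$-algebra the colimit of the $|R_i|,$ which is exactly $|R\<|/(\bigvee_i\theta_i).$ Hence $\bigvee_i\theta_i$ is again admissible, and it is the join in the poset of admissible congruences as well. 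Since the single congruence $\Delta$ (equality) is admissible — it gives back $R$ itself — the poset has a least element, and a poset with a least element in which every subset has a join is automatically a complete lattice (meets being computed as joins of lower bounds). This gives the first assertion.

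\medskip\noindent\textbf{Identifying the top and its quasifinality.} For the second assertion, let $\overline R$ be the standard image corresponding to the greatest admissible congruence; equivalently, $\overline R$ is the colimit of \emph{all} standard images of $R$ over $S.$ I claim $\overline R$ is strongly quasifinal over $S.$ Indeed, any surjective morphism $g:\overline R\to R''$ in $(\coalg{\C}{\D}\downarrow S)$ factors the structure map of $R$ through an admissible congruence on $|R\<|$ that contains the one defining $\overline R;$ by maximality the two congruences coincide, so $g$ is a bijection on underlying sets, hence an isomorphism of coalgebras. Conversely, any strongly quasifinal image $R'$ of $R$ over $S$ must be the top: the join of $R'$ with $\overline R$ is an image of $R'$ (being a further quotient), which by strong quasifinality is an isomorphism, forcing $R'\cong\overline R.$ This proves uniqueness up to isomorphism.

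\medskip\noindent\textbf{Expected obstacle.} The only genuinely substantive point is the closure-under-joins step, and specifically the need to know that the colimit of admissible quotients is \emph{again a $\!\D\!$-coalgebra over $S$} rather than merely an $\!\Omega_\D\!$-precoalgebra object. This is precisely what Lemma~\ref{L.colim} delivers: colimits of $\!\D\!$-coalgebras are computed on underlying $\!\C\!$-algebras and the cosatisfaction of the identities in $\Phi_\D$ is inherited by the colimit. Given that lemma, the argument is a routine transfer between the lattice of congruences and the category of images, so I anticipate no real difficulty beyond carefully verifying that the descended map into $S\ba$ respects the pseudocoprojections and pseudo-co-operations — a diagram-chase using the universal property of the copowers $\coprod_{\ari(\alpha)}|R\<|.$
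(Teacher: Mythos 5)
Your proof is correct and follows essentially the same route as the paper: identify standard images over $S$ with a set of congruences on $|R\<|,$ use Lemma~\ref{L.colim} (colimits of coalgebras over $S$ computed on underlying $\!\C\!$-algebras) to get closure under arbitrary joins, conclude completeness of the lattice, and identify its top with the unique strongly quasifinal image. The paper leaves the quasifinality of the top element and the epi-cancellation check for the descended map to $S$ implicit, and your write-up simply supplies those routine details.
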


We can now show that, up to isomorphism, the strongly
quasifinal coalgebras over $S$ form a small set.

\begin{lemma}\label{L.no_two_iso}
Let $S$ be a pseudocoalgebra, and $R$ a strongly quasifinal
coalgebra over $S.$
Then distinct subcoalgebras of $R$ are nonisomorphic as
coalgebras over~$S.$

Hence in view of Corollary~\ref{C.subcoalg},
$\cd(||R\<||)$ is $\leq$ the sum of the cardinalities of
all \textup{(}up to isomorphism over $S)$ coalgebras over $S$
of cardinality at most
\begin{equation}\begin{minipage}[c]{34pc}\label{x.mu=}
$\max(\cd(\Omega_\C),\,\cd(\Omega_\D),\,2)^{\lambda_\C-}.$
\end{minipage}\end{equation}
This sum is at most
\begin{equation}\begin{minipage}[c]{34pc}\label{x.card}
$\max(\cd(|S\ba|),\lambda_\D)%
^{\max(\cd(\Omega_\C),\,\cd(\Omega_\D),\,2)^{\lambda_\C-}}.$
\end{minipage}\end{equation}

Hence, up to isomorphism, there is only a small set of coalgebras
$R$ strongly quasifinal over $S.$
\end{lemma}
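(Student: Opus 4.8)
The plan is to establish Lemma~\ref{L.no_two_iso} in three logically separate pieces, corresponding to the three sentences of the statement: first the ``no two isomorphic subcoalgebras'' claim, then the cardinality bound~\eqref{x.mu=} via Corollary~\ref{C.subcoalg}, and finally the crude bound~\eqref{x.card} yielding smallness.

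First I would prove the uniqueness assertion. Let $R'$ and $R''$ be subcoalgebras of $R$ that are isomorphic as coalgebras over $S$; I want to show $R'=R''$ as subalgebras of $|R\<|$. The key observation, already noted in the excerpt, is that a morphism from a coalgebra into a pseudocoalgebra $S$ is completely determined by its base component $|R'|\to S\ba$. Since $R'$ and $R''$ are subcoalgebras, their structure maps to $S$ are the restrictions of the structure map $R\to S$ to the subalgebras $|R'|,|R''|\subseteq|R\<|$. Now if $g:R'\to R''$ is an isomorphism over $S$, then the base map $g\ba:|R'|\to|R''|$ followed by the inclusion $|R''|\hookrightarrow|R\<|$ is a coalgebra-over-$S$ morphism $R'\to R$ whose base component agrees, on composition to $S\ba$, with the inclusion of $R'$. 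The plan is to combine this $g$ with the inclusions to produce two surjective morphisms out of $R$ (or out of the quasifinal $R$ itself) that would have to coincide; here is where strong quasifinality of $R$ enters. Concretely, strong quasifinality forbids two distinct morphisms from any coalgebra $R'''$ into $R$ over $S$ (the coequalizer argument sketched in \S\ref{S.motivate}), so the inclusion of $R'$ and the composite $R'\xrightarrow{g}R''\hookrightarrow R$ must be equal, forcing the two subalgebras to coincide pointwise. I expect this step to be the main obstacle: one must be careful that the relevant diagram really lands a \emph{single} domain into $R$ with two parallel morphisms to which the quasifinality (via coequalizers in $\coalg{\C}{\D}$, available by Lemma~\ref{L.colim}) applies, and that ``over $S$'' is preserved throughout.

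Given uniqueness, the counting is routine. By Corollary~\ref{C.subcoalg}, every element $p\in|R\<|$ lies in a subcoalgebra of $R$ of cardinality at most the quantity in~\eqref{x.mu=}; since by the first part each isomorphism class over $S$ of such small coalgebras occurs at most once as a subcoalgebra of $R$, and these subcoalgebras cover $||R\<||$, we get $\cd(||R\<||)$ bounded by the sum of the cardinalities of all such coalgebras over $S$ taken up to isomorphism over $S$, as asserted.

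Finally I would bound that sum by~\eqref{x.card}. A coalgebra $R_0$ over $S$ of cardinality $\leq\nu:=\max(\cd(\Omega_\C),\cd(\Omega_\D),2)^{\lambda_\C-}$ is determined, up to isomorphism over $S$, by a $\!\C\!$-algebra structure on a set of size $\leq\nu$, the $\!\D\!$-coalgebra cooperations $\alpha^{R_0}$, and the base map $|R_0|\to S\ba$. The plan is to bound the number of such data: the base set has $\leq\nu$ choices of underlying set and, by Lemma~\ref{L.gen_card}-type considerations, the number of $\!\C\!$-algebra structures and coalgebra cooperations is controlled by $\nu$ and $\lambda_\D$, while the number of base maps into $S\ba$ is at most $\cd(|S\ba|)^\nu$. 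Collecting these, the total is at most $\max(\cd(|S\ba|),\lambda_\D)^\nu$, which is exactly~\eqref{x.card}; since this is a single cardinal, there is only a small set of isomorphism classes, proving the final smallness claim.
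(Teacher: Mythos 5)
Your proposal is correct and follows essentially the same route as the paper: the uniqueness of embedded copies via the coequalizer/strong-quasifinality argument (two distinct morphisms into $R$ over $S$ would have a coequalizer that is a proper surjective image of $R$), the covering of $R$ by subcoalgebras of cardinality at most~(\ref{x.mu=}) via Corollary~\ref{C.subcoalg}, and the bound~(\ref{x.card}) by counting the possible base maps to $S\ba,$ $\!\C\!$-algebra operations, and $\!\D\!$-co-operations on a set of that size. The only difference is one of detail: the paper carries out the cardinal arithmetic explicitly (absorbing the $\mu^\mu$ factors using $\mu^\mu\leq\lambda_\D^\mu$ and summing over the $\leq\mu$ possible cardinalities), which your sketch leaves implicit but which goes through exactly as you indicate.
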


\begin{proof}
If we had two distinct embeddings into $R$ over $S$ of some coalgebra
$R'$ over $S,$ then the coequalizer of the resulting diagram
$R'\stackrel{\longrightarrow}{\scriptstyle{\longrightarrow}}R$ would
be a proper image of $R$ over $S,$ contradicting the
strong quasifinality of $R.$
This gives the assertion of the first paragraph.
Hence if we break the subcoalgebras of $R$ of cardinality at
most~(\ref{x.mu=}) into their isomorphism classes over $S,$ no more
than one copy of each can occur, and by Corollary~\ref{C.subcoalg},
such subcoalgebras have union $R,$ giving the second assertion,
from which the final sentence of the lemma clearly follows.

To get the explicit bound~(\ref{x.card}), let us write $\mu$ for
the cardinal~(\ref{x.mu=}) and $\nu$ for~(\ref{x.card}).
We shall show that the number of structures of coalgebra over
$S$ on a set $X$ of cardinality $\leq\mu$ is at most $\nu.$
Since there are $\leq\mu<\nu$ cardinalities $\leq\mu,$
this will give $\leq\nu$ isomorphism classes of strongly quasifinal
coalgebras over $S$ altogether, and since each such coalgebra
has cardinality $\leq\mu<\nu,$ the sum of their cardinalities
will be $\leq\nu,$ as required.

To bound the number of structures of coalgebra over $S$ on $X,$
note that such a structure is determined by
several maps (subject to restrictions that we will not repeat
because they do not come into our calculations):
\begin{equation}\begin{minipage}[c]{34pc}\label{x.toS}
a map $X\to|S\ba|,$
\end{minipage}\end{equation}
\begin{equation}\begin{minipage}[c]{34pc}\label{x.fromprod}
for each $\alpha\in\Omega_\C,$ a map $X^{\ari(\alpha)}\to X,$
\end{minipage}\end{equation}
making $X$ a $\!\C\!$-algebra $A;$ and once this has been done,
\begin{equation}\begin{minipage}[c]{34pc}\label{x.tocoprod}
for each $\alpha\in\Omega_\D,$ a map $A\to\coprod_{\ari(\alpha)}A,$
\end{minipage}\end{equation}
giving the $\!\D\!$-coalgebra structure.

Given $X$ of cardinality $\leq\mu,$ the number of
possible 
choices for~(\ref{x.toS}) is bounded by $\cd(|S\ba|)^\mu.$
For each $\alpha\in\Omega_\C,$ the number of choices for the map
in~(\ref{x.fromprod}) is $\leq \mu^{\mu^{\ari(\alpha)}},$ but by
definition (see~(\ref{x.mu=})), $\mu$ is not increased by
exponentiation by
$\ari(\alpha)<\lambda_\C,$ so this bound is $\leq\mu^\mu.$
Letting $\alpha$ run over $\Omega_\C,$ we conclude that the
number of choices for~(\ref{x.fromprod}) is
$\leq(\mu^\mu)^{\cd(\Omega_\C)}=\mu^{\mu\,\cd(\Omega_\C)}.$
But again by definition, $\mu\geq\cd(\Omega_\C),$ so the product
$\mu\,\cd(\Omega_\C)$ simplifies to $\mu;$ hence the
number of choices for~(\ref{x.fromprod}) is $\leq\mu^\mu.$

Finally, for each $\alpha\in\Omega_\D,$ the copower
in~(\ref{x.tocoprod}) will be generated by an $\!\ari(\alpha)\!$-tuple
of copies of $X,$ hence by a set of cardinality
$\leq\ari(\alpha)\,\mu\leq\lambda_\D\,\mu,$
so by Lemma~\ref{L.gen_card} that copower has cardinality
$\leq\max(\lambda_\D\,\mu,\,\cd(\Omega_\C),\,2)^{\lambda_\C-}=
(\lambda_\D\,\mu)^{\lambda_\C-}$ (since
$\cd(\Omega_\C)$ and $2$ are majorized by $\mu).$
Hence for each $\alpha\in\Omega_\D$
the number of maps as in~(\ref{x.tocoprod}) is
$\leq((\lambda_\D\,\mu)^{\lambda_\C-})^\mu
\leq((\lambda_\D\,\mu)^\mu)^\mu=(\lambda_\D\,\mu)^\mu.$
Letting $\alpha$ run over $\Omega_\D,$ we get an additional
factor of $\cd(\Omega_\D)$ in the exponent, but this again is
absorbed by $\mu.$
Bringing together the choices made in~(\ref{x.toS}),~(\ref{x.fromprod})
and~(\ref{x.tocoprod}), we get the bound
\begin{equation}\begin{minipage}[c]{34pc}\label{x.total}
$\cd(|S\ba|)^\mu\ \mu^\mu\ (\lambda_\D^\mu\ \mu^\mu)$
\end{minipage}\end{equation}
on the number of possible structures.
Note also that for any infinite cardinal $\lambda$ and any
cardinal $\kappa>1,$ one knows that $\kappa^\lambda>\lambda,$
hence $\lambda^\lambda\leq(\kappa^\lambda)^\lambda=
\kappa^{\lambda\lambda}=\kappa^\lambda.$
Applying this with $\mu$ in the role
of $\lambda$ and $\lambda_\D$ in the role of $\kappa,$
we see that the $\mu^\mu$ terms can be dropped from~(\ref{x.total}).
Rewriting the product as a maximum, and putting in the
definition~(\ref{x.mu=}) of $\mu,$ we get the desired
bound~(\ref{x.card}).
\end{proof}

We deduce

\begin{theorem}\label{T.final}
Let $S$ be a pseudocoalgebra.
Then the category of coalgebras over $S$ has a final object.
The cardinality of the underlying set of this object is
at most~\textup{(\ref{x.card})}.
\end{theorem}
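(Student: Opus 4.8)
The plan is to obtain the final object by the dual of Freyd's Initial Object Theorem, exactly as forecast in \S\ref{S.motivate}. Concretely, I would apply that theorem to the opposite of the comma category $\mathbf{E}=(\coalg{\C}{\D}\downarrow S)$: a final object of $\mathbf{E}$ is an initial object of $\mathbf{E}^{\mathrm{op}},$ and Freyd's theorem \cite[Theorem~V.6.1]{CW}, \cite[Theorem~7.10.1]{245} supplies such an object once I check that $\mathbf{E}^{\mathrm{op}}$ is complete and satisfies the solution-set condition---equivalently, that $\mathbf{E}$ is cocomplete and carries a small weakly final family of objects.

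Cocompleteness of $\mathbf{E}$ is already in hand: by Lemma~\ref{L.colim}, $\coalg{\C}{\D}$ has small colimits, and the remark preceding Definition~\ref{D.quasifinal} notes that the colimit of any diagram of coalgebras over $S$ inherits a canonical morphism to $S$ and serves as the colimit in $\mathbf{E}.$ For the weakly final family I would take the strongly quasifinal coalgebras over $S.$ Lemma~\ref{L.quasifinal} shows that every coalgebra $R$ over $S$ admits a surjection onto its essentially unique strongly quasifinal image, so each object of $\mathbf{E}$ maps into a strongly quasifinal one; and Lemma~\ref{L.no_two_iso} shows that these form, up to isomorphism over $S,$ only a small set. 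Choosing one representative from each isomorphism class gives the required small weakly final family, which is precisely the solution-set condition for $\mathbf{E}^{\mathrm{op}}.$

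Freyd's theorem then yields the final object $F$ of $\mathbf{E}.$ To pin down its cardinality I would observe that a final object is automatically strongly quasifinal: given a surjective morphism $g:F\to R'$ over $S,$ finality provides a unique morphism $h:R'\to F,$ and the only endomorphism of the final object being the identity forces $h\circ g=\mathrm{id}_F;$ since $g$ is surjective, hence an epimorphism, cancelling it in $(g\circ h)\circ g=\mathrm{id}_{R'}\circ g$ gives $g\circ h=\mathrm{id}_{R'},$ so $g$ is an isomorphism. Being strongly quasifinal, $F$ is covered by the cardinality estimate of Lemma~\ref{L.no_two_iso}, so $\cd(||F||)$ is bounded by~(\ref{x.card}), as claimed.

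I expect no serious obstacle here: the hard analysis has already been absorbed into Theorem~\ref{T.subcoalg} and Lemmas~\ref{L.quasifinal}--\ref{L.no_two_iso}, and what remains is the bookkeeping of feeding a weakly final family into the dual of Freyd's theorem. The only points demanding care are the correct dualization---making sure it is the colimits furnished by Lemma~\ref{L.colim}, rather than limits, that the opposite category requires---and the verification that the final object is itself strongly quasifinal, so that the cardinality bound of Lemma~\ref{L.no_two_iso} transfers to it.
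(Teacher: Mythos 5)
Your proposal is correct and follows essentially the same route as the paper: cocompleteness of the category of coalgebras over $S$ (Lemma~\ref{L.colim}), a small weakly final family given by the strongly quasifinal coalgebras (Lemmas~\ref{L.quasifinal} and~\ref{L.no_two_iso}), the dual of Freyd's Initial Object Theorem, and the cardinality bound~(\ref{x.card}) obtained by noting that the final object is itself strongly quasifinal. The only difference is that you spell out the last point in detail (the retraction argument showing a surjection out of a final object is an isomorphism), which the paper states without proof; that verification is sound.
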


\begin{proof}
Since the category has small colimits, and every object has a
morphism into a strongly quasifinal object, and up to
isomorphism there is only a small set of such objects, the dual
statement to the Initial Object Theorem gives the required final object.
Since a final object is in particular strongly quasifinal, the
cardinality of its underlying set is bounded by~(\ref{x.card}).
\end{proof}

We can now show $\coalg{\C}{\D}$ complete.
Given a small category $\fb{E}$ and
a functor $F:\fb{E}\to\coalg{\C}{\D},$
each coalgebra $F(E)$ $(E\in\mathrm{Ob}(\fb{E}))$ can be
regarded as a pseudocoalgebra, $\psi(F(E)),$ and
this system of pseudocoalgebras has a limit, which can be
constructed objectwise.
Let
\begin{quote}
$S\ =\ \limit_\fb{E}\ \psi\,F.$
\end{quote}

If $R$ is a coalgebra, then a cone in $\coalg{\C}{\D}$ from
$R$ to the diagram of coalgebras $F$ is equivalent to
a morphism of pseudocoalgebras $R\to S.$
Hence the final object of $(\coalg{\C}{\D}\downarrow S)$
corresponds to a limit of $F.$
This gives

\begin{theorem}\label{T.lim}
$\coalg{\C}{\D}$ has small limits; equivalently,
$\Rep{\C}{\D}$ has small colimits.
Moreover, given a small category $\fb{E}$ and
a functor $F:\fb{E}\to\coalg{\C}{\D},$ we have
\begin{equation}\begin{minipage}[c]{34pc}\label{x.limcard}
$\cd(||\limit F||)\ \leq\ %
\max\<(\cd(\limit_\fb{E}||F(E)||),\,\lambda_\D)%
^{\max(\cd(\Omega_\C),\,\cd(\Omega_\D),\,2)^{\lambda_\C-}}.$
\end{minipage}
\end{equation}
\endproof
\end{theorem}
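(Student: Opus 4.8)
The plan is to reduce the existence of small limits in $\coalg{\C}{\D}$ to the final-object result of Theorem~\ref{T.final}, and then to extract the bound~(\ref{x.limcard}) directly from the estimate~(\ref{x.card}) proved there. First I would fix a small category $\fb{E}$ and a functor $F\colon\fb{E}\to\coalg{\C}{\D}$, and push the diagram forward along the full and faithful embedding $\psi$ of Definition~\ref{D.psi} to obtain a diagram $\psi F$ of pseudocoalgebras. Since $\pscoalg{\C}{\D}$ is merely a category of many-sorted algebras, it has all small limits, computed objectwise on underlying sets; so I would set $S=\limit_\fb{E}\psi F$.

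The crux is the identification of cones with morphisms into $S$. Because $\psi$ is full and faithful and carries each coalgebra $R$ to the pseudocoalgebra $\psi(R)=R$, a cone in $\coalg{\C}{\D}$ from a coalgebra $R$ to the diagram $F$ is the same datum as a cone of pseudocoalgebras from $R$ to $\psi F$, and this, by the universal property of the limit $S$, is the same as a single morphism of pseudocoalgebras $R\to S$. I would check that this correspondence is natural in $R$ and identifies morphisms in the comma category $(\coalg{\C}{\D}\downarrow S)$ with morphisms of cones, so that the whole comma category is isomorphic to the category of cones from coalgebras to $F$. Its final object, which exists by Theorem~\ref{T.final}, is then a terminal cone, i.e.\ a limit of $F$ in $\coalg{\C}{\D}$. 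The ``equivalently'' clause about $\Rep{\C}{\D}$ follows at once, since that category is opposite to $\coalg{\C}{\D}$ up to equivalence.

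For the cardinality bound I would use that $S$ is computed objectwise, so its base object is $S\ba=\limit_\fb{E}|F(E)|$, the limit taken in $\C$; since $U_\C$ preserves limits, its underlying set is $|S\ba|=\limit_\fb{E}||F(E)||$, whence $\cd(|S\ba|)=\cd(\limit_\fb{E}||F(E)||)$. Substituting this into~(\ref{x.card}) gives exactly~(\ref{x.limcard}). The only real obstacle is the bookkeeping of the previous paragraph: one must verify carefully that the bijection between cones and morphisms into $S$ is natural and functorial, so that a \emph{final} object of the comma category really does translate into a \emph{terminal} cone and hence a genuine limit. This is a formal consequence of the full-faithfulness of $\psi$ together with the objectwise construction of limits of pseudocoalgebras; once it is secured, the cardinality estimate is immediate.
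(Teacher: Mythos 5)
Your proposal is correct and follows essentially the same route as the paper: form the objectwise limit pseudocoalgebra $S=\limit_\fb{E}\psi F$, identify cones from a coalgebra $R$ to $F$ with pseudocoalgebra morphisms $R\to S$, invoke Theorem~\ref{T.final} for the final object of $(\coalg{\C}{\D}\downarrow S)$, and obtain~(\ref{x.limcard}) by substituting $\cd(|S\ba|)=\cd(\limit_\fb{E}||F(E)||)$ into~(\ref{x.card}). The only difference is that you spell out the naturality bookkeeping and the cardinality substitution, which the paper leaves implicit.
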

\vspace{.8em}

Since most of algebra is done with finitary operations, and
often with only finitely many of them, let us record what our result
says in that case.

\begin{corollary}\label{C.c}
If $\C$ and $\D$ each have only finitely many operations, and all
of these are finitary, then the final $\!\D\!$-coalgebra object
of $\C$ has underlying set of at most continuum cardinality.

In fact, this remains true if ``finitely many operations''
is generalized to ``at most countably many operations'', the assumption
of finite arity on the operations of $\D$ \textup{(}but not of
$\C)$ is generalized to that of arity less than the continuum,
and ``the final $\!\D\!$-coalgebra of $\C\!$''
is generalized to ``the limit of any
diagram of at most countably many $\!\D\!$-coalgebra objects of $\C,$
each of which has underlying set of at most continuum cardinality.''
\end{corollary}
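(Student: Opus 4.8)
The plan is to derive both assertions directly from the cardinality bound~(\ref{x.limcard}) of Theorem~\ref{T.lim}, by choosing the auxiliary regular cardinals $\lambda_\C,\lambda_\D$ conveniently and then carrying out the cardinal arithmetic. The first assertion is the special case of an empty index category $\fb{E}$: the final coalgebra is the limit of the empty diagram, for which the underlying-set limit $\limit_\fb{E}||F(E)||$ is a one-point set. Since finitary operations have arity $<2^{\aleph_0}$ and the empty family of coalgebras trivially meets the hypothesis on cardinalities, the first assertion is subsumed by the second; I would therefore prove the generalized statement and read off the first as this special case.

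First I would fix the auxiliary cardinals. Since all operations of $\C$ are finitary I take $\lambda_\C=\aleph_0$. For $\D$, in the finitary case I likewise take $\lambda_\D=\aleph_0$; in the generalized case, where the arities of the operations of $\D$ are all $<2^{\aleph_0}$ and there are only countably many such operations, I must produce a regular $\lambda_\D$ exceeding all of them with $\lambda_\D\leq 2^{\aleph_0}$. This is the delicate point: by K\"onig's theorem $2^{\aleph_0}$ has uncountable cofinality, so the supremum $\sigma$ of the countably many arities satisfies $\sigma<2^{\aleph_0}$; then $\lambda_\D=\sigma^+$ is regular, exceeds every arity, and satisfies $\sigma^+\leq 2^{\aleph_0}$. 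Keeping $\lambda_\D\leq 2^{\aleph_0}$ is precisely what forces the final bound down to the continuum rather than something larger, so this is the step I expect to be the main (if modest) obstacle.

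Next I would evaluate the exponent in~(\ref{x.limcard}). Since $\cd(\Omega_\C),\cd(\Omega_\D)\leq\aleph_0$ and $\lambda_\C=\aleph_0$, formula~(\ref{x.^aleph0}) (applicable as the base is $\geq 2$) gives $\max(\cd(\Omega_\C),\,\cd(\Omega_\D),\,2)^{\lambda_\C-}=\aleph_0$. Then I would bound the base $\max(\cd(\limit_\fb{E}||F(E)||),\lambda_\D)$: for the empty diagram the underlying-set limit is a singleton, while in general it embeds in a product of at most $\aleph_0$ sets each of cardinality $\leq 2^{\aleph_0}$, hence has cardinality $\leq(2^{\aleph_0})^{\aleph_0}=2^{\aleph_0}$; together with $\lambda_\D\leq 2^{\aleph_0}$ this makes the base $\leq 2^{\aleph_0}$.

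Finally I would combine these to obtain $\cd(||\limit F||)\leq(2^{\aleph_0})^{\aleph_0}=2^{\aleph_0\cdot\aleph_0}=2^{\aleph_0}$, the continuum; in the purely finitary first case the same computation with $\lambda_\D=\aleph_0$ gives base $\aleph_0$ and hence $\aleph_0^{\aleph_0}=2^{\aleph_0}$. Apart from the choice of $\lambda_\D$ above, every step is routine cardinal arithmetic supported by~(\ref{x.^aleph0}), so I expect no further difficulties.
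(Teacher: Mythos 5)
Your proposal is correct and takes essentially the same route as the paper: both prove the generalized statement by substituting into the bound~(\ref{x.limcard}) of Theorem~\ref{T.lim} and computing $\max((2^{\aleph_0})^{\aleph_0},2^{\aleph_0})^{\max(\aleph_0,\aleph_0,2)^{\aleph_0-}}=(2^{\aleph_0})^{\aleph_0}=2^{\aleph_0}.$ The one point where you go beyond the paper is in justifying, via K\"onig's theorem, that a regular $\lambda_\D\leq 2^{\aleph_0}$ exceeding all the (countably many) arities of $\D$ exists; the paper's one-line computation leaves that choice implicit.
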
\noindent{\em Proof.}
In the situation of the generalized statement, the right-hand
side of~(\ref{x.limcard}) is bounded by
\begin{quote}
$\max((2^{\aleph_0})^{\aleph_0},\,
2^{\aleph_0})^{\max(\aleph_0,\,\aleph_0,\,2)^{\aleph_0-}}
=\ (2^{\aleph_0})^{\aleph_0^{\aleph_0-}}
=\ (2^{\aleph_0})^{\aleph_0}\ =\ 2^{\aleph_0}.$
\\[-3.2em]
\end{quote}
\endproof
\vspace{.8em}

We can apply Theorem~\ref{T.final} to other pseudocoalgebras $S$ than
those arising from limit diagrams.
One such application gives the next result;
we omit the cardinality estimate, which the reader can easily supply.

\begin{theorem}\label{T.cofree}
If $A$ is an object of $\C,$ then there is a universal
$\!\D\!$-coalgebra object $R$ of $\C$ with a $\!\C\!$-algebra
homomorphism $|R\<|\to A\<.$
In other words, the forgetful functor
$\coalg{\C}{\D}\to\C$ has a right adjoint; equivalently, the functor
$\Rep{\C}{\D}\to\Rep{\C}{\fb{Set}}$ given by composition with the
underlying set functor on $\D$ has a left adjoint.

More generally, suppose $\D'$ is a variety whose type $\Omega_{\D'}$
is a ``subtype'' of $\Omega_\D;$ i.e., such that the operation-symbols
of $\D'$ form a subset of the operation-symbols of $\D,$
and its arity function is the restriction of that of $\D.$
\textup{(}We make no assumption on the identities of $\D'.)$
Then for any $\!\D'\!$-coalgebra $R'$ of $\C$ there exists a universal
$\!\D\!$-coalgebra object $R$ of $\C$ with a $\!\C\!$-algebra
homomorphism $|R\<|\to|R'|$ that respects $\!\D'\!$-co-operations.
\end{theorem}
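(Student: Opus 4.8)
The plan is to deduce both assertions from Theorem~\ref{T.final}, by exhibiting for the given datum a pseudocoalgebra $S$ whose coalgebras over it are exactly the pairs I wish to make universal, and then taking the final object guaranteed by that theorem. The first assertion is the case $\Omega_{\D'}=\emptyset$ of the second: an object $A$ of $\C$ is the same as a $\!\D'\!$-coalgebra $R'$ for the empty subtype, with $|R'|=A,$ and the requirement that a homomorphism respect the (nonexistent) $\!\D'\!$-co-operations is then vacuous. So I shall treat the general statement. Let $R'$ be a $\!\D'\!$-coalgebra of $\C,$ and write $\mathcal{K}$ for the category whose objects are pairs $(R,g)$ with $R$ a $\!\D\!$-coalgebra and $g:|R\<|\to|R'|$ a $\!\C\!$-homomorphism respecting the $\!\D'\!$-co-operations, i.e.\ with $(\coprod_{\ari(\alpha)}g)\circ\alpha^R=\alpha^{R'}\circ g$ for every $\alpha\in\Omega_{\D'},$ the morphisms being coalgebra morphisms commuting with the maps $g.$ The object sought is precisely a final object of $\mathcal{K}.$

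To build $S$ I would put $S\ba=|R'|.$ For each $\alpha\in\Omega_{\D'}$ I take $S_\alpha=\coprod_{\ari(\alpha)}|R'|,$ with the $c^S_{\alpha,\iota}$ the genuine coprojections and $\alpha^S=\alpha^{R'}.$ For each $\alpha\in\Omega_\D\setminus\Omega_{\D'}$ I instead let $S_\alpha=E,$ the one-element $\!\C\!$-algebra, which is a terminal object of $\C$ (the one-element algebra is terminal in every variety), and I take all the $c^S_{\alpha,\iota}$ and $\alpha^S$ to be the unique morphisms $|R'|\to E.$ This specifies a pseudocoalgebra $S$ in $\C.$

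Next I would verify that $(\coalg{\C}{\D}\downarrow S)\cong\mathcal{K}.$ As recorded after Definition~\ref{D.psi}, a morphism from a coalgebra $\psi(R)$ to $S$ is determined by its base component $f\ba:|R\<|\to|R'|,$ the components $f_\alpha$ being forced by commutativity with the (pseudo)coprojections, and the sole remaining constraint on $f\ba$ is that $f_\alpha\circ\alpha^R=\alpha^S\circ f\ba$ for every $\alpha.$ For $\alpha\in\Omega_{\D'}$ one has $f_\alpha=\coprod_{\ari(\alpha)}f\ba,$ so this reads $(\coprod_{\ari(\alpha)}f\ba)\circ\alpha^R=\alpha^{R'}\circ f\ba,$ exactly the requirement that $f\ba$ respect $\alpha^{R'}.$ For $\alpha\in\Omega_\D\setminus\Omega_{\D'}$ both $f_\alpha\circ\alpha^R$ and $\alpha^S\circ f\ba$ are morphisms into the terminal object $E,$ hence equal, so the constraint is vacuous. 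Thus a morphism $\psi(R)\to S$ is the same as a homomorphism $g=f\ba$ respecting the $\!\D'\!$-co-operations, and the same bookkeeping on morphisms gives the asserted isomorphism of categories. Theorem~\ref{T.final} then furnishes a final object of $(\coalg{\C}{\D}\downarrow S),$ which is the desired universal coalgebra.

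The one delicate point will be the handling of the operations $\alpha\in\Omega_\D\setminus\Omega_{\D'},$ on which $g$ must impose \emph{no} condition: a naive choice of $S_\alpha$ as a copower of $|R'|$ would force an unwanted relation between $\alpha^R$ and $g,$ since $f_\alpha$ is then completely determined by $f\ba.$ Routing these pseudocopower objects through the terminal object $E$ of $\C$ is what makes the relevant squares commute automatically, and is the crux of the construction; everything else is the formal translation above and the appeal to Theorem~\ref{T.final}. The statements about adjoints in the first assertion are the standard reformulations of the existence of these universal objects, and the cardinality estimate, as noted, may be left to the reader.
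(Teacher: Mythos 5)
Your proposal is correct and is essentially the paper's own proof: the paper likewise forms the pseudocoalgebra $S$ by taking $\psi(R')$ on the operations of $\Omega_{\D'}$ (respectively $S\ba=A$ in the first assertion) and the final object of $\C$ as the pseudocopower object for each $\alpha\in\Omega_\D-\Omega_{\D'},$ then invokes Theorem~\ref{T.final}. The only cosmetic difference is direction: the paper proves the first assertion and then adapts it, while you prove the general assertion and recover the first as the case $\Omega_{\D'}=\emptyset.$
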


\begin{proof}
Let us establish the first assertion, then note how to adapt
the argument to the second.
Given $A,$ we define a pseudocoalgebra $S$ by letting
$S\ba=A,$ and, for each $\alpha\in\Omega_\D,$
taking $S_\alpha$ to be the final object of $\C,$ with the unique maps
from $S\ba$ to that final object serving (necessarily)
both as pseudocoprojections
$c^S_{\alpha,\<\iota}$ and as pseudo-co-operations $\alpha^S.$
Then a morphism from a $\!\D\!$-coalgebra object $R$ of $\C$
to $S$ is equivalent to a $\!\C\!$-algebra homomorphism $f:|R\<|\to A.$
Thus, the final coalgebra $R$ over $S$ of Theorem~\ref{T.final}
is a coalgebra with a universal map $|R\<|\to A\<.$

Given $\D'$ and $R'$ as in the second statement,
we form the $\!\D'\!$-pseudocoalgebra $\psi(R'),$
and extend this to a $\!\D\!$-pseudocoalgebra $S$ by letting
$S_\alpha$ be the final object of $\C$
for each $\alpha\in\Omega_\D-\Omega_{\D'}.$
The pseudocoprojections and pseudo-co-operations correspondinging
to these $\alpha$ are uniquely determined, and the $R$ given
by Theorem~\ref{T.final} again has the desired universal property.
\end{proof}

The construction of the first paragraph of the above
theorem can be thought of as giving a ``cofree coalgebra'' on
each object $A$ of $\C;$ equivalently a ``free'' representable
$\!\D\!$-valued functor $G$ on each representable set-valued
functor $E$ on $\C.$
The latter is not, of course, the composite of $E$ with the
free $\!\D\!$-algebra functor $F:\fb{Set}\to\D,$ which is
in general not representable.
For instance, for $\C=\Gp,$ $\D=\fb{Ab},$ and $E$ the
underlying set functor $U_\mathbf{Group},$
the representable functor $G:\Gp\to\fb{Ab}$
``free'' on $E$ is the trivial functor, since
there are no nontrivial representable functors $\Gp\to\fb{Ab},$
as follows from the description of all representable
functors $\Gp\to\Gp$ that will be recalled in \S\ref{S.Group} below.
In contrast, we will see in that section that for $\C=\D=\Gp$
or $\C=\D=\fb{Monoid},$ the free representable
$\!\D\!$-valued functor on any nontrivial representable set-valued
functor is nontrivial, and easy to describe.

A generalization of the first paragraph of Theorem~\ref{T.cofree}
that naturally suggests itself is that for every representable
functor $F:\D\to\fb{E}$ between varieties, the induced functor
$F\circsm-:\Rep{\C}{\D}\to\Rep{\C}{\fb{E}}$ should have a left adjoint.
This seems likely to be true: In the case where $F$ is
underlying-set-preserving, one can prove it from the present second
assertion of the theorem, by replacing $\D$ with an equivalent
variety, having the original operations of $\D,$ plus some additional
ones corresponding to those derived operations of $\D$ that yield the
operations of $\fb{E},$ so that $F$ becomes a forgetful functor.

However, this generalization of the first
assertion of the theorem, if true, would still not subsume the second
assertion, since the latter does not assume that the forgetful functor
$\D\to\Omega_{\D'}\!$-$\!\fb{Alg}$ has values in $\D'.$
The question of the ``right'' generalization of
Theorem~\ref{T.cofree} obviously merits investigation.
We shall, however, devote the rest of this note to exploring the
constructions described by our theorems as stated.

\section{Some examples with $\C=\fb{Set}.$}\label{S.Set}

The proofs in the preceding section were quite uninformative
as to the structures of the limit coalgebras obtained.
To investigate these, let us use the heuristic of
\cite[\S3.2]{245}:  To construct an object with a right universal
property, consider an element $x$ of a (not necessarily universal)
object $A$ of the desired sort, ask what data one can describe
that such an element will determine, and what restrictions one can find
that these data must satisfy.
Then see whether the set of all possible values for data satisfying
those restrictions can be given a structure of object of the
indicated sort, induced by that of the assumed object $A.$
If so, the result should be the desired universal object.

We will here apply this approach to the construction of the final object
of $\coalg{\fb{Set}}{\Bi},$ where $\Bi$ is the category of sets with a
single binary operation $\beta,$ not subject to any identities.

Let $S$ be the final
$\!\Bi\!$-pseudocoalgebra in $\fb{Set},$ with $S\ba$ and
$S_\beta$ both $\!1\!$-element sets.
We want to find a right-universal coalgebra
over $S,$ so our heuristic says we should think about
a general $\!\Bi\!$-coalgebra $R$ in $\fb{Set}$ over $S,$
and an element $x\in|R\<|.$
(Since $\C$ is here $\fb{Set},$ what we would otherwise write
$||R\<||$ is simply $|R\<|.)$

Given $x,$ the first datum that we get from
it is its image in $S\ba\<;$ but this tells us nothing.

The structure of $\!\Bi\!$-coalgebra on $R$ also gives a map
$|R\<|\to|R\<|\cP|R\<|.$
Combining this with our ``useless'' map
$|R\<|\to S\ba,$ we get a map
\begin{equation}\begin{minipage}[c]{34pc}\label{x.01}
$|R\<|\ \to\ |R\<|\cP|R\<|\ \to\  S\ba\cP S\ba\<.$
\end{minipage}\end{equation}
Identifying the right-hand side with $\{0,1\},$ we thus see that
we can associate to every element of $|R\<|$ a member of the latter set.

Now that we have this map $|R\<|\to\{0,1\},$ we can
combine it in turn with the co-operation, getting a composite map
\begin{equation}\begin{minipage}[c]{34pc}\label{x.01^2}
$|R\<|\ \to\ |R\<|\cP|R\<|\ \to\ \{0,1\}\cP\{0,1\}\ \cong\ %
\{0,1\}\times\{0,1\}\ =\ \{0,1\}^2.$
\end{minipage}\end{equation}
Here, in the ``$\!\cong\!$'' step, we are using the
identification $X\cP X\cong\{0,1\}\times X$ for any set $X.$
Thus, the {\em first} coordinate of the image of an element
$x\in|R\<|$ under~(\ref{x.01^2}) specifies which copy of $|R\<|$
it is mapped into by $\beta:|R\<|\to|R\<|\cP|R\<|.$
This was already determined by the image of $x$ under~(\ref{x.01});
so the image of $x$ under~(\ref{x.01^2}) is gotten by bringing in a
second coordinate, specifying which ``side'' of the indicated copy
of $|R\<|$ the element $x$ maps to.

Repeating this process, we find at the next step that each
element of $|R\<|$ determines an element of $\{0,1\}^3$ extending its
image under~(\ref{x.01^2}); and so on.
Thus, after countably many steps, we get a map
\begin{equation}\begin{minipage}[c]{34pc}\label{x.01^w}
$|R\<|\ \to\ \{0,1\}^\omega.$
\end{minipage}\end{equation}

There is no evident way to get more information out of an
element of $|R\<|,$ since its image under
\begin{equation}\begin{minipage}[c]{34pc}\label{x.01^w+}
$|R\<|\ \to\ |R\<|\cP|R\<|\ \to\ %
\{0,1\}^\omega\cP\{0,1\}^\omega\ \cong\ \{0,1\}\times\{0,1\}^\omega$
\end{minipage}\end{equation}
is determined by its image in $\{0,1\}^\omega:$ an element
which maps to $(a_0,a_1,a_2,\dots)$ under~(\ref{x.01^w}) can be seen
to map to $(a_0,(a_1,a_2,\dots))$ under~(\ref{x.01^w+}).
This translates to say that~(\ref{x.01^w}) is a morphism
of coalgebras from $R$ to the coalgebra with underlying
$\!\fb{Set}\!$-object $\{0,1\}^\omega,$ and co-operation
\begin{equation}\begin{minipage}[c]{34pc}\label{x.01coop}
$(a_0,a_1,a_2,\dots)\ \mapsto\ (a_0,(a_1,a_2,\dots))\in
\{0,1\}\times\{0,1\}^\omega
\ \cong\ \{0,1\}^\omega\,\cP\,\{0,1\}^\omega.$
\end{minipage}\end{equation}
It is straightforward to verify that
the map~(\ref{x.01^w}) we have built up is the unique
morphism from $R$ to this coalgebra; so this coalgebra structure
on $\{0,1\}^\omega,$
the Cantor set, makes it the final object of $\coalg{\fb{Set}}{\Bi}.$
(This shows, by the way, that the cardinality bound of
Corollary~\ref{C.c} is best possible, even in the
case of the first paragraph thereof!
The proof of that result shows that $R$ is, however, a union of strongly
quasifinal subcoalgebras of cardinality $\leq 2^{\aleph_0-}=\aleph_0;$
the reader might find it interesting to identify these.)

The representable functor $\fb{Set}\to\Bi$
determined by the above final coalgebra carries every
set $A$ to the set of all $\!A\!$-valued functions (with no continuity
condition) on the Cantor set, furnished with
the binary operation that takes
two such functions $f$ and $g$ to the function $\beta(f,g)$ which can
be described -- using the standard geometric picture of the Cantor
set as a subset of the unit interval -- as gotten by drawing the graphs
of $f$ and $g$ on the real line, with a unit distance between them,
then ``compressing'' the abscissa by a factor of $1/3,$
so that the resulting function again has domain the Cantor set.

The heuristic cited above said that after finding
data that an element $x$ of a coalgebra would determine,
we should look for restrictions those data would have to satisfy.
Above, we found no such restrictions, because the
variety $\C$ had trivial algebra structure, the
variety $\Bi$ involved no identities, and
the pseudocoalgebra $S$ was trivial.

The above example would be enough to motivate the construction
of the next section; but having begun to look at representable
functors from $\fb{Set}$ to algebras with one binary operation,
let us see what happens when identities are imposed on that operation.

Because of the paucity of algebraic structure on $\fb{Set},$
many natural identities can be realized by representable functors
only in trivial ways.
For instance, for the commutative identity
\begin{quote}
$\beta(a,a')\ =\ \beta(a',a),$
\end{quote}
to be cosatisfied by $S$ means that if we map
any $x\in|S\<|$ by $\beta^S$ to $|S\<|\cP|S\<|,$ and then
apply to its image the map that interchanges the two copies of $|S\<|$
in that coproduct, the image is unchanged.
No element $x$ can have this property;
hence $|S\<|$ must be the empty set;
so the only representable functor from $\fb{Set}$ to algebras
with a commutative binary operation is the one taking
all sets to the $\!1\!$-element algebra.

An example of an identity of a less restrictive sort is
\begin{equation}\begin{minipage}[c]{34pc}\label{x.01*}
$\beta(a,\beta(a',a''))\ =\ \beta(a,\beta(a',a''')),$
\end{minipage}\end{equation}
saying that $\beta(-,\beta(-,-))$ is independent of its last argument.
Letting $\D$ be the subvariety of $\Bi$ determined by~(\ref{x.01*}),
we find, as before, that the data one can get from an element
$x$ of any object $S$ of $\coalg{\fb{Set}}{\D}$
is a string of $\!1\!$'s and $\!0\!$'s, and that the
co-operation on these is given by~(\ref{x.01coop}).
However, condition~(\ref{x.01*}) says that if the image of $x$
under $\beta^S$ lies in the second copy of $|S\<|,$
then on applying $\beta^S$ again to that copy, the new image cannot
also lie in the second copy of $|S\<|;$ for its image therein
would have to have equal images under
any pair of maps $a'',\ a'''$ from $|S\<|$ into any set.
This says that the image of $x\in|S\<|$ under~(\ref{x.01^2})
cannot be the string $11.$
Moreover,~(\ref{x.01*}) applies to occurrences of $\beta(-,\beta(-,-))$
within larger expressions; hence the image of $x$
under~(\ref{x.01^w}) cannot have two successive $\!1\!$'s anywhere.

The set $|R'|$ of sequences satisfying this condition forms
the underlying set of the final object $R'$ of $\coalg{\fb{Set}}{\D};$
it constitutes a closed subset of the Cantor set which is uncountable,
but has measure zero under the natural probability measure on that set.
(That this subset has measure zero follows from the fact that
the number of finite strings of length $n$ with no two
successive $\!1\!$'s is the Fibonacci number $f_{n+2},$ and
that $f_{n+2}/2^n\to 0$ as $n\to\infty.)$
Geometrically, one obtains $|R'|$ from the full Cantor set by deleting
the right-hand half of the right-hand half-Cantor-set, and doing the
same recursively to all the natural copies of the Cantor set within
itself.
(What I am  calling a ``half-Cantor-set'' is the part of the Cantor set
within one of the non-deleted ``thirds'' in its geometric construction.)
Given any set $A,$ the value of our representable functor
$\fb{Set}\to\D$ at $A$ consists of all $\!A\!$-valued functions on
$|R'|.$
To compose two such functions $f$ and $g,$ one again draws
their graphs with unit space between them,
but this time, one strikes off the right-hand side of the
graph of $g;$ only then will compressing the abscissa give a
function $\beta(f,g)$ on $|R'|.$
It is not hard to see directly that this operation $\beta$
satisfies~(\ref{x.01*}).

Let us look next at the subvariety $\D$ of $\Bi$ determined
by the identity superficially similar to~(\ref{x.01*}),
\begin{equation}\begin{minipage}[c]{34pc}\label{x.0*3}
$\beta(a,\beta(a',a'''))\ =\ \beta(a,\beta(a'',a''')),$
\end{minipage}\end{equation}
saying that $\beta(-,\beta(-,-))$ is independent of its middle argument.
For an object $S$ of $\coalg{\fb{Set}}{\D},$ this
translates to say that the image of any $x\in|S\<|$ under~(\ref{x.01^w})
can contain no $1$ followed immediately by a $0.$
Thus, the only strings that can occur are those consisting
of a (possibly empty, possibly infinite) string of $\!0\!$'s
followed, if it is finite, by an infinite string of $\!1\!$'s.
This set is still infinite, but is now countable.
Writing $x_i$ $(i\in\omega)$ for the string consisting of
$i$ $\!0\!$'s followed by an infinite string of $\!1\!$'s, and
$x_{\omega}$ for the infinite string of $\!0\!$'s, we conclude that
the final object of $\coalg{\fb{Set}}{\D}$ has underlying set
$\{x_0,\<x_1,\<\dots\<,\<x_{\omega}\},$
and that the induced representable
functor takes every set $A$ to the set of all sequences
$(a_0,a_1,\dots,a_\omega)\in A^{\omega+1},$ with operation
$\beta((a_0,a_1,\dots,a_\omega),(b_0,b_1,\dots,b_\omega))=
(b_0,a_0,a_1,\dots,a_\omega).$
One finds, incidentally, that this algebra satisfies the stronger
identity
\begin{equation}\begin{minipage}[c]{34pc}\label{x.bb=b}
$\beta(a,\beta(a',a''))\ =\ \beta(a,a'').$
\end{minipage}\end{equation}

An identity with still more restrictive consequences is associativity,
\begin{equation}\begin{minipage}[c]{34pc}\label{x.assoc}
$\beta(a,\beta(a',a''))\ =\ \beta(\beta(a,a'),a'').$
\end{minipage}\end{equation}
Here one easily verifies
that the only strings in $\{0,1\}^\omega$ that satisfy the
corresponding condition are $00\<...\<0\<...$ and $11\<...\<1\<...\,.$
Calling these $x_0$ and $x_1,$ we find that
the resulting representable functor takes
a set $A$ to $A\times A,$ with operation
$\beta((a_0,a_1),(b_0,b_1))=(a_0,b_1);$
in semigroup theorists' language, a ``rectangular band'' \cite{C+P}.

Of course, the coalgebras described above are merely the final
objects in their respective categories, not the only such objects.
For instance, for $\D$ the subvariety of $\Bi$ determined
by~(\ref{x.0*3}), a random example of a $\!\D\!$-coalgebra object $S$
of $\fb{Set}$ is the $\!2\!$-element set $|S\<|=\{x,y\}$ with the
co-operation
that, written as a map $|S\<|\to\{0,1\}\times|S\<|,$ has the form
\begin{equation}\begin{minipage}[c]{34pc}\label{x.x,y}
$x\ \mapsto\ (1,y),\qquad y\ \mapsto\ (1,x).$
\end{minipage}\end{equation}
This induces the functor that takes every set $A$ to $A\times A$
with the binary operation
\begin{equation}\begin{minipage}[c]{34pc}\label{x.x,y*b}
$\beta((a,b),\,(a',b'))\ =\ (b',a'),$
\end{minipage}\end{equation}
which is easily seen to satisfy~(\ref{x.0*3}) (but which, unlike the
initial representable functor with that property, does not
satisfy~(\ref{x.bb=b})).
The unique map of the above coalgebra to the final object in the
variety of $\!\D\!$-coalgebras carries both $x$ and $y$ to
``$\!11\<...\<1\<...\!$''.
Equivalently, the unique morphism from our initial representable
functor $\fb{Set}\to\D$ to the functor given by~(\ref{x.x,y*b})
carries every sequence $(a_0,a_1,\dots;a_\omega)$ to $(a_0,a_0).$

\section{Precoalgebras.}\label{S.precoalg}

When we determined the final object of $\coalg{\fb{Set}}{\Bi}$
in the preceding section, the structure of its underlying
algebra (i.e., set), the Cantor set, arose as the inverse limit of
the system of finite sets
\begin{equation}\begin{minipage}[c]{34pc}\label{x.->->}
$\dots\to\{0,1\}^n\to\dots\to\{0,1\}^2\to\{0,1\}^1\to\{0,1\}^0\<.$
\end{minipage}\end{equation}

For none of the above sets were we given a map
$\{0,1\}^n\to\{0,1\}^n\cP\{0,1\}^n.$
Rather, for each of them we had a map
$\{0,1\}^n\to\{0,1\}^{n-1}\cP\{0,1\}^{n-1}.$
We shall abstract this sort of structure in the next definition.

I have so far avoided choosing a notation
for the coprojection maps from an object to a copower
of itself, though I have written the $\!\iota\!$-th
{\em pseudo\/}coprojection map of a pseudocoalgebra $S$ as
$c^S_{\alpha,\<\iota}:S\ba\to S_\alpha.$
Below, we will need to write down formulas relating
pseudocoprojections and genuine coprojections;
so given a cardinal $\kappa,$ an object $A$ of a category $\fb{A}$
having a $\!\kappa\!$-fold copower, and an $\iota\in\kappa,$ let us
write the $\!\iota\!$-th coprojection map of $A$ into that copower as
\begin{equation}\begin{minipage}[c]{34pc}\label{x.coproj}
$q^A_{\kappa,\<\iota}:\ A\ \to\ \coprod_\kappa A\<.$
\end{minipage}\end{equation}

In the next definition, I use Latin rather than Greek letters
for elements of the ordinal $\theta$ because
in our main application, $\theta$ will be $\omega\<{+}1,$
and most of what we do will concern its finite elements.
Also, on encountering expressions such as
$\coprod_{\ari(\alpha)}\,(P_j)\ba,$ the reader should
recall that, as indicated in \S\ref{S.intro}, this denotes the
$\!\ari(\alpha)\!$-fold copower of the single object $(P_j)\ba,$
not the coproduct of a family of different objects.

\begin{definition}\label{D.pre}
Let $\theta$ be an ordinal and $\fb{A}$ a
category with small coproducts.
By a $\!\theta\!$-indexed {\em precoalgebra} $P$ of type $\Omega_\D$ in
$\fb{A}$ we will mean an inverse system of
$\!\D\!$-pseudo\-coalgebras
$(P_k)_{k\in\theta}$ with connecting maps
\begin{equation}\begin{minipage}[c]{34pc}\label{x.pjk}
$p_{j,\<k}:\ P_k\ \to\ P_j$ $(j\leq k\in\theta)$
\end{minipage}\end{equation}
satisfying \textup{(}in addition to the properties defining an
inverse system\textup{)} the following two conditions:\\[.5em]
\textup{(i)}\ \ For every {\em successor} ordinal $j\<{+}1\in\theta$
and every $\alpha\in\Omega_\D,$ we have
\begin{equation}\begin{minipage}[c]{34pc}\label{x.P_j+1*a}
$(P_{j+1})_\alpha\ =\ \coprod_{\ari(\alpha)}\,(P_j)\ba\<,$
\end{minipage}\end{equation}
with the pseudocoprojection associated
with each $\iota\in\ari(\alpha)$ given by
\begin{equation}\begin{minipage}[c]{34pc}\label{x.c_j+1*a}
$c^{P_{j+1}}_{\alpha,\<\iota}\ =\ %
q^{(P_j)\ba}_{\ari(\alpha),\<\iota}\circsm
(p_{j,\<j+1})\ba,$\quad
i.e., the composite mapping\\[.5em]
$(P_{j+1})\ba\,\to\,(P_j)\ba\,\to\,%
\coprod_{\ari(\alpha)}\,(P_j)\ba\,=\,(P_{j+1})_\alpha,$
\end{minipage}\end{equation}
and with the connecting map
$(p_{j,\<j+1})_\alpha:(P_{j+1})_\alpha\to(P_j)_\alpha$
taken to be the unique map making commuting triangles with the
pseudocoprojections $c^{P_j}_{\alpha,\<\iota}$
of $P_j$ and the genuine coprojection maps
$q^{(P_j)\ba}_{\ari(\alpha),\<\iota}: (P_j)\ba\to
\coprod_{\ari(\alpha)}\,(P_j)\ba=(P_{j+1})_\alpha;$
that is, the unique map $(p_{j,\<j+1})_\alpha$ satisfying
\begin{equation}\begin{minipage}[c]{34pc}\label{x.pjk*a}
$(p_{j,\<j+1})_\alpha\circsm q^{(P_j)\ba}_{\ari(\alpha),\<\iota}%
\ =\ c^{P_j}_{\alpha,\<\iota}$\quad for all $\iota\in\ari(\alpha).$
\end{minipage}\end{equation}
\vspace{.17em}
\textup{(ii)}\ \ For every nonzero {\em limit{}} ordinal $k\in\theta$
and every $\alpha\in\Omega_\D,$ we have
\begin{equation}\begin{minipage}[c]{34pc}\label{x.P_k*a}
$(P_k)_\alpha\ =\ \limit_{j<k} (P_j)_\alpha,$\quad with the connecting
maps $(p_{j,\<k})_\alpha:(P_k)_\alpha\to(P_j)_\alpha$ given by the
universal cone associated with this inverse limit.
\end{minipage}\end{equation}

A morphism $f:P\to P'$ of $\!\theta\!$-indexed
precoalgebras will mean a morphism
of inverse systems of pseudocoalgebras, which ``respects''
the relation\textup{~(\ref{x.P_j+1*a})}, in the sense
that for every $\alpha\in\Omega_\D$ and every successor
ordinal $j\<{+}1\in\theta,$ the component
$(f_{j+1})_\alpha: (P_{j+1})_\alpha\to
(P'_{j+1})_\alpha$ is the $\!\ari(\alpha)\!$-fold copower
of the component $(f_j)\ba: (P_j)\ba\to(P'_j\<)\ba\<.$
\end{definition}

Note that the above definition does not specify the objects
$(P_k)\ba,$ the connecting maps $(p_{j,\<k})\ba,$
or the pseudo-co-operations $\alpha^{P_k},$
nor, for $k$ a limit ordinal, the pseudocoprojections
$c^{P_k}_{\alpha,\<\iota}.$
However, the description of $P$ as an inverse system,
together with the definition of a morphism of pseudocoalgebras,
imply certain relations that these must satisfy.
In particular, for $k$ a limit ordinal, the pseudocoprojections
and pseudo-co-operations are uniquely determined by~(\ref{x.P_k*a}) as
soon as $(P_k)\ba$ and its cone of maps $(p_{j,\<k})\ba$ are given.
Let us also verify that given ordinals $j\leq j'$
with $j'\<{+}1\in\theta,$ we have
\begin{equation}\begin{minipage}[c]{34pc}\label{x.jj'+1*a}
$(p_{j+1,\<j'+1})_\alpha\ =\ \coprod_{\ari(\alpha)}\,(p_{j,\<j'})\ba\<,$
\end{minipage}\end{equation}
as one would expect from~(\ref{x.P_j+1*a}).
This is trivial for $j=j'.$
In the contrary case, by the definition of the
right-hand side of~(\ref{x.jj'+1*a}), it suffices to prove
that for all $\iota\in\ari(\alpha)$ we have
$(p_{j+1,\<j'+1})_\alpha\circsm q^{P_{j'}}_{\ari(\alpha),\iota}=
q^{P_j}_{\ari(\alpha),\iota}\circsm(p_{j,\<j'})\ba.$
Substituting $(p_{j+1,\<j'+1})_\alpha=
(p_{j+1,\<j'})_\alpha\circsm (p_{j',\<j'+1})_\alpha$ into
the left side of this equation, and
$(p_{j,\<j'})\ba=(p_{j,\<j+1})\ba\circsm(p_{j+1,\<j'})\ba$
into the right side, and simplifying the results using~(\ref{x.pjk*a})
and~(\ref{x.c_j+1*a}) respectively, we see that the desired
equality is a case of the commuting square relating
pseudocoprojections under the map $p_{j+1,\<j'}:P_{j'}\to P_{j+1}.$

The above definition, like the definition of a $\!\D\!$-pseudocoalgebra
$S$ in \S\ref{S.pseudo}, brings in $\D$ only through its type,
$\Omega_\D.$
In the definition of $\!\D\!$-pseudocoalgebra,
no more was needed; the identities of $\D$ came in via
the genuine $\!\D\!$-coalgebras
mapped to $S,$ whose colimit gave our final $\!\D\!$-coalgebra.
Here, however, we will be creating $\!\D\!$-coalgebras out of
precoalgebras, so we will need to introduce the concept of
a precoalgebra cosatisfying an identity.

The analogs, in a precoalgebra $P,$ of the primitive co-operations
$\alpha^R$ $(\alpha\in\Omega_\D)$ of a coalgebra
are the pseudo-co-operations $\alpha^{P_k}.$
We shall now form from these further
maps, corresponding to {\em derived} operations, i.e.,
$\!\Omega_\D\!$-terms -- expressions constructed by formal composition
from primitive operations and projection operations (with the
latter appearing only at the ``bottom'' step).

We need one bit of notation:  If $A$ and $B$ are objects of
a category $\fb{A},$ $\kappa$ is a cardinal such that
$\coprod_\kappa A$ exists, and we are given a family of morphisms
$f_\iota: A\to B$ $(\iota\in\kappa),$ then
$\bigvee_{\kern-.1em\iota\in\kappa}\<f_\iota: \coprod_\kappa A\to B$
will denote the morphism (whose existence and uniqueness are guaranteed
by the universal property of $\coprod_\kappa A)$ satisfying
\begin{equation}\begin{minipage}[c]{34pc}\label{x.bigvee}
$(\bigvee_{\kern-.1em\iota\in\kappa}f_\iota)\,
\circsm\,q^A_{\kappa,\<\eta}
\ =\ f_\eta$\quad for all $\eta\in\kappa.$
\end{minipage}\end{equation}
\vspace{.17em}

\begin{definition}\label{D.instance}
Suppose $s$ is an $\!\Omega_\D\!$-term of arity $\gamma,$ and $P$ a
$\!\theta\!$-indexed precoalgebra of type $\Omega_\D$ in a
category $\fb{A}.$
For $j\leq k\in\theta,$ a {\em $\!(j,k)\!$-instance} of $s$ in
$P$ \textup{(}unique, by the next lemma, if it exists for the
given $k,$ $j$ and $s)$ will mean a map
\begin{quote}
$(P_k)\ba\ \to\ \coprod_\gamma\,(P_j)\ba\<,$
\end{quote}
arising under the following recursive construction:

If $s$ is the $\!\iota\!$-th projection map for some $\iota\in\gamma,$
then for all $j\leq k\in\theta,$ the $\!(j,k)\!$-instance of $s$ is
given by $q^{(P_j)\ba}_{\gamma,\<\iota}\circsm(p_{j,\<k})\ba\<.$
\textup{(}In particular, taking $j=k,$ the $\!(k,k)\!$-instance of
the $\!\iota\!$-th projection $s$ is the $\!\iota\!$-th
coprojection, $q^{(P_k)\ba}_{\gamma,\<\iota}.)$

If $s$ has the form $\alpha(u_0,u_1,\dots),$ where $\alpha\in\Omega_\D,$
and $(u_0,u_1,\dots)$ is an $\!\ari(\alpha)\!$-tuple
of $\!\Omega_\D\!$-terms \textup{(}some or all of
which may be projections and/or primitive operations\textup{)}, each
of arity $\gamma,$ and if for some $i\leq j<k\in\theta$ we have,
for each $\iota\in\ari(\alpha),$ an $\!(i,j)\!$-instance of $u_\iota,$
which we denote $U_\iota: (P_j)\ba\to\coprod_\gamma\,(P_i)\ba,$
then an $\!(i,k)\!$-instance of $s$ is given by the composite map
\begin{equation}\begin{minipage}[c]{34pc}\label{x.instder}
$(\bigvee_{\kern-.1em\iota\in\ari(\alpha)}\<U_\iota)
\circsm\alpha^{P_{j+1}}\circsm (p_{j+1,\<k})\ba:\\[.17em]
\hspace*{4em}(P_k)\ba\,\to\,(P_{j+1})\ba\,\to\,(P_{j+1})_\alpha=
\coprod_{\ari(\alpha)}\,(P_j)\ba\,\to\,\coprod_\gamma\,(P_i)\ba\<.$
\end{minipage}\end{equation}
\end{definition}

Point~(iii) of the following lemma is the uniqueness result mentioned
in the first paragraph of the above definition.

\begin{lemma}\label{L.uniq_inst}
Let $s$ be an $\!\Omega_\D\!$-term of
arity $\gamma,$ and $P$ a $\!\theta\!$-indexed precoalgebra
of type $\Omega_\D,$ for some ordinal $\theta.$
Then\\[.5em]
\textup{(i)}\ \ If for some $i\leq k\leq k'\in\theta$ there exists an
$\!(i,k)\!$-instance $S$ of $s$ in $P,$ then
there is also an $\!(i,k')\!$-instance, given by
$S\circsm(p_{k,\<k'})\ba\<.$\\[.5em]
\textup{(ii)}\ \ If for some $i'\leq i\leq k\in\theta$ there exists an
$\!(i,k)\!$-instance $S$ of $s$ in $P,$ then
there is also an $\!(i',k)\!$-instance, given by
$(\coprod_\gamma (p_{i',\<i})\ba)\circsm S.$\\[.5em]
\textup{(iii)}\ \ If for some $i\leq k\in\theta$ there exist
$\!(i,k)\!$-instances of $s$ in $P,$ then all such instances are equal.
\end{lemma}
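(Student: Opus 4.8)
The plan is to prove all three parts by induction on the structure of the term $s$, disposing of~(i) and~(ii) first and reserving the real work for the composite-term case of~(iii). Throughout, the two facts I would lean on about the operator of~(\ref{x.bigvee}) are that it distributes over post-composition, $g\circsm(\bigvee_\iota f_\iota)=\bigvee_\iota(g\circsm f_\iota)$, and over a copower map, $\bigvee_\iota(f_\iota\circsm g)=(\bigvee_\iota f_\iota)\circsm\coprod_{\ari(\alpha)}g$; both follow immediately from the defining property~(\ref{x.bigvee}) by checking against coprojections.

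Part~(i) needs no inductive hypothesis: I would simply inspect the two clauses of Definition~\ref{D.instance}. If $s$ is the $\!\iota\!$-th projection, then $S=q^{(P_i)\ba}_{\gamma,\iota}\circsm(p_{i,k})\ba$, and precomposing on the input side with $(p_{k,k'})\ba$ turns the trailing factor into $(p_{i,k'})\ba$ by the composition law of the inverse system, giving exactly the $\!(i,k')\!$-instance. If $s=\alpha(u_0,u_1,\dots)$, then $S$ has the form $(\bigvee_\iota U_\iota)\circsm\alpha^{P_{j+1}}\circsm(p_{j+1,k})\ba$ of~(\ref{x.instder}); composing with $(p_{k,k'})\ba$ merely replaces $(p_{j+1,k})\ba$ by $(p_{j+1,k})\ba\circsm(p_{k,k'})\ba=(p_{j+1,k'})\ba$, yielding an $\!(i,k')\!$-instance built from the same $j$ and the same $U_\iota$.

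For~(ii) I would induct on $s$, pushing $\coprod_\gamma(p_{i',i})\ba$ through the construction. The projection case uses that this copower map commutes with coprojections, so that $(\coprod_\gamma(p_{i',i})\ba)\circsm q^{(P_i)\ba}_{\gamma,\iota}=q^{(P_{i'})\ba}_{\gamma,\iota}\circsm(p_{i',i})\ba$, after which the composition law collapses $(p_{i',i})\ba\circsm(p_{i,k})\ba$ to $(p_{i',k})\ba$. In the composite case I would apply the distribution-over-post-composition law with $g=\coprod_\gamma(p_{i',i})\ba$ and invoke the inductive hypothesis on each $u_\iota$ — namely that $(\coprod_\gamma(p_{i',i})\ba)\circsm U_\iota$ is an $\!(i',j)\!$-instance of $u_\iota$ — which rewrites $(\coprod_\gamma(p_{i',i})\ba)\circsm S$ into the shape~(\ref{x.instder}) of an $\!(i',k)\!$-instance of $s$ built from the same $j$.

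The substance is~(iii), proved by induction on $s$. Projections are trivial, since Definition~\ref{D.instance} assigns them a single map, and any composite $s=\alpha(u_0,u_1,\dots)$ can only receive instances through the second clause. I would take two $\!(i,k)\!$-instances presented as in~(\ref{x.instder}) via intermediate indices $j_1\le j_2$ (comparable as ordinals) and subterm instances $U_\iota^{(1)}$, $U_\iota^{(2)}$. Part~(i) applied to $u_\iota$ produces the $\!(i,j_2)\!$-instance $U_\iota^{(1)}\circsm(p_{j_1,j_2})\ba$, and the inductive hypothesis~(iii) for $u_\iota$ forces $U_\iota^{(2)}=U_\iota^{(1)}\circsm(p_{j_1,j_2})\ba$. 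Distributing $\bigvee$ over this common right factor gives $\bigvee_\iota U_\iota^{(2)}=(\bigvee_\iota U_\iota^{(1)})\circsm\coprod_{\ari(\alpha)}(p_{j_1,j_2})\ba$, and by~(\ref{x.jj'+1*a}) the copower map equals $(p_{j_1+1,j_2+1})_\alpha$. The decisive step is then that the connecting map $p_{j_1+1,j_2+1}$ is a morphism of pseudocoalgebras (Definition~\ref{D.pseudo}), hence makes a commuting square with the pseudo-co-operations: $(p_{j_1+1,j_2+1})_\alpha\circsm\alpha^{P_{j_2+1}}=\alpha^{P_{j_1+1}}\circsm(p_{j_1+1,j_2+1})\ba$. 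Substituting this and collapsing $(p_{j_1+1,j_2+1})\ba\circsm(p_{j_2+1,k})\ba=(p_{j_1+1,k})\ba$ converts the $\!j_2\!$-construction into $(\bigvee_\iota U_\iota^{(1)})\circsm\alpha^{P_{j_1+1}}\circsm(p_{j_1+1,k})\ba$, which is the $\!j_1\!$-construction; so the two instances agree. I expect the only real obstacle to be the bookkeeping in this last case — correctly matching the copower map to $(p_{j_1+1,j_2+1})_\alpha$ through~(\ref{x.jj'+1*a}) and inserting the pseudo-co-operation square at the right point — but once these are aligned the computation closes.
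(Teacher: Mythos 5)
Your proof is correct and takes essentially the same route as the paper's: structural induction on the term, with the uniqueness part~(iii) settled by exactly the chain of rewritings in the paper's display~(\ref{x.jj'}) --- replacing the $\!(i,j_2)\!$-instances by $U^{(1)}_\iota\circsm(p_{j_1,\<j_2})\ba$ via~(i) and the inductive hypothesis, pulling the copower map out with~(\ref{x.jj'+1*a}), and then using the commuting square expressing that $p_{j_1+1,\<j_2+1}$ is a morphism of pseudocoalgebras. The only cosmetic difference is your (accurate) observation that~(i) needs no inductive hypothesis, which the paper nominally folds into the induction without actually using it.
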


\begin{proof}
If $s$ is a projection map, then~(i) and~(ii) follow
immediately from the relation $p_{i,k}=p_{i,j}\circsm p_{j,k},$
and~(iii) holds because there are
no choices in that case of Definition~\ref{D.instance}.
So assume $s=\alpha(u_0,u_1,\dots)$ as in the second part of
Definition~\ref{D.instance}, and suppose inductively that the three
statements are true for all the $u_\iota$ $(\iota\in\ari(\alpha)).$

Statement~(i) follows again from the
formula $(p_{j,\<k'})\ba=(p_{j,\<k})\ba\circsm(p_{k,\<k'})\ba,$
applied in this case
to the definition of an instance of $s=\alpha(u_0,u_1,\dots).$
Statement~(ii) is obtained by combining that definition
with the inductive assumption that~(ii) holds for all $u_\iota.$

In verifying~(iii), note that given the inductive
assumption that~(iii)
holds for all $u_\iota,$ the only place nonuniqueness can come into the
definition of an $\!(i,k)\!$-instance of $s$ is in the choice of $j.$
So suppose we have $i\leq j<j'<k,$ and that for each
$\iota\in\ari(\alpha)$
there is an $\!(i,j)\!$-instance $U_\iota$ of $u_\iota.$
Then by part~(i) and our inductive assumption,
the unique $\!(i,j')\!$-instance of each $u_\iota$ is
given by $U'_\iota=U_\iota\circsm(p_{j,\<j'})\ba\<.$
We now evaluate~(\ref{x.instder}) with $j'$ for $j,$ getting the map
\begin{equation}\begin{minipage}[c]{34pc}\label{x.jj'}
\hspace*{1.25em}$(\bigvee_{\kern-.1em\iota\in\ari(\alpha)}\<U'_\iota)
\circsm\<\alpha^{P_{j'+1}}\circsm(p_{j'+1,\<k})\ba\\[.5em]
=\ (\bigvee_{\kern-.1em\iota\in\ari(\alpha)}
\<(U_\iota\circsm(p_{j,\<j'})\ba))
\circsm\<\alpha^{P_{j'+1}}\circsm(p_{j'+1,\<k})\ba\\[.5em]
=\ (\bigvee_{\kern-.1em\iota\in\ari(\alpha)}\<U_\iota)\circsm
(\coprod_{\ari(\alpha)}\<(p_{j,\<j'})\ba)\circsm
\alpha^{P_{j'+1}}\circsm(p_{j'+1,\<k})\ba\\[.5em]
=\ (\bigvee_{\kern-.1em\iota\in\ari(\alpha)}\<U_\iota)\circsm
\<(p_{j+1,\<j'+1})_\alpha\circsm
\alpha^{P_{j'+1}}\circsm(p_{j'+1,\<k})\ba$
\qquad(by~(\ref{x.jj'+1*a}))\\[.5em]
$=\ (\bigvee_{\kern-.1em\iota\in\ari(\alpha)}\<U_\iota)\circsm\<
\alpha^{P_{j+1}}\circsm(p_{j+1,\<j'+1})\ba
\circsm(p_{j'+1,\<k})\ba\\[.5em]
=\ (\bigvee_{\kern-.1em\iota\in\ari(\alpha)}\<U_\iota)\circsm\<
\alpha^{P_{j+1}}\circsm(p_{j+1,\<k})\ba\<,$
\end{minipage}\end{equation}
giving the required equality of our two $\!(i,k)\!$-instances of $s.$
\end{proof}

Let us define the {\em depth} of an $\!\Omega_\D\!$-term by
taking projections to have depth~$0,$ and recursively defining
$\mathrm{depth}(\alpha(u_0,u_1,\dots)),$ where $\alpha$ is
a primitive operation, to be the ordinal
$\sup_{\iota\in\ari(\alpha)}(\mathrm{depth}(u_\iota))+1.$
Then one can verify by induction that
in a $\!\theta\!$-indexed precoalgebra of type $\Omega_\D,$
an $\!\Omega_\D\!$-term $s$ has a $\!(j,\<k)\!$-instance if and only if
$k\geq j+\nolinebreak\mathrm{depth}(s).$

\vspace{.5em}
We can now say what it means for a precoalgebra
of type $\Omega_\D$ to cosatisfy a system of identities.

\begin{definition}\label{D.cosatisfy}
Let $P$ be a $\!\theta\!$-indexed precoalgebra of type $\Omega_\D$ in a
category $\fb{A}.$

Given $\!\Omega_\D\!$-terms $s$ and $t$
of the same arity $\gamma,$ we shall
say that $P$ {\em cosatisfies} the identity $s=t$ if
for all $j<k\in\theta$ for which both a $\!(j,k)\!$-instance of $s$
and a $\!(j,k)\!$-instance of $t$ exist,
these are equal as maps $(P_k)\ba\to\coprod_\gamma (P_j)\ba\<.$

If our category $\fb{A}$ is a variety $\C$ of algebras,
we shall say that the identity $s=t$ is cosatisfied
{\em at} an element $x\in|(P_k)\ba|$ if
for all $j<k\in\theta$ for which both a $\!(j,k)\!$-instance of $s$
and a $\!(j,k)\!$-instance of $t$ exist, these agree on~$x.$

Recalling from \textup{\S\ref{S.intro}} that $\Phi_\D$ denotes
a set of defining identities for the variety $\D,$
we shall say that $P$ is a {\em $\!\D\!$-precoalgebra}
if it is a precoalgebra of type $\Omega_\D$ which
cosatisfies each member of $\Phi_\D.$
\end{definition}

{\em Remarks.}
If two sets of identities for $\!\Omega_\D\!$-algebras,
$\Phi_\D$ and $\Phi',$ are each derivable from the other by the
standard method of computing with identities, then the same
$\!\Omega_\D\!$-algebras satisfy $\Phi_\D$ and $\Phi',$
and the same $\!\Omega_\D\!$-coalgebras cosatisfy them.
But it is not in general true that the same
$\!\Omega_\D\!$-precoalgebras cosatisfy them.
This is because when (for example) we
deduce an identity $s=t$ in $\Phi'$ from the
identities in $\Phi_\D,$ the $\!\Omega_\D\!$-terms $s$ and $t$ may both
have smaller depth than some terms of the identities of $\Phi_\D$ from
which we derived them, or than some terms appearing in intermediate
steps of the derivation.
(E.g., the group identities and the depth-$\!3\!$
identity $((x\cdot x)\cdot x)\cdot x=x\cdot x$
imply the depth-$\!1\!$ identity $x\cdot x=e.)$
Then $s$ and $t$ will admit $\!(j,k)\!$-instances
for some pairs $(j,k)$ such that not all terms of the original
identities and/or intermediate steps do,
so there is no way to deduce the cosatisfaction of all
instances of $s=t$ from the cosatisfaction
of all instances of the identities of~$\Phi_\D.$

On the other hand, for any $j$ and $k$ sufficiently far apart to
accommodate all steps of a derivation of $s=t$ from the identities
of $\Phi_\D,$ cosatisfaction of the identities of $\Phi_\D$
does imply cosatisfaction of the $\!(j,k)\!$-instance of $s=t.$
(This is why the coalgebras we will get by a limit process
from our $\!\D\!$-precoalgebras in the next section
will cosatisfy all identities holding in the variety $\D,$
regardless of our choice of defining identities.)

It is to make the concept of a $\!\D\!$-precoalgebra well defined
despite the above phenomenon that we are using
a nonstandard definition of variety, in which a variety $\D$
must be given with a specified set $\Phi_\D$ of defining identities.
Another way to get well-definedness would have been to
require that a $\!\D\!$-precoalgebra
cosatisfy {\em all} identities holding in $\D.$
But this would not have gotten around the underlying problem.
For instance, if $\D_1$ and $\D_2$ are two
varieties of the same type $\Omega_\D,$ then under that
definition, an $\!\Omega_\D\!$-precoalgebra might be both
a $\!\D_1\!$-precoalgebra and a $\!\D_2\!$-precoalgebra
without being a $\!\D_1\cap\D_2\!$-precoalgebra, since it
could cosatisfy all identities of $\D_1$ and of $\D_2,$
but fail to cosatisfy some identities that these together imply.
And more important for us, in calculations such as those we will carry
out in \S\S\ref{S.BiBi}-\ref{S.>1zeroary} for
particular varieties, it will be simplest if one only needs to study
cosatisfaction of a fixed family of defining identities.

\vspace{.5em}
Back to the task at hand.
Let $S$ be a $\!\D\!$-pseudocoalgebra in $\C.$
Then for any ordinal $\theta>0$ there is a natural recursive
construction of a $\!\theta\!$-indexed $\!\D\!$-precoalgebra $P$
having $P_0=S.$
Namely, suppose $0<k\in\theta,$ and that the pseudocoalgebras $P_j$ and
connecting maps among them have been defined for all $j<k,$ so
as to make $(P_j)_{j<k}$ a $\!k\!$-indexed $\!\D\!$-pseudocoalgebra.
Let us describe how to define $P_k$ and its maps to the~$P_j.$

First suppose $k$ is a successor ordinal, $k=j\<{+}1.$
The objects $(P_{j+1})_\alpha$ $(\alpha\in\Omega_\D)$ and the
connecting maps $(p_{j,\<j+1})_\alpha: (P_{j+1})_\alpha\to(P_j)_\alpha$
are determined by~(\ref{x.P_j+1*a}) and~(\ref{x.pjk*a}),
so we need only construct $(P_{j+1})\ba$ and the maps coming from it.
Let us start by forming a universal object having maps to all
the necessary objects:
\begin{equation}\begin{minipage}[c]{34pc}\label{x.Qj+1}
$Q_{j+1}\ =\ (P_j)\ba\times\,
\prod_{\alpha\in\Omega_\D} (P_{j+1})_\alpha.$
\end{minipage}\end{equation}
We regard $Q_{j+1}$ as a candidate for
$(P_{j+1})\ba,$ with the projection to $(P_j)\ba$ as the candidate
for the map $(p_{j,\<j+1})\ba,$ the projections to the
$(P_{j+1})_\alpha$ as the candidates for the
pseudo-co-operations $\alpha^{P_{j+1}},$
and finally, with the candidate for each pseudocoprojection
$c^{P_{j+1}}_{\alpha,\<\iota}$ given, as required
by~(\ref{x.c_j+1*a}), by the composite of
the projection $Q_{j+1}\to(P_j)\ba$ with the $\!\iota\!$-th coprojection
$q^{(P_j)\ba}_{\ari(\alpha),\<\iota}:
(P_j)\ba\to\coprod_{\ari(\alpha)}(P_j)\ba= (P_{j+1})_\alpha.$

In general, the maps described above will not satisfy the conditions
required to give a morphism of pseudocoalgebras $P_{j+1}\to P_j.$
However, if we replace $Q_{j+1}$ by the joint equalizer of the pairs of
maps that need to agree, this gives us a universal
$\!k\<{+}\<1\<\!$-indexed $\!\Omega_\D\!$-precoalgebra extending the
given $\!j\<{+}1\!$-indexed $\!\D\!$-precoalgebra.
Passing, further, to the equalizer of all pairs of maps out of
this subalgebra of $Q_{j+1}$ that must agree if our
$\!\Omega_\D\!$-precoalgebra is
to cosatisfy the identities of $\Phi_\D,$ and calling this new
equalizer $(P_{j+1})\ba,$ we get a universal extension of our
$\!j\<{+}1\!$-indexed $\!\D\!$-precoalgebra $(P_i)_{i<j+1}$ to a
$\!k\<{+}\<1\<\!$-indexed $\!\D\!$-precoalgebra $(P_i)_{i\leq j+1}.$

If $k$ is a limit ordinal, we similarly know that the
objects $(P_k)_\alpha$ $(\alpha\in\Omega_\D)$
must be defined by~(\ref{x.P_k*a}).
In this case we take as our candidate for $(P_k)\ba$ the object
\begin{equation}\begin{minipage}[c]{34pc}\label{x.Pkbaselim}
$Q_k\ =\ \limit_{j<k} (P_j)\ba\<,$
\end{minipage}\end{equation}
and as candidates for the maps $(p_{j,k})\ba,$ the universal cone
of maps from this object to the $(P_j)\ba$ $(j<k).$
This automatically has pseudocoprojection and pseudo-co-operation
maps, induced under the limit process by the pseudocoprojection
and pseudo-co-operation maps of the $P_j$ $(j<k),$ which will
make the necessary diagrams commute to give us an
$\!\Omega_\D\!$-precoalgebra.
Will it also cosatisfy the identities of $\D$?
It will if $\D$ is finitary, since in that case,
if we take such an identity $s=t,$ and for some $i<k$ calculate
the $\!(i,k)\!$-instances of $s$ and $t,$ then our calculation
will involve only finitely many indices $j<k,$ so these
indices will have a common strict upper bound $k_0<k;$
and we can get $\!(i,k_0\<{+}1)\!$-instances of $s$ and $t,$ which
by hypothesis will be equal; whence by Lemma~\ref{L.uniq_inst}(i) and
(iii), our $\!(i,k)\!$-instances of $s$ and $t$ are also equal.
For not necessarily finitary $\D,$ the same argument
works if $k$ has cofinality $\geq\lambda_\D.$
If, finally, $\D$ is infinitary and $k$ has cofinality
$<\lambda_\D,$ and {\em not} all identities of $\D$ are cosatisfied
by this $\!\Omega_\D\!$-precoalgebra, we replace $Q_k,$ as
before, with the joint equalizer of the relevant pairs of maps, i.e.,
we use the subalgebra $(P_k)\ba\subseteq Q_k$ consisting of all
elements at which these identities are cosatisfied.

Let us record the universal property of the above construction.

\begin{proposition}\label{P.P_is_final}
Let $S$ be a $\!\D\!$-pseudocoalgebra in $\C,$ let $\theta>0$
be an ordinal, and let $P$ be the $\!\theta\!$-indexed
$\!\D\!$-precoalgebra with $P_0=S$ constructed recursively
as described above.
Then $P$ is final among $\!\theta\!$-indexed
$\!\D\!$-precoalgebras $P'$ with $P'_0=S,$ or more
generally, given with a morphism $P'_0\to S.$

In particular, if $R$ is a genuine $\!\D\!$-coalgebra with a morphism
to $S,$ then the precoalgebra formed by taking $\theta$ copies of
$R$ \textup{(}regarded as pseudocoalgebras via the
functor $\psi),$ and connecting them by identity morphisms, admits
a unique map to $P$ extending the given map to $S=P_0.$\endproof
\end{proposition}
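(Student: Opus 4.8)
The plan is to build the morphism $f:P'\to P$ by transfinite induction on $k\in\theta$, constructing its base components $(f_k)\ba:(P'_k)\ba\to(P_k)\ba$ and taking $(f_0)\ba$ to be the base component of the given map $P'_0\to S=P_0$. Since a morphism of precoalgebras is determined by its base components (for successor $k=j{+}1$ the copower-respecting clause forces $(f_{j+1})_\alpha=\coprod_{\ari(\alpha)}(f_j)\ba$, and for limit $k$ the $\alpha$-components are forced by the universal cone of~(\ref{x.P_k*a})), it suffices to produce each $(f_k)\ba$ uniquely, maintaining the inductive hypothesis that $f_k:P'_k\to P_k$ is a pseudocoalgebra morphism commuting with all connecting maps constructed so far. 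For the successor step, recall from~(\ref{x.Qj+1}) that $(P_{j+1})\ba$ is a subalgebra of $Q_{j+1}=(P_j)\ba\times\prod_\alpha(P_{j+1})_\alpha$, with the projections serving as $(p_{j,\<j+1})\ba$ and the $\alpha^{P_{j+1}}$, and the $c^{P_{j+1}}_{\alpha,\<\iota}$ given by~(\ref{x.c_j+1*a}). A map $g:(P'_{j+1})\ba\to Q_{j+1}$ is determined by its coordinates, and these are forced: commuting with $p_{j,\<j+1}$ forces the $(P_j)\ba$-coordinate to be $(f_j)\ba\circsm(p'_{j,\<j+1})\ba$, while commuting with the pseudo-co-operations, together with $(f_{j+1})_\alpha=\coprod_{\ari(\alpha)}(f_j)\ba$ and $(P'_{j+1})_\alpha=\coprod_{\ari(\alpha)}(P'_j)\ba$, forces the $(P_{j+1})_\alpha$-coordinate to be $(\coprod_{\ari(\alpha)}(f_j)\ba)\circsm\alpha^{P'_{j+1}}$. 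This gives a unique candidate $g$, and uniqueness of $(f_{j+1})\ba$; it remains to show $g$ factors through the subalgebra $(P_{j+1})\ba$.

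That subalgebra is cut out of $Q_{j+1}$ by two families of equations: those making $p_{j,\<j+1}$ a pseudocoalgebra morphism, and those expressing cosatisfaction of $\Phi_\D$. The pseudocoprojection squares hold identically on $Q_{j+1}$ by~(\ref{x.pjk*a}) and~(\ref{x.c_j+1*a}). For the pseudo-co-operation square $(p_{j,\<j+1})_\alpha\circsm\alpha^{P_{j+1}}=\alpha^{P_j}\circsm(p_{j,\<j+1})\ba$, I would precompose with $g$: using that $f_j$ is a pseudocoalgebra morphism (so $\alpha^{P_j}\circsm(f_j)\ba=(f_j)_\alpha\circsm\alpha^{P'_j}$) and that $p'_{j,\<j+1}$ is one in $P'$, the required identity reduces, after composing with each coprojection $q^{(P'_j)\ba}_{\ari(\alpha),\<\iota}$ and applying~(\ref{x.pjk*a}) for both $P$ and $P'$, to the pseudocoprojection square for $f_j$, which holds by hypothesis. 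For the second family I would isolate the key lemma: \emph{precoalgebra morphisms are natural with respect to term-instances}, i.e.\ for any $\Omega_\D$-term $s$ of arity $\gamma$ with $(i,k)$-instances $S'$ in $P'$ and $S$ in $P$, one has $S\circsm(f_k)\ba=(\coprod_\gamma(f_i)\ba)\circsm S'$. This is proved by induction on the structure of $s$ from Definition~\ref{D.instance}, using that the components of $f$ commute with coprojections, connecting maps, and pseudo-co-operations. Applying it (in the version where $g$ plays the role of $(f_{j+1})\ba$ and $Q_{j+1}$ carries the candidate pseudo-structure) to each identity $s=t$ of $\Phi_\D$, and using that $P'$ cosatisfies $\Phi_\D$ (so $S'=T'$), yields $S\circsm g=T\circsm g$. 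Hence $g$ factors uniquely through $(P_{j+1})\ba$, defining $(f_{j+1})\ba$; a routine coprojection check then confirms $f_{j+1}$ is a pseudocoalgebra morphism commuting with $p_{j,\<j+1}$.

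For a limit ordinal $k$, $(P_k)\ba$ is by~(\ref{x.Pkbaselim}) the inverse limit $\limit_{j<k}(P_j)\ba$, or, in the infinitary low-cofinality case, the subalgebra of it on which the instances of $\Phi_\D$ agree. The family $\{(f_j)\ba\circsm(p'_{j,\<k})\ba\}_{j<k}$ is a cone by the inductive hypotheses, so the universal property of $\limit$ yields the unique candidate compatible with the connecting maps; that it lands in the $\Phi_\D$-cosatisfying subalgebra follows once more from the naturality lemma and cosatisfaction by $P'$ (for finitary $s=t$ each instance touches only finitely many levels $j<k$, so the needed naturality is already available), and that it is a pseudocoalgebra morphism follows from the $\alpha$-components at level $k$ being induced through~(\ref{x.P_k*a}). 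I expect the main obstacle to be exactly this naturality lemma, and in particular its application to $g$ before $g$ is known to factor through $(P_{j+1})\ba$: one must check that the instance computations cutting out the equalizer interact correctly with the forced coordinates of $g$. Everything else is bookkeeping with the universal properties of products, inverse limits, and equalizers.

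Finally, for the ``in particular'' assertion, the system $P'$ of $\theta$ copies of a $\!\D\!$-coalgebra $R$ linked by identity morphisms is itself a $\!\D\!$-precoalgebra: relation~(\ref{x.P_j+1*a}) holds because $\psi(R)_\alpha=\coprod_{\ari(\alpha)}|R\<|$ with coprojections as pseudocoprojections (Definition~\ref{D.psi}), the identity connecting maps force $(p'_{j,\<j+1})_\alpha=\mathrm{id}$ via~(\ref{x.pjk*a}), and $P'$ cosatisfies $\Phi_\D$ because each copy $\psi(R)$ is a genuine $\!\D\!$-coalgebra, so the instances of any identity of $\Phi_\D$ reduce to the corresponding derived co-operation of $R$ and agree. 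Taking the given morphism $\psi(R)\to S$ as the structure map of $P'$ and invoking the finality just proved then produces the unique precoalgebra morphism $P'\to P$ extending it.
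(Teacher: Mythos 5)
Your proposal is correct and takes essentially the same approach as the paper: the paper states this proposition with no separate proof (its finality being regarded as built into the level-by-level universal choices of the construction), and then sketches precisely your induction --- forced coordinates into $Q_{j+1},$ factorization through the joint equalizer, limit steps by the universal property of $\limit$ --- for the special case where $P'$ is the constant system on a coalgebra $R.$ Your two additions --- carrying out the induction for an arbitrary precoalgebra $P',$ and isolating the naturality-of-instances lemma, which is exactly the content compressed into the paper's remark that the identities of $\D$ are cosatisfied on the image ``because they are cosatisfied by the coalgebra $R$'' --- are both sound.
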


Let me sketch concretely how the morphism of the last
paragraph above is obtained.
Given $k>0,$ suppose that for all $j<k$ we have obtained a unique
family of morphisms of pseudocoalgebras $f_j:R\to P_j$ that make
commuting triangles with the maps $p_{i,\<j}:P_j\to P_i,$ and such that
$f_0$ is the given map $R\to P_0.$
If $k$ is a successor ordinal $j\<{+}1,$
then for each $\alpha\in\Omega_\D,$
we must, by the definition of a morphism of precoalgebras, take
$(f_{j+1})_\alpha$ to be $\coprod_{\ari(\alpha)}(f_j)\ba:
\coprod_{\ari(\alpha)}|R\<|\to
\coprod_{\ari(\alpha)}(P_j)\ba=(P_{j+1})_\alpha.$
To construct $(f_{j+1})\ba,$ we first map $|R\<|$
into each of the components of the product algebra $Q_{j+1}$
of~(\ref{x.Qj+1}): into $(P_j)\ba$ by $(f_j)\ba,$
and into each $(P_{j+1})_\alpha$ by composing the
co-operation $\alpha^R$ of $R$ with $(f_{j+1})_\alpha,$ described above.
The combined map $|R\<|\to Q_{j+1}$ is easily shown to make commuting
squares with maps down to the components of $P_j.$
Moreover, the identities of $\D$ are cosatisfied at all
elements of the image of that map $|R\<|\to Q_{j+1},$
because they are cosatisfied by the coalgebra $R.$
Hence the above map factors through
the subalgebra $(P_{j+1})\ba\subseteq Q_{j+1},$ giving the
desired map $(f_{j+1})\ba\<.$

If $k$ is a limit ordinal, on the other hand, the maps
$(f_k)_\alpha$ are obtained from the maps $(f_j)_\alpha$ $(j<k)$ by the
universal property of $(P_k)_\alpha$ as a limit algebra,
while $(f_k)\ba$ is obtained, by this same property, as a map
into $Q_k,$ and is again seen to land in $(P_k)\ba$
because $R$ cosatisfies the identities of $\D.$

\vspace{.5em}
Before applying precoalgebras to get a concrete description of the
universal coalgebras of Theorem~\ref{T.final},
let us note a couple of examples of the phenomenon alluded
to in the remark following Definition~\ref{D.cosatisfy}: that
two systems $\Phi_\D$ and $\Phi'$ of identities for
$\!\Omega_\D\!$-algebras which are satisfied by the same class of
algebras, and hence cosatisfied by the same class of coalgebras,
may be cosatisfied by different classes of precoalgebras.
As suggested there, let us take $\Phi_\D$ to consist of the usual
identities for groups together with the depth-$\!3\!$
identity $((x\cdot x)\cdot x)\cdot x=x\cdot x\<,$ and
$\Phi'$ to differ from $\Phi_\D$ only in that
the lastmentioned identity is replaced by $x\cdot x=e.$
Let $\C$ be any variety having a cogroup $R$ which does
not cosatisfy $x\cdot x=e\<;$ e.g., $\C=\Gp,$ and $R=$
the coalgebra representing the identity functor.
Let us define a $\!2\!$-indexed $\!\Omega_\Gp\!$-precoalgebra $P$
in $\C$ by letting $P_0$ and
$P_1$ both be $R,$ with $p_{0,1}$ the identity morphism.
Then $P$ cosatisfies the identities of $\Gp,$ and
vacuously cosatisfies the depth-$\!3\!$ identity
$((x\cdot x)\cdot x)\cdot x=x\cdot x,$ since there are no instances
of it to test; but it does not cosatisfy $x\cdot x=e,$ because there
is an instance thereof, and its cosatisfaction would be equivalent to
the cosatisfaction of that identity by $R.$

If the use of a $\!2\!$-indexed precoalgebra here feels unsatisfying, we
can get from this example an $\!\omega\!$-indexed precoalgebra $P$ with
the same properties:  Take $P_0,$ $P_1,$ and $p_{0,1}$ as above, while
for $2\leq k\in\omega,$ let $(P_k)\ba$ be the initial algebra in $\C.$
The remainder of the structure of $P$ is uniquely determined
by Definition~\ref{D.pre}, and it is easy to see that $P$ still
has the properties described above.

(The reader might find it amusing to try to find a precoalgebra
cosatisfying $\Phi',$ i.e., the group identities together
with $x\cdot x=e,$ but not the set $\Phi''$ consisting of those
same identities together with $x\cdot y=y\cdot x,$ known to be implied
in groups by the identity $x\cdot x=e.$ Since $x\cdot y=y\cdot x$
has the same depth, $1,$ as $x\cdot x=e,$ one needs to modify
the above trick -- but only slightly.)

Let us also note why, in defining what it means for
a precoalgebra $P$ to cosatisfy an identity $s=t,$
we restricted attention
to $\!(j,\<k)\!$-instances with $j$ strictly less than $k.$
The only operations admitting $\!(k,k)\!$-instances are
the projections; so essentially the only nontrivial identity having
$\!(k,k)\!$-instances is $x=y,$ which defines the trivial variety
of any type.
Cosatisfaction of the $\!(k,k)\!$-instance of this identity
would be the statement that the two coprojections
$(P_k)\ba\stackrel{\longrightarrow}{\scriptstyle{\longrightarrow}}
(P_k)\ba\,\cP (P_k)\ba$ coincide.
But the way we constructed the level $P_k$ in our universal
$\!\D\!$-precoalgebra $P$ involved taking equalizers of pairs of maps
to objects of the previously constructed
lower levels $P_j$ $(j<k);$ so that
construction would have to be adjusted considerably to make it possible
to impose cosatisfaction of the $\!(k,k)\!$-instance of $x=y.$
Further, one of the objects so modified would be $(P_0)\ba,$
contradicting our choice to let $P_0$ be the given pseudocoalgebra $S.$
In contrast, our slightly weaker definition of cosatisfaction
allows us to treat $x=y$ like other identities.

\section{Coalgebras from precoalgebras.}\label{S.preco>co}

Let us now see how to get coalgebras from the universal
precoalgebras $P$ constructed in~Proposition~\ref{P.P_is_final}.

A nice situation is if the construction of the $\!P_k\!$
eventually ``stabilizes'', in the sense that after some point, as one
adds new steps, the connecting morphisms
$p_{j,\<k}$ are all isomorphisms of precoalgebras.
(Having an isomorphism at {\em one} step is not enough
to guarantee this.
One may get a run of isomorphisms which, if extended far
enough to test a certain identity, would fail to cosatisfy it;
instead, our recursive construction modifies $P_k$ by that step.
One can bound the lengths of such unstable runs in terms of the depths
of the identities of $\D;$ the bound will be at most $\lambda_\D.)$
If our construction does stabilize, then once that happens, the
pseudocoprojection maps $(P_k)\ba\to (P_k)_{\alpha}$ become genuine
coprojection maps $(P_k)\ba\to\coprod_{\ari(\alpha)}(P_k)\ba,$ and the
common value of the $P_k$ will be the
desired final $\!\D\!$-coalgebra over the pseudocoalgebra~$S.$
Conversely, if for some $k,$ $P_k$ is in fact a $\!\D\!$-coalgebra
(i.e., if the pseudocopower objects are copowers of the base object,
and the identities of $\D$ are cosatisfied in the resulting
$\!\Omega_\D\!$-coalgebra),
then our construction will stabilize at that point.

I do not know whether such stabilization always occurs:

\begin{question}\label{Q.stabilize}
For all choices of varieties $\C$ and $\D,$
and pseudocoalgebra $S,$ does the construction of
Proposition~\ref{P.P_is_final} eventually stabilize, in the
sense that there exists an ordinal $\theta_0$ such that whenever
$\theta_0\leq\nolinebreak[4]i< j<\theta,$ the morphism
$p_{i,\<j}: P_j\to P_i$ of $(P_k)_{k\in\theta}$ is an isomorphism?
\end{question}

Assuming $\D$ finitary, we shall show below that {\em if}
$\C$ belongs to a certain rather restricted class of varieties,
then our construction indeed stabilizes by $\theta_0=\omega\<;$
and that for a larger class of varieties $\C,$ containing more cases of
interest, one can take $P_\omega,$ do a little trimming, and again
get the desired final coalgebra over $S.$

To state the relevant conditions, recall that given small
categories $\fb{E}$ and $\fb{F}$ and a functor $G$ from
$\fb{E}\times\fb{F}$ into a category $\fb{A},$
if one forms {\em limits} over $\fb{E}$ and {\em colimits}
over $\fb{F}$ in the two possible orders (assuming these
limits and colimits exist),
then their universal properties yield a {\em comparison morphism}
\begin{quote}
$\colim_{Y\in\fb{F}}\ \limit_{X\in\fb{E}} G(X,Y)\ \to\ %
\limit_{X\in\fb{E}}\ \colim_{Y\in\fb{F}} G(X,Y),$
\end{quote}
and that when this is an isomorphism, one says the indicated
limit and colimit commute.
We shall now show that if $\fb{A}$ is a variety $\C$ such that the
above commutativity condition holds whenever we take for
$\fb{E}$ the opposite of the ordered set $\omega$
and for $\fb{F}$ any finite set (each regarded as a category
in the natural way), then the desired stabilization condition holds
for all $\!\D\!$-precoalgebras in $\C$ if the variety $\D$ is finitary;
and we will note some $\C$ to which this result is applicable.

(We repeat a notational warning given earlier:
In expressions like $\limit_{k\in\omega}\,\coprod_n A_k,$ the symbol
$\coprod_n$ indicates that we are
taking the $\!n\!$-fold copower of what follows.
The index running over $n$ is not shown; in particular,
it is not $k,$ which here indexes the inverse limit.
Note also that when we speak of an inverse limit over $\omega,$
or write $\limit_{k\in\omega},$ it is
understood that $\omega$ is made a category in the way opposite
to its order structure.)

\begin{proposition}\label{P.compar_iso}
Suppose $\fb{A}$ is a category having finite copowers and having
inverse limits over $\omega,$ and suppose that these commute; i.e., that
\begin{equation}\begin{minipage}[c]{34pc}\label{x.limvscP}
For every inverse system of objects $A_k$ of
$\fb{A}$ indexed by $\omega,$ and every natural number $n,$
the comparison morphism $\coprod_n\ \limit_{k\in\omega} A_k\to\,
\limit_{k\in\omega}\,\coprod_n A_k$ is an isomorphism.
\end{minipage}\end{equation}
Suppose, further, that $\D$ is a finitary variety,
and $(P_k)_{k\in\omega}$ an $\!\omega\!$-indexed $\!\D\!$-precoalgebra
in $\fb{A},$ and that we extend this to an $\!\omega\<{+}1\!$-indexed
precoalgebra of type $\Omega_\D$ by letting
\begin{equation}\begin{minipage}[c]{34pc}\label{x.P*wbase}
$(P_\omega)\ba\ =\ \limit_{k\in\omega} (P_k)\ba\<,$
\end{minipage}\end{equation}
with connecting maps $(p_{k,\<\omega})\ba$ given by the canonical
cone of maps from the above object to the objects $(P_k)\ba\<.$
\textup{(}As noted following Definition~\ref{D.pre}, this is enough
to determine the extended $\!\Omega_\D\!$-precoalgebra,
since $\omega$ is a limit ordinal.\textup{)}

Then $(P_k)_{k\in\omega+1}$ is again a $\!\D\!$-precoalgebra.
Moreover, $P_\omega$ is a $\!\D\!$-coalgebra.

In particular, if $\fb{A}$ is a variety $\C,$
and $P$ is the universal $\!\omega\<{+}1\!$-indexed precoalgebra
built from a pseudocoalgebra $S$ as
in Proposition~\ref{P.P_is_final}, then $P_\omega$
is the final $\!\D\!$-coalgebra in $\C$ over $S.$
\end{proposition}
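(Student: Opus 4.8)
The plan is to prove the three assertions in turn, with the commutativity hypothesis~\ref{x.limvscP} doing the essential work in the middle one. First, that $(P_k)_{k\in\omega+1}$ is a $\!\D\!$-precoalgebra. As noted following Definition~\ref{D.pre}, prescribing $(P_\omega)\ba$ by~\ref{x.P*wbase} together with its canonical cone determines all remaining data of the extended $\!\Omega_\D\!$-precoalgebra, so only cosatisfaction of the identities of $\Phi_\D$ at the new level needs checking. This is exactly the finitary argument given just before Proposition~\ref{P.P_is_final}, now applied with $k=\omega$: for an identity $s=t$ of $\Phi_\D$ and any $i<\omega,$ the computation of the $(i,\omega)$-instances of $s$ and $t$ invokes only finitely many indices $j<\omega$ (since $\D$ is finitary), hence a common strict bound $k_0<\omega;$ the $(i,k_0{+}1)$-instances agree because $(P_k)_{k\in\omega}$ is a $\!\D\!$-precoalgebra, and Lemma~\ref{L.uniq_inst}(i),(iii) then propagate the equality up to the $(i,\omega)$-instances.

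Second, that $P_\omega$ is a $\!\D\!$-coalgebra. I would first show it is an $\!\Omega_\D\!$-coalgebra in the sense of Definition~\ref{D.psi}. By~\ref{x.P_k*a}, $(P_\omega)_\alpha=\limit_{j<\omega}(P_j)_\alpha;$ restricting to the cofinal tail $j\geq 1$ and reindexing each successor level by~\ref{x.P_j+1*a} as $(P_{m+1})_\alpha=\coprod_{\ari(\alpha)}(P_m)\ba,$ with connecting maps $\coprod_{\ari(\alpha)}(p_{m,m'})\ba$ by~\ref{x.jj'+1*a}, this limit becomes $\limit_{m<\omega}\coprod_{\ari(\alpha)}(P_m)\ba.$ The hypothesis~\ref{x.limvscP}, with $n=\ari(\alpha)$ (finite, as $\D$ is finitary), supplies an isomorphism with $\coprod_{\ari(\alpha)}\limit_{m<\omega}(P_m)\ba=\coprod_{\ari(\alpha)}(P_\omega)\ba.$ The point demanding care---and the crux of the proposition---is that under this comparison isomorphism the pseudocoprojections $c^{P_\omega}_{\alpha,\<\iota}$ become the genuine coprojections $q^{(P_\omega)\ba}_{\ari(\alpha),\<\iota}.$ I would verify this by comparing limit components: the projection of $c^{P_\omega}_{\alpha,\<\iota}$ to the $(m{+}1)$-st factor is $c^{P_{m+1}}_{\alpha,\<\iota}\circsm(p_{m+1,\<\omega})\ba,$ which by~\ref{x.c_j+1*a} equals $q^{(P_m)\ba}_{\ari(\alpha),\<\iota}\circsm(p_{m,\<\omega})\ba,$ while the corresponding component of the comparison map applied to $q^{(P_\omega)\ba}_{\ari(\alpha),\<\iota}$ is $\coprod_{\ari(\alpha)}(p_{m,\<\omega})\ba\circsm q^{(P_\omega)\ba}_{\ari(\alpha),\<\iota},$ which equals the same thing by functoriality of the copower. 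Hence $P_\omega$ is an $\!\Omega_\D\!$-coalgebra. It remains to see it cosatisfies $\Phi_\D$ as a coalgebra; for this I would prove, by induction on an $\!\Omega_\D\!$-term $s$ of finite arity $\gamma,$ that its genuine derived co-operation $s^{P_\omega}:(P_\omega)\ba\to\coprod_\gamma(P_\omega)\ba$ satisfies $\coprod_\gamma(p_{j,\<\omega})\ba\circsm s^{P_\omega}=$ the $(j,\omega)$-instance of $s,$ the base case (projections) and the step (applying $\alpha$) being immediate from Definition~\ref{D.instance}. Applying~\ref{x.limvscP} once more, with $n=\gamma,$ identifies $\coprod_\gamma(P_\omega)\ba$ with $\limit_{j<\omega}\coprod_\gamma(P_j)\ba;$ so $s^{P_\omega}=t^{P_\omega}$ follows from the equality, for every $j,$ of the $(j,\omega)$-instances of $s$ and $t,$ which is the precoalgebra cosatisfaction established in the first part.

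Third, in the case where $\fb{A}$ is a variety $\C,$ that $P_\omega$ is final among $\!\D\!$-coalgebras over $S.$ Given a $\!\D\!$-coalgebra $R$ over $S,$ I would form the constant $\omega{+}1$-indexed $\!\D\!$-precoalgebra $\bar R$ (all levels $R,$ identity connecting maps, viewed through $\psi$) and invoke Proposition~\ref{P.P_is_final}: there is a unique precoalgebra morphism $f:\bar R\to P$ extending the structure map $R\to S=P_0.$ Its base component $(f_\omega)\ba:|R\<|\to(P_\omega)\ba$ at level $\omega$ is, because both $\bar R_\omega=R$ and $P_\omega$ are genuine coalgebras (the latter by the second part), forced by the morphism condition and the just-proved identification of pseudocoprojections with coprojections to be a coalgebra morphism $f_\omega:R\to P_\omega$ over $S.$ For uniqueness, given any coalgebra morphism $g:R\to P_\omega$ over $S,$ the family $h_k=p_{k,\<\omega}\circsm g$ $(k\leq\omega)$ is a precoalgebra morphism $\bar R\to P$ extending the structure map (a routine check that it respects~\ref{x.P_j+1*a}), whence $h=f$ by the uniqueness in Proposition~\ref{P.P_is_final}, so $g=h_\omega=f_\omega.$ This exhibits $P_\omega$ as the final $\!\D\!$-coalgebra over $S$ guaranteed abstractly by Theorem~\ref{T.final}.

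The hard part will be the middle step---precisely, matching the pseudocoprojections to genuine coprojections under the comparison isomorphism, and then transferring cosatisfaction from the precoalgebra instances to the genuine derived co-operations. Both uses of~\ref{x.limvscP} are indispensable there, and the finiteness of $\ari(\alpha)$ and of the variable-sets $\gamma$ of identities (forced by $\D$ finitary) is exactly what lets the commutativity hypothesis apply. The remaining work is the routine inductions on term structure and the diagram-chases built into Definition~\ref{D.pre}.
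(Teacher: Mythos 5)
Your proof is correct and follows essentially the same route as the paper's (which is only a sketch): the same chain of identifications $(P_\omega)_\alpha=\limit_k\coprod_{\ari(\alpha)}(P_k)\ba\cong\coprod_{\ari(\alpha)}(P_\omega)\ba$ via~(\ref{x.limvscP}), the same finitary-instance argument from the discussion following~(\ref{x.Pkbaselim}), and the same appeal to Proposition~\ref{P.P_is_final} for finality. The only difference is that you explicitly carry out the component-wise verifications (pseudocoprojections becoming coprojections, transfer of cosatisfaction to derived co-operations, the uniqueness chase) that the paper compresses into ``one finds,'' ``one deduces,'' and ``easily seen.''
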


\subsubsection*{\textsc{Sketch of Proof}}
For each $\alpha\in\Omega_\D,$ note that
\begin{quote}
\begin{tabbing}
$(P_\omega)_\alpha$\=$=\ \limit_{k\in\omega}(P_k)_\alpha\ =\ %
\limit_{0<k\in\omega}
\coprod_{\ari(\alpha)}(P_{k-1})\ba\ =\ %
\limit_{k\in\omega}
\coprod_{\ari(\alpha)}(P_k)\ba$\\[.5em]
\>$\cong\ \coprod_{\ari(\alpha)}\,
\limit_{k\in\omega}(P_k)\ba\ =\ %
\coprod_{\ari(\alpha)}(P_\omega)\ba\,.$
\end{tabbing}
\end{quote}

One finds that the pseudocoprojection maps
$(P_\omega)\ba\to (P_\omega)_\alpha\cong
\coprod_{\ari(\alpha)}(P_\omega)\ba$ are the genuine coprojection maps,
making $P_\omega$ a coalgebra.
As discussed in the lines following~(\ref{x.Pkbaselim}), the fact that
$\D$ is finitary and $(P_k)_{k\in\omega}$ cosatisfies the defining
identities of $\D$ as a precoalgebra implies that
$(P_k)_{k\in\omega+1}$ also cosatisfies them as a precoalgebra;
and one deduces from this that $P_\omega$
cosatisfies them as a coalgebra, i.e., is a $\!\D\!$-coalgebra.

The final assertion is easily seen from Proposition~\ref{P.P_is_final}.
\endproof

The next proposition establishes a sufficient condition
for~(\ref{x.limvscP}) (and another property we will
subsequently find useful) to hold in a variety $\C.$
Let me illustrate the technical-looking hypothesis by contrasting the
properties of coproducts in the two varieties $\Se$ and $\fb{Monoid}.$

Given any semigroup $A,$ consider the natural homomorphism from
$A\cP A$ to the coproduct $\{x\}\cP\{x\}$ of two copies
of the final (one-element) semigroup $\{x\}.$
The two generators of $\{x\}\cP\{x\}$ are
$q^{\{x\}}_{\<2,\<0}(x)$ and $q^{\{x\}}_{\<2,\<1}(x);$
and we see from the normal form for coproducts of
semigroups that if an element $u$ of $A\cP A$ maps to, say,
$q^{\{x\}}_{\<2,\<0}(x)\ q^{\{x\}}_{\<2,\<1}(x)\ %
q^{\{x\}}_{\<2,\<0}(x)$ in $\{x\}\cP\{x\},$ then it must have the form
$q^A_{\<2,\<0}(y_1)\ q^A_{\<2,\<1}(y_2)\ q^A_{\<2,\<0}(y_3)$
in $A\cP A,$ for unique $y_1,y_2,y_3\in|A\<|.$
On the other hand, in the variety $\fb{Monoid},$ the coproduct of
two copies of the final monoid $\{e\}$ is again $\{e\},$ so
every element of a monoid $A\cP A$ maps to the unique
element thereof, so knowing that an element $u\in|A\cP A\<|$
maps to $e$ does not yield a fixed monoid word that we can
say represents $u$ in terms of elements
$q^A_{\<2,\<0}(y_i)$ and $q^A_{\<2,\<1}(y_j).$
We now state for a general variety $\C$ the condition we have
shown that $\Se$ has, but $\fb{Monoid}$ does not have,
letting $\{x\}$ now denote the final $\!(1\!$-element) algebra of $\C:$
\begin{equation}\begin{minipage}[c]{34pc}\label{x.u=}
For every $u\in|\{x\}\cP\{x\}|$ there exists
a derived operation $s_u$ of $\C$ such that
\begin{quote}
$u\ =\ s_u(q^{\{x\}}_{\<2,\<0}(x),\,\dots,\,q^{\{x\}}_{\<2,\<0}(x),\ %
q^{\{x\}}_{\<2,\<1}(x),\,\dots,\,q^{\{x\}}_{\<2,\<1}(x)),$
\end{quote}
(say with $n_0$ arguments $q^{\{x\}}_{\<2,\<0}(x)$ and
$n_1$ arguments $q^{\{x\}}_{\<2,\<1}(x)),$ and
such that for every $\!\C\!$-algebra $A,$ and every element
$b\in|A\cP A\<|$ which is carried to $u$ by the map
$A\cP A\to\{x\}\cP\{x\}$ induced by the unique map $A\to\{x\},$ there
exist {\em unique} elements $b_1,\dots,b_{n_0+n_1}\in|A\<|$ such that
\begin{quote}
$b\ =\ s_u(q^{A}_{\<2,\<0}(b_1),\,\dots,\,q^{A}_{\<2,\<0}(b_{n_0}),\ %
q^{A}_{\<2,\<1}(b_{n_0+1}),\,\dots,\,q^{A}_{\<2,\<1}(b_{n_0+n_1})).$
\end{quote}
\end{minipage}\end{equation}

The next result shows that condition~(\ref{x.u=}) is useful,
but restrictive.

\begin{proposition}\label{P.red}
Let $\C$ be a finitary variety satisfying~\textup{(\ref{x.u=})}.
Then it satisfies the two conditions
\begin{equation}\begin{minipage}[c]{34pc}\label{x.arblimvscP}
For every functor $F:\fb{E}\to\C$
where $\fb{E}$ is a {\em connected} small category, and every
cardinal $\kappa,$ the comparison morphism
$\coprod_\kappa\,\limit_\fb{E}\,F(E)\to\,
\limit_\fb{E}\ \coprod_\kappa\<F(E)$ is an isomorphism.
\end{minipage}\end{equation}
\begin{equation}\begin{minipage}[c]{34pc}\label{x.cP1-1}
For every one-to-one homomorphism $f:A\to B$ of objects of $\C,$
and every cardinal $\kappa,$ the induced map
$\coprod_\kappa f: \coprod_\kappa A\to\coprod_\kappa B$ is one-to-one.
\textup{(}In the language of Definition~\ref{D.pure}, every subalgebra
of a $\!\C\!$-algebra is copower-pure.\textup{)}
\end{minipage}\end{equation}

Moreover,\textup{~(\ref{x.u=})} holds, inter alia, in every
finitary variety without zeroary operations and without identities,
and also in the varieties $\Se$ and $\fb{Ab};$ on the other hand,
it does not hold in $\fb{Monoid},$ $\Gp$ or $\fb{Lattice}.$
\end{proposition}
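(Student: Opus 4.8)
The plan is to read condition~(\ref{x.u=}) as a \emph{natural normal form} for binary copowers and then bootstrap it to all copowers. First I would record that~(\ref{x.u=}) is exactly the assertion that the underlying-set functor $A\mapsto|A\cP A|$ is naturally isomorphic to $A\mapsto\coprod_{u\in|\{x\}\cP\{x\}|}|A|^{m(u)}$, where $m(u)=n_0(u)+n_1(u)$ is the arity of the derived operation $s_u$, necessarily \emph{finite} because $\C$ is finitary. Naturality in $A$ is automatic: a homomorphism $f\colon A\to B$ commutes with coprojections, with derived operations, and with the canonical maps to $\{x\}\cP\{x\}$, so $f\cP f$ carries the normal form of an element $b$ lying over $u$ to the normal form of $(f\cP f)(b)$ over the same $u$, acting coordinatewise by $f$.

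Next I would extend this to a normal form for $\coprod_\kappa A$. Applying~(\ref{x.u=}) to the algebra $B=\coprod_{2^{k-1}}A$ and using $B\cP B\cong\coprod_{2^k}A$, an induction on $k$ gives $|\coprod_{2^k}A|\cong\coprod_w|A|^{m(w)}$ with each $m(w)$ finite: one substitutes the level-$(k-1)$ normal form into the power of a coproduct $(\coprod_{u'}|A|^{m(u')})^{m(u)}$ and expands, every resulting summand being a finite power of $|A|$. Since $\C$ is finitary, Lemma~\ref{L.gen_by} shows every element of $\coprod_\kappa A$ lies in $\coprod_I A$ for some finite $I\subseteq\kappa$; the subsets of cardinality a power of two are cofinal and directed, and the copower-inclusions $\coprod_I A\to\coprod_{I'}A$ are split monos, an index inclusion $I\hookrightarrow I'$ having a fold retraction $I'\to I$. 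As filtered colimits in a finitary variety are computed on underlying sets, $\coprod_\kappa A$ is the directed union of its $2^k$-indexed sub-copowers, and passing to the colimit yields a natural $|\coprod_\kappa A|\cong\coprod_{w\in|\coprod_\kappa\{x\}|}|A|^{m(w)}$ with each $m(w)$ finite.

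With this normal form in hand the two conclusions are short. For~(\ref{x.cP1-1}) it suffices, by Lemma~\ref{L.pure<*l}, to treat finite $\kappa$. For $\kappa=2^k$ the map $\coprod_{2^k}f$ is the coproduct $\coprod_w f^{m(w)}$, which preserves injectivity; for arbitrary finite $n$ one uses that $\coprod_n(-)$ is a natural retract of $\coprod_{2^k}(-)$, again via an index inclusion and a fold retraction, so $\coprod_n f$ is a retract of the monomorphism $\coprod_{2^k}f$ and hence itself mono. For~(\ref{x.arblimvscP}), write $L=\limit_\fb{E}F(E)$ and compute $\limit_\fb{E}\coprod_\kappa F(E)=\limit_\fb{E}\coprod_w|F(E)|^{m(w)}$; because $\fb{E}$ is \emph{connected}, limits over $\fb{E}$ commute with the coproduct $\coprod_w$ in $\fb{Set}$, while the finite powers $(-)^{m(w)}$ commute with all limits, so this equals $\coprod_w|L|^{m(w)}=|\coprod_\kappa L|$. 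A routine check identifies this bijection with the comparison morphism. Connectedness is used in exactly the one place where it is indispensable, the commutation of limits with coproducts.

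Finally I would verify the sample varieties. For a finitary variety with no zeroary operations and no identities, $A\cP A$ has the familiar reduced-term normal form and~(\ref{x.u=}) is read off directly; for $\Se$ it is the alternating-word normal form $q_0(a_1)q_1(b_1)q_0(a_2)\cdots$, with $s_u$ the evident derived product; for $\fb{Ab}$ we have $A\cP A=A\oplus A$ with a single class $u\in|\{x\}\cP\{x\}|$ and $s_u(a,b)=q_0(a)+q_1(b)$ giving unique coordinates. The failures all stem from the one-element final algebra collapsing too much: in $\fb{Monoid}$ and $\Gp$ the object $\{x\}\cP\{x\}$ is again a one-element algebra, whereas $A\cP A$ (a free product) contains reduced words of unbounded length, so no single finite derived operation $s_u$ can represent the whole fibre; in $\fb{Lattice}$, $\{x\}\cP\{x\}$ is the free lattice on two generators, but the fibre over a generator $x_0$ contains elements such as $q_0(a)\wedge(q_0(a')\vee q_1(b))$ of unbounded absorption-depth, again defeating any fixed $s_u$. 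The main obstacle of the whole argument is the second paragraph: promoting the purely binary hypothesis~(\ref{x.u=}) to a copower normal form valid for every cardinal $\kappa$, which forces the doubling construction, the cofinality of the powers of two, and the check that the copower is a directed union so that the normal form survives passage to the colimit.
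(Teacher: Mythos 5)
Your argument follows the paper's proof quite closely in outline: bootstrap~(\ref{x.u=}) to finite copowers, describe the copower functor at the set level as a disjoint union of finite powers of $|A|,$ then use the facts that disjoint unions commute with connected limits and preserve injectivity, and finally check the examples via normal forms. However, the step in your second paragraph where you pass from the $\!2^k\!$-fold normal forms to a normal form for $\coprod_\kappa A$ with $\kappa$ {\em infinite} contains a genuine gap. For the directed union $|\coprod_\kappa A|=\bigcup_I|\coprod_I A|$ to inherit a decomposition indexed by $|\coprod_\kappa\{x\}|$ with finite exponents, the finite-level decompositions must be compatible along the transition maps: for $w\in|\coprod_I\{x\}|$ with image $w'\in|\coprod_{I'}\{x\}|,$ the fibre over $w'$ at level $I'$ must be the image of the fibre over $w,$ with the same exponent. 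Condition~(\ref{x.u=}) does not give this, and in $\fb{Ab}$ it fails outright: $\coprod_I\{x\}$ is the zero group for every $I,$ so there is a single fibre, whose exponent at level $I$ is $|I|$ and never stabilizes. Your claimed conclusion would make $|\coprod_\kappa A|$ a {\em finite} power of $|A|$ naturally in $A,$ which is absurd already for $A=\mathbb{Z}/2\mathbb{Z}$ and $\kappa=\omega.$ Worse, conclusion~(\ref{x.arblimvscP}) itself fails in $\fb{Ab}$ at infinite $\kappa,$ so no argument can close the gap: take $\fb{E}=\omega$ (an inverse system, hence connected), $F(k)=\mathbb{Z}/2^k\mathbb{Z}$ with the canonical surjections, and $\kappa=\omega.$ Writing $q_i$ for the $\!i\!$-th coprojection into $\coprod_\omega F(k),$ the element of $\limit_{k\in\omega}\coprod_\omega F(k)$ whose $\!k\!$-th component is $\sum_{i<k}2^i\,q_i(1)$ is a compatible family, but any preimage in $\coprod_\omega\limit_{k\in\omega}F(k)$ would need the coordinate $2^i\neq 0$ at {\em every} $i,$ contradicting finite support; so the comparison map is not surjective.

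To be fair, the paper's own proof passes over exactly this point in one sentence (``using the fact that $\C$ is finitary, one passes in turn to the same result for $\!\kappa\!$-fold copowers''), so the difficulty is inherited from the statement at infinite $\kappa$ rather than introduced by you; your more explicit colimit argument merely makes it visible. The damage is confined to infinite $\kappa{:}$ the finite case of~(\ref{x.arblimvscP}) -- in particular~(\ref{x.limvscP}), which is all the paper uses later -- is correctly established by your doubling induction, and your proof of~(\ref{x.cP1-1}) is actually {\em more} robust than the paper's, since your reduction to finite $\kappa$ via Lemma~\ref{L.pure<*l} sidesteps the infinite-copower decomposition entirely. Among the examples, your length arguments for $\fb{Monoid}$ and $\Gp$ are sound (length of a reduced word in a free product is well defined, while any value of a fixed $s_u$ has bounded length), but the $\fb{Lattice}$ argument is not complete as stated: ``absorption-depth'' is not an obviously well-defined invariant of an element of $A\cP A,$ since lattice identities let one element be represented by terms of many depths. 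The paper's route avoids this: if~(\ref{x.u=}) held, then since $\{x\}\cP\{x\}$ is a four-element lattice, the coproduct of two copies of any finite lattice would be finite; but the coproduct of two copies of the four-element free lattice on two generators is the free lattice on four generators, which is infinite.
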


\begin{proof}
Using the observation that an $\!n\<{+}1\!$-fold
coproduct $A_0\cP\dots\cP A_n$ can be written as the
$\!n\!$-fold coproduct $(A_0\cP A_1)\cP A_2\cP\dots\cP A_n,$
it is easy to show by induction that~(\ref{x.u=})
implies, for all positive integers $n,$
the corresponding condition with pairwise copowers
replaced by $\!n\!$-fold copowers; and using the fact that
$\C$ is finitary, one passes in turn to the same result for
$\!\kappa\!$-fold copowers, for any cardinal $\kappa.$

Fixing $\kappa,$ let us now choose for each $u\in|\coprod_\kappa\{x\}|$
a derived operation $s_u$ as in the version of~(\ref{x.u=})
for $\!\kappa\!$-fold copowers, say of arity $n_u.$
Then we see that the $\!\kappa\!$-fold
copower functor on $\C$ can be described, at the set level, as the
disjoint union over $u$ of the functors $X\mapsto X^{n_u}.$
The disjoint union operation on sets commutes with limits over connected
categories, and constructions $X\mapsto X^n$ commute with all limits,
yielding the commutativity result~(\ref{x.arblimvscP}).
Similarly, disjoint unions and constructions $X\mapsto X^n$
respect one-one-ness, giving~(\ref{x.cP1-1}).

Let us now verify the assertions about particular varieties.

If $\C$ is a variety without zeroary operations and without identities,
then every element of a coproduct algebra $A\cP B,$
where for notational convenience we assume $A$ and $B$ disjoint,
can be written uniquely as a word in
elements of $A$ and elements of $B,$ subject only to the
condition that no operation occurring in this word has for
its arguments elements of $A$ alone or elements of $B$ alone.
(If there were zeroary operations, then these, and words involving
them, would be counterexamples to this statement.)
Comparing this normal form for elements of $A\cP A$ and
their images in $\{x\}\cP\{x\},$
we see that~(\ref{x.u=}) holds in such varieties.
The same applies to coproducts of semigroups, using the normal form for
$A\cP B$ given by parenthesis-free products of elements
of $A$ and $B,$ in which no two successive factors
come from the same semigroup,
and for $\fb{Ab},$ using the normal form in which every element is
written as the sum of an element of $A$ and an element of~$B.$

To prove (by a method different from our earlier informal
sketch) that~(\ref{x.u=})
does not hold for $\C=\fb{Monoid},$ and to get the corresponding
results for $\Gp$ and $\fb{Lattice},$ we first
note that $\{x\}\cP\{x\}$ is finite in each of these cases
(the one-element algebra in the first two cases, since $x$ is the
identity element; a four-element lattice in the last).
Hence if~(\ref{x.u=}) held, the coproduct
of two copies of any finite algebra in $\C$ would be finite.
But from the standard normal forms we see that the coproduct of two
copies of any nontrivial finite group or monoid is infinite.
In the lattice case, we recall that the free lattice
$L$ on two generators is finite
$(\<|L|=\{x_0,\,x_1,\,x_0\<{\vee}\<x_1,\,x_0\<{\wedge}\<x_1\}\<),$
but that the coproduct of two copies of this $L,$ the free lattice on
four generators, is infinite
\cite[\S VI.8,~Exercise~4]{GB.Lat} \cite[Exercise~5.3.9]{245}.
\end{proof}

The variety $\fb{Monoid}$ not only fails to satisfy~(\ref{x.u=}),
but also the consequence~(\ref{x.arblimvscP}), including the case
we are interested in,~(\ref{x.limvscP}).
For an element on the right side of the morphism of~(\ref{x.limvscP})
can have images in the monoids $\coprod_n A_k$ whose lengths as monoid
words increase without bound as
a function of $k$ (since the length of such a word will shorten under a
monoid homomorphism whenever some factor maps to the identity element),
and such an element cannot lie in the image of that morphism.
This behavior has analogs in most naturally arising varieties of
algebras.
For instance, in lattices, the identity $(x\vee y)\wedge x=x$
has the consequence that a length-$\!3\!$ word
$(x\vee y)\wedge x'$ collapses to length~$1$ when $x$
and $x'$ fall together, and one can use
this to build counterexamples to~(\ref{x.limvscP}).

In later sections, we will use Propositions~\ref{P.compar_iso}
and~\ref{P.red} to get explicit descriptions
of final coalgebras in certain varieties $\C$ satisfying~(\ref{x.u=}),
and hence~(\ref{x.limvscP}).
But since~(\ref{x.limvscP}) excludes so many important cases,
let us consider a condition on a variety $\C$ that is a little
weaker, but has consequences that are almost as pleasant:
\begin{equation}\begin{minipage}[c]{34pc}\label{x.cPlim}
For every $\!\omega\!$-indexed inverse system of objects $A_k$ of $\C,$
and every natural number $n,$ the comparison morphism
$\coprod_n\ \limit_{k\in\omega} A_k\,\to\,%
\limit_{k\in\omega}\,\coprod_n A_k$ is {\em one-to-one}.
\end{minipage}\end{equation}

As before, we will supplement the next result with examples of
varieties that satisfy its hypotheses and varieties that don't.
But these will take more work than before
(inter alia because positive examples are
more plentiful), so we will make them a separate following lemma.

\begin{proposition}\label{P.compar_1-1}
Suppose $\C$ is a variety of algebras
which satisfies~\textup{(\ref{x.cP1-1})} and\textup{~(\ref{x.cPlim})},
$\D$~is a finitary variety, and $P$
is any $\!\omega\!$-indexed $\!\Omega_\D\!$-precoalgebra in $\C.$
Let $(P_k)_{k\in\omega+1}$ be the $\!\omega\<{+}1\!$-indexed
extension of $P$ obtained by constructing $P_\omega$
using~\textup{(\ref{x.P*wbase})}.
Then --\\[.5em]
\textup{(i)}\ \ For each subalgebra $A$ of $(P_\omega)\ba$ and
$\alpha\in\Omega_\D,$ the subalgebra of $(P_\omega)_\alpha$
generated by the images $c^{P_\omega}_{\alpha,\iota}(A)$
of $A$ $(\iota\in\ari(\alpha))$ can be identified with
$\coprod_{\ari(\alpha)}A,$
with the restrictions of the maps $c^{P_\omega}_{\alpha,\iota}$
as its coprojections $q^A_{\ari(\alpha),\iota}.$\\[.5em]
\textup{(ii)}\ \ The set of subalgebras $A\subseteq(P_\omega)\ba$
such that each pseudo-co-operation $\alpha^{P_\omega}$ carries $A$
into the above
subalgebra $\coprod_{\ari(\alpha)}A\subseteq(P_\omega)_\alpha,$
and which thereby become $\!\Omega_\D\!$-coalgebras with
the restrictions of the
$\alpha^{P_\omega}$ as their co-operations, has a largest element.
Let us denote this by $|R\<|,$ and the resulting coalgebra
by $R.$\\[.5em]
\textup{(iii)}\ \ If the $\!\Omega_\D\!$-precoalgebra $P$ is
a $\D\!$-precoalgebra, then the above coalgebra $R$ will
be a $\!\D\!$-coalgebra.
If $P$ is in fact the universal $\!\omega\!$-indexed
$\!\D\!$-precoalgebra built from a pseudocoalgebra $S,$ as in
Proposition~\ref{P.P_is_final},
then $R$ is the final $\!\D\!$-coalgebra over~$S.$
\end{proposition}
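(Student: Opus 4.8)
The plan is to handle (i)--(iii) in turn, the real content lying in (i) and in the cosatisfaction half of (iii). Throughout write $\pi_k=(p_{k,\<\omega})\ba:(P_\omega)\ba\to(P_k)\ba$ for the limit projections coming from~(\ref{x.P*wbase}). For (i), I would first record that the first two equalities of the display in the proof of Proposition~\ref{P.compar_iso} identify $(P_\omega)_\alpha$ with $\limit_{k\in\omega}\coprod_{\ari(\alpha)}(P_k)\ba$; these rest only on the reindexing $\limit_{0<k}\coprod_{\ari(\alpha)}(P_{k-1})\ba=\limit_{k}\coprod_{\ari(\alpha)}(P_k)\ba$ and on~(\ref{x.P_j+1*a}), and so do \emph{not} invoke the isomorphism hypothesis~(\ref{x.limvscP}) unavailable here. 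A short check with~(\ref{x.c_j+1*a}) shows that under this identification each pseudocoprojection $c^{P_\omega}_{\alpha,\<\iota}$ is the $\!\iota\!$-th component of the comparison morphism $\coprod_{\ari(\alpha)}(P_\omega)\ba\to(P_\omega)_\alpha$, so that $\bigvee_{\iota}c^{P_\omega}_{\alpha,\<\iota}$ is exactly that comparison morphism; by~(\ref{x.cPlim}) it is one-to-one (here $\ari(\alpha)$ is finite since $\D$ is finitary). For a subalgebra $A\subseteq(P_\omega)\ba$, the map $\coprod_{\ari(\alpha)}A\to(P_\omega)_\alpha$ induced by the restrictions $c^{P_\omega}_{\alpha,\<\iota}|_A$ factors as $\coprod_{\ari(\alpha)}A\to\coprod_{\ari(\alpha)}(P_\omega)\ba\to(P_\omega)_\alpha$, whose first map is one-to-one by~(\ref{x.cP1-1}) and whose second is the comparison morphism; hence the composite is one-to-one, its image is the subalgebra generated by the $c^{P_\omega}_{\alpha,\<\iota}(A)$, and this image is $\coprod_{\ari(\alpha)}A$ with the restricted maps as coprojections.

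For (ii), let $\mathcal S$ be the set of subalgebras $A\subseteq(P_\omega)\ba$ carried by every $\alpha^{P_\omega}$ into the copy $\coprod_{\ari(\alpha)}A\subseteq(P_\omega)_\alpha$ supplied by (i). I would show $\mathcal S$ is closed under arbitrary joins, including the empty one. If $C$ is the subalgebra generated by $\bigcup_i A_i$ with all $A_i\in\mathcal S$, then since $\alpha^{P_\omega}$ is a homomorphism its image of $C$ is the subalgebra generated by $\bigcup_i\alpha^{P_\omega}(A_i)$; each $\alpha^{P_\omega}(A_i)\subseteq\coprod_{\ari(\alpha)}A_i\subseteq\coprod_{\ari(\alpha)}C$, the last inclusion by the monotonicity of the identification in (i) (as $A_i\subseteq C$ gives $c^{P_\omega}_{\alpha,\<\iota}(A_i)\subseteq c^{P_\omega}_{\alpha,\<\iota}(C)$). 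Hence $\alpha^{P_\omega}(C)\subseteq\coprod_{\ari(\alpha)}C$, so $C\in\mathcal S$. As the empty join (the subalgebra of constants, possibly empty) lies in $\mathcal S$, the join $\bigvee\mathcal S$ is the largest element; this is the $|R\<|$ and $R$ of the statement.

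For (iii), the coalgebra assertion reduces to one inductive lemma, which I expect to be the \emph{main obstacle}. Writing $\gamma$ for the (finite) arity of a pair of terms $s,t$ and $d$ for $\max(\mathrm{depth}(s),\mathrm{depth}(t))$, I would prove by induction on term structure that for every $k$ the composite $|R\<|\xrightarrow{s^R}\coprod_\gamma|R\<|\hookrightarrow\coprod_\gamma(P_\omega)\ba\to\coprod_\gamma(P_k)\ba$ equals $\mathrm{Inst}_{k,\<k+d}(s)\circ\pi_{k+d}|_{|R\<|}$, where $s^R$ is the derived co-operation of the coalgebra $R$ and $\mathrm{Inst}_{k,\<k+d}(s)$ is the $\!(k,k{+}d)\!$-instance of Definition~\ref{D.instance}. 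The base case is the projection clause, and the inductive step for $s=\alpha(u_0,u_1,\dots)$ matches the composite~(\ref{x.instder}) against the recursive build-up of $s^R$, using that each $\alpha^R$ is the restriction of $\alpha^{P_\omega}$ and that the level-$k$ projection of $\alpha^{P_\omega}$ is the $\!(k,k{+}1)\!$-instance of $\alpha$; the delicate part is the index bookkeeping between the coalgebra and precoalgebra worlds, and the appeals to Lemma~\ref{L.uniq_inst} that let one use a common $d$. Granting the lemma, cosatisfaction of $s=t$ by the precoalgebra $P$ (Definition~\ref{D.cosatisfy}) makes the two sides agree for every $k$; since by~(\ref{x.cPlim}) the maps $\coprod_\gamma(P_\omega)\ba\to\coprod_\gamma(P_k)\ba$ are jointly one-to-one and by~(\ref{x.cP1-1}) the inclusion $\coprod_\gamma|R\<|\hookrightarrow\coprod_\gamma(P_\omega)\ba$ is one-to-one, one concludes $s^R=t^R$, so $R$ is a $\!\D\!$-coalgebra.

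Finally, for finality of $R$ over $S$ when $P$ is the universal precoalgebra of Proposition~\ref{P.P_is_final}: given any $\!\D\!$-coalgebra $R'$ over $S$, that proposition yields a \emph{unique} precoalgebra morphism from the constant system on $R'$ into $P$, whose $\!\omega\!$-component is a pseudocoalgebra morphism $R'\to P_\omega$. Its base map has image $B$ with $\alpha^{P_\omega}(B)\subseteq\coprod_{\ari(\alpha)}B$ by part (i) (using that $R'$ is a genuine coalgebra, so the $\!\alpha\!$-component of the morphism has image in $\coprod_{\ari(\alpha)}B$); thus $B\in\mathcal S$, so $B\subseteq|R\<|$ and the morphism factors through $R$, giving a coalgebra morphism $R'\to R$ over $S$. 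Conversely, any coalgebra morphism $R'\to R$ over $S$, composed with $R\hookrightarrow P_\omega$ and projected down the tower by the $\pi_k$, produces such a precoalgebra morphism; the uniqueness clause of Proposition~\ref{P.P_is_final} then forces the morphism $R'\to R$ to be unique. Existence and uniqueness together make $R$ final over $S$ (equivalently, $R$ and the final coalgebra of Theorem~\ref{T.final} admit mutual morphisms over $S$, which these uniqueness statements show to be inverse).
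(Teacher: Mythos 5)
Your proposal is correct and takes essentially the same route as the paper's proof: part~(i) from~(\ref{x.P_k*a}),~(\ref{x.P*wbase}) and~(\ref{x.cPlim}) for $A=(P_\omega)\ba$ and then~(\ref{x.cP1-1}) for general $A$; part~(ii) via the paper's ``from below'' construction (the join of all subalgebras with the stated property); and part~(iii) by using finitariness of $\D$ to reduce cosatisfaction to finite levels, injectivity of the maps supplied by~(\ref{x.cPlim}) and~(\ref{x.cP1-1}), and the universal property of Proposition~\ref{P.P_is_final} for finality. The only difference is one of exposition: you make explicit (and correctly carry out) the inductive comparison of derived co-operations of $R$ with finite-level $\!(k,k{+}d)\!$-instances, which the paper compresses into ``it is easy to deduce,'' routing it through the $\!(i,\omega)\!$-instances of the extended precoalgebra instead.
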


\begin{proof}
For each $\alpha$ in $\Omega_\D,$ the case of~(i) with $A=(P_\omega)\ba$
follows from~(\ref{x.P*wbase}),~(\ref{x.P_k*a}) and~(\ref{x.cPlim}),
while~(\ref{x.cP1-1}) allows us to go from this case to that of
any $A\subseteq(P_\omega)\ba.$

Let me indicate two ways of obtaining the $|R\<|$ of~(ii):
``from above'' and ``from below''.

The former approach uses a transfinite recursion:
Start with $|R\<|_{(0)}=(P_\omega)\ba,$ and assuming recursively
that $|R\<|_{(\eta)}$ has been defined, let $|R\<|_{(\eta+1)}$ be the
subalgebra thereof consisting of all elements whose image
in each $(P_\omega)_\alpha$ under $\alpha^{P_\omega}$
$(\alpha\in\Omega_\D)$ lies in the subalgebra generated by the
images of $|R\<|_{(\eta)}$ under the $c^{P_\omega}_{\alpha,\<\iota}.$
For limit ordinals $\eta,$
let $|R\<|_{(\eta)}=\bigcap_{\epsilon<\eta} |R\<|_{(\epsilon)}.$
Thus, we get a descending chain of subalgebras,
which must stabilize because $(P_\omega)\ba$ has
small underlying set, yielding the desired~$|R\<|.$

For the alternative construction ``from below'',
let $X$ be the set of all subalgebras
$A$ of $(P_\omega)\ba$ such that the image of $A$ under
each pseudo-co-operation lies in the subalgebra of $(P_\omega)_\alpha$
generated by the images of $A$ under the pseudocoprojections.
Then the algebra generated by all members of $X$ will lie in
$X,$ and be the desired~$|R\<|.$

For $P$ a $\!\D\!$-precoalgebra, as in~(iii), we observed
following~(\ref{x.Pkbaselim}) that (if $\D$ was
finitary, as it is here,) the identities of $\D$
were cosatisfied in $(P_k)_{k\in\omega+1}.$
As before, it is easy to deduce that they will also
be cosatisfied in $|R\<|$ as a coalgebra, and that
for $P$ obtained from $P_0=S$ as in Proposition~\ref{P.P_is_final},
$R$ is the final $\!\D\!$-coalgebra over~$S.$
\end{proof}

Conditions~(\ref{x.cP1-1}) and~(\ref{x.cPlim}) tend to
hold in varieties where coproducts have nice normal forms,
as illustrated by the proof of the assertions of
the first paragraph of the next result.

\begin{lemma}\label{L.normal}
In addition to the varieties $\C$ satisfying\textup{~(\ref{x.u=})},
conditions~\textup{(\ref{x.cP1-1})}
and~\textup{(\ref{x.cPlim})} hold in the varieties $\Gp$
and $\fb{Monoid},$ and in the varieties of all unital or nonunital,
commutative or not-necessarily-commutative
\textup{(}four combinations in
all\textup{)} associative algebras over a field $K.$
Hence, in these cases, Proposition~\ref{P.compar_1-1} shows how to
construct the final $\!\D\!$-coalgebra in $\C$ whenever
$\D$ is a variety of finitary algebras.

On the other hand, neither~\textup{(\ref{x.cP1-1})}
nor~\textup{(\ref{x.cPlim})} holds in the
{\em subvarieties} of $\Gp,$ $\fb{Monoid}$ and $\Se$ generated by the
infinite dihedral group, nor in any of the four varieties of rings
$\!(\mathbb{Z}\!$-algebras\textup{)} corresponding to the
four varieties of $\!K\!$-algebras just mentioned.
\end{lemma}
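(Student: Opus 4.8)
The plan is to handle the positive assertions by the standard normal forms for coproducts, and the two families of negative assertions by explicit inverse systems: for the $D_\infty$ cases exploiting the identity $x^2y^2=y^2x^2$, and for the ring cases exploiting the failure of flatness over $\mathbb{Z}$. For the positive side I would first recall that in $\Gp$ and $\fb{Monoid}$ a coproduct is described by reduced words $b_1\cdots b_m$ with each $b_i$ a nonidentity element of one factor and consecutive $b_i$ in distinct factors, while in the four $K$-algebra varieties a coproduct decomposes as a direct sum of tensor powers indexed by reduced words (for the commutative unital case simply $\coprod_n A=A^{\otimes_K n}$, and in the other three cases after choosing a $K$-complement to the scalars). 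Condition~(\ref{x.cP1-1}) is then immediate: an injective homomorphism sends reduced words injectively to reduced words, since a nonidentity letter has nonidentity image; in the $K$-algebra cases one chooses the complements so that $f$ respects them, whence $\coprod_\kappa f$ is a direct sum of maps $f^{\otimes m}$, each one-to-one because a tensor power of an injective $K$-linear map is one-to-one.

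For condition~(\ref{x.cPlim}) I would prove injectivity of the comparison $\coprod_n\limit_k A_k\to\limit_k\coprod_n A_k$ as follows. In $\Gp$ and $\fb{Monoid}$, write $L=\limit_k A_k$; a reduced word $u=b_1\cdots b_p$ over $L$ has each $b_i\neq e$, and since the set of $k$ with $(b_i)_k=e$ is a proper down-closed subset of $\omega$, for all large $k$ the image of $u$ in $\coprod_n A_k$ is again reduced of length $p$. Hence two reduced words with equal images in every $\coprod_n A_k$ agree letter-by-letter for large $k$, and by compatibility of the inverse limit they then agree for all $k$, so they are equal. In the $K$-algebra cases the comparison is graded by reduced words, so it suffices to treat each graded piece, namely the tensor-power comparison $L^{\otimes m}\to\limit_k A_k^{\otimes m}$. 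For this, any element lies in $W^{\otimes m}$ for some finite-dimensional $W\subseteq L$; the chain $W\cap\ker(\pi_k)$ decreases in $k$ with zero intersection, hence stabilizes to $0$, so $\pi_k|_W$ and therefore $(\pi_k|_W)^{\otimes m}$ are one-to-one for large $k$, forcing the element to vanish if it dies in every $A_k^{\otimes m}$. (The natural map from the direct sum of the limits of the graded pieces to the limit of their direct sum is itself one-to-one, and in the unital case the scalar-complement needed to make the grading work is supplied, at each relevant stage, by the same finite-dimensional argument.)

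For the $D_\infty$ subvarieties, condition~(\ref{x.cP1-1}) already fails by the example recalled before Lemma~\ref{L.pure<*l}. For~(\ref{x.cPlim}) I would build an inverse system whose finite stages collapse while its limit does not. Let $\mathcal{V}$ be the subvariety; since $\mathbb{Z}=\langle x\rangle\subseteq D_\infty$ we have $\fb{Ab}\subseteq\mathcal{V}$. Take $A_k=\mathbb{Z}(2^\infty)$ with connecting maps ``multiplication by $2$'', so that $L=\limit_k A_k=\mathbb{Z}_2$. As every element of $A_k$ is a square, any two elements coming from the two coprojected copies commute in $\coprod_2 A_k$ (two squares commute), whence $\coprod_2 A_k$ is the direct product $A_k\times A_k$ and $\limit_k\coprod_2 A_k=L\times L$. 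But $\coprod_2 L$ is nonabelian: using the reduction $\mathbb{Z}_2\to\mathbb{Z}/2$ to send the generator of the first copy to the reflection $s$ and that of the second to the reflection $rs$ of $D_\infty=\langle r,s\mid s^2=1,\,srs=r^{-1}\rangle$ gives $[q_0(1),q_1(1)]\mapsto[s,rs]=r^{-2}\neq e$, so this commutator is nontrivial, while it maps to the identity in the abelian $L\times L$. Thus the comparison is not one-to-one; the monoid and semigroup cases go through by the same device, with $x^2y^2=y^2x^2$ again collapsing the finite stages.

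For the four ring varieties both failures come from the non-flatness of $\mathbb{Z}$. For~(\ref{x.cP1-1}) take the square-zero rings $A=\mathbb{Z}/2$ and $B=\mathbb{Z}/4$ with the injection $f\colon 1\mapsto 2$; then $1\otimes 1\neq 0$ in $A\otimes_{\mathbb{Z}}A$ while $(f\otimes f)(1\otimes 1)=2\otimes 2=0$, so the tensor summand of $\coprod_2 f$ is not one-to-one. For~(\ref{x.cPlim}) take $A_k=\mathbb{Z}/2^k$ (unital commutative), so that $\coprod_2 A_k=A_k\otimes_{\mathbb{Z}}A_k=A_k$ and the comparison is the multiplication map $\mathbb{Z}_2\otimes_{\mathbb{Z}}\mathbb{Z}_2\to\mathbb{Z}_2$; this is not one-to-one, since for a $2$-adic unit $u\notin\mathbb{Z}$ the element $u\otimes 1-1\otimes u$ is a nonzero member of its kernel ($1$ and $u$ being $\mathbb{Q}$-independent). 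Analogous torsion examples dispose of the remaining three ring varieties. The main obstacle is condition~(\ref{x.cPlim}): on the positive side, the reduction of the $K$-algebra cases to the tensor-power comparison together with the handling of scalar-complements in the unital normal form; and on the negative side, the discovery of the inverse systems — above all the $D_\infty$ construction, whose crux is an object ($\mathbb{Z}_2$) that is not $2$-divisible yet is an inverse limit of $2$-divisible objects, so that ``being a square'' holds at every finite stage but fails in the limit.
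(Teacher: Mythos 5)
Your positive assertions follow essentially the paper's own proof (normal forms for coproducts in $\Gp,$ $\fb{Monoid}$ and the four $\!K\!$-algebra varieties, with the finite-dimensional ``linear independence stabilizes at large $k\!$'' argument replacing distinctness of letters), and your ring counterexamples are a legitimate variant of what the paper sketches: the square-zero example for~(\ref{x.cP1-1}) and the system $A_k=\mathbb{Z}/2^k\mathbb{Z},$ whose comparison map is the (far from injective) multiplication map $\mathbb{Z}_2\otimes_{\mathbb{Z}}\mathbb{Z}_2\to\mathbb{Z}_2,$ are both correct, modulo two small repairs: you need $u\notin\mathbb{Q},$ not merely $u\notin\mathbb{Z}$ (e.g.\ $u=1/3$ is a $\!2\!$-adic unit outside $\mathbb{Z}$ with $u\otimes 1-1\otimes u$ mapping to $0$ for the wrong reason -- it is already $0$ after tensoring with $\mathbb{Q}$ only when $1,u$ are dependent, so take $u$ irrational), and the two \emph{unital} ring varieties still need an explicit~(\ref{x.cP1-1}) example, e.g.\ the unitalizations $\mathbb{Z}\oplus A\to\mathbb{Z}\oplus B$ of your square-zero rings.

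The genuine gap is your $D_\infty$ counterexample to~(\ref{x.cPlim}). The inverse limit of the system $A_k=\mathbb{Z}(2^\infty)$ with connecting maps ``multiplication by $2\!$'' is \emph{not} $\mathbb{Z}_2$; it is $\!2\!$-divisible. Indeed, for \emph{any} inverse system whose connecting maps are multiplication by $2,$ the shift exhibits division by $2$ in the limit: if $(x_k)_{k\in\omega}$ is a compatible sequence, so is $(x_{k+1})_{k\in\omega},$ and $2\cdot(x_{k+1})_k=(2x_{k+1})_k=(x_k)_k.$ (Concretely, $\limit_k(\mathbb{Z}(2^\infty),\times 2)\cong\mathbb{Q}_2,$ the $\!2\!$-adic \emph{rationals}; what is isomorphic to $\mathbb{Z}_2$ is the Tate module, the limit of the torsion subgroups $\mathbb{Z}(2^\infty)[2^k]$ -- but those are not $\!2\!$-divisible, and the image of $1\in\mathbb{Z}_2$ at stage $k$ is not a square there, so that variant fails as well.) Since your $L$ is $\!2\!$-divisible, every element of $L$ is a square, so by your own reasoning $\coprod_2 L\cong L\times L$ is abelian and the comparison map is an isomorphism: the example proves nothing. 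What must be engineered is exactly that ``$z$ is a square in $A_k$ for every $k\!$'' fails to imply ``$z$ is a square in $\limit_k A_k\!$'', and this can never happen when division by $2$ is wired into the connecting maps. The paper achieves it with a decreasing chain of subgroups $A_k\subseteq\mathbb{Z}\times\bigoplus_\omega\mathbb{Z}/2\mathbb{Z},$ where $A_k$ consists of the elements $(m;a_0,a_1,\dots)$ with $m\bmod 2=a_0=\dots=a_{k-1}$: the element $z=(2;0,0,\dots)$ has square roots $(1;1,\dots,1,*,\dots)$ in each $A_k,$ but a square root lying in every $A_k$ would need infinitely many nonzero coordinates in the direct sum; hence $\limit_k A_k=\bigcap_k A_k=\langle z\rangle\cong\mathbb{Z},$ the two copies of $z$ do not commute in $(\limit_k A_k)\cP(\limit_k A_k),$ yet their images commute in every $A_k\cP A_k.$ Some construction of this kind -- where squareness at each finite stage is witnessed by square roots that ``escape to infinity'' -- is needed to replace your system, both for the group case and for the monoid and semigroup subvarieties you reduce to it.
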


\begin{proof}
To prove~(\ref{x.cP1-1}) and~(\ref{x.cPlim}) in $\fb{Monoid},$ recall
that a coproduct $\coprod_{\iota\in\kappa}A_\iota$ in that category
has a normal form almost like that of coproducts in $\Se{\<:}$
we again assume for notational simplicity that the $A_\iota$ are
disjoint, and the normal form again consists of parenthesis-free
finite products with no two successive
factors coming from the same monoid $A_\iota.$
But these products are now subject to the restriction
that no factor be the identity element $1$ of the monoid from which
it comes, while the length-zero product is allowed,
and becomes the identity element of the coproduct monoid.

It is immediate that given submonoids $A_\iota\subseteq B_\iota,$ an
expression for an element of $\coprod_{\iota\in\kappa} A_\iota$ will
be in normal form if and only if it is so in
$\coprod_{\iota\in\kappa} B_\iota,$ so the inclusion of
the former in the latter is one-to-one.
The case where all $B_\iota$ are copies of a single monoid,
and the submonoids $A_\iota$ are likewise equal, is~(\ref{x.cP1-1}).
Under the hypothesis of~(\ref{x.cPlim}), if an element of
$\coprod_n\,\limit_{k\in\omega} A_k$ is written in normal form, then
the image of that normal form expression in any $\coprod_n A_k$
automatically satisfies the condition that successive factors come from
different objects, while the statement that none of those factors
is $1$ constitutes
a finite set of conditions, which will hold simultaneously for
all {\em sufficiently large} $k.$
Thus, elements in normal form in $\coprod_n\,\limit_{k\in\omega} A_k$
map to elements in the $\coprod_n A_k$
that are in normal form for sufficiently large $k;$
and increasing $k$ still further if necessary, we can get any
two elements of $\coprod_n\,\limit_{k\in\omega} A_k$
with distinct normal forms to have images
in $\coprod_n A_k$ with distinct normal forms; hence
their images in $\limit_{k\in\omega}\ \coprod_n A_k$ are distinct.

The considerations for groups are identical, since their normal
form is the same as for monoids.

In the variety of associative unital $\!K\!$-algebras, one has an
analogous normal form result for a coproduct
$\coprod_{\iota\in\kappa} A_\iota$ of nonzero algebras:
One chooses for each of the $A_\iota$ an arbitrary $\!K\!$-vector-space
basis $X_\iota$ containing the identity element $1\in|A_\iota|$
and (again assuming the $A_\iota$ disjoint), the result
says that the set of parenthesis-free products of nonidentity
members of the union of these bases,
in which no two successive factors come from the same $X_\iota,$ forms
a $\!K\!$-vector-space basis of $\coprod_{\iota\in\kappa} A_\iota,$
with the empty product representing $1.$
So given nonzero algebras and subalgebras $A_\iota\subseteq B_\iota,$
we may take a basis of each $A_\iota$ which contains $1,$
extend it to a basis of $B_\iota,$ and observe that an expression in
normal form for a member of $\coprod_{\iota\in\kappa}A_\iota$ is also
the expression in normal form for the image of that element in
$\coprod_{\iota\in\kappa}B_\iota.$
As above,~(\ref{x.cP1-1}) follows.
In the situation of~(\ref{x.cPlim}), if we let $X$ denote a
$\!K\!$-vector space basis of $\limit_k A_k$ containing $1,$ then the
expression for any element of $\coprod_n\,\limit_k A_k$ involves the
copies of only finitely many elements of $X,$ and since these
finitely many elements and $1$ are linearly independent, they will
have images in some $A_k$ that are linearly independent,
and so can be incorporated into a basis of that $A_k;$ so with linear
independence in place of distinctness, the argument from the
monoid case also goes over.
(As noted, the above normal form is restricted to nonzero
algebras, equivalently, algebras in which $1\neq 0.$
Cases involving the zero algebra are trivial to handle.)

In the variety of ``nonunital'' associative $\!K\!$-algebras
(meaning algebras without a zeroary operation giving a multiplicative
identity element $1,$ which may or may not happen to have such an
element, and where such elements, when they exist, need not
be respected by homomorphisms), the normal form and the reasoning are
very similar: one merely deletes all mention
of identity elements, no longer allows
length-zero products, and does not exclude the zero algebra.

Coproducts in varieties of commutative associative $\!K\!$-algebras
have much simpler normal forms.
In the unital case, one again takes a basis (which may, but need not,
contain $1)$ for each algebra;
a basis for an $\!n\!$-fold coproduct $(n\in\omega)$
is then given by the set of products of one
basis element from each algebra, with factors $1$ not excluded:
\begin{equation}\begin{minipage}[c]{34pc}\label{x.x_1inX_1}
$\prod_{m=1,\dots,n} X_m\ =
\ \{x_1\dots x_n\ \mid\ x_1\in X_1,\,\dots\,,\,\<x_n\in X_n\}.$
\end{minipage}\end{equation}
In the nonunital case, this doesn't suffice, since a product of
basis elements from a proper subset of the $X_m$ will not
lie in~(\ref{x.x_1inX_1}); so we allow expressions with
variable numbers of factors; a basis is given by
\begin{equation}\begin{minipage}[c]{34pc}\label{x.x_*iinX_*i}
$\bigcup_{\emptyset\<\neq\<s\<\subseteq\<\{1,\dots,\<n\}}\ %
\prod_{m\in s} X_m.$
\end{minipage}\end{equation}
In either case, the reader can easily supply the arguments
giving~(\ref{x.cP1-1}) and~(\ref{x.cPlim}).

The first of our negative assertions concerns the variety $\C$ of
groups generated by the infinite dihedral group.
The example given following Definition~\ref{D.pure}
shows that~(\ref{x.cP1-1}) fails in that
variety for $\kappa=2$ and the inclusion
$2\<\mathbb{Z}\to\mathbb{Z}$ of infinite cyclic groups.
For a counterexample to~(\ref{x.cPlim}),
we shall use an inverse system given by a chain
$\dots\subseteq\nolinebreak A_2\subseteq\nolinebreak[3]
A_1\subseteq\nolinebreak A_0$ of subgroups
of a certain group $A;$ thus, the inverse limit of that
system will be the intersection of those subgroups.
Let $A$ be the abelian group
$\mathbb{Z}\times\bigoplus_\omega\mathbb{Z}/2\mathbb{Z},$
let $q:\mathbb{Z}\to\mathbb{Z}/2\mathbb{Z}$ be the quotient map,
and for each $k\in\omega,$ let $A_k$ be the subgroup of $A$
consisting of all elements $(m;a_0,a_1,\dots)$
$(m\in\mathbb{Z},\,a_j\in\mathbb{Z}/2\mathbb{Z})$
such that $q(m)=a_0=\dots=a_{k-1}.$
(As usual, the groups $\mathbb{Z}$ and $\mathbb{Z}/2\mathbb{Z}$ are
written additively; but $A$ and its subgroups will be thought of as
written multiplicatively, since we will be dealing with the
noncommutative group structures of their copowers in~$\C.)$
Letting $z=(2;0, 0,\dots)\in|A\<|,$ we see that in each $A_k,$ $z$ is
a square, namely of any element $(1;1,\dots,1,0,\dots)$ with initial
string of components $1\in\mathbb{Z}/2\mathbb{Z}$ of length
$\geq k;$ but the group
$\limit_{k\in\omega} A_k= \bigcap_{k\in\omega} A_k$
has no elements with odd first
component, since that would force infinitely many components
of the $\bigoplus_\omega\mathbb{Z}/2\mathbb{Z}$ summand to be nonzero.
Rather, we see that $\limit_{k\in\omega} A_k$ is free on the
single generator $z;$ hence the two copies of $z$
in $(\limit_{k\in\omega} A_k)\cP(\limit_{k\in\omega} A_k)$ do not
commute; but their images commute in $\limit_{k\in\omega} (A_k\cP A_k),$
since $z$ is a square in each~$A_k;$ so the map
of~(\ref{x.cPlim}) is not one-to-one.

Since these examples used identities not involving
the inverse or identity-element operations, they also work in the
subvarieties of $\fb{Monoid}$ and of $\Se$
generated by the infinite dihedral group.

In the four varieties of $\!\mathbb{Z}\!$-algebras listed,
one can get counterexamples using, similarly, cases where an
element $z$ in a subring $B$ of a ring $A,$ respectively
in an inverse limit of
rings $A_k$ does not admit a solution to $z=2z_0$ in that subring
or inverse limit, though it does in $A,$ respectively in all the $A_k.$
In this case, in place of the fact, used in the preceding
examples, that in our subvarieties of
$\Gp,$ etc., squares commute but some non-squares do not,
we can use the fact that in rings,
an element $z$ for which there is a solution to $z=2z_0$
must have product $0$ with any element $y$ satisfying $2y=0,$
but that other elements $z$ need not behave in this way.
The reader should not find it hard to construct specific examples.
\end{proof}

Our construction of the precoalgebra from which we obtained the
final object of $\coalg{\fb{Set}}{\Bi}$ was
in several ways simpler than the general construction
summarized in Proposition~\ref{P.P_is_final} above.
Point~(iii) of the next result indicates a class of cases
to which one of these simplifications applies.

\begin{proposition}\label{P.dropbase}
For $S$ a pseudocoalgebra, let us, in this
proposition, write $\pi^S: S\ba\to\prod_{\alpha\in\Omega_\D}S_\alpha$
for the map induced by the pseudo-co-operations
$\alpha^S: S\ba\to S_\alpha$ $(\alpha\in\Omega_\D).$

Let $S$ be a $\!\D\!$-pseudocoalgebra in $\C,$ $\theta>0$ an ordinal,
and $P$ the final $\!\theta\!$-indexed $\!\D\!$-precoalgebra with
$P_0=S,$ constructed as in Proposition~\ref{P.P_is_final}.
Then\\[.5em]
\textup{(i)}\ \ If $\pi^S$ is one-to-one, then
$\pi^{P_k}$ is one-to-one for every $k\in\theta.$\\[.5em]
\textup{(ii)}\ \ If $\pi^S$ is surjective, and
$\D$ has no identities \textup{(}i.e., is the
variety of all $\!\Omega_\D\!$-algebras\textup{)},
then $\pi^{P_k}$ is surjective for all finite $k\in\theta.$\\[.5em]
\textup{(iii)}\ \ If $\pi^S$ is bijective \textup{(}e.g., if $S$ is the
trivial pseudocoalgebra\textup{)} and $\D$ has no identities,
then $\pi^{P_k}$ is bijective for every $k\in\theta.$

In this last situation, if we use the bijections $\pi^{P_k}$ to
identify each $(P_k)\ba$ with $\prod_{\alpha\in\Omega_\D} (P_k)_\alpha,$
then the homomorphisms $(p_{j,\<k})\ba:(P_k)\ba\to(P_j)\ba$
correspond to the maps $\prod_{\alpha\in\Omega_\D}(P_{k})_\alpha\to
\prod_{\alpha\in\Omega_\D}(P_j)_\alpha$ induced by
the maps $(p_{j,\<k})_\alpha:(P_{k})_\alpha\to (P_j)_\alpha.$
\end{proposition}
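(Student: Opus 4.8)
The plan is to prove the three claims by transfinite induction on $k\in\theta$, tracking the map $\pi^{P_k}:(P_k)\ba\to\prod_{\alpha}(P_k)_\alpha$ through the successor and limit steps of the recursive construction in Proposition~\ref{P.P_is_final}. The base case $k=0$ is trivial since $P_0=S$. The real content is in the successor step, where I would unwind the definition of $(P_{k})\ba$ as a subalgebra of the product $Q_{j+1}=(P_j)\ba\times\prod_\alpha (P_{j+1})_\alpha$ from~(\ref{x.Qj+1}), and identify what $\pi^{P_{j+1}}$ does in terms of the projections of that product.

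First I would treat (i). Assuming $\pi^S$ is one-to-one, I claim injectivity of each $\pi^{P_k}$ follows almost formally. At a successor $k=j{+}1$, the candidate base object $Q_{j+1}$ has projections to each $(P_{j+1})_\alpha$, and these projections are exactly the pseudo-co-operations $\alpha^{P_{j+1}}$; so $\pi^{Q_{j+1}}$ is the canonical map into $\prod_\alpha (P_{j+1})_\alpha$, which is one-to-one when restricted to any subobject, in particular to $(P_{j+1})\ba$. Thus injectivity of $\pi^{P_{j+1}}$ needs no inductive hypothesis at all at successors. At a limit $k$, we have $(P_k)\ba=\limit_{j<k}(P_j)\ba$ and $(P_k)_\alpha=\limit_{j<k}(P_j)_\alpha$, and $\pi^{P_k}$ is the map induced on inverse limits by the $\pi^{P_j}$; since each $\pi^{P_j}$ is one-to-one by the inductive hypothesis and inverse limits preserve injectivity, $\pi^{P_k}$ is one-to-one. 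This gives (i).

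Next, for (ii) and (iii) I would invoke the assumption that $\D$ has no identities, which means the equalizer-trimming step in the construction of $(P_{j+1})\ba$ that enforces $\Phi_\D$ is vacuous: $(P_{j+1})\ba$ is exactly the joint equalizer of the maps forcing $P_{j+1}\to P_j$ to be a pseudocoalgebra morphism, with no further identity-constraints. The key computation is to show that when $\pi^S$ is surjective, the universal property forcing the commuting squares with the connecting maps $(p_{j,\<j+1})_\alpha$ does not cut down the image of $\pi^{P_{j+1}}$: I would verify that for a finite successor $k$, every element of $\prod_\alpha (P_{j+1})_\alpha$ that is consistent with the prescribed connecting-map data~(\ref{x.pjk*a}) lifts to $(P_{j+1})\ba$. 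The hypothesis that $\pi^S$ is surjective propagates through the product-and-equalizer construction precisely because there are no identities to impose, so the equalizer is governed only by agreement with the already-surjective $\pi$ one level down. For (iii), bijectivity follows by combining (i) and (ii); the restriction to finite $k$ in (ii) is needed because surjectivity can fail at limit ordinals (an inverse limit of surjections need not be surjective), whereas (i)'s injectivity survives limits, and bijectivity in (iii) at limit stages is recovered using that each $(P_k)_\alpha$ is itself a copower once the lower levels are bijective, as in the computation displayed in the sketch of Proposition~\ref{P.compar_iso}.

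\textbf{The main obstacle} I anticipate is the surjectivity argument in (ii) at successor ordinals: I must show that imposing the connecting-map compatibility~(\ref{x.pjk*a}) and~(\ref{x.c_j+1*a}) --- which is what defines $(P_{j+1})\ba$ as a joint equalizer inside $Q_{j+1}$ --- does not destroy surjectivity of $\pi$. This requires checking that the equalizer conditions constrain only the relationship between the $(P_j)\ba$-coordinate and the $(P_{j+1})_\alpha$-coordinates in a way that, given surjectivity of $\pi^{P_j}$, still admits a preimage for every target tuple; the absence of identities is exactly what guarantees no additional independent constraints enter. For the final displayed assertion of (iii), once the $\pi^{P_k}$ are all bijective I would simply transport the inverse-system structure across these bijections and check, by the defining universal properties, that the connecting maps $(p_{j,\<k})\ba$ correspond to $\prod_\alpha (p_{j,\<k})_\alpha$; this is a routine naturality verification, which I would state but not belabor.
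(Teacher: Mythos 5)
Your overall plan (transfinite induction, unwinding the construction of $(P_{j+1})\ba$ inside the product $Q_{j+1}$ of~(\ref{x.Qj+1})) is the same as the paper's, and your treatment of (ii) is in substance the paper's argument. But your successor step for (i) rests on a false claim, and this is a genuine gap. You assert that $\pi^{Q_{j+1}}$ ``is one-to-one when restricted to any subobject,'' so that injectivity of $\pi^{P_{j+1}}$ ``needs no inductive hypothesis at all.''
By~(\ref{x.Qj+1}), however, $Q_{j+1}=(P_j)\ba\times\prod_{\alpha\in\Omega_\D}(P_{j+1})_\alpha,$ and the map induced by the candidate pseudo-co-operations is the projection onto the {\em second} factor; it kills the factor $(P_j)\ba,$ so it is certainly not injective on arbitrary subobjects. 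Concretely, an element of $(P_{j+1})\ba$ is a pair $(v,u)$ with $v\in|(P_j)\ba|$ and $u=(u_\alpha)_\alpha,$ and the equalizer conditions making $p_{j,\<j+1}$ a morphism of pseudocoalgebras read $\alpha^{P_j}(v)=(p_{j,\<j+1})_\alpha(u_\alpha)$ for all $\alpha$; they constrain $v$ only through the tuple $\pi^{P_j}(v).$ So if $\pi^{P_j}$ were not injective, two pairs $(v,u)$ and $(v',u)$ with $v\neq v'$ but $\pi^{P_j}(v)=\pi^{P_j}(v')$ could both satisfy those conditions, and $\pi^{P_{j+1}}$ would identify them. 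The inductive hypothesis is therefore indispensable, and the paper uses it at exactly this point: distinct $s,t\in(P_{j+1})\ba$ are separated either by some $\alpha^{P_{j+1}}$ or by $(p_{j,\<j+1})\ba,$ and in the latter case injectivity of $\pi^{P_j}$ together with the commuting squares shows that some $\alpha^{P_{j+1}}$ separates them as well. Since you are running an induction anyway, your proof is repairable, but as written this step fails.

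A second, lesser flaw is your limit step for (iii): you recover bijectivity there ``using that each $(P_k)_\alpha$ is itself a copower \dots\ as in the computation displayed in the sketch of Proposition~\ref{P.compar_iso}.'' That computation depends on the commutation hypothesis~(\ref{x.limvscP}), which is {\em not} assumed in the present proposition (and fails, e.g., for $\C=\fb{Monoid}$); moreover no statement about copowers is needed. Since $\D$ has no identities, at a limit ordinal $(P_k)\ba$ is exactly the $Q_k$ of~(\ref{x.Pkbaselim}), i.e.\ $\limit_{j<k}(P_j)\ba,$ while $(P_k)_\alpha=\limit_{j<k}(P_j)_\alpha$ by~(\ref{x.P_k*a}); as inverse limits commute with products and preserve injectivity and bijectivity (though not surjectivity, which is why (ii) is restricted to finite $k,$ as you correctly observe), $\pi^{P_k}$ is identified with $\limit_{j<k}\pi^{P_j}$ and is bijective. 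Note also that to get (iii) at {\em infinite successor} ordinals you need the surjectivity step of (ii) carried out for arbitrary successors $j\<{+}1,$ not only finite ones -- the paper states (ii) for finite $k$ but deliberately proves its inductive step for all successor ordinals for just this reason.
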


\begin{proof}
We will get (i) and (iii) by transfinite induction on $k,$
and show that surjectivity as in~(ii) carries over from any ordinal
$j$ (finite or infinite) to its successor $j\<{+}1,$
which by finite induction gives~(ii).

First suppose $k$ is a successor ordinal $j\<{+}1,$ so
that $(P_k)\ba$ is constructed as
in the discussion beginning with~(\ref{x.Qj+1}).

Under the hypothesis of~(i), let $s$ and $t$ be distinct elements of
$(P_{j+1})\ba\<;$ we need to show that they are separated by at
least one of the pseudo-co-operations $\alpha^{P_{j+1}}.$
By the construction of $P_{j+1}$ referred to,
they are separated either by one
of these, or by the map $(p_{j,\<j+1})\ba:(P_{j+1})\ba\to(P_j)\ba\<.$
In the latter case, by inductive assumption their images under
this map are separated by some $\alpha^{P_j},$ and from the commuting
square made by $\alpha^{P_j}$ and $\alpha^{P_{j+1}},$
we see that $s$ and $t$ are also separated by the latter map.

For~(ii), consider any
$u$ in $\prod_{\alpha\in\Omega_\D}(P_{j+1})_\alpha.$
By our assumption that~(ii) holds
for $P_j,$ the image of $u$ in
$\prod_{\alpha\in\Omega_\D}(P_j)_\alpha$ can be written
$\pi^{P_j}(v)$ for some $v\in|(P_j)\ba|.$
Let $w=(v,u)\in|Q_{j+1}|=|(P_j)\ba|\times\,
|\prod_{\alpha\in\Omega_\D} (P_{j+1})_\alpha|.$
Our choice of $v$ guarantees that $w$ satisfies the
equalizer conditions imposed, in the construction of $(P_{j+1})\ba$
from $Q_{j+1},$ to
insure that $p_{j,\<j+1}$ is morphism of pseudocoalgebras.
The other equalizer conditions imposed there were to
insure that the identities of $\D$ were cosatisfied, so
for $\D$ having no identities, that set of conditions is empty.
Hence $(v,u)$ is an element of $P_{j+1},$ and it maps
to $u$ under $\pi^{P_{j+1}},$ as required.

The $k=j\<{+}1$ step for~(iii) follows from the conjunction
of the steps for~(i) and~(ii).
(Although~(ii) is only stated for finite $k,$ we formulated
our inductive step for an arbitrary successor ordinal,
as required to obtain this step of~(iii).)

In the case where $k$ is an infinite limit ordinal, $P_k$ is
just the $Q_k$ of~(\ref{x.Pkbaselim}) if $\D$ has no identities,
as in~(iii), or a subalgebra thereof in the general case.
Since inverse limits preserve products, one-one-ness and bijectivity
(though not, in general, surjectivity), the transfinite
inductive steps of~(i) and~(iii) follow.
\end{proof}

The condition that $\D$ have no identities in part~(iii) above
is highly restrictive; but in~\S\ref{S.Set}, after describing
the final object of $\coalg{\fb{Set}}{\Bi}$ using that idea,
we obtained the final objects of $\coalg{\fb{Set}}{\D},$ for
subvarieties $\D$ of $\Bi$ defined by various
identities, as {\em subcoalgebras} of that object.
Points~(ii) and~(iv) of the next result note a general context in which
this technique works.

\begin{proposition}\label{P.imagecoalg}
Let $\C$ be a variety satisfying~\textup{(\ref{x.cP1-1})},
and $\D$ any variety.
Let $S$ \textup{(}in
parts~\textup{(ii)} and~\textup{(iv)} below\textup{)}
be any $\!\D\!$-pseudocoalgebra in $\C,$ and let $\theta$
\textup{(}in parts~\textup{(iii)} and~\textup{(iv))}
be any nonzero ordinal.
Then\\[.5em]
\textup{(i)}\ \ The image of any homomorphism of $\!\D\!$-coalgebras
in $\C,$ $f\,{:}\ R\to R',$ is a subcoalgebra of $R',$ and cosatisfies
every identity that is cosatisfied by $R$ or by $R'.$\\[.5em]
\textup{(ii)}\ \ If $R'$ denotes the final $\!\Omega_\D\!$-coalgebra
over $S,$ then the final $\!\D\!$-coalgebra $R$ over $S$ can be
identified with a subcoalgebra of $R',$ which will be
the largest such subcoalgebra
that cosatisfies the identities in $\Phi_\D.$\\[.5em]
\textup{(iii)}\ \ Given a homomorphism $f\,{:}\ P\to P'$ of
$\!\theta\!$-indexed $\!\D\!$-precoalgebras in $\C,$ let $P''$ denote
the system of subalgebras $(P''_k)\ba\subseteq(P'_k)\ba$
and $(P''_k)_\alpha\subseteq(P'_k)_\alpha$ given by the images
of the components of $f,$ except for $(P''_k)_\alpha$
when $k$ is a nonzero limit ordinal, which, in accordance
with~\textup{(\ref{x.P_k*a}),} we define to be the subalgebra of
$(P'_k)_\alpha$ naturally isomorphic to $\limit_{j<k} (P''_j)_\alpha.$
Then $P''$ is subprecoalgebra of $P',$ and cosatisfies
every identity cosatisfied by $P$ or by~$P'.$\\[.5em]
\textup{(iv)}\ \ If $P'$ denotes the final $\!\theta\!$-indexed
$\!\Omega_\D\!$-precoalgebra with $P'_0=S,$ then the final
$\!\theta\!$-indexed $\!\D\!$-precoalgebra $P$ with $P_0=S$ can be
identified with a subprecoalgebra of $P',$ which will be the largest
such subprecoalgebra that cosatisfies the identities in $\Phi_\D.$
\end{proposition}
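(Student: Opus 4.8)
The plan is to prove the four parts in the order (i), (iii), (ii), (iv): parts~(i) and~(iii) are the technical heart (``the image of a morphism is a sub(pre)coalgebra''), and parts~(ii) and~(iv) then drop out from them by one and the same finality argument.

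For~(i), write $f\colon R\to R'$ and set $A=f(|R\<|)\subseteq|R'|$. Since $\C$ satisfies~(\ref{x.cP1-1}), the subalgebra $A$ is copower-pure, so it is a legitimate candidate to underlie a subcoalgebra in the sense of Definition~\ref{D.subcoalg}. The one computation to make is that each co-operation $\alpha^{R'}$ carries $A$ into $\coprod_{\ari(\alpha)}A$: factoring $f$ as $|R\<|\to A\hookrightarrow|R'|$ with the first map surjective and applying the copower functor, which is a left adjoint and so preserves that surjection, while~(\ref{x.cP1-1}) keeps the second factor injective, one identifies the image of $\coprod_{\ari(\alpha)}f$ with $\coprod_{\ari(\alpha)}A$ sitting inside $\coprod_{\ari(\alpha)}|R'|$; the square expressing that $f$ is a coalgebra morphism then places $\alpha^{R'}(A)$ in that subalgebra. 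So $A$ underlies a subcoalgebra of $R'$. Cosatisfaction splits into two complementary naturalities: a derived co-operation is natural in coalgebra morphisms, so an identity cosatisfied by $R$ descends to $A$ because the corestriction $|R\<|\to A$ is an epimorphism, whereas an identity cosatisfied by $R'$ restricts to $A$ because $\coprod_\kappa A\hookrightarrow\coprod_\kappa|R'|$ is injective.

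For~(iii) I would carry out the same idea level by level along $\theta,$ taking $P''$ to be the system of images of the components of $f.$ Everything except the precoalgebra axioms of Definition~\ref{D.pre} transcribes directly from~(i). At a successor ordinal $j\<{+}1$ the required identity $(P''_{j+1})_\alpha=\coprod_{\ari(\alpha)}(P''_j)\ba$ is exactly the place where~(\ref{x.cP1-1}) is indispensable: because $f$ is a precoalgebra morphism one has $(f_{j+1})_\alpha=\coprod_{\ari(\alpha)}(f_j)\ba,$ and~(\ref{x.cP1-1}) identifies the image of this copower of a surjection with the copower of the image of $(f_j)\ba.$ At a nonzero limit ordinal $k$ one sets $(P''_k)_\alpha=\limit_{j<k}(P''_j)_\alpha$ in accordance with~(\ref{x.P_k*a}), using that inverse limits preserve injections to see this lies in $(P'_k)_\alpha$; a short transfinite induction then shows the pseudo-co-operations and pseudocoprojections of $P'$ restrict to $P''.$ The cosatisfaction clause follows just as in~(i), once one checks---by induction on term structure from Definition~\ref{D.instance}---that $\!(j,k)\!$-instances are natural under morphisms of precoalgebras.

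Parts~(ii) and~(iv) then follow by a single finality argument, built on~(i) and~(iii) respectively. Let $R'$ be the final $\!\Omega_\D\!$-coalgebra over $S$ (Theorem~\ref{T.final} applied to the identity-free variety of all $\!\Omega_\D\!$-algebras), and let $R$ be the final $\!\D\!$-coalgebra over $S$; the precoalgebra case uses $P',$ $P$ and Proposition~\ref{P.P_is_final} in the same roles. Since $R$ is in particular an $\!\Omega_\D\!$-coalgebra over $S,$ finality of $R'$ gives a unique $h\colon R\to R'.$ Take $\mathrm{im}(h)$ as the candidate subcoalgebra: by~(i) it is a subcoalgebra of $R'$ cosatisfying $\Phi_\D,$ hence is again a $\!\D\!$-coalgebra over $S,$ so finality of $R$ supplies a map $\mathrm{im}(h)\to R.$ The composite of the surjective corestriction $R\to\mathrm{im}(h)$ with this map is an endomorphism of the final object $R,$ hence the identity, making the corestriction a surjective split monomorphism and therefore an isomorphism; this identifies $R$ with $\mathrm{im}(h).$ Largeness is immediate: for any subcoalgebra $T$ of $R'$ cosatisfying $\Phi_\D,$ finality of $R$ gives $T\to R\xrightarrow{h}R',$ which by uniqueness of maps into the final $\!\Omega_\D\!$-coalgebra must be the inclusion of $T,$ and since $h$ factors through $\mathrm{im}(h)$ this forces $T\subseteq\mathrm{im}(h).$ I expect the genuine obstacle to lie entirely in~(iii): managing the limit ordinals of Definition~\ref{D.pre} and nailing down the naturality of term-instances, rather than in any single conceptual step.
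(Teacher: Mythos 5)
Your proposal is correct and follows essentially the same route as the paper: parts~(i) and~(iii) by showing the image is a copower-pure sub(pre)coalgebra using~(\ref{x.cP1-1}), with cosatisfaction transferred by the two naturalities (descent along the surjective corestriction, restriction along the injective copower of the inclusion), and then parts~(ii) and~(iv) by applying these to the canonical map supplied by finality, with the same maximality argument. The only cosmetic difference is that where the paper invokes strong quasifinality of the final coalgebra to conclude that $R\to f(R)$ is an isomorphism, you re-derive that fact on the spot via the split-monomorphism argument.
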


\begin{proof}
The hypothesis~(\ref{x.cP1-1}) insures that subalgebras in $\C$
are copower-pure.
It is then easy to see that in~(i), the image of $f$ will be a
subcoalgebra, and will cosatisfy the identities referred to.
The argument for~(iii) is similar.
(The existence of a subalgebra of $(P'_k)_\alpha$ naturally
isomorphic to $\limit_{j<k} (P''_j)_\alpha$ follows
from the fact that the $(P''_j)_\alpha$ are subalgebras of the
$(P'_j)_\alpha,$ mapped into one another by the connecting maps.)

To get~(ii), we apply~(i) to the case where
$R$ is the final $\!\D\!$-coalgebra over $S,$
$R'$ the final $\!\Omega_\D\!$-coalgebra over $S,$
and $f$ the morphism $R\to R'$ given by the universal property of $R'.$
By~(i), $f(R)$ is a coalgebra; so as $R$ is strongly quasifinal
over $S,$ the map $f:R\to f(R)$ must be an isomorphism, so
we may identify $R$ with the subcoalgebra $f(R)$ of $R'.$
If $R'$ had any $\!\D\!$-subcoalgebra $R''$ not contained
in $R,$ then the unique map of $R''$ to the final $\!\D\!$-coalgebra
$R$ over $S$ would give a map $R''\to R'$ over $S$ distinct
from the inclusion, contradicting the universal property of $R'.$

Similarly,~(iv) follows from~(iii).
\end{proof}

(Remark: The awkward construction of the precoalgebra $P''$ of~(iii)
above is necessitated by~(\ref{x.P_k*a}).
It might be preferable, instead, to modify the definition of
precoalgebra by weakening~(\ref{x.P_k*a}) to merely require
that for $k$ a limit ordinal $>0,$ the natural map
$(P_k)_\alpha\to\limit_{j<k} (P_j)_\alpha$ be a monomorphism.
When $\fb{A}$ is a variety $\C$ this simply says that the family of maps
$(P_k)_\alpha\to(P_j)_\alpha$ separates points of $(P_k)_\alpha.$
If this change of definition proves feasible, then
one could take $P''$ in the above result simply to be
the image of $P$ in $P',$ in analogy with~(i).)

\vspace{.5em}
We will apply these results to several examples in Part~II below.
We end Part~I by noting --

\section{A few directions for further investigation.}\label{S.further}

In \cite[\S\S63-64]{coalg}, generalizing work of D.\,Tall and
G.\,Wraith \cite{TW.Rep} \cite[\S9]{GW.aar}, I defined a
``TW-{\em comonad}\/'' on a variety $\C$ to be a representable
functor $F:\C\to\C$ with a comonad structure (morphisms $F\to FF$
and $F\to\mathrm{Id}_\C$ making appropriate diagrams commute).

Actually, the main focus there was on the left adjoints of such
functors, called ``TW-{\em monads}\/''.
As noted in \cite{Freyd}, for any coalgebra $R,$ the left adjoint
to the functor represented by $R$ can be viewed as a construction of
``generalized tensor product with $R.$''
Coalgebras given with morphisms that make the functors they represent
TW-comonads and the adjoints thereof TW-monads,
I dubbed ``TW-monad objects'' in \cite{coalg}.
But since we have not dealt with left adjoints of representable functors
here, I will mainly use the language of TW-comonads in these remarks.
(To complicate things further, in \cite{coalg} I also
defined ``WT-monads'', ``WT-comonads'', and ``WT-comonad objects'',
to mean representable functors with {\em monad} structures,
respectively their left adjoints, i.e., ``tensor product-like'' functors
with comonad structures, respectively
the coalgebras with additional structure that induce these functors.
But these will not be relevant below.)

Recall that a ``coalgebra object'' with respect to a comonad $F$
-- a different usage from the sense of coalgebra with which we are
concerned in this paper -- means an object $A$ of $\C$ with
a morphism $A\to F(A)$ making appropriate diagrams commute.
It was shown in \cite{coalg} that coalgebra objects (in that sense)
with respect to a TW-comonad are the same as algebra objects (in the
analogous sense) with
respect to the corresponding TW-monad, and that the category of
these is equivalent to a new variety of algebras (in the
sense of this paper), obtained by adjoining to $\C$
additional unary operations, corresponding to the elements of any
generating set (as an algebra) for the
representing coalgebra, and appropriate identities.

It was also noted that the {\em initial} representable functor from a
variety $\C$ to itself (for the existence of which, in the general
case, I referred
to this not-yet-written paper) always has a structure of TW-comonad.
For $\C=\fb{Ring}^1,$
it was shown, using the description of the initial representable functor
$\fb{Ring}^1\to\fb{Ring}^1$ as $S\mapsto S\times S^\mathrm{op},$
that the coalgebras with respect
to this comonad are the {\em rings with involution}
(an involution meaning an antiautomorphism of exponent $2).$

Now the concept of a ring with involution is of considerable independent
interest; it gives, for instance, the class of rings over which one
can define the conjugate transpose of a matrix,
and hence such constructions as the group of unitary matrices.
(The transpose operation alone, without conjugation, is of little use
for matrices over noncommutative rings; it behaves nicely in the
commutative case only because there, the identity map is an involution.)
This suggests that for other varieties, the extensions arising
from their initial representable endofunctors may also be of use.
The initial representable endofunctors (final coalgebras) of several
varieties will be determined in the remaining sections of this note.
I have no present plans to pursue the study of the corresponding
extended varieties; some reader might find it interesting to do so.

Of course, if we start with a variety such as $\fb{Monoid}$
or $\Gp$ having a unique derived zeroary operation, then the
initial representable functor, and hence the resulting TW-monad and
comonad, are trivial by Theorem~\ref{T.7/9}(ii).
But if $\C$ is any variety,
and we fix a nontrivial object $A$ of $\C,$ then the comma category
$(A\downarrow\C)$ of $\!\C\!$-algebras with homomorphisms of $A$
into them can be regarded as a variety, with any generating set for $A$
as additional zeroary operations; and perhaps the
above construction on such varieties would give useful information.

It would be interesting to know whether the above ``variety of
coalgebras with respect to the initial representable endofunctor''
construction is idempotent (up to natural equivalence).

\vspace{.5em}
To see another direction for investigation, let us start
with the following observations:
In $\fb{Set},$ the kinds of algebra structures that can arise are
extremely diverse, while coalgebra structures are much more limited.
On the other hand, once one has varieties $\C$ of set-based
algebras, the consideration of objects of $\C$ with algebra
structure (in the sense of an algebra structure on an object
of a general category $\fb{A}$ with products) gives nothing
essentially new; an object of $\C$ with a $\!\C'\!$-algebra structure
is simply an algebra in yet another variety $\C''.$
This can be thought of as a consequence of the fact that
products in $\C'$ are formed using products of underlying sets.
(There are, though, some interesting questions on how
$\C''$ relates to $\C$ and $\C';$ cf.\ \cite[\S\S9.12-13]{245}.)

However, {\em coalgebra} objects in a variety of algebras $\C$ can
be very diverse \cite{coalg}.

In a category $\coalg{\C}{\D}$ of such coalgebras, if we now look at
$\!\D'\!$-coalgebra objects for another variety $\D',$ this again gives
nothing essentially new; these are simply $\!\D''\!$-coalgebra objects
of $\C$ for an appropriate $\D'';$ essentially because coproducts in
$\coalg{\C}{\D}$ arise from coproducts of underlying $\!\C\!$-algebras.

However, {\em products} in $\coalg{\C}{\D}$ do not in general arise
from products of underlying $\!\C\!$-algebras; they are examples of
limits, which we have seen are ``exotic'' in these categories.
So -- what can one say about $\!\C'\!$-{\em algebra} objects
of $\coalg{\C}{\D},$ for $\C'$ another variety?
I don't know.

We have seen (Theorem~\ref{T.cofree}, first paragraph) that
given varieties $\C$ and $\D,$ every object $A$ of $\C$ determines
an object $R$ of $\coalg{\C}{\D}$ with a universal map $|R\<|\to A$
in $\C.$
This says that the contravariant functor $\coalg{\C}{\D}\to\fb{Set}$
given by $S\mapsto\C(|S|,A)$ is represented by $R\<.$
But, of course, the sets $\C(|S|,A)$ have natural structures
of $\!\D\!$-algebra, $\C(S,A);$ so the above coalgebra $R$
has a $\!\D\!$-algebra structure in $\coalg{\C}{\D}.$
It would be desirable to understand this structure.

\vspace{.5em}
In addition to the above open-ended directions for investigation,
there are several straightforward ways that it should be possible
to generalize the results of this note.
A {\em quasivariety} \cite{PC.UA} is a class of algebras of a given type
defined by identities and universal {\em Horn sentences}, i.e.,
universal equational implications.
(For example, if, to the operations and identities for groups, we
add the implications $(\forall\,x)\ (x^n\<{=}\,e\implies x\,{=}\,e),$
one for each positive integer $n,$ we get
the {\em quasivariety} of torsion-free groups.)
The left sides of these implications are required to be
{\em finite} conjunctions of equations; if this finiteness restriction
is removed, the resulting more general classes
of algebras are called {\em prevarieties}.
As is the case with identities, the satisfaction of such
a universal implication
by all the values of a representable functor $F$ can be
detected in the structure of the representing coalgebra $R:$
An implication of this sort in $\kappa$ variables holds for the values
of $F$ if and only if the joint coequalizer in
$\C$ of the family of pairs of
maps $|R\<|\to\coprod_\kappa |R\<|$ corresponding to the
hypotheses of the implication also coequalizes the
pair of maps corresponding to the conclusion.
One should be able to adapt the material of the preceding sections
to the case where $\C$ and $\D$ are general prevarieties.

Categories of algebras in which some of the operations are
partial, with domain given by the set at which specified relations
in other operations hold (subject to conditions that avoid
vicious circles) should also be amenable to such treatment.
Cf.~\cite[\S6]{tower} for a bit of motivation, and an example.
(In contrast, the category of {\em fields},
where the domain of the inverse operation is the set of elements
where the relation $x=0$ does {\em not} hold, is notoriously
uncooperative from the point of view of universal constructions.)
The same should be true of categories of algebraic structures involving
relations as well as operations (subject to appropriate sorts of
axioms), and to categories of many-sorted algebras.

Indeed, it was this potential embarrassment of riches that led me to
restrict the present development to varieties.

\vspace{.8em}
{\samepage\begin{center}{\bf II. EXAMPLES.}\end{center}

In the remaining sections, we shall compute some explicit
examples of limit coalgebras, especially final coalgebras.

\section{The final object of $\coalg{\Bi}{\Bi}.$}\label{S.BiBi}

In} \S\ref{S.Set} we described the final
co-$\!\Bi\!$ object of $\fb{Set};$
let us now determine the final co-$\!\Bi\!$ object of $\Bi.$
As before, the unique element of $\Omega_{\Bi},$ i.e., the
primitive binary operation of our algebras, will be denoted $\beta.$

Because our ``$\!\C\!$'' is $\Bi,$ which has no identities,
Propositions~\ref{P.compar_iso} and~\ref{P.red} tell us that we can
get our final coalgebra as $P_\omega,$ where $P$ is the
$\!\omega\<{+}1\!$-indexed
precoalgebra built from the trivial pseudocoalgebra, and that
this object $P_\omega$
will be the inverse limit of the $P_k$ with $k\in\omega.$
On the other hand, because our ``$\!\D\!$'' is $\Bi,$
Proposition~\ref{P.dropbase}(iii) tells us that each $(P_{j+1})\ba$
$(j\in\omega)$ can be constructed as $(P_j)\ba\cP(P_j)\ba\<.$
For that result says that the ``product of pseudo-co-operations''
map $\pi^{P_{j+1}}$ is an isomorphism, and with only one
pseudo-co-operation, $\pi^{P_{j+1}}$ is just $\beta^{P_{j+1}}.$

Let us look at this last result in the more general context
where $\D$ is still $\Bi$ and $P_0$ is still the trivial
pseudocoalgebra, but $\C$ is an arbitrary variety.
Let us write $(P_0)\ba$ as $\{x\},$ and
express the fact that for every operation
$\alpha$ of $\C,$ $\alpha(x,\dots,x)=x,$ by calling $x$ ``idempotent''.
Thus $(P_1)\ba,$ the coproduct of two copies of $(P_0)\ba=\{x\},$
will be freely generated as a $\!\C\!$-algebra by two
idempotent elements, which we shall denote $x_0$ and $x_1;$
$(P_2)\ba,$ the coproduct of two copies of
$(P_1)\ba,$ will be freely generated by four idempotent
elements $x_{00},\,x_{01},\,x_{10},\,x_{11},$ etc..
We fix our notation so that for each idempotent generator
$x_a$ of $(P_j)\ba,$ where $a\in\{0,1\}^j,$ the
two coprojections $(P_j)\ba\to(P_j)\ba\cP(P_j)\ba=(P_{j+1})\ba$
take $x_a$ to $x_{0a}$ and $x_{1a}$ respectively.
It is then easy to verify by induction on
$j$ that the maps $(p_{j,\<j+1})\ba:(P_{j+1})\ba\to (P_j)\ba$
act by dropping indices from the other end of our subscripts,
sending generators $x_{a0}$ and $x_{a1}$ of
$(P_{j+1})\ba$ $(a\in\{0,1\}^j)$ to $x_a$ in $(P_j)\ba\<.$
Hence our inverse system of algebras $(P_k)\ba$ $(k\in\omega)$
and their connecting morphisms takes the form
\begin{equation}\begin{minipage}[c]{34pc}\label{x.x->->}
$\dots\ \to\ \{x_{00}\}\cP\{x_{01}\}\cP\{x_{10}\}\cP\{x_{11}\}\ \to\ %
\{x_0\}\cP\{x_1\}\ \to\ \{x\},$
\end{minipage}\end{equation}
where the morphisms act by
\begin{equation}\begin{minipage}[c]{34pc}\label{x.droplast}
$x_{a\,i}\ \mapsto\ x_a$ $(a\in\{0,1\}^j,\ i\in\{0,1\}).$
\end{minipage}\end{equation}

Back, now, to the case $\C=\Bi.$
I claim that in this case one can encode elements
of $(P_1)\ba=\{x_0\}\cP\{x_1\}$ by certain $\!\{0,1\}\!$-valued
functions on the Cantor set.
We shall formally regard the Cantor set as
$\{0,1\}^\omega,$ but for convenience in description, I will often
use language corresponding to the ``middle thirds in the unit
interval'' Cantor set, referring, for instance,
to the ``left'' and ``right'' halves of the set.
(I hope this will be more helpful than confusing.)
Here is the encoding:

The elements $x_0$ and $x_1$ will be represented by the constant
functions $0$ and $1$ respectively.
If two elements $s$ and $t$ are represented by functions $f_s$ and
$f_t$ respectively, let us represent $\beta(s,t)$ by the
function whose graph on the ``left half'' of the Cantor set is
a compressed copy of the graph of $f_s,$ and on the ``right half'',
a compressed copy of the graph of $f_t.$
In terms of strings of $\!0\!$'s and $\!1\!$'s (which we will write
without parentheses or commas), this says that
\begin{equation}\begin{minipage}[c]{34pc}\label{x.beta}
$f_{\beta(s,\<t)}(0a)\ =\ f_s(a),$\quad
$f_{\beta(s,\<t)}(1a)\ =\ f_t(a)$\quad $(a\in\{0,1\}^\omega).$
\end{minipage}\end{equation}
Note that~(\ref{x.beta}) is consistent with the relations
$\beta(x_0,\<x_0)=x_0,$ $\beta(x_1,\<x_1)=x_1;$ and that
the $\!\{0,1\}\!$-valued functions on $\{0,1\}^\omega$ that we get
by recursive application of~(\ref{x.beta}) starting with the
constant functions $0$ and $1$
are {\em continuous} relative to the standard topology on
$\{0,1\}^\omega$ and the discrete topology on $\{0,1\}.$
In fact, this construction yields a bijection between
elements of $\{x_0\}\cP\{x_1\}$ and such continuous functions: this
follows from the facts that every element $y\in|\<\{x_0\}\cP\{x_1\}\<|$
is uniquely expressible as $\beta(s,t),$ where unless $y$
is $x_0$ or $x_1,$ each of $s$ and $t$ has smaller depth
than $y;$ and that every continuous $\!\{0,1\}\!$-valued function
$f$ on the Cantor set is uniquely expressible as in~(\ref{x.beta})
for unique continuous functions $g$ and $h$ in place of $f_s$
and $f_t,$ where unless $f$ is the constant function $0$ or $1,$
these functions are constant on ``coarser'' clopen subsets than~$f.$

In exactly the same way, elements of
$(P_2)\ba=\{x_{00}\}\cP\{x_{01}\}\cP\{x_{10}\}\cP\{x_{11}\}$
may be represented by continuous $\!\{00,\,01,\,10,\,11\}\!$-valued
functions on $\{0,1\}^\omega,$ and so forth.
The connecting morphisms of~(\ref{x.x->->}),~(\ref{x.droplast})
are seen to correspond to the operation of sending
$\!\{0,1\}^{j+1}\!$-valued functions to $\!\{0,1\}^j\!$-valued
functions by dropping the final $0$ or $1$ in the output-symbols.

It is not hard to deduce that on passing to the inverse limit, we get a
bijection between elements of
$(P_\omega)\ba=\limit_{k\in\omega} (P_k)\ba$ and
functions $\{0,1\}^\omega\to\{0,1\}^\omega$ continuous with respect
to the standard topology on both copies of $\{0,1\}^\omega.$
The $\!\Bi\!$-operation on this set is again described
by~(\ref{x.beta}).

Turning to co-operations, we have noted that by
Proposition~\ref{P.dropbase}(iii), for each $j\in\omega,$ the
pseudo-co-operation
$\beta^{P_{j+1}}: (P_{j+1})\ba\to (P_{j+1})_\beta=(P_j)\ba\cP(P_j)\ba$
is an isomorphism, taking each generator of the
form $x_{0a}$ to $q^{(P_j)\ba}_{2,\<0}(x_a),$ and each
generator of the form $x_{1a}$ to $q^{(P_j)\ba}_{2,\<1}(x_a).$
Identifying $(P_{j+1})_\beta$ with $(P_{j+1})\ba$ via this
map, we see that the image of the first pseudocoprojection,
$c^{P_{j+1}}_{\beta,0}:(P_{j+1})\ba\to (P_{j+1})_\beta,$ is
the subalgebra generated by the elements of the form $x_{0a}$
$(a\in\{0,1\}^j),$ and that of the second pseudocoprojection,
$c^{P_{j+1}}_{\beta,1},$
is the subalgebra generated by the elements $x_{1a}.$
Passing to the inverse limit, and using the ``middle thirds''
image of the Cantor set, we see that $q^{P_\omega}_{2,\<0}$ acts
by {\em vertically} compressing the graph of an element of $P_\omega$ so
as to get a function taking values in the lower half of the
Cantor set, while $q^{P_\omega}_{\<2,1}$ similarly compresses it to
a function with values in the upper half.
Regarding $P_\omega$ as a coalgebra $R,$ the co-operation
$\beta^R: |R\<|\to|R\<|\cP|R\<|$ thus works by ``breaking up''
the graph of a general element $y$ of $R$ into segments
in which the value is in the lower half-Cantor-set and segments
in which it is in the upper half-Cantor-set (each segment having for
domain a ``sub-Cantor-set'' consisting of all elements beginning
with some specified string of $\!0\!$-s and $\!1\!$-s), and using
this decomposition to write $y$ as a $\!\beta_{|R\<|}\!$-word in the
corresponding elements of $q^{P_\omega}_{\<2,\<0}(|R\<|)$ and
$q^{P_\omega}_{\<2,1}(|R\<|).$

This completes the description of the universal object $R$!
(That description was sketched, without using the language of
coalgebras, as \cite[last two exercises of \S8.3]{245}.)

\vspace{.5em}
Though the set $||R\<||$ has a more complicated form than
the underlying set of the initial object of $\coalg{\fb{Set}}{\Bi}$
found in \S\ref{S.Set}, note that its cardinality is still that
of the continuum; for a continuous map $\{0,1\}^\omega\to\{0,1\}^\omega$
is determined by its values at a countable dense subset of its domain,
so there are at most $(2^{\aleph_0})^{\aleph_0}=2^{\aleph_0}$ such maps.

A curious property of this example is that not only does the
co-operation $\beta^R$ give an isomorphism of $|R\<|$ with
the coproduct in $\Bi$ of two copies of itself,
as required by Proposition~\ref{P.dropbase}; the operation
$\beta_{|R\<|}$ likewise gives a bijection of $||R\<||$ with
the product in $\fb{Set}$ of two copies of itself.
I do not know whether this is an instance of some general result.

The set $||R\<||$ of continuous maps from the Cantor set to itself
also has a natural monoid structure, given by composition of maps.
I likewise do not know whether this has any interpretation as
``additional structure'' on the coalgebra $R$ we have constructed.

\section{The final object of $\coalg{\Bi}{\Se}.$}\label{S.BiSe}

When we turn to co-$\!\Se\!$ objects of $\Bi,$ we cannot apply
Proposition~\ref{P.dropbase}(iii) directly, since the variety
$\Se$ is defined using a nonempty set of identities; but we
can take the above description of the final $\!\Bi\!$-precoalgebra
and $\!\Bi\!$-coalgebra,
and use Proposition~\ref{P.imagecoalg}(iv) and~(ii)
to obtain the corresponding
final $\!\Se\!$-precoalgebra and coalgebra as substructures thereof.
Let us write $P'$ for the precoalgebra we called $P$ in the preceding
section, and let $P$ here
denote the subprecoalgebra thereof determined by
the condition that $\beta$ cosatisfy the associative identity.

The first instance of that identity arises in $P'_2.$
In constructing the two derived pseudo-co-operation maps that we must
equalize, we in each case begin
by mapping $(P'_2\<)\ba$ into $(P'_1\<)\ba\cP(P'_1\<)\ba$ by
$\beta^{P'_2},$ and then map this, in different ways, into
\begin{equation}\begin{minipage}[c]{34pc}\label{x.lmr}
$(P'_0\<)\ba\,\cP\,(P'_0\<)\ba\,\cP\,(P'_0\<)\ba\<.$
\end{minipage}\end{equation}
Namely, to get one derived pseudo-co-operation,
we map our first $(P'_1\<)\ba$
by $\beta^{P'_1}$ into $(P'_0\<)\ba\cP(P'_0\<)\ba,$
which we identify with the subalgebra of~(\ref{x.lmr})
generated by the first two copies of~$(P'_0\<)\ba,$
while mapping the second $(P'_1\<)\ba$ by
$(p_{0,1})\ba$ into $(P'_0\<)\ba$ which we identify with the third copy.
To get the other derived pseudo-co-operation,
we apply $(p_{0,1})\ba$ to our
first $(P'_1\<)\ba,$ and $\beta^{P'_1}$ to the second,
identifying its codomain with the subalgebra of~(\ref{x.lmr})
generated by the second and third copies.
Denoting the images of the unique element $x$ of $(P'_0\<)\ba$
under the three coprojections into~(\ref{x.lmr})
by $x_\lambda,\,x_\mu,\,x_\rho$ (mnemonic for the {\em left,
middle} and {\em right} terms in the associative law), we find that
our two derived pseudo-co-operations act by
\begin{equation}\begin{minipage}[c]{34pc}\label{x.lmr_r}
$x_{00}\ \mapsto\ x_\lambda,\qquad
x_{01}\ \mapsto\ x_\mu,\qquad
x_{10}\ \mapsto\ x_\rho,\qquad
x_{11}\ \mapsto\ x_\rho,$
\end{minipage}\end{equation}
and
\begin{equation}\begin{minipage}[c]{34pc}\label{x.lmr_l}
$x_{00}\ \mapsto\ x_\lambda,\qquad
x_{01}\ \mapsto\ x_\lambda,\qquad
x_{10}\ \mapsto\ x_\mu,\qquad
x_{11}\ \mapsto\ x_\rho.$
\end{minipage}\end{equation}
respectively.
Note that the only generators of $(P'_2\<)\ba$ on which these
two maps agree are $x_{00}$ and $x_{11}.$
Now the object of $\Bi$ freely generated by any set $X$ of
idempotent elements has a normal form, consisting of all
expressions in these generators with no subexpression which can
be simplified by the idempotence relations, i.e., no
$\beta(x,x)$ with $x\in X.$
Comparing normal forms in the domains and codomains of the
above two maps, it is not hard to
verify that their equalizer will be the subcoalgebra
of $(P'_2\<)\ba$ generated by $x_{00}$ and $x_{11}.$
(Perhaps the easiest way to see this is to regard members of
$P_2',$ as in the preceding section, as $\!\{00,\<01,10,11\}\!$-valued
functions on the Cantor set, and~(\ref{x.lmr_r}),~(\ref{x.lmr_l})
as determining maps from these to
$\!\{\lambda,\<\mu,\rho\}\!$-valued functions on that set.)
Hence $(P_2)\ba$ is the subalgebra $\langle x_{00},\,x_{11}\rangle$
of $(P'_2\<)\ba\<.$

Instances of coassociativity at higher levels similarly give
the condition that for any generator $x_{i_0\dots i_k}$
occurring in the expression for any element of $(P_k)\ba,$
successive indices $i_j,\,i_{j+1}$ must be equal, so
$(P'_j\<)\ba$ is just $\langle x_{0\dots 0},\,x_{1\dots 1}\rangle.$
The limit object is therefore
$\langle x_{0^\infty}, x_{1^\infty}\rangle;$
its co-operation takes $x_{0^\infty}$
to its own image under the first coprojection, and
$x_{1^\infty}$ to its own image under the second coprojection.
The corresponding initial object of $\Rep{\Bi}{\Se}$
can be described as taking each binar, $A,$ to the set of pairs
$\{(a,b)\in|A\<|\times|A\<|\mid a^2{\<=\<}a,\, b^2{\<=\<}b\},$
with operation $(a,b)\cdot(c,d)=(a,d).$
(Like the construction of the initial semigroup-valued
representable functor on $\fb{Set},$ this is a
``rectangular band'' construction.)

\vspace{.5em}
What if we modify the task of the preceding section in the opposite way,
and seek to describe the final object of $\coalg{\Se}{\Bi},$
equivalently, the initial representable functor $\Se\to\Bi$?

Since our new ``$\!\C\!$'', $\Se,$ like $\Bi,$ satisfies~(\ref{x.u=}),
while ``$\!\D\!$'' is again $\Bi,$ we can again say
that the representing object will have for underlying algebra the
inverse limit of the chain of idempotent-generated objects (in this
case, semigroups) written as in~(\ref{x.x->->}) and~(\ref{x.droplast}).
In the preceding section, we used
the ``rigidity'' of the normal form in $\Bi$ to identify elements
of that inverse limit with certain functions on the Cantor set.
No such model is evident when the base-category is $\Se.$
An element of $(P_1)\ba$ can be represented by a
finite alternating string of $\!0\!$'s and $\!1\!$'s;
an element of $(P_2)\ba$ mapping to such an element
is obtained by replacing each of the $\!0\!$'s with a
finite alternating string of $\!00\!$'s and $\!01\!$'s,
and each of the $\!1\!$'s with a finite alternating string
of $\!10\!$'s and $\!11\!$'s; and so on.
But I don't see any ``geometric'' description of the inverse
limit of these semigroups.

Nevertheless, the description of
each $P_k$ $(k\in\omega)$ as a semigroup
freely generated by certain idempotents will prove
useful in the next section, where we will construct the subprecoalgebra
of this $P$ that yields the final object of $\coalg{\Se}{\Se}.$

\section{The final object of $\coalg{\Se}{\Se}.$}\label{S.SeSe}

In our construction of the final object of $\coalg{\Bi}{\Se}$ above,
the pseudocoalgebras $P_j$ stabilized quickly: $P_1$ already
made all the distinctions that were going to be made among elements
of sets $(P_j)\ba,$ and once these propagated up to the pseudocopower
object at the next stage, we could have verified that $P_2$ was a
genuine coalgebra, and was our desired final object.

In constructing the final object of $\coalg{\Se}{\Se},$
our structure will also stabilize early:
All the distinctions among elements of that coalgebra will be present
in $(P_2)\ba\<;$ when we reach $(P_3)\ba$ the process of eliminating
elements at which the associative identity is not
cosatisfied will stabilize, and at the next step, the pseudocopower
object $(P_4)_\beta$ will catch up, making $P_4$ the desired coalgebra.

Let us, in this section, write $P'$ for the $\!\omega\!$-indexed
$\!\Bi\!$-precoalgebra in $\Se$ referred to at the end of the last
section, built up universally on the trivial pseudocoalgebra $P'_0;$
thus the inverse system of semigroups $(P'_k\<)\ba$
is given by~(\ref{x.x->->}) and~(\ref{x.droplast}),
with ``$\!\cP\!$'' interpreted as coproduct of semigroups.
By Proposition~\ref{P.imagecoalg}(iv), the precoalgebra we want is the
largest subprecoalgebra of $P'$ cosatisfying the associative identity.

As before, the first instance of that identity occurs in $P'_2.$
The elements of $(P'_2)\ba$ at which that identity is cosatisfied
are those at which the two homomorphisms into the semigroup freely
generated by three idempotent elements $x_\lambda, x_\mu, x_\rho$ given
by~(\ref{x.lmr_r}) and~(\ref{x.lmr_l}) agree.
However, because semigroups have a less rigid structure than
binars, the equalizer of those two homomorphisms is larger than
the subsemigroup generated by the two elements of
$\{x_{00},\<x_{01},\<x_{10},\<x_{11}\}$ on which they agree.
For instance, it is easy to verify that it
contains the product $x_{00}\<x_{01}\<x_{10}\<x_{11};$
namely, that both maps send this to $x_\lambda\<x_\mu\<x_\rho.$

It would be convenient for the computations to come
if we could say that the common image of an
element of $(P_2)\ba$ under the homomorphisms~(\ref{x.lmr_r})
and~(\ref{x.lmr_l}) was also their image under the homomorphism
\begin{equation}\begin{minipage}[c]{34pc}\label{x.lmr_m}
$x_{00}\ \mapsto\ x_\lambda,\qquad
x_{01}\ \mapsto\ x_\mu,\qquad
x_{10}\ \mapsto\ x_\mu,\qquad
x_{11}\ \mapsto\ x_\rho.$
\end{minipage}\end{equation}
However, this is not true.
The simplest counterexample I know is the element
\begin{equation}\begin{minipage}[c]{34pc}\label{x.mess}
$(x_{00}\ x_{01})^2\ x_{10}\ x_{00}\ x_{01}
\ x_{10}\ x_{11}\ x_{01}\ (x_{10}\ x_{11})^2.$
\end{minipage}\end{equation}
The reader can verify that~(\ref{x.lmr_r})
and~(\ref{x.lmr_l}) both send this to
\begin{quote}
$(x_\lambda\ x_\mu)^2\ x_\rho\ x_\lambda\ (x_\mu\ x_\rho)^2,$
\end{quote}
so that it belongs to $(P_2)\ba,$
but that~(\ref{x.lmr_m}) sends it to the different value
\begin{quote}
$(x_\lambda\ x_\mu)^2\ x_\lambda\ x_\mu\ x_\rho\ (x_\mu\ x_\rho)^2.$
\end{quote}

However, the next result shows that~(\ref{x.lmr_m})
does agree with~(\ref{x.lmr_r}) and~(\ref{x.lmr_l})
on the image of $(p_{2,\<3})\ba:(P_3)\ba\to(P_2)\ba\<;$
indeed, that this is true for the corresponding precosemigroup
in any variety $\C$ of algebras.

\begin{lemma}\label{L.m_also}
Let $\C$ be a variety of algebras, and $P$ the precoalgebra arising
in the construction of the final object of $\coalg{\C}{\Se}.$
Then the homomorphism determined by\textup{~(\ref{x.lmr_m})},
from $(P'_2)\ba$ to the algebra freely generated by
three idempotents $x_\lambda,\<x_\mu,\<x_\rho,$ agrees
on the subalgebra $(p_{3,\<2})\ba((P_3)\ba)\subseteq(P_2)\ba$ with the
common value of the homomorphisms determined
by\textup{~(\ref{x.lmr_r})} and\textup{~(\ref{x.lmr_l})}.
\end{lemma}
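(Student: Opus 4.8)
The plan is to argue entirely inside the concrete normal form for coproducts of idempotents in $\Se$. Each $(P'_k)\ba$ is the semigroup freely generated by the $2^k$ idempotents $x_a$ $(a\in\{0,1\}^k)$, so its elements are the nonempty words in these generators with no two equal adjacent letters, products being formed by concatenation followed by cancellation of adjacent repeats; a homomorphism out of it substitutes for each letter and then cancels. In this language the maps~(\ref{x.lmr_r}),~(\ref{x.lmr_l}),~(\ref{x.lmr_m}) are the substitution homomorphisms $r,l,m$ sending $x_{00},x_{01},x_{10},x_{11}$ to the indicated one-letter words in $x_\lambda,x_\mu,x_\rho$, and the first observation is that $r$ and $m$ differ on a \emph{single} generator, namely $r(x_{10})=x_\rho$ while $m(x_{10})=x_\mu$, agreeing on $x_{00},x_{01},x_{11}$ (indeed $l\le m\le r$ generator-by-generator in the order $\lambda<\mu<\rho$). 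Thus the lemma reduces to showing that for every $w$ in the image $(p_{2,\<3})\ba((P_3)\ba)\subseteq(P'_2)\ba$ (the connecting map $(P_3)\ba\to(P_2)\ba$, which drops the last index), recolouring the $x_{10}$-letters of $w$ from $x_\rho$ to $x_\mu$ leaves the reduced word $r(w)$ unchanged. Since the counterexample~(\ref{x.mess}) lies in $(P_2)\ba$ but \emph{not} in this image, the argument must use the lift to level~$3$.

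To exploit that lift I would compute the two $\!(1,\<3)\!$-instances $S,T\colon(P'_3)\ba\to(P_1)\ba\cP(P_1)\ba\cP(P_1)\ba$ of the two sides of coassociativity; these are the substitution homomorphisms $S\colon x_{abc}\mapsto x_{r(ab),\,\sigma_S}$ and $T\colon x_{abc}\mapsto x_{l(ab),\,\sigma_T}$ into the semigroup freely generated by the six idempotents $x_{\kappa,\<b}$ $(\kappa\in\{\lambda,\mu,\rho\},\ b\in\{0,1\})$, where $\sigma_S=c$ if $a=0$ and $b$ if $a=1$, while $\sigma_T=b$ if $a=0$ and $c$ if $a=1$. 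The point of the instance construction and Lemma~\ref{L.uniq_inst} is that every $v\in(P_3)\ba$ cosatisfies this instance, i.e.\ $S(v)=T(v)$. Now let $\varpi\colon x_{\kappa,\<b}\mapsto x_\kappa$ forget the second index; then $\varpi\circsm S=r\circsm(p_{2,\<3})\ba$ and $\varpi\circsm T=l\circsm(p_{2,\<3})\ba$, whereas the two homomorphisms $\Phi,\Phi'$ that agree with $\varpi$ except that $\Phi(x_{\rho,\<0})=x_\mu$ and $\Phi'(x_{\lambda,\<1})=x_\mu$ satisfy $\Phi\circsm S=m\circsm(p_{2,\<3})\ba=\Phi'\circsm T$ (a routine check on generators). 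Hence, writing $w=(p_{2,\<3})\ba(v)$ and $y=S(v)=T(v)$ for $v\in(P_3)\ba$, the target identity $m(w)=r(w)$ becomes $\Phi(y)=\varpi(y)$, with the bonus equation $\Phi(y)=\Phi'(y)$ available.

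Since $\Phi$ and $\varpi$ differ only on the single letter $x_{\rho,\<0}$, and $\Phi'$ and $\varpi$ only on $x_{\lambda,\<1}$, the identity $\Phi(y)=\varpi(y)$ is immediate for any $y$ avoiding both of these ``ambiguous'' letters, and the whole difficulty is to control their occurrences. Here I expect the main obstacle: the bonus equation $\Phi(y)=\Phi'(y)$ \emph{by itself} is not enough, as the word $y=x_{\rho,\<1}\,x_{\rho,\<0}\,x_{\lambda,\<1}\,x_{\lambda,\<0}$ shows ($\Phi$ and $\Phi'$ both send it to $x_\rho x_\mu x_\lambda$, yet $\varpi$ sends it to $x_\rho x_\lambda$). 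What rescues the statement is that $y$ is not arbitrary but realizable as $S(v)=T(v)$, which constrains the neighbourhoods of the ambiguous letters: an $x_{\rho,\<0}$ in $y$ arises in the $S$-reading only from a $v$-letter $x_{10c}$ but in the $T$-reading only from $x_{110}$, and reconciling these two readings of the \emph{single} reduced word $y$ is what excludes configurations like the bad word above. Concretely, for an \emph{isolated} occurrence the equation $\Phi(y)=\Phi'(y)$ already forces the two neighbouring letters to carry $\varpi$-values $x_\mu$ and $x_\rho$ (in one order or the other), in which case the recolouring washes out; the residual work is the normal-form bookkeeping for \emph{clustered} occurrences of $x_{\rho,\<0}$ and $x_{\lambda,\<1}$, where one must use realizability directly to show the bad configuration can never be produced by a genuine $v$. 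This clustered case, handled symmetrically from the $T$-side via $\Phi'$ for $x_{\lambda,\<1}$, is the delicate heart of the proof.
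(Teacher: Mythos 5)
Your algebraic scaffolding is correct, and it is in fact a faithful reconstruction of the paper's own first step: your $S$ and $T$ are exactly the maps~(\ref{x.lmr_r3}) and~(\ref{x.lmr_l3}), the identities $\varpi\circsm S=\mbox{(\ref{x.lmr_r2})},$ $\varpi\circsm T=\mbox{(\ref{x.lmr_l2})},$ $\Phi\circsm S=\mbox{(\ref{x.lmr_m2})}=\Phi'\circsm T$ all check on generators, and your example showing that the ``bonus'' equation $\Phi(y)=\Phi'(y)$ alone cannot force $\Phi(y)=\varpi(y)$ is valid. But there are two genuine gaps. First, a scope problem: your plan is ``to argue entirely inside the concrete normal form for coproducts of idempotents in $\Se,$'' whereas the lemma is stated for an \emph{arbitrary} variety $\C,$ in which no such word calculus exists (the paper even remarks after its proof that idempotence of the generators is never used). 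Second, and more seriously, even for $\C=\Se$ the proof stops exactly where the lemma's content begins: the statement $\Phi(y)=\varpi(y)$ for realizable $y$ is what has to be proved, and you explicitly defer its ``clustered'' case to unspecified bookkeeping that ``must use realizability directly'' --- without ever pinning down what realizability supplies, i.e.\ what actually characterizes elements of $(P_3)\ba$ beyond the two instance equations you have already used.

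The missing idea is structural, not combinatorial. Because $\beta^{P_3}$ must carry $(P_3)\ba$ into $(P_3)_\beta=\coprod_2(P_2)\ba,$ every element of $(P_3)\ba,$ viewed in $(P'_3)\ba\cong(P'_2)\ba\cP(P'_2)\ba,$ lies in the subalgebra generated by the two coprojection images of $(P_2)\ba$ --- and elements of $(P_2)\ba$ already satisfy (\ref{x.lmr_r})$\,=\,$(\ref{x.lmr_l}). So to prove two homomorphisms equal on $S((P_3)\ba)$ it suffices to check them on the $S$-images of those two generating copies of $(P_2)\ba$: the $S$-image of the second copy lies in $\langle x_{\rho 0},x_{\rho 1}\rangle,$ where nothing is moved, while on the $S$-image of the first copy the check reduces, through the relabeling built into~(\ref{x.lmr_r3}), to the level-$2$ identity (\ref{x.lmr_r})$=$(\ref{x.lmr_l}). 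This is precisely how the paper shows that~(\ref{x.lmr_r4}) and~(\ref{x.lmr_l4}), hence their collapses~(\ref{x.lmr_r5}) and~(\ref{x.lmr_l5}), agree on $S((P_3)\ba);$ and in your notation (\ref{x.lmr_r5}) \emph{is} $\Phi'$ and (\ref{x.lmr_l5}) \emph{is} $\varpi,$ so this is exactly the realizability statement you need, in its mirror-image form $\Phi'(y)=\varpi(y).$ The lemma then follows from the purely formal chain, valid on $(P_3)\ba$: $\mbox{(\ref{x.lmr_r2})}=\varpi\circsm S=\Phi'\circsm S=\Phi'\circsm T=\mbox{(\ref{x.lmr_m2})}.$ No isolated-versus-clustered analysis and no normal form are needed, which is also why the paper's proof survives for general $\C.$ If you import the block-structure fact, your own target $\Phi=\varpi$ can be handled identically (it is the image of the paper's argument under the $0\leftrightarrow 1,$ $\lambda\leftrightarrow\rho$ symmetry).
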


\begin{proof}
Since the map $(p_{3,\<2})\ba$ takes each generator
$x_{ijk}$ of $(P'_3)\ba$
to $x_{ij}$ in $(P'_2)\ba,$ what we wish to show is that the
composites of that map with~(\ref{x.lmr_r}) and~(\ref{x.lmr_l}),
namely the maps given on our idempotent generators by
\begin{equation}\begin{minipage}[c]{34pc}\label{x.lmr_r2}
$x_{00k}\ \mapsto\ x_\lambda,\qquad
x_{01k}\ \mapsto\ x_\mu,\qquad
x_{10k}\ \mapsto\ x_\rho,\qquad
x_{11k}\ \mapsto\ x_\rho,$\qquad and
\end{minipage}\end{equation}
\begin{equation}\begin{minipage}[c]{34pc}\label{x.lmr_l2}
$x_{00k}\ \mapsto\ x_\lambda,\qquad
x_{01k}\ \mapsto\ x_\lambda,\qquad
x_{10k}\ \mapsto\ x_\mu,\qquad
x_{11k}\ \mapsto\ x_\rho$
\end{minipage}\end{equation}
(which by what we already know agree on $(P_3)\ba$ with each other),
also agree on $(P_3)\ba$ with the
composite of $(p_{2,\<3})\ba$ with~(\ref{x.lmr_m}), namely, the
map given on generators by
\begin{equation}\begin{minipage}[c]{34pc}\label{x.lmr_m2}
$x_{00k}\ \mapsto\ x_\lambda,\qquad
x_{01k}\ \mapsto\ x_\mu,\qquad
x_{10k}\ \mapsto\ x_\mu,\qquad
x_{11k}\ \mapsto\ x_\rho.$
\end{minipage}\end{equation}

The equality of the maps~(\ref{x.lmr_r2}) and~(\ref{x.lmr_l2})
is the $\!(3,0)\!$-instance of the associative identity in $P;$
a stronger condition is the $\!(3,1)\!$-instance of that identity.
This involves the corresponding maps into
the coproduct of three copies of $(P'_1)\ba\<.$
That coproduct is freely generated by six idempotents:
``$\!\lambda\!$'', ``$\!\mu\!$'' and ``$\!\rho\!$'' copies of
the two idempotent generators $x_0$ and $x_1$ of $(P'_1)\ba\<.$
Let us write these $x_{\lambda 0},\,
x_{\lambda 1},\,x_{\mu 0},\,x_{\mu 1},\,x_{\rho 0},\,x_{\rho 1}.$
Then the $\!(3,1)\!$-instance of the associative law says that the maps
defined on $(P'_3\<)\ba$ by
\begin{equation}\begin{minipage}[c]{34pc}\label{x.lmr_r3}
$x_{00k}\ \mapsto\ x_{\lambda k},\qquad
x_{01k}\ \mapsto\ x_{\mu k},\qquad
x_{10k}\ \mapsto\ x_{\rho 0},\qquad
x_{11k}\ \mapsto\ x_{\rho 1},$\qquad and
\end{minipage}\end{equation}
\begin{equation}\begin{minipage}[c]{34pc}\label{x.lmr_l3}
$x_{00k}\ \mapsto\ x_{\lambda 0},\qquad
x_{01k}\ \mapsto\ x_{\lambda 1},\qquad
x_{10k}\ \mapsto\ x_{\mu k},\qquad
x_{11k}\ \mapsto\ x_{\rho k}$
\end{minipage}\end{equation}
agree on $(P_3)\ba\<.$
Note the loss of the subscript $k$ in the last two terms
of~(\ref{x.lmr_r3}) and the first two terms of~(\ref{x.lmr_l3}).
This happens because, in the definition of cosatisfaction of
the associativity identity $\beta(\beta(-,-),-)=\beta(-,\beta(-,-)),$
the second argument of the outer $\beta$ on the left, and
the first argument of the outer $\beta$ on the right, in order to
be comparable with the other arguments, have to be carried down from
$(P'_2)\ba$ to $(P'_1)\ba$ by $(p_{1,\<2})\ba,$ which acts by
lopping off the last subscript.

Now elements of $(P_3)\ba$ are
restricted not only by the cosatisfaction of associativity on that
object, but by the fact that they lie in the subalgebra generated by
the images of $(P_2)\ba$ in $(P'_2)_\beta=(P'_3)\ba,$
under the two coprojection maps $x_{ij}\mapsto x_{0ij}$
and $x_{ij}\mapsto x_{1ij},$ and that the elements of that subalgebra
$(P_2)\ba$ themselves cosatisfy the associative identity.
To see how to use this fact, let us
look at~(\ref{x.lmr_r3}) as defining a homomorphism on
$(P'_3\<)\ba=(P'_2)\ba\cP(P'_2)\ba,$
and note that it carries the first copy of $(P'_2)\ba$
{\em isomorphically} to the subalgebra of
$(P'_1\<)\ba\cP(P'_1\<)\ba\cP(P'_1\<)\ba$ generated by the four
idempotent elements $x_{\lambda k}$ and$ x_{\mu k},$ merely relabeling
the generators by replacing initial subscript-strings $00$ and $01$ by
$\lambda$ and $\mu$ respectively; while it acts in a non-one-one way on
the second copy; and that the codomain of~(\ref{x.lmr_r3})
is the coproduct of the images of those two copies.
Restricting attention to the action of~(\ref{x.lmr_r3}) on the
subalgebra $(P_3)\ba\subseteq(P'_3\<)\ba,$ let us write down the fact
that the elements that come from the first copy of $(P'_2\<)\ba$
(i.e., expressions where all subscripts begin with ``$\!0\!$'')
are images under $x_{ij}\mapsto x_{0ij}$
of elements cosatisfying the associative law;
this may be expressed as the equality, on
that image, of the maps obtained from~(\ref{x.lmr_r})
and~(\ref{x.lmr_l}) by prefixing a ``$\!0\!$'' to all subscripts.
Taking into account the relabeling, referred to above,
of that copy of $(P'_2\<)\ba$
via the first two arrows of~(\ref{x.lmr_r3}),
and noting that we are not making any changes
in the image of the second copy of $(P'_2\<)\ba,$ this says that on
the image of $(P_3)\ba$ under~(\ref{x.lmr_r3}), the homomorphisms
into the $\!\C\!$-algebra freely generated by idempotent elements
$x_{0\lambda},\ x_{0\mu},\ x_{0\rho},\ x_{\rho 0},\ x_{\rho 1}$
given by
\begin{equation}\begin{minipage}[c]{34pc}\label{x.lmr_r4}
$x_{\lambda 0}\ \mapsto\ x_{0\lambda},\qquad
x_{\lambda 1}\ \mapsto\ x_{0\mu},\qquad
x_{\mu k}\ \mapsto\ x_{0\rho},\qquad
x_{\rho k}\ \mapsto\ x_{\rho k},$\qquad and
\end{minipage}\end{equation}
\begin{equation}\begin{minipage}[c]{34pc}\label{x.lmr_l4}
$x_{\lambda k}\ \mapsto\ x_{0\lambda},\qquad
x_{\mu 0}\ \mapsto\ x_{0\mu},\qquad
x_{\mu 1}\ \mapsto\ x_{0\rho},\qquad
x_{\rho k}\ \mapsto\ x_{\rho k}$
\end{minipage}\end{equation}
agree.

Let us now collapse the codomain
$\langle x_{0 \lambda},\<x_{0 \mu},\<
x_{0 \rho},\<x_{\rho 0},\<x_{\rho 1}\rangle$
of~(\ref{x.lmr_r4}) and~(\ref{x.lmr_l4})
onto the algebra $\langle x_\lambda,\<x_\mu,\<x_\rho\rangle$ freely
generated by three idempotent elements, by
the homomorphism sending $x_{0\lambda}$ to $x_\lambda,$
$x_{0\mu}$ and $x_{0\rho}$ both to $x_\mu,$ and
$x_{\rho 0}$ and $x_{\rho 1}$ both to $x_\rho.$
Then~(\ref{x.lmr_r4}) and~(\ref{x.lmr_l4}) yield maps
\begin{equation}\begin{minipage}[c]{34pc}\label{x.lmr_r5}
$x_{\lambda 0}\ \mapsto\ x_\lambda,\qquad
x_{\lambda 1}\ \mapsto\ x_\mu,\qquad
x_{\mu k}\ \mapsto\ x_\mu,\qquad
x_{\rho k}\ \mapsto\ x_\rho$\qquad and
\end{minipage}\end{equation}
\begin{equation}\begin{minipage}[c]{34pc}\label{x.lmr_l5}
$x_{\lambda k}\ \mapsto\ x_\lambda,\qquad
x_{\mu k}\ \mapsto\ x_\mu,\qquad
x_{\rho k}\ \mapsto\ x_\rho,$
\end{minipage}\end{equation}
which still agree on the image of $(P_3)\ba$ under~(\ref{x.lmr_r3}).
But on $(P_3)\ba,$~(\ref{x.lmr_r3}) agrees
with~(\ref{x.lmr_l3}), so we will get the same results
by following~(\ref{x.lmr_r3}) with~(\ref{x.lmr_l5}),
and by following~(\ref{x.lmr_l3}) with~(\ref{x.lmr_r5}), namely
\begin{quote}
$x_{00k}\ \mapsto\ x_\lambda,\qquad
x_{01k}\ \mapsto\ x_\mu,\qquad
x_{10k}\ \mapsto\ x_\rho,\qquad
x_{11k}\ \mapsto\ x_\rho,$\qquad and
\end{quote}
\begin{quote}
$x_{00k}\ \mapsto\ x_\lambda,\qquad
x_{01k}\ \mapsto\ x_\mu,\qquad
x_{10k}\ \mapsto\ x_\mu,\qquad
x_{11k}\ \mapsto\ x_\rho.$
\end{quote}
These are precisely~(\ref{x.lmr_r2}) and~(\ref{x.lmr_m2}),
completing the proof of the lemma.
\end{proof}

{\em Remarks.}
The above result presumably has something to do with the identity
\begin{quote}
$(ab)(cd)\ =\ a(bc)d$
\end{quote}
in $\Se,$ where the right-hand side means the common value
of $(a(bc))d$ and $a((bc)d).$
But while that relation is trivial to derive, I was only able to find
the proof of the above lemma by ``enlightened trial and error''.
Some illuminating way of looking at such computations is
clearly desirable.

The proof of the lemma did not use the assumption that the $x_{ijk}$
were idempotent, so one really should prove it in a wider context.
But I did not want to complicate further the already messy proof.

We now return to the case $\C=\Se.$
The above lemma motivates

\begin{definition}\label{D.R}
$|R\<|\subseteq (P'_2)\ba$ will denote the subsemigroup of elements
at which the maps \textup{(\ref{x.lmr_r})},
\textup{(\ref{x.lmr_l})} and~\textup{(\ref{x.lmr_m})} all agree.
\end{definition}

In the remainder of this section,
we shall obtain a precise description of $|R\<|,$
and show that the precosemigroup structure on $P$ induces
a structure of cosemigroup on $|R\<|,$ and that the resulting
cosemigroup $R$ is the final cosemigroup we are seeking.
We will then examine the functor $\Se\to\Se$ that it represents.

The reader who intends to follow the calculations of
the next few pages in detail would do well to copy onto a slip of paper
displays (\ref{x.lmr_r}), (\ref{x.lmr_l}) and (\ref{x.lmr_m})
(perhaps in abbreviated form, e.g., showing only the subscripts),
or memorize them with their display-numbers, since we will refer to
them repeatedly.
(Also, if at some point the reader feels the need
for examples of elements of $|R\<|$
to help in thinking about the assertions we will be proving, he or
she can look ahead at~(\ref{x.p_i}), (\ref{x.q_i}), and the paragraph
preceding Lemma~\ref{L.genrel}.)

\begin{lemma}\label{L.splitR}
Let $r$ be an element of $|R\<|,$ written in the normal form
for $(P'_2\<)\ba\<,$ i.e., as a word in
$x_{00},\,x_{01},\,x_{10},\,x_{11}$ with none of these
generators occurring twice in succession.
Then\\[.5em]
\textup{(i)}\ \ The expression for $r$ begins with an $x_{00}$
or $x_{11},$ and ends with an $x_{00}$ or $x_{11}.$\\[.5em]
\textup{(ii)}\ \ In the expression for $r,$ there are no occurrences of
$x_{00}$ adjacent to $x_{10}$ in either order,
nor of $x_{11}$ adjacent to $x_{01}$ in either order.\\[.5em]
\textup{(iii)}\ \ If $r$ has two occurrences of $x_{00}$ with
no $x_{00}$ or $x_{11}$ between them, then what
occurs between them is precisely the length-$\!1\!$ string $x_{01}.$
Likewise, if $r$ has two occurrences of $x_{11}$ with
no $x_{00}$ or $x_{11}$ between them, then what
occurs between them is precisely $x_{10}.$\\[.5em]
\textup{(iv)}\ \ If $r$ has the form $s\,x_{00}\,x_{11}\,t$
for some possibly empty strings $s$ and $t,$ then the
factors $s\,x_{00}$ and $x_{11}\,t$ also belong to $|R\<|.$
Likewise, if $r$ has the form $s\,x_{11}\,x_{00}\,t,$
then $s\,x_{11}$ and $x_{00}\,t$ belong to $|R\<|.$\\[.5em]
\textup{(v)}\ \ If $r$ has length $>1,$ then its expression includes
occurrences of both $x_{00}$ and $x_{11}.$
\end{lemma}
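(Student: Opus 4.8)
The plan is to work entirely inside the normal form for $(P'_2)\ba$. Since this semigroup is the coproduct of four copies of the one-element semigroup, its elements are exactly the words in the idempotents $x_{00},x_{01},x_{10},x_{11}$ with no two equal adjacent letters, and the product is concatenation followed by collapsing any adjacent equal letters that result. Each of the three homomorphisms~(\ref{x.lmr_r}),~(\ref{x.lmr_l}),~(\ref{x.lmr_m})---call them the right, left and middle maps---carries such a word into the free semigroup on the idempotents $x_\lambda,x_\mu,x_\rho$ by substituting single letters and then collapsing. By Definition~\ref{D.R}, $r\in|R\<|$ exactly when these three reduced images coincide; and since the right and middle maps differ only in their value on $x_{10}$ (namely $x_\rho$ versus $x_\mu$), while the left and middle maps differ only on $x_{01}$ (namely $x_\lambda$ versus $x_\mu$), this membership is equivalent to the two separate conditions ``right $=$ middle'' and ``left $=$ middle''.

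Before treating the five assertions I would record three facts about applying one of these maps to a reduced word $r=y_1\cdots y_n$. First, collapsing never alters the extreme letters, so each reduced image begins with the image of $y_1$ and ends with the image of $y_n$. Second, $x_{00}\mapsto x_\lambda$ and $x_{11}\mapsto x_\rho$ under all three maps, so these two generators act as fixed ``anchors''. Third---and this is the combinatorial engine of the whole argument---each map collapses exactly one type of adjacent pair of distinct generators: the right map only $\{x_{10},x_{11}\}$, the left map only $\{x_{00},x_{01}\}$, and the middle map only $\{x_{01},x_{10}\}$, these being in each case the only distinct generators sharing an image. Thus I can read off precisely where, and only where, length is lost under each map.

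Assertion (i) is then immediate: the three reduced images have first letters equal to the three images of $y_1$, and these agree only when $y_1\in\{x_{00},x_{11}\}$; likewise at the other end. For (iv) I would exploit the anchors: at an $x_{00}\,x_{11}$ (or $x_{11}\,x_{00}$) junction every map produces an $x_\lambda x_\rho$ (resp.\ $x_\rho x_\lambda$) descent, and since $x_\lambda\neq x_\rho$ no collapse ever straddles that junction; hence each reduced image factors as the reduced image of $s\,x_{00}$ followed by that of $x_{11}\,t$, and I would argue that the common equality of the full words restricts to the two factors. The delicate point here is that the split may occur at different positions for different maps, so showing the factorizations line up requires the constraints already extracted in~(i)--(iii).

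For the genuinely combinatorial assertions~(ii),~(iii) and~(v) I would argue by contradiction, exhibiting for each forbidden configuration a place where two of the reduced images disagree. A pattern involving $x_{10}$ (such as $x_{00}\,x_{10}$) is tested against the right-versus-middle condition, and a pattern involving $x_{01}$ (such as $x_{11}\,x_{01}$) against the left-versus-middle condition; moreover the $0\leftrightarrow 1$, $x_\lambda\leftrightarrow x_\rho$ symmetry that interchanges the right and left maps while fixing the middle one lets me treat only one case of each mirror pair. For~(v) I would assume, say, that $x_{11}$ is absent, deduce from~(i) that $r$ both begins and ends with $x_{00}$, narrow the interior using~(ii)--(iii), and then locate a surviving discrepancy. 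The main obstacle throughout is the possibility that a local difference between two images is absorbed by collapses occurring elsewhere in the word; I expect to defeat this using the third fact above, which pins collapses down so tightly that the two images can be compared letter by letter---most likely via an induction on length that peels off factors through the splitting established in~(iv).
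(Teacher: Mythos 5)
Your framework is the paper's own: reduce everything to comparing the collapsed images of a normal-form word under the three substitution maps (\ref{x.lmr_r}), (\ref{x.lmr_l}), (\ref{x.lmr_m}); your proof of (i) coincides with the paper's, your ``fact 3'' and the $0\leftrightarrow 1$ symmetry are correct and are also used there. The gap is in how you propose to finish (ii)--(v). The single idea that actually defeats the absorption problem you yourself flag as the main obstacle -- and which the paper states immediately after proving (i) and then uses for (ii), (iii) \emph{and} (iv) -- is an occurrence-alignment observation that you never formulate: under (\ref{x.lmr_r}) and (\ref{x.lmr_m}) the generator $x_{00}$ is the \emph{unique} preimage of $x_\lambda$, so the $i$-th occurrence of $x_\lambda$ in either reduced image comes from the $i$-th occurrence of $x_{00}$ in $r$; dually, under (\ref{x.lmr_l}) and (\ref{x.lmr_m}) occurrences of $x_\rho$ enumerate occurrences of $x_{11}$. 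Once the images are known to be equal, one may therefore compare the letters adjacent to \emph{aligned} anchor occurrences: (ii) and (iii) become immediate local contradictions, and (iv) follows because the prefix of the common image ending at the $i$-th $x_\lambda$ equals the image of $s\,x_{00}$ under both (\ref{x.lmr_r}) and (\ref{x.lmr_m}), while the prefix ending just before the $j$-th $x_\rho$ equals its image under (\ref{x.lmr_l}). Your facts 2 and 3 contain the raw material for this alignment, but stating it is the decisive step, and it is missing.

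Moreover, the scaffolding you substitute for it would not stand. You propose to prove (iv) ``using the constraints already extracted in (i)--(iii)'' (in fact (iv) does not rest on (i)--(iii) at all, but on the alignment, since the left map cannot be aligned via $x_\lambda$ and the right map cannot be aligned via $x_\rho$), while (ii), (iii), (v) are to be obtained ``via an induction on length that peels off factors through the splitting established in (iv).'' As stated this is a dependency cycle, and no simultaneous induction on length can repair it, because (iv) provides no reduction whatsoever for words containing no $x_{00}\,x_{11}$ or $x_{11}\,x_{00}$ junction: that is exactly the situation in (v), where a word omitting $x_{11}$ has no such junction (the paper disposes of (v) by the direct computation that, by (i) and (iii), such a word must be $x_{00}(x_{01}\,x_{00})^i$, whose images under (\ref{x.lmr_r}) and (\ref{x.lmr_l}) are $x_\lambda(x_\mu\,x_\lambda)^i$ and $x_\lambda$, forcing $i=0$), and it can equally be the situation for the forbidden patterns of (ii) and (iii). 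So: right strategy, correct (i), but the alignment lemma that carries the whole proof is absent, and the inductive route offered in its place is circular and unworkable.
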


\begin{proof}
(i): If $r$ began with $x_{01},$ then its image
under~(\ref{x.lmr_l}) would begin with $x_\lambda,$
while its image under~(\ref{x.lmr_r}) would
begin with $x_\mu,$ a contradiction.
The same reasoning applies with ``end'' in place of ``begin'', and
the analogous computation excludes elements that begin or end
with $x_{10}.$

To get the remaining assertions, let us
first note that when either~(\ref{x.lmr_r})
or~(\ref{x.lmr_m}) is applied to $r,$ only the generator
$x_{00}$ is mapped to $x_\lambda;$ hence for any $i,$
the $\!i\!$-th occurrence of $x_{00}$ in $r$ (if such exists)
yields the $\!i\!$-th occurrence of $x_\lambda$ in the image element;
hence the initial string of $r$ up to (respectively, through) the
$\!i\!$-th occurrence of $x_{00}$ is mapped
by {\em both}~(\ref{x.lmr_r}) and~(\ref{x.lmr_m}) to
the initial string of the image of $r$ up to (respectively, through)
the $\!i\!$-th occurrence of $x_\lambda;$ and the same holds for
the terminal strings following (respectively beginning with)
the $\!i\!$-th occurrences of $x_{00}$ and $x_\lambda.$
These conclusions are not true of the images
of these substrings under~(\ref{x.lmr_l}); rather,
if we write ``(\ref{x.lmr_l}) and~(\ref{x.lmr_m})''
in place of ``(\ref{x.lmr_r}) and~(\ref{x.lmr_m})'',
we get the analogous results with $x_{11}$ and $x_\rho$
in place of $x_{00}$ and $x_\lambda.$

To see (ii) now, simply observe that if the $\!i\!$-th occurrence
of $x_{00}$ in $r$ were immediately preceded by
$x_{10},$ then in the image of $r$ under~(\ref{x.lmr_r}), the $\!i\!$-th
occurrence of $x_\lambda$ would be immediately preceded
by $x_\rho,$ while in the image under~(\ref{x.lmr_m}) it
would be immediately preceded by $x_\mu,$ a contradiction.
The statement for $x_{10}$ immediately following $x_{00},$ and
the corresponding statements for $x_{11}$ and $x_{01},$
are seen in the same way.

To see (iii), suppose that the $\!i\!$-th and
$\!i\<{+}1\!$st occurrences of $x_{00}$ have no $x_{11}$ between them.
Since they cannot be adjacent (our expression for $r$ being in normal
form), they have a nonempty string of $\!x_{01}\!$'s and $\!x_{10}\!$'s
between them.
Hence in the image of $r$ under~(\ref{x.lmr_m}), the $\!i\!$-th and
$\!i\<{+}1\!$st occurrences of $x_\lambda$ have
precisely an $x_\mu$ between them;
but the only way we can get this in the image under~(\ref{x.lmr_r})
is if they have only a single $x_{01}$ between them.
Again, the second statement is proved in the same way.

To get (iv), suppose the $\!i\!$-th occurrence of $x_{00}$ in $r$ is
immediately followed by the $\!j\!$-th occurrence of $x_{11},$
and we factor $r$ at this point, writing $r=s\,x_{00}\,x_{11}\,t.$
Applying~(\ref{x.lmr_m}), we see that the
$\!i\!$-th occurrence of $x_\lambda$ in the image element is
immediately followed by the $\!j\!$-th occurrence of $x_\rho,$
giving a factorization of that image as $u=v\,x_\lambda\,x_\rho\,w.$
Here $v\,x_\lambda,$ the image of $s\,x_{00}$ under~(\ref{x.lmr_m}),
is both the initial string of $u$ through the
$\!i\!$-th occurrence of $x_\lambda,$ and the initial
string up to (but not including) the $\!j\!$-th occurrence of $x_\rho.$
By our earlier observations, the former characterization also makes
this element the image of $s\,x_{00}$ under~(\ref{x.lmr_r}), while
the latter makes it the image of $s\,x_{00}$ under~(\ref{x.lmr_l}).
Hence those three images of $s\,x_{00}$ are equal;
so $s\,x_{00}\in|R\<|.$
The statement that $x_{11}\,t\in|R\<|$ is obtained in the same way,
as are the corresponding results when $x_{00}$ follows $x_{11}.$

To show~(v), suppose first that $r$ does not contain $x_{11}.$
Then~(i) and~(iii) imply that has the form $x_{00}(x_{01}x_{00})^i$
for some $i\geq 0.$
The images of this element under~(\ref{x.lmr_r})
and~(\ref{x.lmr_l}) are $x_\lambda(x_\mu x_\lambda)^i$
and $x_\lambda$ respectively, so $i=0,$ showing
that $r$ has length~$1.$
We get the same conclusion if $r$ does not contain $x_{00},$
completing the proof of~(v).
\end{proof}

Let us now use the above tools
to dig our way into the structure of an element $r\in|R\<|.$
By~(i) above, such an element must begin with $x_{00}$ or $x_{11}.$
Assume the former without loss of generality.
That may be all of $r,$ for it is easy to see that $x_{00}\in|R\<|.$
If it is not all of $r,$ then by~(ii), the following factor must be
$x_{11}$ or $x_{01}.$
In the former case, writing $r=x_{00}\,x_{11}\,t,$ we know by~(iv)
that each of the factors $x_{00}$ and $x_{11}\,t,$ lies in $|R\<|,$
and we are reduced to studying elements with shorter expressions.
So suppose the second factor is $x_{01}.$
This may be followed either by another $x_{00}$ or by an $x_{10}.$
In the former case, this new $x_{00}$
must again be followed by $x_{01}.$
(It can't be terminal by~(v), and it can't be followed by $x_{11},$
because if it were, then by~(iv) the product of the terms up to that
point, $x_{00}\,x_{01}\,x_{00},$ would belong to $|R\<|,$ again
contradicting~(v).)
Repeating these considerations, we see that $r$ will begin with some
string $(x_{00}\,x_{01})^{i+1}$ $(i\in\omega),$ followed by an $x_{10}.$

This $x_{10}$ cannot terminate $r$ by~(i), so it must be followed
by $x_{11}$ or $x_{01}.$
If we get $x_{01},$ then the next term can only be another $x_{10};$
the apparent alternative $x_{00}$ is ruled out by the fact
that such an $x_{00}$ and the preceding $x_{00}$ would have
$x_{01}\,x_{10}\,x_{01}$ between them, contradicting~(iii).
Repeating this argument, we get a (possibly empty)
string of alternating $\!x_{10}\!$'s and $\!x_{01}\!$'s,
finally followed by $x_{10}\,x_{11};$ i.e., $r$ begins
\begin{quote}
$(x_{00}\,x_{01})^{i+1}(x_{10}\,x_{01})^j(x_{10}\,x_{11})$
\quad$(i,j\in\omega).$
\end{quote}
(Note that if $j=0,$ the ``$\!(x_{10}\,x_{01})^j\!$'' in this
expression is not a semigroup element, but merely
means that nothing is inserted between the
$(x_{00}\,x_{01})^{i+1}$ and the $(x_{10}\,x_{11}).$
This interpretation of possibly zero exponents
will be in effect throughout this section.)

If the above $x_{11}$ is not the final term of $r,$ then it must be
followed either by $x_{00}$ (in which case we can again reduce to the
study of two shorter elements of $|R\<|$ using (iv)), or by
another $x_{10}.$
Let us collect as large a power of $x_{10}\,x_{11}$ as we can, and
note what the next two factors, if any, can be; we conclude that
if the element we have been considering cannot be factored
at an $x_{00}$-$x_{11}$ interface, then it must have the form
\begin{equation}\begin{minipage}[c]{34pc}\label{x.start_p}
$r\ =\ (x_{00}\,x_{01})^{i+1}(x_{10}\,x_{01})^j
(x_{10}\,x_{11})^{k+1}\,s,$ where $i,j,k\in\omega,$ and $s$ is either
the empty string, or begins with $x_{10}\,x_{01}.$
\end{minipage}\end{equation}

Let us now apply (\ref{x.lmr_r}) and (\ref{x.lmr_l})
to this description~(\ref{x.start_p}) of $r.$

The image of~(\ref{x.start_p}) under~(\ref{x.lmr_r}) begins
with $(x_\lambda\,x_\mu)^{i+1}(x_\rho\,x_\mu)^j\,x_\rho$
(where, in evaluating the image of $(x_{10}\,x_{11})^{k+1},$ we
have used the idempotence of $x_\rho),$ and the last clause
of~(\ref{x.start_p}) shows that if this is not the whole of
that image, it is followed by another $x_\mu.$

On the other hand, the image of~(\ref{x.start_p})
under~(\ref{x.lmr_l}) begins
$x_\lambda\,(x_\mu\,x_\lambda)^j(x_\mu\,x_\rho)^{k+1}$
(where in evaluating
the first part, we use the idempotence of $x_\lambda),$
possibly followed by a string beginning $x_\mu\,x_\lambda.$

Equating the powers of $x_\lambda\,x_\mu$ with which these
images begin, we see that $j=i.$
If we then compare the number of terms $x_\rho$ occurring
before each image either ends or shows another $x_\lambda,$ we see that
this is at least $j\<{+}1$ in the first case and exactly $k\<{+}1$
in the second, so $k\geq j=i;$
so the term $(x_{10}\,x_{11})^{k+1}$ of~(\ref{x.start_p})
has a left factor $(x_{10}\,x_{11})^{i+1}.$
We conclude that $r$ begins with a factor to which we give the name
\begin{equation}\begin{minipage}[c]{34pc}\label{x.p_i}
$p_i\ =\ (x_{00}\,x_{01})^{i+1}(x_{10}\,x_{01})^i
(x_{10}\,x_{11})^{i+1}$\quad $(i\in\omega).$
\end{minipage}\end{equation}

We remark that if we write the final factor $x_{01}$ of the
initial string $(x_{00}\,x_{01})^{i+1}$ in~(\ref{x.p_i}) twice
(as we may since all our generators are idempotent),
and likewise the initial factor $x_{10}$ of the
terminal string $(x_{10}\,x_{11})^{i+1},$ and combine these
with the middle string, then the expression assumes
the non-reduced, but somewhat more elegant form
\begin{equation}\begin{minipage}[c]{34pc}\label{x.p_i_alt}
$(x_{00}\,x_{01})^{i+1}(x_{01}\,x_{10})^{i+1}(x_{10}\,x_{11})^{i+1}.$
\end{minipage}\end{equation}

It is straightforward to verify that the element $p_i$ described
by~(\ref{x.p_i}), equivalently,~(\ref{x.p_i_alt}),
lies in $|R\<|:$ its image
under each of (\ref{x.lmr_r}), (\ref{x.lmr_l}) and (\ref{x.lmr_m}) is
\begin{equation}\begin{minipage}[c]{34pc}\label{x.p_i_sig}
$(x_\lambda\,x_\mu)^{i+1}\,(x_\mu\,x_\rho)^{i+1}$
\end{minipage}\end{equation}
(where again, for elegance, I am using a non-reduced
expression, with the middle occurrence of $x_\mu$ repeated).

What about the remaining factor of $r,$ if any?
If it is nonempty,
let us write $r=p_i\,s$ and apply (\ref{x.lmr_r}), (\ref{x.lmr_l})
and (\ref{x.lmr_m}), calling the images of $s$ under these
three maps $s_1,\ s_2$ and $s_3,$ respectively.
Then the fact that $p_i\,s\in|R\<|$ tells us that
\begin{quote}
$(x_\lambda\,x_\mu)^{i+1}\,(x_\mu\,x_\rho)^{i+1}\ s_1\ =\ %
(x_\lambda\,x_\mu)^{i+1}\,(x_\mu\,x_\rho)^{i+1}\ s_2\ =\ %
(x_\lambda\,x_\mu)^{i+1}\,(x_\mu\,x_\rho)^{i+1}\ s_3.$
\end{quote}

This does not imply that $s_1=s_2=s_3,$ since the idempotent
$x_\rho$ to the left of these three elements may mask distinctions
between them.
But it clearly implies that $x_\rho\,s_1=x_\rho\,s_2=x_\rho\,s_3,$
hence that $x_{11}\,s\in|R\<|.$
Thus, if we write $r$ as $p_i\cdot x_{11}\,s,$ using the fact that
$p_i$ ends with the idempotent element $x_{11},$ then the factors
$p_i$ and $x_{11}\,s$ lie in $|R\<|,$ and we can apply the same
considerations to the second of these, which is shorter than $r.$

We assumed above that $r$ began with $x_{00}.$
If instead it begins with $x_{11},$ then the nontrivial case
is when this is followed by something other than $x_{00},$ and in
that case, by symmetry (interchanging subscripts $0$ and $1,$
subscripts $\lambda$ and $\rho,$ and applications of
(\ref{x.lmr_r}) and (\ref{x.lmr_l}) throughout the preceding
argument), we get an initial string which we name
\begin{equation}\begin{minipage}[c]{34pc}\label{x.q_i}
$q_i\ =\ (x_{11}\,x_{10})^{i+1}(x_{01}\,x_{10})^i
(x_{01}\,x_{00})^{i+1}$\quad $(i\in\omega),$
\end{minipage}\end{equation}
which likewise belongs to $|R\<|,$ and, if it is not the whole of $r,$
can be split off so as to leave a shorter factor in $|R\<|.$

We conclude that $|R\<|$ is generated by the elements
$x_{00},\ x_{11},\ p_i$ and $q_i$ $(i\in\omega).$
It is easy to check the relations these satisfy
in $(P'_2)\ba,$ and deduce

\begin{lemma}\label{L.genrel}
The semigroup $|R\<|$ has the presentation
\begin{equation}\begin{minipage}[c]{34pc}\label{x.genrel}
$\langle\<x_{00},\,x_{11},\,p_i,\,q_i\ (i{\<\in\<}\omega)\mid%
x_{00}^2{\<=\<}x_{00},\ x_{11}^2{\<=\<}x_{11},\ %
x_{00}\,p_i{\<=\<}p_i{\<=\<}p_i\,x_{11},\ %
x_{11}\<q_i{\<=\<}q_i{\<=\<}q_i\,x_{00}\<\rangle.$
\end{minipage}\end{equation}

It has a normal form consisting of all strings in the indicated
generators which contain no substrings $x_{00}^2,\ x_{11}^2,\ %
x_{00}\,p_i,\ p_i\,x_{11},\ x_{11}\,q_i$ or $q_i\,x_{00}.$\endproof
\end{lemma}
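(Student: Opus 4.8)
The plan is to verify that the generators and relations in~(\ref{x.genrel}) present $|R\<|$ by producing a surjection from the abstractly presented semigroup $S=\langle x_{00},x_{11},p_i,q_i\mid\dots\rangle$ onto $|R\<|$, and then checking that the words avoiding the six forbidden substrings biject with the elements of $|R\<|$. The preceding analysis already shows that $x_{00},x_{11},p_i,q_i$ generate $|R\<|$ and that any element of $|R\<|$ can be peeled apart into such factors, so surjectivity of $S\to|R\<|$, and the fact that every element is represented by some reduced word, will come for free; the genuinely new content is the \emph{uniqueness} of the reduced form, i.e.\ the injectivity of the substitution map sending reduced words into $(P'_2)\ba$.

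First I would dispose of the relations. Idempotence of the generators of $(P'_2)\ba$ gives $x_{00}^2=x_{00}$ and $x_{11}^2=x_{11}$ at once. Since $p_i$ begins with $x_{00}$ and ends with $x_{11}$ by~(\ref{x.p_i}), idempotence collapses a leading $x_{00}^2$ and a trailing $x_{11}^2$, giving $x_{00}\,p_i=p_i=p_i\,x_{11}$; symmetrically~(\ref{x.q_i}) gives $x_{11}\,q_i=q_i=q_i\,x_{00}$. These relations thus hold in $(P'_2)\ba$, yielding a surjection $S\to|R\<|$. Orienting each relation as a length-reducing rewriting rule ($x_{00}^2\to x_{00}$, $x_{00}\,p_i\to p_i$, and so on, whose left sides are exactly the six forbidden substrings), every word strictly shortens under rewriting, so iterated reduction terminates at a word containing none of the forbidden substrings and representing the same element of $S$; hence every element of $|R\<|$ is the image of a reduced word.

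The hard part will be injectivity: distinct reduced words must have distinct images in $(P'_2)\ba$, the semigroup free on the four idempotents $x_{00},x_{01},x_{10},x_{11}$. I would establish this by a parsing argument that reverses the structural analysis leading to~(\ref{x.p_i}) and~(\ref{x.q_i}). Given a nonempty $r\in|R\<|$ in the normal form of $(P'_2)\ba$ (no two equal letters adjacent), Lemma~\ref{L.splitR}(i) forces it to begin with $x_{00}$ or $x_{11}$; the case analysis using~(\ref{x.lmr_r}), (\ref{x.lmr_l}), (\ref{x.lmr_m}) and Lemma~\ref{L.splitR}(ii)--(v) then singles out a \emph{unique} leading generator (namely $x_{00}$ or $x_{11}$ when that letter abuts the opposite one across an $x_{00}$--$x_{11}$ interface, and otherwise the single $p_i$, resp.\ $q_i$, whose index is pinned down by matching the powers of $x_\lambda x_\mu$ and $x_\mu x_\rho$ in the images), after which Lemma~\ref{L.splitR}(iv) exhibits the remainder as again an element of $|R\<|$. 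Iterating gives a factorization, and determinism at each step gives its uniqueness. The step demanding real care is the absorption occurring at junctions upon substituting back into $(P'_2)\ba$ — for instance a reduced word in which $p_i$ is immediately followed by $q_j$ collapses the trailing $x_{11}$ of $p_i$ against the leading $x_{11}$ of $q_j$, as already seen in the masking phenomenon $x_\rho s_1=x_\rho s_2=x_\rho s_3$ preceding the lemma. I must confirm that the six forbidden-substring conditions are precisely what keep the peeling deterministic despite such merging, so that no two distinct reduced words can produce the same normal form in $(P'_2)\ba$.
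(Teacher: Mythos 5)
Your proposal is correct and follows essentially the same route as the paper, which states the lemma with no written proof beyond the remark that the generators were produced by the preceding peeling analysis and that "it is easy to check the relations"; your plan (verify the relations via idempotence, get surjectivity plus existence of reduced forms from the length-reducing rewriting rules, and get uniqueness by observing that the peeling procedure of Lemma~\ref{L.splitR} and the $p_i$/$q_i$ analysis deterministically reconstructs a reduced word from its image in $(P'_2)\ba$, with the $x_{11}s\in|R\<|$ "masking" step handling the letter-merging at $p$--$q$ junctions) is exactly the argument the paper leaves implicit. The one point you flag but do not finish -- that the six forbidden substrings are precisely the junctions at which substitution into $(P'_2)\ba$ collapses letters, so that peeling the image of a reduced word returns that word -- does check out, and is the only content the paper's terse treatment also suppresses.
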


\vspace{.5em}
Our next task is to find a natural co-operation on $|R\<|.$
We know that every object $S$ of $\coalg{\Se}{\Se}$ has a unique
family of pseudocoalgebra maps $S\to P_k\subseteq P'_k$
$(k\in\omega)$ making commuting
triangles with the connecting maps $p_{j,\<j+1},$ and that for every
such $S,$ the map $|S|\to(P'_2)\ba$ lands in our subsemigroup $|R\<|.$
We shall now show that every
$r\in|R\<|$ lifts under $(p_{2,\<3})\ba$ to a unique
$r'\in|(P'_3\<)\ba|$ which has a chance of being in the image
of the map $|S|\to(P'_3\<)\ba$ for such an $S.$
We will find that this yields an isomorphism $\varphi$
of $|R\<|$ with a subsemigroup $|R\<|'\subseteq(P'_3\<)\ba\<;$
thus, $\varphi$ composed with the pseudo-co-operation
of $P'_3$ gives a co-operation $\beta^R:|R\<|\to|R\<|\cP|R\<|.$
We will verify that $\beta^R$ cosatisfies the associative identity,
and deduce that the resulting coalgebra $R=(|R\<|,\beta^R)$
is in fact the final object of $\coalg{\Se}{\Se}.$

So let $r$ be any element of $|R\<|.$
By the description of the map $(p_{2,\<3})\ba: (P'_3)\ba\to(P'_2)\ba,$
the general element $r'$ of $(P'_3)\ba$ that maps to $r$ is
obtained from $r$ by replacing each term $x_{ij}$ $(i,j\in\{0,1\})$
by a nonempty (possibly length-$\!1\!)$
alternating string of factors $x_{ij0}$ and $x_{ij1}.$
Moreover, since any element of $(P'_2\<)\ba$ that
lies in the image of a cosemigroup $S$ belongs to $|R\<|,$
any element $r'$ of $(P'_3\<)\ba$ in the image of
a cosemigroup must have the property that the pseudo-co-operation
of $P'_3$ carries it into the subsemigroup
$|R\<|\cP|R\<|\subseteq (P'_2\<)\ba\cP(P'_2\<)\ba=(P'_3)_\beta.$
Explicitly, this means that if we break $r'$ up as a product of
substrings formed from generators $x_{0jk}$ alternating with
substrings formed from generators $x_{1jk},$
each such substring is formed from an element of $|R\<|$
by prefixing the index $0,$ respectively $1,$ to all subscripts.

Consider now the case where the expression for $r$ begins with the
string $p_i,$ for some $i\in\omega.$
In particular, its initial substring consisting of generators
with first subscript $0$ is exactly $(x_{00}\,x_{01})^{i+1}.$
Hence for any $r'\in (P'_3)\ba$ that maps to $r,$
the initial substring of $r'$ consisting of generators
with first subscript $0$ is the
result of prefixing $0$ to the subscript of each factor in an element
$u$ of $|R\<|$ which, when broken up into words in
$x_{00}$ and $x_{01},$ alternating with words in
$x_{10}$ and $x_{11},$ yields $i\<{+}1$ of each sort,
beginning with one of the former sort.

Assuming that $r'$ lies in the image of a
cosemigroup, the associative identity must be cosatisfied
at $r',$ hence the results of applying~(\ref{x.lmr_r3})
and~(\ref{x.lmr_l3}) to $r'$ will agree.
(The reader might copy those two displays onto
a slip of paper for use in reading the next few paragraphs --
that will be the last such slip of paper needed.)
Now when we apply~(\ref{x.lmr_l3}), each string of factors
$x_{000}$ and $x_{001}$ collapses to one term $x_{\lambda 0}$
and each string of factors $x_{010}$ and $x_{011}$
collapses to $x_{\lambda 1},$ so the image of $r'$ under that map
begins with $(x_{\lambda 0}\,x_{\lambda 1})^{i+1},$ followed by
a term with a different subscript.
But for the image of $r'$ under the other
map,~(\ref{x.lmr_r3}), to have this form,
we see that $r'$ must begin with precisely $(x_{000}\,x_{001})^{i+1},$
followed by a term with a different subscript.
Hence the element of $|R\<|$ to whose subscripts we prefix
a $0$ to get the initial substring of $r'$ must begin with
$(x_{00}\,x_{01})^{i+1}$ but no higher power of $x_{00}\,x_{01}.$
From our study of elements of $|R\<|,$ we can therefore say that
this element begins with $p_i.$

This says that $r'$ must begin
\begin{equation}\begin{minipage}[c]{34pc}\label{x.start_r'}
$(x_{000}\,x_{001})^{i+1}(x_{010}\,x_{001})^i(x_{010}\,x_{011})^{i+1},$
\end{minipage}\end{equation}
and applying~(\ref{x.lmr_r3}) and~(\ref{x.lmr_l3}) to this in turn,
we get, from the former, $(x_{\lambda 0}\,x_{\lambda 1})^{i+1}
(x_{\mu 0}\,x_{\lambda 1})^i\nolinebreak[2]
(x_{\mu 0}\,x_{\mu 1})^{i+1},$
but from the latter (after collapsing repeated factors),
simply $(x_{\lambda 0}\,x_{\lambda 1})^{i+1}.$
Thus, on applying~(\ref{x.lmr_l3}), the terms
immediately after~(\ref{x.start_r'}) in $r'$ must yield
$(x_{\mu 0}\,x_{\lambda 1})^i\nolinebreak[2]
(x_{\mu 0}\,x_{\mu 1})^{i+1}.$
Noting how~(\ref{x.lmr_l3}) acts, we conclude that
those terms begin with an $\!i\!$-fold alternation
of single terms $x_{100}$ with alternating products of $x_{010}$
and $x_{011},$ followed at the end by $(x_{100}\,x_{101})^{i+1}.$
(A priori, there might also be an initial alternating
product of $x_{010}$ and $x_{011}$ before the first term $x_{100},$
whose image after applying~(\ref{x.lmr_l3}) would be absorbed
by the preceding $x_{\lambda 1}.$
But this would mean that the element of $|R\<|$ to whose
subscripts one prefixes $0$ to get the initial terms of $r'$
would consist of $p_i$ followed by further terms $x_{10}$ and $x_{11},$
and by our description of $|R\<|,$ this is impossible.)

Now each of the alternating products of $x_{010}$
and $x_{011}$ mentioned in the preceding paragraph must be the image
of an element $s\in|R\<|$ under prefixing $0$ to all subscripts; so
each such $s$ must consist only of factors $x_{10}$
and $x_{11},$ so by Lemma~\ref{L.splitR}(v), $s=x_{11}.$
So the additional terms we have found on our element $r'$ are
$(x_{100}\,x_{011})^i(x_{100}\,x_{101})^{i+1}.$

Another cycle of the same sort (which the reader is invited
to work out) adds on a factor $(x_{110}\,x_{101})^i\,x_{110}.$
(Just as the loss of the subscript $k$ at the left-hand
side of~(\ref{x.lmr_l3}) started this process moving,
by causing the images of~(\ref{x.start_r'}) under these two
maps to have unequal lengths, so
the loss of that subscript at the right-hand
side of~(\ref{x.lmr_r3}) slows it down at this point, so
that we add only $i\<{+}1$ terms rather than $2i\<{+}1$ as before.)
Still another cycle tacks on a factor $(x_{111}\,x_{110})^i\,x_{111},$
and there the process stops: the expression so obtained,
\begin{equation}\begin{minipage}[c]{35pc}\label{x.p'_i}
$p'_i\ =\\
\hspace*{-.5em}(x_{000}\<x_{001})^{i+1}
(x_{010}\<x_{001})^i(x_{010}\<x_{011})^{i+1}
(x_{100}\<x_{011})^i(x_{100}\<x_{101})^{i+1}
(x_{110}\<x_{101})^i(x_{110}\<x_{111})^{i+1},$
\end{minipage}\end{equation}
has the same images under~(\ref{x.lmr_r3}) and~(\ref{x.lmr_l3});
moreover, it maps precisely to $p_i$ under $(p_{3,\<2})\ba.$

(We remark, in passing, that, paralleling~(\ref{x.p_i_alt}),
we can rewrite~(\ref{x.p'_i}) as
\vspace{-1em}
\begin{equation}\begin{minipage}[c]{35pc}\label{x.p'_i_alt}
\vrule width0pt height2em depth0em
$p'_i\ =\\
\hspace*{-1.5em}(x_{000}x_{001})^{i+1}
(x_{001}x_{010})^{i+1}(x_{010}x_{011})^{i+1}
(x_{011}x_{100})^{i+1}(x_{100}x_{101})^{i+1}
(x_{101}x_{110})^{i+1}(x_{110}x_{111})^{i+1}$
\end{minipage}\end{equation}
showing that our precosemigroup has
somewhere learned how to count from $0$ to $7$ in base $2).$

Thus, if $r=p_i\,s,$ then we can write $r'=p'_i\,s'$
for $p'_i$ as in~(\ref{x.p'_i}), and some $s'.$
As noted earlier, $s$ may or may not belong to $|R\<|;$ similarly,
$s'$ may or may not inherit the properties that~(\ref{x.lmr_r3})
and~(\ref{x.lmr_l3}) agree on it and that $\beta^{P'_3}$
carries it into $|R\<|\cP|R\<|;$ but if we repeat the idempotent
generator $x_{11}$ at the right end of $p_i,$ and the $x_{111}$
at the right end of $p'_i,$ and attach the extra copies to $s$ and $s'$
respectively, we get factorizations $r=p_i\cdot x_{11} s$
and $r'=p'_i\cdot x_{111} s',$ for which it is immediate to verify
that both factors inherit these properties, and
that $(p_{2,\<3})\ba$ carries the latter factorization to the former.

By symmetry, what we have proved for $p_i$ and $p'_i$ likewise holds
for $q_i$ and the element
\begin{equation}\begin{minipage}[c]{35pc}\label{x.q'_i}
$q'_i\ =\\
\hspace*{-.5em}(x_{111}\<x_{110})^{i+1}
(x_{101}\<x_{110})^i(x_{101}\<x_{100})^{i+1}
(x_{011}\<x_{100})^i(x_{011}\<x_{010})^{i+1}
(x_{001}\<x_{010})^i(x_{001}\<x_{000})^{i+1}$
\end{minipage}\end{equation}
(for which
\vspace{-1em}
\begin{equation}\begin{minipage}[c]{35pc}\label{x.q'_i_alt}
\vrule width0pt height2em depth0em
$q'_i =\\
\hspace*{-1.5em}(x_{111}x_{110})^{i+1}
(x_{110}x_{101})^{i+1}(x_{101}x_{100})^{i+1}
(x_{100}x_{011})^{i+1}(x_{011}x_{010})^{i+1}
(x_{010}x_{001})^{i+1}(x_{001}x_{000})^{i+1}$
\end{minipage}\end{equation}
is again a variant form).
The reader can quickly verify the easier cases we have passed over:
that if the expression for $r$ as in Lemma~\ref{L.genrel} begins,
not with a $p_i$ or a $q_i,$ but with $x_{00}$ or $x_{11},$
then $r'$ will begin with $x_{000}$ or $x_{111},$ and that if
these are not all of $r$ and $r',$ we can peel them off so as
to leave shorter elements still having the properties in question.

Complementing the hard work above with some
easier calculations, we get

\begin{lemma}\label{L.lift}
For each $r\in||R\<||$ there exists a unique element
$\varphi(r)\in|(P'_3\<)\ba|$ satisfying the following three
conditions:\\[.5em]
\textup{(i)}~~$(p_{2,\<3})\ba(\varphi(r))=r.$\\[.5em]
\textup{(ii)}~~The homomorphisms\textup{~(\ref{x.lmr_r3})}
and\textup{~(\ref{x.lmr_l3})} agree on $\varphi(r).$\\[.5em]
\textup{(iii)}~~$\beta_3^P(\varphi(r))\in|R\<|\cP|R\<|.$

\vspace{.5em}
The resulting map $\varphi: |R\<|\to (P'_3\<)\ba$ is an embedding
of semigroups, and sends the generators
$x_{00},\ x_{11},\ \linebreak[0]p_i,\ q_i$ of $|R\<|$
respectively to the elements
$x_{000},\ x_{111},\ p'_i,\ q'_i$ of $(P'_3\<)\ba\<.$
\end{lemma}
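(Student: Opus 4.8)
The plan is to build $\varphi$ directly as a semigroup homomorphism, using the presentation of $|R\<|$ obtained in Lemma~\ref{L.genrel}, and only afterwards to check that it is characterized by conditions~(i)--(iii). First I would define $\varphi$ on the generators appearing in~(\ref{x.genrel}) by $x_{00}\mapsto x_{000}$, $x_{11}\mapsto x_{111}$, $p_i\mapsto p'_i$ and $q_i\mapsto q'_i$, and verify that these images satisfy the defining relations. Idempotence of $x_{000}$ and $x_{111}$ is automatic, since every generator of $(P'_3\<)\ba$ is idempotent; and $x_{000}\,p'_i=p'_i=p'_i\,x_{111}$, $x_{111}\,q'_i=q'_i=q'_i\,x_{000}$ can be read off from~(\ref{x.p'_i}) and~(\ref{x.q'_i}), whose first and last factors are $x_{000}$ and $x_{111}$. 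This produces a homomorphism $\varphi\colon|R\<|\to(P'_3\<)\ba$ with the action on generators asserted in the lemma.

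Next I would establish~(i)--(iii). Each of these asserts either that two homomorphisms out of $|R\<|$ agree, or that $\varphi$ followed by a homomorphism lands in a subsemigroup, so in every case it suffices to test the four generators. For~(i), the map $(p_{2,\<3})\ba$ deletes the last subscript; a short reduction using idempotence gives $(p_{2,\<3})\ba(p'_i)=p_i$ and $(p_{2,\<3})\ba(q'_i)=q_i$, while $(p_{2,\<3})\ba(x_{000})=x_{00}$ and $(p_{2,\<3})\ba(x_{111})=x_{11}$ are immediate, so $(p_{2,\<3})\ba\circsm\varphi$ fixes every generator and is the identity of $|R\<|$. For~(ii), the homomorphisms~(\ref{x.lmr_r3}) and~(\ref{x.lmr_l3}) agree on $x_{000}$ and on $x_{111}$ by inspection, and agree on $p'_i$ by the very computation that produced~(\ref{x.p'_i}) (and on $q'_i$ by the subscript-reversing symmetry used to produce~(\ref{x.q'_i})); hence they coincide on all of $\varphi(|R\<|)$. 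For~(iii), recall from Proposition~\ref{P.dropbase}(iii) that $\beta^{P'_3}$ carries $x_{0jk}$ to $q^{(P'_2)\ba}_{2,\<0}(x_{jk})$ and $x_{1jk}$ to $q^{(P'_2)\ba}_{2,\<1}(x_{jk})$, so for any word $w$ the element $\beta^{P'_3}(w)$ splits, in the coproduct normal form, according to the maximal runs of constant first subscript, each run contributing the subword with its first subscript deleted. For $w=p'_i$ these runs strip to $p_i$, then to an alternation of single letters $x_{00}$ and $x_{11}$, and finally to $p_i$ again, all of which lie in $|R\<|$; thus $\beta^{P'_3}(p'_i)\in|R\<|\cP|R\<|$, and $x_{000},x_{111},q'_i$ are treated identically. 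Since $\C=\Se$ satisfies~(\ref{x.cP1-1}), the inclusion $|R\<|\cP|R\<|\hookrightarrow(P'_3)_\beta$ is one-to-one, so $\beta^{P'_3}\circsm\varphi$ may be regarded as landing in the subsemigroup $|R\<|\cP|R\<|$, and~(iii) passes from the generators to all of $|R\<|$. Finally,~(i) exhibits $(p_{2,\<3})\ba$ as a left inverse of $\varphi$, whence $\varphi$ is injective and therefore an embedding; this disposes of the last assertion of the lemma.

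There remains the uniqueness of $\varphi(r)$, and this is exactly what the computations preceding the lemma deliver. Condition~(i) forces any rival $r'$ to arise from the normal form of $r$ (Lemma~\ref{L.genrel}) by replacing each factor $x_{ij}$ with a nonempty alternating string in $x_{ij0},x_{ij1}$, and~(ii)--(iii) then determine these strings. I would run an induction on the normal-form length of $r$: the analysis leading to~(\ref{x.p'_i}) shows that a leading $p_i$ in $r$ forces $r'$ to begin with precisely $p'_i$, and symmetrically a leading $q_i$, $x_{00}$ or $x_{11}$ forces $q'_i$, $x_{000}$ or $x_{111}$. One then splits this initial block off at its terminal idempotent, as in the factorizations $r=p_i\cdot x_{11}\,s$ and $r'=p'_i\cdot x_{111}\,s'$, confirms that both factors again satisfy~(i)--(iii) for the shorter words, and applies the inductive hypothesis to conclude $r'=\varphi(r)$.

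The main obstacle is this last step. Its substance is the delicate forcing argument already carried out in the pages before the lemma; the residual difficulty is the bookkeeping showing that once the determined initial block is removed, the remaining factors inherit all three conditions, so that the recursion closes cleanly. By contrast, the construction of $\varphi$ and the verification of~(i)--(iii) are the \emph{easier calculations} the text promises.
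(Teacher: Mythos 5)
Your proposal is correct and follows essentially the same route as the paper: define $\varphi$ on the generators of the presentation~(\ref{x.genrel}) by $x_{00},x_{11},p_i,q_i\mapsto x_{000},x_{111},p'_i,q'_i$, check the relations, verify~(i)--(iii) by computations on generators (with~(iii) coming from splitting $\beta^{P'_3}(p'_i)$ into runs of constant first subscript, exactly the paper's display $\beta^{P'_3}(p'_i)=p_i^{(0)}(x_{00}^{(1)}x_{11}^{(0)})^i\,p_i^{(1)}$ and its companions), and obtain uniqueness from the forcing analysis preceding the lemma. The only cosmetic difference is that you re-verify~(i) and~(ii) on generators where the paper simply cites the preceding discussion, which had already recorded those facts about $p'_i$ and $q'_i$.
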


\begin{proof}
It is clear that $x_{000},\ x_{111},\ p'_i,\ q'_i$ satisfy
the relations among $x_{00},\ x_{11},\ p_i,\ q_i$ comprising
the presentation~(\ref{x.genrel}) of $|R\<|,$ so mapping the latter
to the former, we get a homomorphism $\varphi:|R\<|\to(P'_3)\ba\<.$

Our preceding discussion shows that $\varphi(r)$ is the only
element of $(P'_3\<)\ba$ that can satisfy~(i),~(ii) and~(iii),
and that it does satisfy~(i) and~(ii).
To verify~(iii), we take the image under $\varphi$
of each of our generators of $|R\<|,$ break it into strings of
generators whose subscripts begin with $0$ alternating with
strings whose subscripts begin with $1,$
and look at the former as images of elements of $P'_2$
under $q^{(P'_2)\ba}_{2,\<0},$ and the latter as images of elements
of the same semigroup under $q^{(P'_2)\ba}_{2,1}.$
Abbreviating $q^{(P'_2)\ba}_{2,\<i}(u)$ to $u^{(i)}$ to avoid
long expressions (and confusion with our elements $q_i),$ we find that
\begin{quote}
$\begin{array}{ccc}
\beta^{P'_3}(x_{000})\ =\ x_{00}^{(0)}, &&
\beta^{P'_3}(x_{111})\ =\ x_{11}^{(1)},\\[.5em]
\beta^{P'_3}(p'_i)\ =\ p_i^{(0)}
(x_{00}^{(1)}\,x_{11}^{(0)})^i\,p_i^{(1)}, &&
\beta^{P'_3}(q'_i)\ =\ q_i^{(1)}
(x_{11}^{(0)}\,x_{00}^{(1)})^i\,q_i^{(0)}.
\end{array}$
\end{quote}
Clearly the terms from $(P'_2)\ba$ to which $^{(0)}$ and $^{(1)}$
are above applied are members of~$|R\<|.$
\end{proof}

This makes $\beta^{P'_3}\circsm\varphi$ a co-operation
$\beta^R:|R\<|\to|R\<|\cP|R\<|;$ explicitly
\begin{equation}\begin{minipage}[c]{34pc}\label{x.beta^R}
$\begin{array}{ccc}
\beta^R(x_{00})\ =\ x_{00}^{(0)}, &&
\beta^R(x_{11})\ =\ x_{11}^{(1)},\\[.5em]
\beta^R(p_i)\ =\ p_i^{(0)}(x_{00}^{(1)}\,x_{11}^{(0)})^i\,p_i^{(1)}, &&
\beta^R(q_i)\ =\ q_i^{(1)}(x_{11}^{(0)}\,x_{00}^{(1)})^i\,q_i^{(0)},
\end{array}$
\end{minipage}\end{equation}
making $R=(|R\<|,\beta^R)$ a coalgebra.
It is straightforward to verify that $\beta^R$ is coassociative.
For example, the reader is encouraged to check that
the derived co-operations corresponding to the two sides
of the associative identity, when applied to $p_i,$ both yield
\begin{equation}\begin{minipage}[c]{34pc}\label{x.p_i_lmr}
$p_i^{(\lambda)}\,(x_{00}^{(\mu)}\,x_{11}^{(\lambda)})^i\,p_i^{(\mu)}\,
(x_{00}^{(\rho)}\,x_{11}^{(\mu)})^i\,p_i^{(\rho)}.$
\end{minipage}\end{equation}

Thus, $R$ belongs to $\coalg{\Se}{\Se}.$
The proof that it is the final object of that category
will now be quick.

Let $S$ be any object of $\coalg{\Se}{\Se},$ and let the components of
the unique morphism from $S$ to the precoalgebra $P$ be $f_k:S\to P_k$
$(k\in\omega).$
Since these maps respect (pseudo-)co-operations, we have,
in particular,
\begin{equation}\begin{minipage}[c]{34pc}\label{x.f_3}
$(f_3)_\beta\circsm\beta^S\ =\ \beta^{P_3}\circsm(f_3)\ba\<.$
\end{minipage}\end{equation}
Now looking at the commuting square formed from the
(pseudo)coprojections of $S$ and of $P_3,$ and
invoking~(\ref{x.P_j+1*a}),
we see that the term $(f_3)_\beta$ in~(\ref{x.f_3}) can be
written as a copower, $\coprod_{\ari(\beta)}\,(f_2)\ba.$
On the other hand, in the right-hand side of~(\ref{x.f_3}),
we can insert $\varphi\circsm(p_{2,\<3})\ba$ between the two factors,
since $(p_{2,\<3})\ba$ carries $f_3$ of an element of $|S\<|$ to $f_2$
of that element, and $\varphi,$ by construction, will carry the
resulting element back to its only preimage in $(P'_3)\ba$ that
can possibly be in the image of $f_3.$
When we make this insertion, the combination $\beta^{P_3}\circsm\varphi$
gives, by definition, $\beta^R,$ and~(\ref{x.f_3}) becomes
\begin{equation}\begin{minipage}[c]{34pc}\label{x.S>P}
$\coprod_{\ari(\beta)}\,(f_2)\ba\circsm\beta^S\ =
\ \beta^R\circsm(f_2)\ba\<,$
\end{minipage}\end{equation}
showing that $(f_2)\ba$ is a morphism of coalgebras $S\to R.$

To show that this is the only morphism of coalgebras $S\to R,$
note first that the inclusion of the subsemigroup
$|R\<|$ in $(P_2)\ba$ induces a morphism $g$ from the coalgebra
$R$ to the final $\!3\!$-indexed precobinar $(P'_k\<)_{k\in 3},$
and that this morphism separates elements of $|R\<|.$
Hence if we had two distinct
morphisms $S\to R,$ then composing these with $g,$
we would get distinct morphisms $S\to(P'_k\<)_{k\in 3},$
contradicting the universal property
of that $\!3\!$-indexed precoalgebra
(Proposition~\ref{P.P_is_final}).

Thus

\begin{theorem}\label{T.finalcosemi}
The cosemigroup $R$ with underlying semigroup~\textup{(\ref{x.genrel})}
and co-opera\-tion~\textup{(\ref{x.beta^R})}
is the final object of $\coalg{\Se}{\Se}.$\endproof
\end{theorem}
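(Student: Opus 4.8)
The plan is to realize $R$ as the final object by routing everything through the precoalgebra machinery of \S\ref{S.preco>co} rather than through the abstract solution-set argument of Theorem~\ref{T.final}. Since $\Se$ satisfies~(\ref{x.u=}) by Proposition~\ref{P.red}, it satisfies both~(\ref{x.cP1-1}) and~(\ref{x.cPlim}), so Proposition~\ref{P.compar_1-1} applies with $\C=\D=\Se$ (both finitary): the final $\!\Se\!$-coalgebra over the trivial pseudocoalgebra is recovered from the universal $\!\omega\!$-indexed $\!\Se\!$-precoalgebra $P$. By Proposition~\ref{P.imagecoalg}(iv), $P$ is in turn the largest subprecoalgebra, cosatisfying associativity, of the $\!\Bi\!$-precoalgebra $P'$ of \S\ref{S.BiSe}, whose levels $(P'_k)\ba$ are the free idempotent-generated semigroups displayed in~(\ref{x.x->->}). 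Because all distinctions among elements of the eventual coalgebra are already present at level~$2$, the whole problem localizes to $(P'_2)\ba$.

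First I would pin down the underlying semigroup. The relevant associativity instances at level~$2$ are the two homomorphisms~(\ref{x.lmr_r}) and~(\ref{x.lmr_l}); Lemma~\ref{L.m_also} shows that on the image of level~$3$ these also agree with the third map~(\ref{x.lmr_m}), which licenses \emph{defining} $|R\<|$ as the common equalizer of all three (Definition~\ref{D.R}). The heart of the argument is the combinatorial dissection of an element of $|R\<|$ written in the $\!\Bi\!$-normal form: controlling which generators may sit adjacent (Lemma~\ref{L.splitR}(i)--(iii)) and peeling at $x_{00}$--$x_{11}$ interfaces (Lemma~\ref{L.splitR}(iv)) reduces a general element to the building blocks $x_{00}$, $x_{11}$, $p_i$, $q_i$ of~(\ref{x.p_i}),~(\ref{x.q_i}); reading off the relations these satisfy then yields the presentation~(\ref{x.genrel}) and its normal form (Lemma~\ref{L.genrel}).

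Next I would equip $|R\<|$ with its co-operation. Each generator admits a \emph{unique} lift to $(P'_3\<)\ba$ that is compatible with the level-$3$ associativity instances~(\ref{x.lmr_r3}),~(\ref{x.lmr_l3}) and whose pseudo-co-operation lands in $|R\<|\cP|R\<|$; computing these lifts produces $x_{000}$, $x_{111}$, $p'_i$, $q'_i$ and hence the embedding $\varphi$ of Lemma~\ref{L.lift}. Composing $\varphi$ with $\beta^{P'_3}$ gives the co-operation~(\ref{x.beta^R}), and one then checks directly that $\beta^R$ is coassociative (for instance that both derived co-operations send $p_i$ to~(\ref{x.p_i_lmr})), so that $R\in\coalg{\Se}{\Se}$.

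Finally, finality follows formally. For any $S\in\coalg{\Se}{\Se}$, the unique precoalgebra morphism $S\to P$ has components $f_k$; the compatibility~(\ref{x.f_3}) at level~$3$, after rewriting $(f_3)_\beta$ as a copower $\coprod_{\ari(\beta)}(f_2)\ba$ and inserting $\varphi\circsm(p_{2,\<3})\ba$, becomes~(\ref{x.S>P}), exhibiting $(f_2)\ba$ as a coalgebra map $S\to R$. Uniqueness is forced because the inclusion $|R\<|\hookrightarrow(P'_2\<)\ba$ induces a morphism $R\to(P'_k\<)_{k\in 3}$ separating points of $|R\<|$, so two distinct maps $S\to R$ would contradict the universal property of the $\!3\!$-indexed precoalgebra (Proposition~\ref{P.P_is_final}). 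The main obstacle is unquestionably the explicit determination of $|R\<|$ and of the lift $\varphi$: as the author remarks, these rest on delicate normal-form bookkeeping found only by ``enlightened trial and error'', and everything downstream is routine once they are in hand.
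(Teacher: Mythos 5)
Your proposal is correct and takes essentially the same route as the paper's own proof: the same definition of $|R\<|$ as the joint equalizer of~(\ref{x.lmr_r}), (\ref{x.lmr_l}), (\ref{x.lmr_m}) (Definition~\ref{D.R}, licensed by Lemma~\ref{L.m_also}), the same normal-form dissection (Lemmas~\ref{L.splitR} and~\ref{L.genrel}), the same lift $\varphi$ of Lemma~\ref{L.lift} yielding $\beta^R,$ and the same finality argument via~(\ref{x.f_3}), (\ref{x.S>P}) and the point-separation appeal to Proposition~\ref{P.P_is_final}. The only cosmetic difference is your opening appeal to Proposition~\ref{P.compar_1-1}, which the paper does not invoke here since finality is proved directly by that last argument.
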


What does the functor represented by this coalgebra $R,$ i.e.,
the initial object $E$ of $\Rep{\Se}{\Se},$ look like?
Writing $a,$ $b,$ $c_i,$ $d_i$ for the images of $x_{00},$ $x_{11},$
$p_i$ and $q_i$ respectively under a semigroup homomorphism on $|R\<|,$
it can be described as taking a semigroup
$A$ to the semigroup with underlying set
\begin{equation}\begin{minipage}[c]{34pc}\label{x.efab}
$|E(A)|\ =\ \{\,(a;\,b;\,c_0,c_1,\dots;\,d_0,d_1,\dots\,)\in
|A\<|^{1+1+\omega+\omega}\ \mid\\[.17em]
\hspace*{7em}a^2=a,\ \ b^2=b,\ \ a\<c_i=c_i=c_i\<b,\ \ b\<d_i=d_i=d_i\<a
\ \ (i\in\omega)\}$
\end{minipage}\end{equation}
(cf.~(\ref{x.genrel})), and operation
\begin{equation}\begin{minipage}[c]{34pc}\label{x.mult}
$(a;\,b;\,\dots,\,c_i,\,\dots;\ \dots,\,d_i,\,\dots\,)\,%
(a';\,b';\,\dots,c'_i,\,\dots;\,\dots,d'_i,\,\dots\,)\\[.25em]
\hspace*{4em}=\ (a;\ b';\ \dots,\ c_i\,(a'\<b)^i\,c'_i,\ \dots;
\ \dots,\ d'_i\,(b\<a')^i\,d_i,\ \dots\,)$
\end{minipage}\end{equation}
(cf.~(\ref{x.beta^R})).
The formula for the $c_i$ and $d_i$ components of~(\ref{x.mult})
is an instance of what semigroup theorists \cite{C+P} call a
{\em Rees matrix semigroup} construction.

As an example of the universal property of $R,$ consider,
in $\coalg{\Se}{\Se},$ the object $S$ representing the identity functor.
This is represented by the free semigroup on one generator $y,$ with
comultiplication given by
\begin{equation}\begin{minipage}[c]{34pc}\label{x.g0g1}
$\beta^S(y)\ =\ y^{(0)}\<y^{(1)}.$
\end{minipage}\end{equation}

When we map this into $P,$
$y$ is necessarily mapped to $x$ in $(P_0)\ba,$ so
by~(\ref{x.g0g1}) the image of $\beta^S(y)$ in $(P_1)_\beta$
is $x^{(0)}\<x^{(1)};$ so the image of
$y$ in $(P_1)\ba$ is the corresponding element, $x_0\,x_1.$
(Cf.\ the discussion of how such maps are constructed following
Proposition~\ref{P.P_is_final}.)
Hence, again using~(\ref{x.g0g1}), the image of $\beta^S(y),$
in $(P_2)_\beta$ is $(x_0\,x_1)^{(0)}\<(x_0\,x_1)^{(1)};$
so the image of $y$ in $(P_2)\ba$ is $x_{00}\,x_{01}\,x_{10}\,x_{11}.$
Note that this is
$(x_{00}\,x_{01})^1(x_{10}\,x_{01})^0\,(x_{10}\,x_{11})^1=p_0\in|R\<|.$
Thus, the unique map from the cosemigroup $S$ representing the identity
functor to the final
cosemigroup $R$ is the semigroup homomorphism taking the free
generator $y$ of $|S\<|$ to $p_0\in|R\<|\<.$

Since $p_0$ corresponds to the coordinate $c_0$ of~(\ref{x.efab}),
the corresponding morphism of representable functors
from $E$ to the identity takes the infinite tuples of~(\ref{x.efab})
and ignores all coordinates except $c_0,$ which it uses as its value.
This mapping is not in general onto: its image is the subsemigroup of
$A$ consisting of
those elements that are fixed by multiplication by at least one
idempotent on the left and by at least one idempotent on the right;
in particular, it is empty if $A$ has no idempotents.
This nonsurjectivity is inevitable:
there {\em are} representable semigroup-valued functors $F$ which
give the empty semigroup when applied
to semigroups having no idempotents, so to
be an initial representable functor, and thus have homomorphisms
to such $F,$ $E$ must also give the empty semigroup in such cases.
On the other hand, since representable functors between varieties
respect final objects, every representable functor $F:\Se\to\Se$
will, when applied
to the $\!1\!$-element semigroup, give a $\!1\!$-element semigroup;
hence $F$ must also give nonempty semigroups on all semigroups into
which the $\!1\!$-element semigroup can be mapped, i.e., all
semigroups that do have idempotent elements.

As easy variants of this
example, the unique morphism from $E$ to the direct product
of two copies of the identity functor takes each element as
in~(\ref{x.efab}) to the pair $(c_0,c_0);$ the unique morphism to
the opposite-semigroup functor $A\mapsto A^\mathrm{op}$ projects
to the coordinate $d_0,$ rather than~$c_0.$

For a different sort of example, let $F$ be the functor
taking every semigroup $A$ to its underlying set, with
the {\em left zero} multiplication $s\cdot t=s.$
Then the unique morphism $E\to F$ will carry each
infinite tuple as in~(\ref{x.efab}) to its first coordinate $a.$
Again this will in general be nonsurjective,
though in this case, the image of $E$ in $F$ will be a
representable subfunctor, taking each semigroup $A$ to
the semigroup consisting of the set of idempotent elements of $|A\<|,$
with the left zero multiplication.

Though each of the above morphisms from the initial
representable functor throws away {\em almost} all the information
contained in~(\ref{x.efab}), a morphism from $E$ to a nontrivial
representable functor $F$ cannot throw away all such information.
If it did, it would have to send all of $E(A)$ to a single
distinguished element of $F(A);$ but a nontrivial representable
functor on a variety without zeroary operations does not admit
such a choice of distinguished element.
From the coalgebra point of view, the corresponding observation is that
the image in $R$ of the coalgebra $S$ representing $F$ cannot be empty
unless $S$ is the empty coalgebra, i.e.,
unless $F$ is the trivial functor.

These examples have not involved the ``exotic'' coordinates
of~(\ref{x.mult}), the $c_i$ and $d_i$ with $i>0.$
There are no representable functors that I was previously aware of
whose morphisms from the initial representable functor
involve those coordinates; but one can easily design such functors.
Our initial functor $E$ itself is one, of course.
One can also cut it down, throwing away all the relations
in~(\ref{x.efab}), and all but three of the coordinates,
getting the construction sending $A$ to the set of all
$\!3\!$-tuples $(a,b,c)$ of elements of $|A\<|,$ with the
multiplication $(a,\,b,\,c)(a',\,b',\,c')=(a,\,b',\,c\,(a'b)^i c'),$
for arbitrary fixed~$i.$
It is easy to verify that this is associative, and that the unique
morphism from $E$ to this functor uses the coordinates $a,$ $b,$
and $c_i.$

More generally, if one takes any semigroup word $w$ in $m+n$
variables, one can define a representable functor $F$ taking every
semigroup $A$ to the set of all $\!m+n+1\!$-tuples
\begin{equation}\begin{minipage}[c]{34pc}\label{x.m+n+1}
$(a_1,\ \dots,\ a_m;\ b_1,\ \dots,\ b_n;\ c)$
\end{minipage}\end{equation}
of elements of $A,$ with the operation
\begin{equation}\begin{minipage}[c]{34pc}\label{x.w-mult}
$(a_1,\dots,a_m;\ b_1,\dots,b_n;\ c)
\ (a'_1,\dots,a'_m;\ b'_1,\dots,b'_n;\ c')=\\[.17em]
\hspace*{1em}
=\ (a_1,\,\dots,\,a_m;\ b'_1,\,\dots,\,b'_n;\ c
\ w(a'_1,\,\dots,\,a'_m,\,b_1,\,\dots,\,b_n\,)\ c'),$
\end{minipage}\end{equation}
which it is easy to verify is associative.
To describe the map from $E$ to this functor, break the
word $w$ into alternating blocks of occurrences of
the ``$\!a\!$'' variables (the first $m)$ and
the ``$\!b\!$'' variables (the last $n),$ and count these blocks,
starting from the first block of ``$\!a\!$'' variables
(ignoring any ``$\!b\!$'' variables  that may precede them), and
ending with the last block of ``$\!b\!$'' variables
(ignoring any ``$\!a\!$'' variables that may follow that block).
Say these constitute $i$ blocks of ``$\!a\!$'' variables alternating
with $i$ block ``$\!b\!$'' variables, for some $i\in\omega.$
(This $i$ may be $0,$ namely, if $w$ consists only of a
word in the ``$\!b\!$'' variables followed by a word in the ``$\!a\!$''
variables, with one of those words possibly empty.)
Then the unique morphism $E\to F$ assigns to all $a_r$ the value
$a,$ to all $b_r$ the value $b,$ and to $c$ the value $c_i.$

We remark that~(\ref{x.w-mult}) looks more natural if we rewrite
our $\!m\<{+}\<n\<{+}1\!$-tuples so as to put
the coordinate~$c$ in the middle,
and abbreviate $a_1,\dots,a_m$ to $a,$ $b_1,\dots,b_n$ to $b,$ and
$w(a'_1,\,\dots,\,a'_m,\nolinebreak[3]\,b_1,\,\dots,\,b_n)$ to $b*a'.$
Then~(\ref{x.w-mult}) becomes
\begin{equation}\begin{minipage}[c]{34pc}\label{x.w-mult2}
$(a,\ c,\ b)\ (a',\ c',\ b')\ =\ (a,\ c\ (b*a')\ c',\ b').$
\end{minipage}\end{equation}
In particular, the fact that the multiplication keeps $a$ and
$b'$ as coordinates, while the expression between $c$ and $c'$
involves, not these, but $b$ and $a',$ looks reasonable in this
notation, and the calculation by which one verifies
associativity becomes ``trivial''.

\vspace{.5em}
What do we learn about general representable functors between semigroups
from our description of the initial such functor $E$?
I do not have a good answer.
One interesting consequence is that for any representable functor
$F,$ the unique morphism $E\to F$ yields elements of the
semigroups $F(A)$ on which any two morphisms of representable
functors from $F$ to a common functor must agree.
So, for instance, any two morphisms from the
underlying-set-with-left-zero-operation functor
into a common representable functor $G$ must agree on the set of
idempotent elements (i.e., elements idempotent with respect
to the operation of $A.$
Under the operation of $F(A),$ all elements are idempotent.)
That this is not true of other elements can be seen by comparing
the identity endomorphism of $F$ with the endomorphism taking
each element $s$ to the element $s^2$ (where the ``square'' is
interpreted in terms of the original multiplication of $A).$
These endomorphisms agree on idempotent elements only.

The same argument tells us
that any two morphisms from the {\em identity} functor to
a common representable functor must agree on all elements that are
fixed on each side by multiplication by at least one idempotent.
But in this case, they must agree everywhere; for
the unique cosemigroup morphism from the coalgebra
representing the identity functor into $R,$ described at the end
of the paragraph following~(\ref{x.g0g1}), is one-to-one, hence it is
a monomorphism, so the corresponding
map of representable functors is an epimorphism.
(This is not true of identity functors of all varieties.
For instance, in $\mathbf{Ab},$ multiplication by each
integer $n$ gives an endomorphism of the identity functor, so that
functor has many distinct morphisms to itself, so the map from the
initial functor to it is not epic.)

Another possible use of our description of the final object
of $\coalg{\Se}{\Se}$ lies in the fact that elements of a general
cosemigroup in $\Se$ may be classified according to their images under
the unique morphism to $R,$ giving a start toward the task of
describing all such objects (cf.\ \cite[Problem~21.7, p.94]{coalg}).
An easy case, which we leave to the reader to verify, is

\begin{proposition}\label{P.x00.coalg}
Consider objects $S$ of $\coalg{\Se}{\Se}$ such that the
unique homomorphism $S\to R$ carries all elements of $||S\<||$
to $x_{00}\in||R\<||.$

Each such coalgebra $S$ may be obtained by taking a semigroup $B$ and
an idempotent endomorphism $\epsilon: B\to B,$ and letting
\begin{equation}\begin{minipage}[c]{34pc}\label{x.Sbase}
$|S\<|\ =\ B,$
\end{minipage}\end{equation}
\begin{equation}\begin{minipage}[c]{34pc}\label{x.betaS}
$\beta^S\ =\ q^B_{2,\<0}\ \epsilon\,;$\quad i.e.,\quad
$\beta^S(b)\ =\ \epsilon(b)^{(0)}$ $(b\in|B\<|).$
\end{minipage}\end{equation}

The corresponding representable functor carries every semigroup
$A$ to the set of homomorphisms $h: B\to A,$ under
the semigroup operation
\begin{equation}\begin{minipage}[c]{34pc}\label{x.h.h'}
$h\cdot h'\ =\ h\,\epsilon\<.$
\end{minipage}\end{equation}
\endproof
\end{proposition}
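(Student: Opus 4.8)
The plan is to prove the characterization in both directions, extract the endomorphism $\epsilon$ from the co-operation, and then read off the represented functor. Write $f\colon S\to R$ for the unique cosemigroup morphism, and suppose $f$ sends every element of $||S\<||$ to $x_{00}$. First I would use that $f$ respects co-operations: for each $b\in||S\<||$ we have $(\coprod_2 f)(\beta^S(b))=\beta^R(f(b))=\beta^R(x_{00})=x_{00}^{(0)}$ by~(\ref{x.beta^R}). Now write $\beta^S(b)$ in the normal form for the semigroup coproduct $|S\<|\cP|S\<|$, say as an alternating word $u_1^{(i_1)}\<\cdots\<u_n^{(i_n)}$ with $n\geq 1$ (a semigroup coproduct has no empty word). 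Applying $\coprod_2 f$ sends this to $x_{00}^{(i_1)}\<\cdots\<x_{00}^{(i_n)}$, which, because the superscripts alternate and $x_{00}$ is idempotent yet not an identity element, is already in normal form and has length $n$. Hence it can equal the length-one word $x_{00}^{(0)}$ only if $n=1$ and $i_1=0$. Thus $\beta^S(b)=q^{|S|}_{2,\<0}(c)$ for a unique $c\in|S\<|$; setting $\epsilon(b)=c$ and using that $q^{|S|}_{2,\<0}$ is an embedding while $\beta^S$ is a homomorphism, $\epsilon\colon B:=|S\<|\to B$ is a semigroup endomorphism, which is~(\ref{x.Sbase}) and~(\ref{x.betaS}).

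Next I would show that coassociativity of $\beta^S$ is exactly the condition $\epsilon^2=\epsilon$. Computing the two derived triple co-operations $B\to B\cP B\cP B$ on an element $b$: since $\beta^S(b)=\epsilon(b)^{(0)}$ lies wholly in the left factor, the left-associated co-operation re-applies $\beta^S$ there and yields $q_{3,\<0}(\epsilon^2(b))$, while the right-associated one re-applies $\beta^S$ only in the (empty) right factor and so leaves $q_{3,\<0}(\epsilon(b))$. Injectivity of the coprojection $q_{3,\<0}$ forces $\epsilon^2(b)=\epsilon(b)$, and conversely idempotence of $\epsilon$ makes the two agree. So $S$ is a genuine cosemigroup precisely when $\epsilon$ is idempotent; this direction simultaneously shows that every pair $(B,\epsilon)$ as in the statement yields a valid object.

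For the converse I must check that such an $S$ really does map to $x_{00}$ under $S\to R$, which I would do by tracing the recursive construction of the morphism $S\to P$ from the discussion following Proposition~\ref{P.P_is_final}: at level $0$ every element goes to $x$; then the image of $b$ in $(P_1)\ba$ is $(\coprod_2 f_0)(\beta^S(b))=x^{(0)}=x_0$, and the image in $(P_2)\ba$ is $(\coprod_2 f_1)(\beta^S(b))=x_0^{(0)}=x_{00}$, which is precisely the value of the map $S\to R\subseteq(P'_2)\ba$. Finally I would identify the represented functor: $\Se(S,A)$ has underlying set $\Se(B,A)$, and its operation sends $(h,h')$ to the composite of $\beta^S$ with the homomorphism $|S\<|\cP|S\<|\to A$ that is $h$ on the first copy and $h'$ on the second; evaluating on $b$ gives $h(\epsilon(b))$, i.e.\ $h\cdot h'=h\,\epsilon$, which is~(\ref{x.h.h'}), and associativity of this operation is automatic from coassociativity of $\beta^S$. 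The one genuinely delicate step is the normal-form argument of the first paragraph: everything hinges on $x_{00}$ being idempotent but not an identity, so that $\coprod_2 f$ preserves word length and a nonempty alternating word can map to a single letter only if it was a single letter to begin with.
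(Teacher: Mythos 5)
Your proof is correct. The paper offers no proof of this proposition at all --- it is introduced as ``an easy case, which we leave to the reader to verify'' --- and your argument is precisely the natural verification: extract $\epsilon$ from the forced form of $\beta^S,$ identify coassociativity with $\epsilon^2=\epsilon,$ check that the resulting coalgebras do map constantly to $x_{00},$ and read off the represented functor. Two small remarks. First, in your normal-form step the justification is slightly misplaced: length preservation under $\coprod_2 f$ has nothing to do with $x_{00}$ being idempotent or failing to be an identity element. In a coproduct of \emph{semigroups} (as opposed to monoids) there is no deletion rule whatsoever, so any map between coproducts induced by homomorphisms of the factors carries an alternating word of length $n$ to an alternating word of length $n;$ the conclusion you draw is exactly right, just for a simpler reason. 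Second, your third paragraph can be shortened: instead of tracing the recursive construction of the morphism $S\to P$ through levels $0,$ $1,$ $2,$ observe directly that the constant map $f(b)=x_{00}$ is a semigroup homomorphism $|S\<|\to|R\<|$ (since $x_{00}$ is idempotent) and satisfies $\beta^R(f(b))=x_{00}^{(0)}=(\coprod_2 f)(\epsilon(b)^{(0)})=(\coprod_2 f)(\beta^S(b)),$ so it is a morphism of coalgebras $S\to R;$ finality of $R$ then identifies it as \emph{the} unique morphism, which is exactly what needed checking.
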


It would be interesting to know the answer to

\begin{question}\label{Q.gens}
Let $S$ be an object of $\coalg{\Se}{\Se},$ and $f$ the unique
morphism in that category from $S$ to the final object $R.$

Must the semigroup $|S|$ be generated by those of
its elements that are sent by $f$ to members of the generating set
$\{x_{00},\ x_{11},\ p_i,\ q_i\ (i\in\omega)\}$ of $|R\<|$?

If so, must $|S|$ in fact have a presentation such that the common
value in $|R\<|$ of the two sides of every relation
also belongs to that generating set?
\end{question}

Let me conclude this section by mentioning that my original
investigation of the structure of the final object $R$ of
$\coalg{\Se}{\Se},$ though it led to the same result,
Theorem~\ref{T.finalcosemi} above, took a somewhat different route.
I considered an arbitrary object $S$ of $\coalg{\Se}{\Se}$
and an arbitrary element $s\in||S||,$ and looked at the images of $s$
under the binary, ternary, etc., co-operations of $S$ corresponding to
multiplication of two, three, etc., elements of a semigroup.
(Thus, I used coassociativity from the start, rather than
thinking of $S$ as a cobinar, and then adding the assumption
that it cosatisfied the associative law.)
From the image of $s$ under the binary co-operation, I extracted a
``$\!\lambda,\rho\!$-signature'', a word in the alphabet
$\{\lambda,\rho\},$ which from our present point of view
encodes the image of $s$ in $\{x\}^{(\lambda)}\cP\{x\}^{(\rho)}.$
Its images in the $\!3\!$- and $\!4\!$-fold copowers similarly
gave a ``$\!\lambda,\mu,\rho\!$-signature'' and a
``$\!\lambda,\mu_1,\mu_2,\rho\!$-signature''.
Examining the relations between these signatures, I determined
which strings could occur, and verified that the semigroup
of $\!\lambda,\mu,\rho\!$-signatures that occurred could be given
the structure of our desired final cosemigroup.

That development, used in the first drafts of this note,
was about as lengthy as the present one, though the formulas
were a bit shorter, as suggested by comparison of~(\ref{x.p_i_alt})
and~(\ref{x.p_i_sig}).
I eventually switched to the present development for the sake of
coherence with the methods of other sections of this note.
But the alternative approach might be kept in mind by anyone
wishing to look further at coalgebras with coassociative co-operations.

\section{Examples with $>1$ derived zeroary
operations.}\label{S.>1zeroary}

We saw in Theorem~\ref{T.7/9} (last paragraph) that the only
situations where the initial object of $\Rep{\C}{\D}$ can have
nontrivial nonempty algebras among its values
are when $\C$ and $\D$ either both have no zeroary operations,
or both have more than one derived zeroary operation.
So far, we have looked at cases of the former sort.

A very elementary example of a variety with more than one derived
zeroary operation is the one
determined by a single primitive zeroary operation $\alpha_0,$
a single primitive unary operation $\alpha_1,$
and no identities; i.e., the variety of sets with a distinguished
element, and an endomap not assumed to fix that element.
(The derived zeroary operations are $\alpha_0,
\ \alpha_1(\alpha_0),\ \dots,\ \alpha_1^i(\alpha_0),\ \dots\ .)$

Taking any variety $\C,$ and the above variety as $\D,$
let $P$ be the final $\!\D\!$-precoalgebra in $\C$ having
for $P_0$ the trivial pseudocoalgebra.
By Proposition~\ref{P.dropbase}(iii),
$(P_1)\ba$ can be identified with the direct product
of the $\!0\!$-fold and $\!1\!$-fold copowers of $(P_0)\ba\<.$
The $\!1\!$-fold copower is just $(P_0)\ba,$ the trivial algebra;
hence that direct product can be identified with the $\!0\!$-fold
copower of $(P_0)\ba;$ i.e., the initial object of $\C.$
Let us denote this by $I_\C;$ we know that
it will be nonempty and nontrivial precisely if $\C$ has more
than one derived zeroary operation.

We find that passage to each subsequent level of $P$
brings in one more copy of $I_\C,$ so that $(P_k)\ba=I_\C^k.$
Taking the limit, we get
$(P_\omega)\ba=I_\C^\omega\<,$ where the pseudo-co-operation
$\alpha_0^{P_\omega}$ projects $I_\C^\omega$
to its component indexed by $0,$
while $\alpha_1^{P_\omega}$ is the ``left shift'' map.

Propositions~\ref{P.compar_iso} and~\ref{P.red} tell us that
if $\C$ is a finitary variety satisfying~\textup{(\ref{x.u=})},
then $P_\omega$ will be a coalgebra,
the final object of $\coalg{\C}{\D}.$
But in fact, those hypotheses on $\C$ are not needed here.
Their purpose was to guarantee that $\!\ari(\alpha)\!$-fold
copowers commuted with $\!\omega\!$-indexed inverse limits
for all $\alpha\in\Omega_\D,$ but this is automatic
when all elements of $\Omega_\D$ are zeroary or unary.
(The $\!1\!$-fold copower functor is the identity and
commutes with everything; the $\!0\!$-fold copower is the
constant functor giving the initial algebra, and constant functors
commute with limits over connected categories.)

Indeed, either by following the above precoalgebra approach,
or by directly checking the universal property, it is not hard to
verify the following general statement.

\begin{lemma}\label{L.01ary}
In a category $\fb{X}$ with an initial object, let such an
object be denoted~$I_\fb{X}.$

Suppose $\fb{A}$ is a category having small products and
coproducts, and $\D$ a variety with only zeroary and unary
primitive operations \textup{(}with or without identities\textup{)}.
Then $\coalg{\fb{A}}{\D}$ has a final object $R,$ with
underlying $\!\fb{A}\!$-object $|R\<|=I_\fb{A}^{|I_\D|},$ where the
\textup{(}primitive or arbitrary\textup{)}
zeroary $\!\D\!$-co-operations of $R$ are given by the projections to
the components indexed by the values in $|I_\D|$ of the corresponding
zeroary operations of $I_\D,$ and where the
\textup{(}primitive or arbitrary\textup{)}
unary $\!\D\!$-co-operations of $R$ are determined
\textup{(}contravariantly\textup{)} by the action of the corresponding
unary operations of $I_\D$ on the index-set $|I_\D|.$\endproof
\end{lemma}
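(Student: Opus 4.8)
The plan is to write down the candidate object explicitly and verify its universal property in $\coalg{\fb{A}}{\D}$ by hand; the precoalgebra machinery of \S\ref{S.precoalg}--\S\ref{S.preco>co} is not needed, because the restriction on $\Omega_\D$ makes every relevant co-operation transparent. Since $\ari(\alpha)\in\{0,1\}$ for all $\alpha\in\Omega_\D$, and $\coprod_0|R\<|=I_\fb{A}$ while $\coprod_1|R\<|=|R\<|$, a $\!\D\!$-coalgebra $S$ in $\fb{A}$ is just an object $|S\<|$ equipped with a morphism $\alpha^S\colon|S\<|\to I_\fb{A}$ for each zeroary $\alpha$ and an endomorphism $\alpha^S\colon|S\<|\to|S\<|$ for each unary $\alpha$, cosatisfying $\Phi_\D$. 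Write $Z=|I_\D|$; this is a small set, and every element of $Z$ is the class of a closed $\!\Omega_\D\!$-term, i.e.\ a single primitive zeroary operation with a finite stack of primitive unary operations applied to it. I would set $|R\<|=\prod_{z\in Z}I_\fb{A}$ (which is $I_\fb{A}^{|I_\D|}$), let each zeroary co-operation $\alpha^R$ be the projection $\pi_{\alpha_{I_\D}}$ onto the factor indexed by $\alpha_{I_\D}\in Z$, and let each unary co-operation $\alpha^R$ be the reindexing map $f^\ast$ for $f=\alpha_{I_\D}\colon Z\to Z$, where for any $f\colon Z\to Z$ the morphism $f^\ast\colon\prod_Z I_\fb{A}\to\prod_Z I_\fb{A}$ is the one determined by $\pi_z\circ f^\ast=\pi_{f(z)}$ for all $z$.

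The computational heart is an induction on the depth of a term $s$, showing that the derived co-operation $s^R$ is $\pi_{s_{I_\D}}$ when $s$ is a closed term and $(s_{I_\D})^\ast$ when $s$ is a unary term, where $s_{I_\D}$ denotes the value (resp.\ the self-map of $Z$) that $s$ induces on $I_\D$. The base cases are the definitions just given, together with the fact that the projection term has co-operation $\mathrm{id}_{|R\<|}=(\mathrm{id}_Z)^\ast$. For the inductive step one writes $s=\alpha(u)$ with $\alpha$ primitive unary, and uses the recursive rule $s^R=u^R\circ\alpha^R$ for the co-operation of a composite derived operation (the honest-coalgebra analogue of~(\ref{x.instder}), the combination~(\ref{x.bigvee}) being trivial in the one-ary case), combined with the contravariant functoriality identities $\pi_w\circ f^\ast=\pi_{f(w)}$ and $g^\ast\circ f^\ast=(f\circ g)^\ast$; these match $s_{I_\D}=\alpha_{I_\D}\circ u_{I_\D}$. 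Getting these variances straight is the one place where care is required.

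From this lemma it is immediate that $R$ is a genuine $\!\D\!$-coalgebra. Every $\!\Omega_\D\!$-term involves at most one variable, so we may take every identity $s=t$ of $\D$ to have arity at most $1$; its two sides then give $s^R,t^R$ equal to $\pi_{s_{I_\D}},\pi_{t_{I_\D}}$ or to $(s_{I_\D})^\ast,(t_{I_\D})^\ast$, and these coincide because $I_\D$, being a $\!\D\!$-algebra, satisfies $s=t$, whence $s_{I_\D}=t_{I_\D}$. Thus $R$ cosatisfies $\Phi_\D$.

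Finally I would verify the universal property. Given any $S\in\coalg{\fb{A}}{\D}$, each closed term $t$ yields a co-operation $t^S\colon|S\<|\to I_\fb{A}$, and since $S$ cosatisfies every identity of $\D$ this depends only on the class $z=[t]\in Z$; call it $z^S$. By the universal property of the product these assemble into a unique $g\colon|S\<|\to\prod_Z I_\fb{A}=|R\<|$ with $\pi_z\circ g=z^S$. That $g$ is a coalgebra morphism reduces, on the zeroary co-operations, to $\pi_{\alpha_{I_\D}}\circ g=\alpha^S$, and on the unary ones to the identity $(\alpha_{I_\D}(z))^S=z^S\circ\alpha^S$; both follow from the composition rule for co-operations. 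Uniqueness is an induction on the depth of a term representing $z$: any coalgebra morphism $h$ satisfies $\pi_z\circ h=z^S$ at a primitive zeroary $z$ (from the zeroary square, using $\coprod_0 h=\mathrm{id}_{I_\fb{A}}$), and the unary square propagates this from $z$ to $\alpha_{I_\D}(z)$, which exhausts $Z$ since every element of $Z$ arises from a primitive zeroary operation by applying primitive unary operations. The main obstacle is therefore not any single hard step but the careful bookkeeping identifying the derived co-operations of $R$ and $S$ with the $\!\Omega_\D\!$-action of $I_\D$ on its own underlying set $Z$; once the inductive lemma of the second paragraph is in place, existence, cosatisfaction, and uniqueness all fall out.
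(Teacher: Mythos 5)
The paper gives no written proof of Lemma~\ref{L.01ary}: it works out the precoalgebra computation for the special case of one zeroary and one unary operation with no identities, and then asserts that the general statement can be verified ``either by following the above precoalgebra approach, or by directly checking the universal property.'' Your proposal implements the second route, and most of it is sound: the identification of coalgebras (for $\Omega_\D$ consisting of zeroary and unary operations) with objects carrying morphisms to $I_\fb{A}$ and endomorphisms, the construction of $R,$ the induction identifying the derived co-operations of $R$ with $\pi_{s_{I_\D}}$ for closed $s$ and $(s_{I_\D})^\ast$ for one-variable $s,$ and the existence-and-uniqueness argument for the morphism into $R$ from an arbitrary coalgebra are all correct.

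The genuine gap is in the cosatisfaction step. The assertion that ``we may take every identity $s=t$ of $\D$ to have arity at most $1$'' is false: each \emph{term} involves at most one variable, but an identity may relate terms in two \emph{different} variables, e.g.\ $\alpha(x)=\alpha(y)$ with $\alpha$ unary, and such an identity is not equivalent to any family of one-variable identities. For it, cosatisfaction by $R$ is not the tautology $\alpha^R=\alpha^R$ but the condition $q_0\circ\alpha^R=q_1\circ\alpha^R,$ where $q_0,q_1:|R|\to|R|\cP|R|$ are the two coprojections -- a strictly stronger requirement. (A milder oversight of the same kind: in a one-variable identity $\alpha(x)=c$ with $c$ closed, your two derived co-operations have different codomains, $|R|$ and $I_\fb{A},$ so ``these coincide'' cannot be taken literally; cosatisfaction there means $\alpha^R=\iota\circ c^R,$ where $\iota:I_\fb{A}\to|R|$ is the unique map.)

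When $|I_\D|\neq\emptyset$ both problems are repairable: satisfaction of such an identity in $I_\D$ forces the term functions on the two sides to be constant at a common $w\in|I_\D|,$ so the corresponding co-operations of $R$ factor through the initial object, e.g.\ $\alpha^R=\iota\circ\pi_w,$ and then $q_i\circ\alpha^R=(q_i\circ\iota)\circ\pi_w$ is independent of $i,$ since each $q_i\circ\iota$ is the unique morphism $I_\fb{A}\to|R|\cP|R|.$ When $|I_\D|=\emptyset,$ however, no repair is possible, because the statement itself fails there: take $\fb{A}=\fb{Set}$ and $\D$ the variety with one unary operation $\alpha$ and the single identity $\alpha(x)=\alpha(y).$ Then $I_\D=\emptyset,$ so the lemma predicts as final coalgebra the empty product $I_{\fb{Set}}^{\emptyset},$ a one-point set, with identity co-operation; but that object does not cosatisfy the identity (the two coprojections of a point into a two-point set differ), and the only $\!\D\!$-coalgebra in $\fb{Set}$ is in fact the empty one. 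So the case your reduction silently discards is exactly the case in which Lemma~\ref{L.01ary} (and, by the same example, Proposition~\ref{P.unary}) is wrong as stated. With the factoring-through-$I_\fb{A}$ observation added, your proof becomes valid in every case where the lemma is true, and in particular in all the applications the paper makes of it, where $\D$ carries no identities at all.
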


For concreteness, let us return to the case where $\D$
is the variety defined by one zeroary operation,
one unary operation, and no identities.
Let $\C$ be an arbitrary variety with more than one derived zeroary
operation, and let $R$ be the final $\!\D\!$-coalgebra of $\C,$
which by the above observations
has underlying $\!\C\!$-algebra $I_\C^\omega.$
If $I_\C$ is finite or countable, then $R$ has the
cardinality of the continuum; and for many choices of $\C$ -- e.g.,
$\D$ itself, or the variety of sets with two distinguished elements,
or the variety of vector-spaces with a distinguished vector,
over a countable or finite field -- we find that the corresponding
representable functor $\C(R,-)$ takes all nontrivial finite or
countable objects of $\C$ to
objects of $\D$ having cardinality~$2^{\aleph_0^{\aleph_0}}.$

As a curious exception, suppose $\C$ is the
category of abelian groups with one distinguished element, so that
$I_\C$ is $\mathbb{Z}$ with distinguished element $1.$
Then $|R\<|$ is the group $\mathbb{Z}^\omega$
of all sequences of integers, with the
constant sequence $(1,1,\dots)$ as distinguished element.
It is known that the only group homomorphisms
$\mathbb{Z}^\omega\to\mathbb{Z}$ are the finite linear combinations
of the projection maps \cite[Proposition~94.1]{Fuchs}.
Hence, if we apply our functor $\C(R,-)$ to $\mathbb{Z},$ with any
$n\in|\mathbb{Z}|$ made the distinguished element, we get a
countable $\!\D\!$-object, the set of those
finite linear combinations of projection maps
$\mathbb{Z}^\omega\to\mathbb{Z}$ whose coefficients sum to $n$
(with distinguished element given by $n$ times the projection onto the
$\!0\!$-component, and endomap given by the {\em right} shift operator
on those strings of coefficients).
On the other hand, if we apply $\C(R,-)$ to
$\mathbb{Z}/p\mathbb{Z}$ or $\mathbb{Q}$
(with any distinguished element), we find that the resulting object
again has cardinality $2^{\aleph_0^{\aleph_0}}.$

\vspace{.5em}
A very different class of cases where we can get a nice handle on
the final object of a category $\coalg{\C}{\D}$ is when $\D$ is an
arbitrary variety, and $\C$ the variety $\fb{Boole}$ of Boolean rings
(commutative associative unital
rings satisfying the identities $2=0$ and $x^2=x).$
The variety $\fb{Boole}$ is dual to the category $\fb{Stone}$ of
Stone spaces, i.e., totally disconnected compact Hausdorff spaces,
via the functor taking every Boolean ring $A$ to its prime spectrum,
equivalently, to the set $\fb{Boole}(A,2)$ of its homomorphisms into
the $\!2\!$-element Boolean ring, under the function topology;
the inverse functor takes each Stone space $X$ to
the set $\fb{Stone}(X,2)$ of continuous $\!\{0,1\}\!$-valued
functions on $X,$ made a Boolean ring under pointwise operations,
equivalently, to the set of clopen subsets of $X,$ made a
Boolean ring in the usual way \cite[Introduction and \S II.4]{PJ.Sto}.

Hence, {\em $\!\D\!$-coalgebras} in $\fb{Boole}$ correspond
to {\em $\!\D\!$-algebra} objects in $\fb{Stone}.$
These can be described as $\!\D\!$-algebras given with totally
disconnected compact Hausdorff topologies on their underlying sets,
with respect to which the $\!\D\!$-operations are continuous.
The representable functor $\fb{Boole}\to \D$
corresponding to such a topological $\!\D\!$-algebra $X$ can
be described as taking each Boolean ring $B$ to the
$\!\D\!$-algebra of continuous $\!X\!$-valued functions on the
Stone space of $B.$
(In particular, we can recover the underlying $\!\D\!$-algebra of
$X$ from the functor by evaluating the latter on the Boolean ring $2,$
corresponding to the $\!1\!$-point Stone space.)

Any finite set with the discrete topology is a Stone space, so any
finite $\!\D\!$-algebra gives a $\!\D\!$-algebra object of $\fb{Stone}.$
For some varieties $\D,$ such as those of associative (unital
or nonunital) rings, semigroups,
monoids, groups, and distributive lattices (and, automatically,
all subvarieties of these), it is known
\cite[VI.2.6-9]{PJ.Sto}, \cite{250B_stone} that
\begin{equation}\begin{minipage}[c]{34pc}\label{x.profin}
The topological $\!\D\!$-algebras with Stone topologies are precisely
the inverse limits of systems of finite $\!\D\!$-algebras, with the
topologies induced by the discrete topologies on those finite algebras.
\end{minipage}\end{equation}

When this holds, every such Stone algebra is, in particular,
the inverse limit of all its finite homomorphic images;
and given an arbitrary $\!\D\!$-algebra $A,$
the category of Stone $\!\D\!$-algebras furnished with homomorphisms
of $A$ into them will have as initial object the inverse limit
of all finite homomorphic images of $A.$

Now note that the category $(A\downarrow\D)$ of
(non-topologized) $\!\D\!$-algebras with
homomorphisms of $A$ into them can be regarded as a variety $\D',$
by taking any presentation $\langle X\mid Y\rangle_\D$ for $A$
in $\D,$ and letting $\D'$ have, in addition to the operations and
identities of $\D,$ an $\!X\!$-tuple of additional zeroary
operations, subject to the relations $Y.$
So the above inverse limit of finite homomorphic images of $A$
can be regarded as the initial Stone topological $\!\D'\!$-algebra.

For example, suppose that $\D=\Gp$ and
$\langle X\mid Y\rangle_\D$ is the infinite cyclic group (where
we can take $X$ a singleton and $Y$ empty), or that
$\D=\fb{Ring}^1$ and $\langle X\mid Y\rangle_\D$ is
the initial ring $(X$ and $Y$ both empty).
In these cases, the initial Stone $\!\D'\!$-algebra will be the
inverse limit of the finite groups or rings $\mathbb{Z}/n\mathbb{Z}.$
(This inverse limit, called the ``profinite completion
of $\mathbb{Z}\!$'', is the direct product over all primes $p$
of the groups or rings of $\!p\!$-adic integers.)

In this example, our Stone algebra, and hence the value of our initial
representable functor at the Boolean ring $2,$
merely has the cardinality of
the continuum, and the final coalgebra to which it corresponds is
merely countable, in contrast to some of our earlier examples.
This is an instance of a general phenomenon.
Before formulating it, let us note that if $\D$
satisfies~(\ref{x.profin}), then that property will be inherited
by $\D'=(A\downarrow\D)$ for any $\!\D\!$-algebra $A,$ and that
if $A$ is finitely generated, then if $\D$ is describable in terms
of finitely many primitive operations, all of finite arity, these
properties will be inherited by $\D'.$
Hence it suffices to prove the next result for such a variety $\D,$
understanding that it will then be applicable to varieties
$\D'=(A\downarrow\D)$ when $A$ is a finitely generated $\!\D\!$-algebra.

\begin{lemma}\label{L.countable}
Suppose $\D$ is a finitary variety with only finitely many primitive
operations, which satisfies\textup{~(\ref{x.profin})}.
Then the final object of $\coalg{\fb{Boole}}{\D}$ is at most countable;
equivalently, the initial Stone $\!\D\!$-algebra has separable
topology.
\end{lemma}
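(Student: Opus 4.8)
The plan is to pass through the Stone duality $\fb{Boole}\simeq\fb{Stone}^{\mathrm{op}}$ recalled just before the lemma. Under it, the final object $R$ of $\coalg{\fb{Boole}}{\D}$ corresponds to the initial Stone $\!\D\!$-algebra $X$ (final coalgebras answering to initial algebra objects), with underlying Boolean ring $|R\<|$ equal to the ring of clopen subsets of $X.$ Hence $\cd(||R\<||)$ is just the number of clopen subsets of $X,$ and the assertion ``$R$ is at most countable'' is the assertion that $X$ has a countable clopen algebra. By hypothesis~(\ref{x.profin}) and the discussion preceding the lemma, $X$ is the inverse limit $\limit_\theta\,F_\D(\emptyset)/\theta$ of all finite homomorphic images of the initial $\!\D\!$-algebra $F_\D(\emptyset),$ the quotients ranging over the finite-index congruences $\theta$ on $F_\D(\emptyset),$ directed by refinement (the intersection of two such congruences again having finite index).

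The crux of the argument is to bound this indexing set, and this is the step I expect to be the main obstacle. First, since $\D$ is finitary with only finitely many primitive operations, there are only finitely many $\!\D\!$-algebra structures on each finite cardinal, so there are only countably many isomorphism classes of finite $\!\D\!$-algebras. The key observation then uses the initiality of $F_\D(\emptyset){:}$ for every finite $\!\D\!$-algebra $F$ there is exactly one homomorphism $F_\D(\emptyset)\to F.$ I would use this to show that $\theta\mapsto[\,F_\D(\emptyset)/\theta\,]$ is injective on finite-index congruences: if $\phi$ is an isomorphism $F_\D(\emptyset)/\theta_1\to F_\D(\emptyset)/\theta_2$ and $q_i$ the quotient maps, then $\phi\<q_1$ and $q_2$ are both homomorphisms $F_\D(\emptyset)\to F_\D(\emptyset)/\theta_2,$ hence equal by uniqueness, whence $\theta_1=\ker q_1=\ker q_2=\theta_2$ since $\phi$ is injective. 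Thus the finite-index congruences inject into the countable set of isomorphism classes of finite $\!\D\!$-algebras, and there are at most countably many of them.

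With only countably many stages $\theta,$ each having finite quotient $F_\D(\emptyset)/\theta,$ the system is a countable cofiltered diagram of finite discrete spaces. Every clopen subset of the limit $X$ is compact and open, hence a finite union of basic clopen sets, hence (using directedness of the index set) of the form $\pi_\theta^{-1}(S)$ for a single $\theta$ and some $S\subseteq F_\D(\emptyset)/\theta,$ where $\pi_\theta:X\to F_\D(\emptyset)/\theta$ is the canonical projection. Consequently the clopen algebra of $X$ is the directed union $\bigcup_\theta\,\mathcal P(F_\D(\emptyset)/\theta),$ a countable union of finite sets, so $\cd(||R\<||)\le\aleph_0,$ as claimed.

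Finally, for the topological rephrasing, a countable clopen algebra equips the Stone space $X$ with a countable basis of clopen sets; equivalently, $X$ embeds as a closed subspace of the countable product $\prod_\theta F_\D(\emptyset)/\theta$ of finite discrete spaces, which is compact and metrizable. A compact metrizable space is separable, so the initial Stone $\!\D\!$-algebra has separable topology, completing the equivalence. None of the topology here is delicate; the whole weight of the proof rests on the countability of the set of finite-index congruences established in the second paragraph.
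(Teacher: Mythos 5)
Your proposal is correct and follows essentially the same route as the paper's proof: countably many isomorphism classes of finite $\!\D\!$-algebras, hence countably many finite homomorphic images of the initial $\!\D\!$-algebra, hence the initial Stone $\!\D\!$-algebra is a countable inverse limit of finite algebras, and every clopen set (continuous $\!\{0,1\}\!$-valued function) factors through a finite stage. The extra details you supply -- the injectivity of $\theta\mapsto[\,F_\D(\emptyset)/\theta\,]$ via uniqueness of maps out of the initial algebra, and the compactness argument for factoring clopen sets -- are exactly what the paper's ``hence'' and ``is induced by'' compress, so no comparison beyond this is needed.
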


\begin{proof}
From the fact that $\D$ is finitary with finitely many primitive
operations, one sees that up to isomorphism, there are at
most countably many finite $\!\D\!$-algebras.
Hence the initial $\!\D\!$-algebra
has at most countably many finite homomorphic images,
so the initial Stone $\!\D\!$-algebra is the inverse limit
of an at most countable inverse system of finite algebras.
Every continuous $\!\{0,1\}\!$-valued function on that inverse
limit is induced by such a function on one of these algebras; so the
Boolean ring of all such functions is at most countable.
\end{proof}

Sometimes our final coalgebras are even smaller than the
above result requires.
If one takes for $\D$ the variety of groups, and
lets $\langle X\mid Y\rangle_\D$ be an infinite group with no finite
homomorphic images, such as
\begin{equation}\begin{minipage}[c]{34pc}\label{x.wxyz}
$\langle\,w,x,y,z\ \mid\ wxw^{-1}=x^2,\ xyx^{-1}=y^2,\ %
yzy^{-1}=z^2,\ zwz^{-1}=w^2\,\rangle,$
\end{minipage}\end{equation}
(or, going outside the context of Lemma~\ref{L.countable} to
a non-finitely-generated example, but still using the fact
that $\Gp$ satisfies~(\ref{x.profin}), the
additive group of rational numbers), then the initial
Stone $\!\D'\!$-algebra is trivial, making the final object of
$\coalg{\fb{Boole}}{\D'}$ the $\!2\!$-element (initial) Boolean ring.
In this case, the derived zeroary operations of $\D'$ cannot be
distinguished by representable functors on $\fb{Boole},$ and we are
effectively reduced to case~(iii) of Theorem~\ref{T.7/9}.

In the other direction, taking varieties $\D$ not
satisfying~(\ref{x.profin}), one again
gets examples where the final object
of $\coalg{\fb{Boole}}{\D}$ has continuum cardinality.
For instance, if we let $\D$ be, as at the beginning of this section,
the variety of sets with a single zeroary and a single binary
operation, then by the discussion there, the final object of
$\coalg{\fb{Boole}}{\D}$ will be $2^\omega,$ the countable direct
power of the initial object $2=\{0,1\}$ of $\fb{Boole}.$
The corresponding Stone $\!\D\!$-algebra can be described as the
Stone-\v{C}ech compactification of $\omega,$ with the element
$0\in\omega$ as the value of the zeroary operation, and the
endomap induced by $n\mapsto n+1$ as the unary operation.
Some more familiar varieties not
satisfying~(\ref{x.profin}) are the variety
of Lie algebras over a finite field (this may be deduced from
\cite[Example~25.49, p.126]{coalg}) and the variety
of all lattices (\cite[second example on p.10]{250B_stone}).

Incidentally, one's first impression on looking at the statement of
Lemma~\ref{L.countable} might be that it should surely be possible
to weaken the hypothesis ``finitely many operations''
to ``at most countably many''; but this is not so.
If $\D$ is the variety of sets with an $\!\omega\!$-tuple of
zeroary operations, then~(\ref{x.profin}) is inherited from
the variety $\fb{Set},$ but by Lemma~\ref{L.01ary}, the final
object of $\coalg{\fb{Boole}}{\D}$ again has for underlying algebra
the uncountable Boolean algebra $2^\omega.$

Infinite compact Hausdorff spaces are ``typically''
uncountable, but not always.
Can the initial Stone object of a variety $\D$ be countably infinite?
I found this a hard one to answer; but here is an example.
Let $\D$ be the variety of lattices (or upper semilattices,
or lower semilattices) with one additional zeroary operation
$\alpha_0,$ and one additional unary operation $\alpha_1,$ subject
to the identities saying that $\alpha_1$ is {\em increasing,}
i.e., $\alpha_1(x)\wedge x=x$ and/or $\alpha_1(x)\vee x=\alpha_1(x)$
(depending on which operations are assumed present).
I leave the determination of the initial Stone topological
object of $\D$ to the reader who would like a challenging exercise.
(Note that one cannot assume that $\D$ satisfies~(\ref{x.profin}).)

If $\D$ is a variety whose initial object is finite, then even
without~(\ref{x.profin}) one can see that this finite object,
regarded as a finite Stone space, will be the initial Stone object
of $\D,$ and so will determine the initial representable functor
$\fb{Boole}\to\D.$
We find, in particular, that the initial representable
functor $\fb{Boole}\to\fb{Boole}$ is the identity.

\vspace{.5em}
There are other dualities between varieties of algebras and
structures with compact topologies; cf.\ \cite{RA+Kap}, \cite{DC+BD}.
It would be worth seeing whether these also lead to useful
results about coalgebras, equivalently, representable functors.

It would also be worth examining whether some of
the properties of varieties
$\D$ that allow one to prove~(\ref{x.profin}) might also allow
one to prove results about $\!\D\!$-coalgebras in general varieties.
In both~\S\ref{S.BiSe} and~\S\ref{S.SeSe} above, the
associative identity trimmed uncountable cobinars $P'_\omega$
down to countable cosemigroups $R=P_\omega.$
Is this a case of a more general phenomenon?
Cf.\ also \cite[\S32, first paragraph]{coalg}, where a
statement formally resembling~(\ref{x.profin})
is translated into a dual statement about ``coalgebras'',
though in one of the other senses of that word.

\vspace{.5em}
Yet another example of an initial representable functor between
varieties with more than one derived zeroary operation is the one
that motivated this investigation, as noted in \S\ref{S.motivate}:
the initial object of $\Rep{\fb{Ring}^1}{\,\fb{Ring}^1},$
given by $S\mapsto S\times S^{\mathrm{op}}$
\cite[Corollary~25.22]{coalg}.
(That result is generalized a bit in \cite[Exercise~31.12(vi)]{coalg}.)

\section{Universal constructions in $\coalg{\Gp}{\Gp}$ and\\
$\coalg{\fb{Monoid}}{\fb{Monoid}}.$}\label{S.Group}

Since the varieties of groups and of monoids have unique derived zeroary
operations (giving the identity element), Theorem~\ref{T.7/9}
tells us that the initial representable functor from either of
them to any variety, or from any variety to either of them, is trivial;
equivalently, that the corresponding final coalgebras have
initial algebras for underlying algebra.
But other limits of coalgebras
involving these varieties need not be trivial.
Let us briefly examine {\em products} in $\coalg{\Gp}{\,\Gp}.$

Kan \cite{DK.Mon} determined the structure of all comonoid
objects in $\Gp.$
These all extend uniquely to cogroup structures, so we shall describe
his result as determining the cogroups in $\Gp.$
The functors these
represent are simply the direct powers of the identity functor.
If we take an arbitrary cogroup $R$ in $\Gp,$ the precise
statement is that its underlying group $|R\<|$ is
free on the set consisting of the nonidentity elements
$x$ satisfying $\beta^R(x)=x^{(0)} x^{(1)}$ (where
$\beta\in\Omega_\Gp$ again denotes the primitive binary operation).
Now a morphism $R\to S$ of cogroups must take elements of $R$
satisfying $\beta^R(x)=x^{(0)} x^{(1)}$ to elements of $S$ with the same
property; so it must take each member of the canonical free generating
set for $|R\<|$ either to a member of the corresponding
generating set for $|S|,$ {\em or} to the identity element, $e.$
If we write $\mathrm{Id}_\Gp$ for the identity functor of
$\Gp,$ and $\mathrm{Id}_\Gp^n$ for the $\!n\!$-fold
direct power of this functor (the functor taking each group $G$
to $G^n$ -- not, of course, the $\!n\!$-fold composite
of $\mathrm{Id}_\Gp),$ then this is equivalent to saying that a morphism
$\mathrm{Id}_\Gp^m\to \mathrm{Id}_\Gp^n$ is determined
by specifying, as the value to be assigned at each of the $n$
coordinates of the codomain groups, either a specified
one of the $m$ coordinates of the domain groups, or the value $e.$

So, though one could phrase Kan's
result as saying that the isomorphism classes of cogroups
in $\Gp$ correspond to the isomorphism classes of sets, the
category $\coalg{\Gp}{\,\Gp}$
is actually equivalent, not to $\fb{Set},$ but
to the category $\fb{Set}^\mathrm{pt}$ of pointed sets,
by the functor taking each coalgebra $R$ to the set
$\{x\mid\beta^R(x)=x^{(0)} x^{(1)}\},$ with $e$ as basepoint.

The product of two pointed sets $(X,e_X)$ and $(Y,e_Y)$ is
$(X\times Y,\ (e_X,e_Y)).$
Thus, the product of the cogroups in $\Gp$ represented by
the free groups on $m$ and on $n$ generators will be free on
$(m+1)(n+1)-1$ generators (with $+1$ to count the basepoint $e$
in the description of the relevant pointed sets, and $-1$ to
discount the basepoint in the product set).
In terms of representable functors, the coproduct of
$\mathrm{Id}_\Gp^m$ and $\mathrm{Id}_\Gp^n$ in
$\Rep{\Gp}{\,\Gp}$ is
therefore $\mathrm{Id}_\Gp^{mn+m+n}.$
To describe the universal maps
$\mathrm{Id}_\Gp^m\to \mathrm{Id}_\Gp^{mn+m+n}$ and
$\mathrm{Id}_\Gp^n\to \mathrm{Id}_\Gp^{mn+m+n},$
note that for {\em any} such pair of maps,
each coordinate of $\mathrm{Id}_\Gp^{mn+m+n}$ must select either
some coordinate of $\mathrm{Id}_\Gp^m$ or the identity, and
either some coordinate of $\mathrm{Id}_\Gp^n$ or the identity.
The universal pair is such that
the $mn+m+n$ coordinates together cover all possible choices exactly
once, except for the simultaneous choice of the identity for
both coordinates.

It is interesting to observe that for $S$ and $S'$ objects of
$\coalg{\Gp}{\,\Gp}$ and $S\times S'$ their product
in this category, the pair of projection maps $S\times S'\to S$ and
$S\times S'\to S'$ does not in general separate elements of
the underlying set $||S\times S'\<||$ of the product coalgebra.
For example, let us write the underlying free group of the coalgebra
representing $\mathrm{Id}_\Gp$ as $|S\<|=\langle x\rangle,$
and the underlying free group of the product coalgebra $S\times S$ as
$|S\times S\<|= \langle g_{x,\<x},\,g_{x,\<e},\,g_{e,\<x}\rangle,$ where
the subscripts indicate where the generators are sent
under the two projection maps.
Since this group is free, we have
$g_{x,\<e}\,g_{e,\<x}\neq g_{e,\<x}\,g_{x,\<e};$ but the two sides
of this inequality fall together under both projection maps to $|S\<|.$

\vspace{.5em}
We saw in Theorem~\ref{T.cofree} that for every object $A$
of a variety $\C,$ and every variety $\D,$ there exists an object
$R$ of $\coalg{\C}{\D}$ with a universal $\!\C\!$-algebra homomorphism
$|R\<|\to A;$ a coalgebra ``cofree'' on $A,$ corresponding to a
representable functor ``free'' on an arbitrary
representable set-valued functor.
For $\C=\D=\Gp$ and any group $A,$ it is not hard to show that
this universal $R$ corresponds, under the above description
of $\coalg{\Gp}{\,\Gp},$ to
the pointed set $(|A\<|,e_A),$ with the universal homomorphism taking
the generator of the free group $|R\<|$ corresponding to each
$a\in|A\<|$ to that element of $A.$
So, for instance, if $A$ is the free group on one generator
(representing the {\em underlying-set} functor), then $|R\<|$ will
be a free group on countably many generators, indexed by the nonzero
integers, and the corresponding representable functor
will be the direct power $\mathrm{Id_\Gp}^{|\mathbb{Z}|-\{0\}}$
of the identity functor.
The underlying-set functor is mapped
to $U_\Gp\circsm\mathrm{Id_\Gp}^{|\mathbb{Z}|-\{0\}}$ by sending
each element $b$ of the underlying set of a group $B$ to
$(b^n)_{n\in|\mathbb{Z}|-\{0\}}.$

The description we have given of the category of representable
functors from groups to groups goes over almost unchanged to
representable functors from groups to monoids, and from monoids
to groups, the only adjustment being that the identity
functor on groups is replaced in the first case by the forgetful
functor from groups to monoids, and in the second by the
group-of-invertible-elements functor from monoids to groups.
But the structure of representable functors from monoids to monoids,
worked out in \cite[\S20]{coalg} and
\cite[\S9.5]{245}, is more complicated.
(In \cite{coalg},
incidentally, we called monoids ``semigroups with neutral element''.)
Such functors $F$ correspond not to sets, but to certain sorts of
bipartite directed graphs, where vertices on one side correspond to
coordinates of $F(S)$ that are multiplied as in $S,$ those on the other
side to elements that use the opposite multiplication, and the edges of
the graph represent conditions specifying certain coordinates of one
sort as left or right inverses of certain coordinates of the other sort.
(Here the identity element sits in the intersection of the two
sets, violating the usual definition of bipartite graph.
It is not connected by edges to any other elements, and could be
excluded if we were only concerned with the structures of
individual functors and their representing coalgebras.
But its importance, as above, comes out in the
description of morphisms among these functors and coalgebras.)

For details, the reader can see \cite{coalg}
or \cite{245}; but let me sketch some elementary examples.
Let $R$ be the comonoid representing the functor that takes
every monoid $A$ to the monoid $A\times A^\mathrm{op};$
let $R'$ represent the functor that takes
every $A$ to the submonoid of $A\times A^\mathrm{op}$
consisting of pairs $(a,b)$ with $ab=e;$ and
let $R''$ represent the functor that takes
every $A$ to the still smaller submonoid
consisting of pairs $(a,b)$ with $ab=e=ba.$
Each of these functors has an inclusion morphism into the one that
precedes; hence our comonoids have maps $R\to R'\to R''.$
These are in fact surjective; on underlying monoids they are
\begin{equation}\begin{minipage}[c]{34pc}\label{x.xy}
$\langle x,y\rangle\ \to\ %
\langle x,y\mid xy=e\rangle\ \to\ %
\langle x,y\mid xy=e=yx\rangle,$
\end{minipage}\end{equation}
where the arrows carry generators to generators with the same
name, and where, in each case, the comultiplication is given by
\begin{equation}\begin{minipage}[c]{34pc}\label{x.xLyR}
$x\ \mapsto\ x^{(0)}\<x^{(1)},\qquad y\ \mapsto\ y^{(1)}\<y^{(0)}.$
\end{minipage}\end{equation}

Now by the results of \cite{coalg} or \cite{245},
every object $S$ of $\coalg{\fb{Monoid}}{\fb{Monoid}}$ has underlying
monoid generated by $\{z\in||S||\mid\beta^S(z)=z^{(0)}z^{(1)}$~or
$z^{(1)}z^{(0)}\},$ hence every morphism of such objects
is determined by its behavior on this set.
For each of the above objects, this set is $\{x,y,e\},$ and the
maps~(\ref{x.xy}) are clearly bijective on these sets.
(In the formalism of \cite{coalg} and \cite{245}, these
objects correspond to the bipartite graphs $[\,\cdot\ |\ \cdot\,],$
$[\,\cdot\hspace{.25em}|\hspace{-.6em}{\rightarrow}\cdot\,]$ and
$[\,\cdot\hspace{.33em}|\hspace{-.63em}{\leftrightarrow}\cdot\,]$
respectively.)
Hence these maps of coalgebras are both monomorphisms and epimorphisms,
but not isomorphisms.

Let us consider one more comonoid, $S,$ representing the
identity functor, with underlying monoid $\langle x\rangle$ free
on one generator (and bipartite graph $[\,\cdot\ |\ \ ]).$
Note that the inclusion $S\to R''$ is also both a monomorphism
and an epimorphism
(because it has these properties in $\fb{Monoid}),$ though in
contrast to the morphisms~(\ref{x.xy}), it is not surjective.
If we factor this map in the obvious
way as $S\to R\to R'',$ we see that a composite
$a\circsm b$ of morphisms of coalgebras can be both epic and monic,
and $a$ also be epic and monic, without $b$ being epic.

What about the cofree coalgebra on a monoid $A$?
One finds that it corresponds to the bipartite graph obtained by
taking two copies of $A,$ say
$\{a_0\mid a\in|A\<|\}\cup\linebreak[0]\{a_1\mid a\in|A\<|\},$
using one as the left side of our graph and the other as
the right side, collapsing $e_0$ and $e_1$ to a single identity
element, and drawing directed edges
from $a_0$ to $b_1$ and from $a_1$ to $b_0$ whenever $ab=e.$

\section{A quick look
of $\coalg{\fb{Ring}^1}{\fb{Ab}}.$}\label{S.Ring>Ab}

Let $\fb{Ring}^1$ denote the variety of associative rings with $1,$ and
$\fb{Ab}$ the variety of abelian groups, which we will write additively.
In \cite[Theorem~13.15]{coalg} it is shown, inter alia, that
$\coalg{\fb{Ring}^1}{\fb{Ab}}$ is equivalent to $\fb{Ab}.$
Precisely, if we denote by $U$ the forgetful functor taking every
ring to its underlying abelian group, then
every representable functor $\fb{Ring}^1\to\fb{Ab}$
can be obtained by following $U$ with a representable functor
$\fb{Ab}(A,-):\fb{Ab}\to\fb{Ab},$ for some abelian group $A.$
(As is well-known, every abelian group is the representing
object of a unique representable functor
$\fb{Ab}\to\fb{Ab},$ the group structure being given
by elementwise operations on group homomorphisms.)
The object of $\coalg{\fb{Ring}^1}{\fb{Ab}}$ representing
this composite is the tensor ring
$\mathbb{Z}\langle A\<\rangle,$ i.e., the result of applying to
$A$ the left adjoint to $U,$ with a
co-$\!\fb{Ab}\!$-structure that the reader can easily write down; and
these constructions yield a functorial equivalence.

Limits in $\fb{Ab}$ are easy to describe, and yield descriptions
of the corresponding limits in $\coalg{\fb{Ring}^1}{\fb{Ab}}.$

Note that the tensor ring construction does not respect
one-one-ness of maps.
For instance, let $A$ be the
abelian group $\langle a,b\mid pb=0\rangle$ for any prime $p$
(the direct sum of $\mathbb{Z}$ and $\mathbb{Z}/p\mathbb{Z}),$ let $B$
be the subgroup of $A$ generated by $pa$ and $b,$ and let $f:B\to A$ be
the inclusion map.
Then in $\mathbb{Z}\langle B\<\rangle$
the element $(pa)b$ is easily shown
to be nonzero, but its image in $\mathbb{Z}\langle A\<\rangle$
can be written $p(ab)=a(pb)=a0=0.$

Consequently, statements about $\fb{Ab}$ that refer to one-one-ness
may not go over to $\coalg{\fb{Ring}^1}{\fb{Ab}}.$
For instance, in $\fb{Ab},$ every one-to-one map is an equalizer (e.g.,
of its canonical map onto its cokernel group, and the zero map
thereto); in particular, this is true of the inclusion $A\subseteq B$
of the preceding paragraph; hence the induced map
in the equivalent category $\coalg{\fb{Ring}^1}{\fb{Ab}}$
is also an equalizer; showing that equalizers in that category
need not be one-to-one on underlying sets.


\end{document}